%
%

%
%


\documentclass[11pt,reqno]{amsart}

\setlength{\oddsidemargin}{0.125in}
\setlength{\evensidemargin}{0.125in}
\setlength{\textwidth}{6.375in}
\setlength{\textheight}{8.5in}
\topskip 0in
\topmargin 0.375in
\footskip 0.25in










\setcounter{tocdepth}{4}



\newtheorem{thm}{Theorem}[section]
\newtheorem*{thm*}{Theorem}
\newtheorem{lem}[thm]{Lemma}
\newtheorem*{lem*}{Lemma}

\newtheorem{cor}[thm]{Corollary}
\newtheorem{claim}[thm]{Claim}
\newtheorem{prop}[thm]{Proposition}

\theoremstyle{definition}

\newtheorem{assump}[thm]{Assumption}
\newtheorem{case}{Case}\renewcommand{\thecase}{}
\newtheorem*{case*}{Case}

\newtheorem{defn}[thm]{Definition}
\newtheorem*{defn*}{Definition}

\newtheorem*{exmp*}{Example}

\newtheorem{rmk}[thm]{Remark}
\newtheorem*{rmk*}{Remark}
\newtheorem{step}{Step}\renewcommand{\thestep}{}

\theoremstyle{remark}


\makeatletter
\def\alphenumi{
  \def\theenumi{\alph{enumi}}
  \def\p@enumi{\theenumi}
  \def\labelenumi{(\@alph\c@enumi)}}
\makeatother




\makeatletter
\def\thecase{\@arabic\c@case}
\makeatother




\makeatletter
\def\thestep{\@arabic\c@step}
\makeatother


%
%

\newcount\hh
\newcount\mm
\mm=\time
\hh=\time
\divide\hh by 60
\divide\mm by 60
\multiply\mm by 60
\mm=-\mm
\advance\mm by \time
\def\hhmm{\number\hh:\ifnum\mm<10{}0\fi\number\mm}


%


\setlength{\marginparwidth}{0.72in}


\renewcommand\emptyset{\varnothing}



\newcommand\barB{{\bar{B}}}

\newcommand\pa{\partial}
\newcommand\restrictedto{\upharpoonright}



\newcommand\EE{\mathbb{E}}

\newcommand\NN{\mathbb{N}}

\newcommand\PP{\mathbb{P}}
\newcommand\QQ{\mathbb{Q}}
\newcommand\RR{\mathbb{R}}


\newcommand\cA{{\mathcal{A}}}
\newcommand\cB{{\mathcal{B}}}

\newcommand\cF{{\mathcal{F}}}
\newcommand\cG{{\mathcal{G}}}


\newcommand\fa{{\mathfrak{a}}}

\newcommand\fb{{\mathfrak{b}}}

\newcommand\fc{{\mathfrak{c}}}

\newcommand\fe{{\mathfrak{e}}}

\newcommand\ff{{\mathfrak{f}}}







\newcommand\eps{\varepsilon}





\DeclareMathOperator{\Int}{int}



%
%

\numberwithin{equation}{section}

\usepackage{amssymb}
\usepackage{hyperref}
\usepackage{mathrsfs}
\usepackage{slashed}
\usepackage[usenames]{color}
\usepackage{url}
\usepackage{verbatim}
\usepackage{times}

\hypersetup{pdftitle={Transition probabilities for degenerate diffusions arising in population genetics}}

\hypersetup{pdfauthor={Charles L. Epstein and Camelia A. Pop}}

%
%

\begin{document}

\title[Degenerate diffusions]{Transition probabilities for degenerate diffusions arising in population genetics}
\author[C. L. Epstein]{Charles L. Epstein}
\address[CLE]{Department of Mathematics, University of Pennsylvania, 209 South 33rd Street, Philadelphia, PA 19104-6395}
\email{cle@math.upenn.edu}

\author[C. A. Pop]{Camelia A. Pop}
\address[CP]{School of Mathematics, University of Minnesota, 206 Church St. SE, Minneapolis, MN 55455}
\email{capop@umn.edu}
\thanks{CL Epstein's research is partially supported by NSF grant  DMS-1507396. CA Pop's research is partially supported by NSF grant  DMS-1714490.}
\date{\today{ }\hhmm}

\begin{abstract}
We provide a detailed description of the structure of the transition
probabilities and of the hitting distributions of boundary components of a
manifold with corners for a degenerate strong Markov process arising in
population genetics. The Markov processes that we study are a generalization of
the classical Wright-Fisher process. The main ingredients in our proofs are
based on the analysis of the regularity properties of solutions to a forward
Kolmogorov equation defined on a compact manifold with corners, which is
degenerate in the sense that it is not strictly elliptic and the coefficients
of the first order drift term have mild logarithmic singularities.
\end{abstract}

%

\subjclass[2010]{Primary 35J70; secondary 60J60}
\keywords{Degenerate elliptic operators, compact manifold with corners, fundamental solution, Dirichlet heat kernel, caloric measure, Markov processes, transition probabilities, hitting distributions}


\maketitle

\tableofcontents

\section{Introduction}
\label{sec:Introduction}

We study generalized Kimura operators, which were introduced by C. Epstein and R. Mazzeo in \cite{Epstein_Mazzeo_annmathstudies}, as an extension of the Wright-Fisher operator, \cite{Fisher_1922, Fisher_1930, Wright_1931}, \cite[Equation (3.3)]{Shimakura_1981}; it is a basic model for the evolution of gene frequencies in population genetics. Related versions of generalized Kimura operators were previously studied in connection with superprocesses \cite{Athreya_Barlow_Bass_Perkins_2002} and with applications to the dynamics of populations \cite{Cerrai_Clement_2001, Cerrai_Clement_2003, Cerrai_Clement_2004, Cerrai_Clement_2007}.

Generalized Kimura operators act on functions defined on  compact manifolds
with corners, $P,$  see~\cite[Definition
2.2.1]{Epstein_Mazzeo_annmathstudies}. If  $P$ is a compact manifold with
corners of dimension $N$, then a point $p\in\partial P$ has the property that there
are non-negative integers, $n,m\in\NN$ such that $n+m=N$, and  a relatively
open neighborhood in $P,$ which is homeomorphic to a relatively open
neighborhood $V,$ of the origin in $\bar S_{n,m}.$ We can assume that $p$ maps
to $(0,0),$ where we let
$$
S_{n,m}:=\RR^n_+\times\RR^m\quad\hbox{ and }\quad \RR_+:=(0,\infty).
$$
The point $p$ belongs to a boundary stratum of co-dimension $n.$ The
  $l$-simplex in $\RR^l:$ \
$$\{x\in\RR^l:\: x_1+\cdots +x_l\leq 1\text{ and }0\leq
x_j\text{ for }1\leq j\leq l\}$$ is a typical manifold with corners. It has
boundary strata of co-dimensions 1 through $l.$

When written in an adapted system of local coordinates on $\bar S_{n,m}$, a generalized Kimura operators take the normalized form:
\begin{equation}
\label{eq:Operator}
\begin{aligned}
Lu &= \sum_{i=1}^n\left(x_i u_{x_ix_i} 
+b_i(z) u_{x_i}\right) +\sum_{i,j=1}^n x_ix_ja_{ij}(z)u_{x_ix_j}
+\sum_{l,k=1}^md_{lk}(z)u_{y_ly_k}
\\
&\quad
+\sum_{i=1}^n\sum_{l=1}^mx_ic_{il}(z)u_{x_iy_l} 
+ \sum_{l=1}^me_l(z)u_{y_l},
\end{aligned}
\end{equation}
where we denote $z=(x,y)\in\bar S_{n,m}$. In our article we study the transition probabilities of the Markov process with infinitesimal generator given by the generalized Kimura operator defined on a compact manifold with corners. We provide a precise description of the structure of the transition probabilities of Kimura diffusions,  the probabilistic distributions of the paths of the process before absorption on one of the boundary hypersurfaces of the manifold, and  the hitting distributions of suitable portions of the boundary of the manifold. These are equivalent to the study of the fundamental solution of the parabolic equation defined by generalized Kimura operator,  the Dirichlet heat kernel, and  the caloric measure, respectively. 

In order to prove the existence and uniqueness of a (strong) Markov process on $P$ with infinitesimal generator given by a generalized Kimura operator, we introduce the \emph{martingale problem} associated to this operator. The solution to the martingale problem is a probability measure on the space of continuous functions $C([0,\infty);P)$ endowed with the uniform convergence on compact sets in $[0,\infty)$. We let $\cB:=\cB(C([0,\infty);P))$ and $\cB_t:=\cB(C([0,t];P))$, for all $t\geq 0$, be the corresponding Borel $\sigma$-algebras. As usual, we call $\{\cB_t\}_{t\geq 0}$ the canonical filtration.

\begin{defn}[Martingale problem]
\label{defn:Martingale_problem}
Let $p\in P$. A probability measure $\QQ^p$ on the measurable space $(C([0,\infty);P), \cB)$ is a solution to the martingale problem associated to a generalized Kimura operator if
\begin{equation}
\label{eq:Mart_problem_initial_cond}
\QQ^p(\omega(0)=p) = 1,
\end{equation}
and, for all $\varphi\in C^{\infty}([0,\infty)\times P)$, we have that
\begin{equation}
\label{eq:Mart_problem_martingales}
M^{\varphi}_t:=\varphi(t,\omega(t))-\varphi(0,\omega(0))-\int_0^t \left(\varphi_s(s,\omega(s))+L\varphi(s,\omega(s))\right)\, ds,\quad\forall\, t\geq 0,
\end{equation}
is a martingale with respect to the filtration $\{\cF_t\}_{t\geq 0}$, where $\cF_t:=\cap_{s>t} \cG_s$, and $\cG_t$ is the augmentation under $\QQ^p$ of the canonical filtration $\{\cB_t\}_{t\geq 0}$.
\end{defn}

For a point $z=(x,y)\in \bar S_{n,m}$ and $r>0$, we let
\begin{equation}
\label{eq:Ball_infty}
B^{\infty}_r(z) := \{z'=(x',y')\in S_{n,m}:|x'_i-x_i|<r, \quad\forall\, 1\leq i\leq n,\quad |y'_l-y_l|<r,\quad\forall\,1\leq l\leq m\}.
\end{equation}
When $z=0\in \bar S_{n,m}$, we write for brevity $B^{\infty}_r$ instead of
$B^{\infty}_r(0)$. To prove the well-posedness of the martingale problem
associated to generalized Kimura operators, we make the following assumptions
about the coefficients of the operator when written in a local system of
coordinates.  It is sufficient to state the conditions on the operator $L$
defined in \eqref{eq:Operator} in a neighborhood of the origin, say on
$B^{\infty}_1$:

\begin{assump}
\label{assump:Coeff}
The coefficients of the operator $L$ defined in \eqref{eq:Operator} satisfy:
\begin{enumerate}
\item[1.] The functions $a_{ij}(z)$, $b_i(z)$, $c_{il}(z)$, $d_{lk}(z)$, and $e_l(z)$ are smooth and bounded functions on $\bar B^{\infty}_1$, for all $1\leq i, j\leq n$ and $1\leq l, k \leq m$.
\item[2.] The drift coefficients $b_i(z)$ satisfy the \emph{non-negativity condition}: for all $1\leq i\leq n$ we have that that
\begin{equation}
\label{eq:Nonnegative}
b_i(z) \geq 0,\quad\forall\, z\in \bar B^{\infty}_1.
\end{equation}
\item[3.] The \emph{strict ellipticity} condition holds: there is a positive constant, $\Lambda$, such that for all $z \in \barB^{\infty}_1$, $\xi\in\RR^n$, and $\eta\in\RR^m$, we have 
\begin{equation}
\label{eq:Uniform_ellipticity}
\begin{aligned}
\sum_{i=1}^n \xi_i^2
+ \sum_{i,j=1}^n a_{ij}(z)\xi_i\xi_j
+\sum_{i=1}^n \sum_{l=1}^m c_{il}(z)\xi_i\eta_l 
+\sum_{l,k=1}^m d_{lk}(z)\eta_k\eta_l
\geq \Lambda\left(|\xi|^2+|\eta|^2\right).
\end{aligned}
\end{equation}
\end{enumerate}
\end{assump}
These are essentially the minimal assumptions needed to define a generalized
Kimura operator, and are the assumptions used
in~\cite{Epstein_Mazzeo_annmathstudies}. A slightly larger class can be
considered by allowing cross terms of the forms
$\sqrt{x_ix_j}a_{ij}\pa_{x_i}\pa_{x_j},$ and
$\sqrt{x_i}c_{il}\pa_{x_i}\pa_{y_l},$ along with smallness hypotheses on the
coefficients $a_{ij}$ and $c_{il}$ along the appropriate boundary
components. We leave these generalizations to the interested reader.

The main difficulties in studying questions related to generalized
Kimura operators are due to the fact that the operator $L$ in
\eqref{eq:Operator} is not strictly elliptic up to the boundary of
$S_{n,m}$. From \eqref{eq:Uniform_ellipticity}, we see that the coefficients of
the second order derivatives $\partial^2_{x_i}$ are linearly proportional to
the distance to the boundary, and so the terms $x_i\partial^2_{x_i}$ and
$b_i(z)\partial_{x_i}$ scale in the same way. To see this we consider the following rescaling. For $\lambda\in (0,1)$,  let 
\begin{equation}
\label{eq:Scaling}
x=\lambda x',\quad y=\sqrt{\lambda} y',\quad\hbox{ and }\quad u(x,y) = :v(x',y'),
\end{equation}
and  notice that when $u(z)$ is a solution to $Lu(z) = 0$, then the rescaled function $v(z')$ satisfies the equation $L' v(z') = 0$,  where the operator $L'$ is given by
\begin{align*}
L'v(z') 
&=
\sum_{i=1}^n\left(x'_i v_{x'_ix'_i} + b_i(\lambda x', \sqrt{\lambda} y') v_{x'_i}\right) 
+\sum_{i,j=1}^n \lambda x'_ix'_ja_{ij}(\lambda x', \sqrt{\lambda} y')v_{x'_ix'_j}
\\
&\quad
+\sum_{l,k=1}^md_{lk}(\lambda x', \sqrt{\lambda}y') v_{y'_ly'_k}
+\sum_{i=1}^n\sum_{l=1}^m \sqrt{\lambda} x'_i c_{il}(\lambda x', \sqrt{\lambda} y')v_{x'_iy'_l} 
+ \sum_{l=1}^m \sqrt{\lambda} e_l(\lambda x', \sqrt{\lambda} y')v_{y'_l}.
\end{align*}
Thus the rescaling \eqref{eq:Scaling} gives us solutions to an equation defined by a new operator, $L'$, that also satisfies Assumption \ref{assump:Coeff}. This scaling property implies that, at the boundary, the first order transverse derivatives $b_i(z)\partial_{x_i}$ are not of lower order, as in the case of strictly elliptic operators, and they play an important role in the analysis of the transition probabilities. 

We can now state our first main result:

\begin{thm}[Well-posedness of the martingale problem]
\label{thm:Wellposed_mart_problem}
Suppose that the generalized Kimura operator defined on a compact manifold  with corners,$P,$ satisfies Assumption \ref{assump:Coeff} when written in a local system of coordinates. Then the martingale problem in Definition \ref{defn:Martingale_problem} is well-posed.
\end{thm}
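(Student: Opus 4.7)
The plan is to follow the Stroock--Varadhan programme for the martingale problem, using as a black box the analytic theory of generalized Kimura operators developed in~\cite{Epstein_Mazzeo_annmathstudies}. That theory constructs anisotropic Hölder spaces on the compact manifold with corners $P$, shows that the closure of $L$ on its intrinsic domain is a densely defined closed operator on $C^0(P)$, and produces a strongly continuous, positivity-preserving, constant-preserving (hence Feller) semigroup $\{P_t\}_{t\geq 0}$ associated to $L$. In particular, for every $\lambda > 0$ and every $f \in C^0(P)$ there is a unique $u = (\lambda - L)^{-1} f \in \Dom(L)$, and $C^\infty(P)$ is a core for $L$ in the $C^0$-graph norm. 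Both existence and uniqueness in the theorem will be deduced from these inputs.

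\textbf{Existence.} From the Feller semigroup $\{P_t\}$ I will construct, by the standard Kolmogorov extension procedure for Feller processes, a probability measure $\QQ^p$ on the path space $C([0,\infty); P)$ starting at $p$ and having transition semigroup $P_t$; continuity of paths is automatic because $L$ is a purely second-order/first-order differential operator with no jump symbol. To verify \eqref{eq:Mart_problem_martingales}, I fix $\varphi \in C^\infty([0,\infty) \times P)$, so that $\varphi(t, \cdot) \in \Dom(L)$ uniformly in $t$, and apply Dynkin's formula to obtain $\EE^{\QQ^p}[M_t^\varphi - M_s^\varphi \mid \cB_s] = 0$ for $0 \leq s \leq t$. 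The martingale property with respect to the right-continuous augmentation $\{\cF_t\}$ then follows by standard completion arguments.

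\textbf{Uniqueness.} Let $\QQ^p$ be any solution of the martingale problem. For $\lambda > 0$ and $f \in C^\infty(P)$, set $u := (\lambda - L)^{-1} f$ and pick $u_k \in C^\infty(P)$ with $u_k \to u$ and $Lu_k \to \lambda u - f$ uniformly on $P$. Applying \eqref{eq:Mart_problem_martingales} to the test function $\varphi_k(t, z) := e^{-\lambda t} u_k(z)$, taking $\QQ^p$-expectation, letting $t \to \infty$, and then letting $k \to \infty$, I obtain
\begin{equation*}
u(p) \;=\; \EE^{\QQ^p}\!\left[\int_0^\infty e^{-\lambda s} f(\omega(s))\, ds\right].
\end{equation*}
The right-hand side is thus independent of the particular solution chosen, so the Laplace transform of the one-dimensional marginal $\QQ^p \circ \omega(t)^{-1}$ is uniquely determined on a dense set of $f$ and hence everywhere; this forces the one-dimensional marginal itself to be unique. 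An induction on the number of time-points, conditioning the martingale identity on $\cF_s$ at each step and reducing to the one-dimensional case for the shifted process, promotes this to uniqueness of all finite-dimensional distributions, and therefore of $\QQ^p$ on $\cB$.

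\textbf{Main obstacle.} The conceptually hard step, and the place where the bulk of the analytic work sits, is the existence and regularity of the resolvent $(\lambda - L)^{-1}$ on $C^0(P)$ up to the corners of $P$, together with the density of $C^\infty(P)$ in $\Dom(L)$ for the graph norm. Because $L$ is degenerate transverse to each boundary hypersurface $\{x_i = 0\}$ and the drift coefficients $b_i$ are only assumed non-negative, no classical elliptic boundary condition is imposed; the correct domain is encoded in anisotropic Hölder regularity near each boundary stratum, and transverse smoothness of solutions is genuinely limited. The natural strategy is to localize to the model patches $B^{\infty}_1 \subset \bar S_{n,m}$, exploit the scaling \eqref{eq:Scaling} (which preserves Assumption~\ref{assump:Coeff}) together with the Epstein--Mazzeo Schauder estimates to control solutions in each local model, and then patch resolvents via a partition of unity. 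Once these analytic inputs are in hand, the probabilistic portion of the argument above is routine.
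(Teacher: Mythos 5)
Your proposal is correct in outline, but it follows a genuinely different route from the paper's. For existence, the paper does not invoke the global Feller semigroup from~\cite{Epstein_Mazzeo_annmathstudies}; instead it covers $P$ by coordinate charts, extends the coefficients to all of $\bar S_{n,m}$ so that the results of~\cite{Pop_2013a} on the model half-space apply, constructs stopped local solutions that are coordinate-invariant, and then concatenates them inductively via the Stroock--Varadhan gluing theorem~\cite[Theorem 6.1.2]{Stroock_Varadhan}, with a separate claim that the accumulated stopping times tend to infinity. You instead take as input the $C^0$-Feller semigroup $\{P_t\}$ produced by the Schauder theory on $P$, build the process by Kolmogorov extension, and verify the martingale identity by Dynkin's formula. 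Both are sound; yours is shorter on the probabilistic side but leans more heavily on the analytic package from~\cite{Epstein_Mazzeo_annmathstudies}, while the paper's is more self-contained probabilistically (it uses~\cite{Epstein_Mazzeo_annmathstudies} only for the uniqueness half). One small point you wave at too quickly: path continuity of the Feller process is not ``automatic'' from the absence of a jump symbol --- it follows from the local property of the generator by a standard but nontrivial argument (Dynkin/Blumenthal--Getoor or Ethier--Kurtz), so it deserves a citation rather than the word ``automatic.''

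For uniqueness the two arguments are dual to one another: the paper plugs $\varphi(s,\cdot)=u(T-s,\cdot)$, where $u$ solves the parabolic Cauchy problem from~\cite[Theorem 10.0.2]{Epstein_Mazzeo_annmathstudies}, directly into the martingale identity to equate marginals and then cites~\cite[Proposition 5.4.27]{KaratzasShreve1998}; you instead plug $\varphi_k(t,\cdot)=e^{-\lambda t}u_k(\cdot)$ with $u_k$ approximating the resolvent $(\lambda-L)^{-1}f$ in the graph norm, send $t\to\infty$, and deduce uniqueness of the Laplace transform of the one-dimensional distributions before lifting to the full law by the usual conditioning-and-induction argument. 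Your version requires the additional (true but nontrivial) input that $C^\infty(P)$ is a graph-norm core for $L$ on $C^0(P)$; the paper sidesteps this by working directly with classical solutions of the parabolic equation. Finally, note that the theorem only assumes Assumption~\ref{assump:Coeff} (not cleanness), so whichever analytic facts you import from~\cite{Epstein_Mazzeo_annmathstudies} must be those that hold without the cleanness hypothesis --- this is the case for the basic $C^0$ semigroup and for Theorem 10.0.2, but it is worth flagging explicitly if you write the argument out.
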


We call a \emph{(generalized) Kimura process} starting from a point $p\in P$ the canonical process, $\{\omega(t)\}_{t\geq 0}$ endowed with the probability distribution $\QQ^p$, where $\QQ^p$ is the unique solution to the martingale problem introduced in Definition \ref{defn:Martingale_problem}. As a consequence of the well-posedness of the martingale problem, we have:

\begin{cor}[The strong Markov property]
\label{cor:Strong_Markov}
Suppose that the generalized Kimura operator defined on a compact manifold $P$
with corners satisfies Assumption \ref{assump:Coeff} when written in an adapted
system of local coordinates. For $p\in P$, let $\QQ^p$ be the unique solution to the martingale problem in Definition \ref{defn:Martingale_problem} given by Theorem \ref{thm:Wellposed_mart_problem}. Then the family of probability measures $\{\QQ^p:\, p\in P\}$ defined on the filtered space 
$(C([0,\infty);P),\cB, \{\cB_t\}_{t\geq 0})$ satisfies the strong Markov property. That is, the following hold:
\begin{enumerate}
\item[(i)] For all $p\in P$, we have that $\QQ^p(\omega(0)=p)=1$.
\item[(ii)] For all sets, $A\in \cB$, the map $P \ni p\mapsto\QQ^p(A)$ is measurable.
\item[(iii)] For all stopping times $\tau$ relative to the filtration $\{\cB_t\}_{t\geq 0}$, $A\in \cB$, and $p\in P$, we have that
\begin{equation}
\label{eq:Strong_Markov}
\QQ^p(\theta_{\tau}^{-1} A|\cB_{\tau}) = \QQ^{\omega(\tau(\omega))}(A),\quad\, \QQ^p\hbox{-a.s.,}\quad\hbox{ on } \tau<\infty,
\end{equation}
where the shift operator is defined by $(\theta_s\omega)(t):=\omega(t+s)$, for all $t \geq 0$ and for all $\omega\in C([0,\infty)\times P)$.
\end{enumerate}
\end{cor}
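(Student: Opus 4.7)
The plan is to deduce the strong Markov property from the well-posedness established in Theorem \ref{thm:Wellposed_mart_problem} by the standard Stroock--Varadhan argument (see their monograph on multidimensional diffusions), adapted to the compact manifold with corners setting.

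First, property (i) is immediate from \eqref{eq:Mart_problem_initial_cond}. For property (ii), I would argue as follows. Fix a sequence $p_n\to p$ in $P$. Because $P$ is compact, the family $\{\QQ^{p_n}\}$ is automatically tight on $C([0,\infty);P)$ once one checks a modulus-of-continuity estimate; such an estimate follows from applying \eqref{eq:Mart_problem_martingales} to a fixed finite collection of test functions $\varphi\in C^\infty([0,\infty)\times P)$ that separate points of $P$, and then using Doob's maximal inequality for the corresponding martingales $M^\varphi$. Any weak sub-sequential limit $\QQ^*$ satisfies $\QQ^*(\omega(0)=p)=1$ and, by passing to the limit in \eqref{eq:Mart_problem_martingales} against bounded continuous test functionals on $\cB_s$, solves the martingale problem at $p$. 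By uniqueness in Theorem \ref{thm:Wellposed_mart_problem}, $\QQ^*=\QQ^p$, so $\QQ^{p_n}\to\QQ^p$ weakly. Hence $p\mapsto \QQ^p(A)$ is continuous, and therefore Borel measurable, for $A\in\cB$ of the form $\{\omega:(\omega(t_1),\dots,\omega(t_k))\in G\}$ with $G$ open; a monotone class argument extends this to all of $\cB$.

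For property (iii), I would use regular conditional probabilities. Since $C([0,\infty);P)$ is Polish, there exists a regular conditional probability $\omega\mapsto \QQ^p_{\tau,\omega}(\cdot)$ for $\QQ^p$ given $\cB_\tau$. Let $\wt{\QQ}_\omega$ be the push-forward of $\QQ^p_{\tau,\omega}$ under the shift $\theta_{\tau(\omega)}$. The key claim is that for $\QQ^p$-a.e.\ $\omega$ with $\tau(\omega)<\infty$, the measure $\wt{\QQ}_\omega$ solves the martingale problem starting from $\omega(\tau(\omega))$. To verify this, one uses the optional stopping theorem: for any $\varphi\in C^\infty([0,\infty)\times P)$ and $0\le s<t$, and any bounded $\cB_s$-measurable $H$, the process
\begin{equation*}
M^\varphi_{\tau+t}-M^\varphi_{\tau+s}=\varphi(\tau+t,\omega(\tau+t))-\varphi(\tau+s,\omega(\tau+s))-\int_{\tau+s}^{\tau+t}\!\!(\varphi_r+L\varphi)(r,\omega(r))\,dr
\end{equation*}
has $\QQ^p$-conditional expectation zero given $\cB_\tau$, so after translating the test function $\varphi(\cdot+\tau(\omega),\cdot)$ (which is still smooth on $[0,\infty)\times P$), the shifted process $\theta_\tau\omega$ satisfies the martingale requirement \eqref{eq:Mart_problem_martingales} under $\wt{\QQ}_\omega$. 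The initial condition $\wt{\QQ}_\omega(\omega'(0)=\omega(\tau(\omega)))=1$ is automatic from the definition of $\theta_\tau$. By the uniqueness half of Theorem \ref{thm:Wellposed_mart_problem}, $\wt{\QQ}_\omega=\QQ^{\omega(\tau(\omega))}$, which is exactly \eqref{eq:Strong_Markov}.

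The main obstacle is the verification that $\wt{\QQ}_\omega$ solves the martingale problem: one must exercise care in handling the time-dependence of test functions under the shift (so that the resulting functional identity, integrated against an arbitrary $\cB_\tau$-measurable event, still matches the form \eqref{eq:Mart_problem_martingales}), and in guaranteeing that the exceptional null set can be chosen independently of the countable separating family of test functions $\varphi$ used to characterize martingale solutions. Both points are handled by the usual countable-dense argument, exploiting the separability of $C^\infty([0,\infty)\times P)$ in the $C^2$-topology on compacts together with the fact that the martingale property in \eqref{eq:Mart_problem_martingales} is preserved under uniform $C^2$ approximation.
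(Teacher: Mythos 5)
Your proposal is correct and follows essentially the same route as the paper, which simply invokes the standard argument from \cite[Theorem 5.4.20]{KaratzasShreve1998} together with the well-posedness established in Theorem \ref{thm:Wellposed_mart_problem} (i.e.\ regular conditional probabilities plus uniqueness for (iii), and a continuity/monotone-class argument for measurability in (ii)). One small imprecision: weak convergence $\QQ^{p_n}\Rightarrow\QQ^p$ gives lower semicontinuity of $p\mapsto\QQ^p(A)$ for open finite-dimensional cylinder sets rather than continuity, but that already suffices for Borel measurability before the monotone-class extension.
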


We can now describe our results on the transition probability distributions of
Kimura diffusions and several related questions. The boundary of $P$ plays an
central role in this analysis. For this purpose, we review from \cite[\S
2.1]{Epstein_Mazzeo_annmathstudies} the definitions and introduce notation
related to the stratification of the boundary of $P$, which we use throughout
the article. Fixing $1\leq n\leq N$, the closure of the set of boundary points
on $\partial P$ that have a neighborhood homeomorphic to a neighborhood of the
origin in $\bar S_{n,m}$ can be written as the union of connected compact
$m$-dimensional manifolds with corners, where $m=N-n$. We denote
these components (strata) of the boundary by $P^m_1,\ldots,P^m_{\eta_m}$, and
call them corners of co-dimension $n$. Corners of co-dimension $1$ are
called the boundary hypersurfaces, or faces of the manifold $P$. When we refer to
the boundary hypersurfaces of the manifold $P,$ in a context where the lower
dimensional boundary components of the manifold are not important, we use the
simpler notation $H_1,\ldots,H_{\eta}$ to refer to them, instead of
$P^{N-1}_1,\ldots,P^{N-1}_{\eta_{N-1}}$.

Recall from \cite[\S 2.2]{Epstein_Mazzeo_annmathstudies} that the principal symbol of the generalized Kimura operator induces a Riemannian metric on the manifold $P$. 
For all $p\in P$ and $r>0$, we denote by $B_r(p)$ the relatively open ball
centered at $p$ and of radius $r$ computed with respect to the Riemannian
distance on $P$. The ball $B_r(p)\subseteq P$ should not be confused with the
sup-norm Euclidean balls $B^{\infty}_r(z) \subseteq S_{n,m}$ defined in \eqref{eq:Ball_infty}.
Letting $H_i$ be a boundary hypersurface and defining $\rho_i(p)$ to be the Riemannian distance from the point $p\in P$ to $H_i$, we recall from \cite[Proposition 2.1]{Epstein_Mazzeo_2016} that
\begin{equation}
\label{eq:Weight_b_i}
B_i\restrictedto_{H_i} := \frac{1}{4}L\rho^2_i\restrictedto_{H_i},\quad\forall\, 1\leq i\leq \eta,
\end{equation}
are coordinate-invariant quantities, which we call the \emph{weights} of the
generalized Kimura operator, \cite[Definition
2.1]{Epstein_Mazzeo_2016}. In population genetics the space $P$ is typically
  an $N$-simplex, $\Sigma_N;$ a point $(x_1,\dots,x_N)\in\Sigma_N$ gives the
  frequencies of individuals of different types in the population. The
  interiors of the faces of the simplex correspond to situations where a single
  type has gone extinct. If type $i$ has disappeared and the frequencies of the
  remaining types are $(y_1,\dots,y_{N-1}),$ then the weight
  $B_i(y_1,\dots,y_{N-1})$ is the instantaneous rate at which type $i$ 
  reappears in the population.

We also let $B_i$ denote a smooth extension from $H_i$ to $P$ of the
coefficients defined in \eqref{eq:Weight_b_i}. When we write the
  Kimura operator $L$ in an adapted system of local coordinates, it is natural
  to choose these extended weights $B_i$ to coincide with the coefficients
  $b_i$ of the first order derivatives $\partial_{x_i}$ in
  \eqref{eq:Operator}. We adopt this convention throughout our paper, except
  for the case when a weight $B_i\restrictedto_{H_i}$ is a constant, $C$, then
  the extension to $P$ is also taken to be the constant $C$\footnote{This
    assumption is not necessarily adopted in \cite{Epstein_Mazzeo_2016}.}. By
a slight abuse of terminology, we sometimes call the smooth extensions of the
quantities defined in \eqref{eq:Weight_b_i} weights of the generalized Kimura
operator. 

The next results in our article are derived under the following \emph{cleanness} condition:
\begin{assump}[The cleanness condition]
\cite[Definition 3.2.3]{Epstein_Mazzeo_annmathstudies}
\label{assump:Cleanness}
We say that the weights $\{B_i:1\leq i\leq \eta\}$ of the generalized Kimura
operator satisfy the \emph{cleanness condition} if there is a positive
constant, $\beta_0$, such that for all $1\leq i\leq \eta$ we have that either 
\begin{equation}
\label{eq:Cleanness}
B_i \equiv 0\quad\hbox{ on } H_i,
\quad\hbox{or }\quad
B_i \geq \beta_0 >0\quad\hbox{ on } H_i.
\end{equation}
\end{assump}

\noindent
In most of the results of this paper we make the following
  assumptions about the Kimura operator under study.
\begin{defn}[{\bf Standard assumptions}]
\label{defn:Standard_assump}
A generalized Kimura operator that satisfies Assumption
\ref{assump:Cleanness} and, when written in an adapted system of
coordinates, it takes the form of the operator $L$ in
\eqref{eq:Operator} that satisfies Assumption \ref{assump:Coeff} will
be said to satisfy the \emph{standard assumptions}.
\end{defn}

When the cleanness condition holds, we also say that the generalized Kimura
operator meets the boundary of the manifold $P$ cleanly. If the weight $B_i$
corresponding to the boundary hypersurface $H_i$ is $0$, then we say that the
operator is \emph{tangent} to $H_i$ or that $H_i$ is a tangent boundary
component. Otherwise, if the weight $B_i$ corresponding to the boundary
hypersurface $H_i$ is positive, then we say that the operator is
\emph{transverse} to $H_i$ or that $H_i$ is a transverse boundary
component.  The cleanness hypothesis implies a certain uniformity
  in the ``ellipticity'' of the operator $L$ along the faces of $\pa P,$ which
  holds up to the boundaries of the faces. Thus far, much less is known about
  the refined analytic properties of the heat kernel in the case where this
  hypothesis does not hold. It can be expected to have considerably more
  complicated singularities in the incoming variables, along the loci where
  cleanness fails. Even the structure of the null-space of $L$ is far from
  clear without this hypothesis, and dually the existence and uniqueness of a
  stationary measure has not been studied. 

We also introduce the notation:
\begin{align}
\label{eq:I_tangent}
I^T &:= \{i:\, 1\leq i\leq \eta,\quad B_i\restrictedto_{H_i} = 0\},\\
\label{eq:I_transverse}
I^{\pitchfork} &:= \{i:\, 1\leq i\leq \eta,\quad B_i\restrictedto_{H_i} > 0\},\\
\label{eq:Tangent_boundary}
\partial^T P &:= \cup_{i\in I^T} H_i,\\
\label{eq:Transverse_boundary}
\partial^{\pitchfork} P &:= \cup_{i\in I^{\pitchfork}} H_i.
\end{align}
As in \cite[\S 2]{Epstein_Mazzeo_2016}, we define a measure on the manifold $P$. Let $dV_P$ be a smooth positive density on $P$. Then we define the weighted measure $d\mu_P$ by
\begin{equation}
\label{eq:Weight}
d\mu_P(p) := \prod_{i=1}^{\eta} \rho_i(p)^{B_i(p)-1}dV_P.
\end{equation}
We often use $d\mu$ to denote $d\mu_P,$ when no confusion will arise.

In an adapted system of local coordinates, the weight measure $d\mu_P$ is a multiple by a smooth function of
$$
d\mu(z) = \prod_{i=1}^n x_i^{b_i(z)-1}dx_i\,\prod_{l=1}^m dy_l.
$$
The motivation to introduce the previous weight measure is most easily understood in the one-dimensional case when the operator $Lu=xu_{xx}+bu_x$ can be written in divergence form (or as a Sturm-Liouville operator) as
$$
Lu = \frac{1}{m(x)}\frac{d}{dx}\left(\frac{u(x)}{s(x)}\right),
$$
where $m(x)=x^{b-1}$ and $s(x)=x^{-b}$. In the theory of one-dimensional
diffusions, \cite[Chapter 16]{Breiman_book}, $m(x)$ and $s(x)$ are called the
speed measure and the scale function, respectively, and they completely
characterize the boundary behavior of the corresponding diffusion. The weight
measure $d\mu_P$ is our generalization to the multi-dimensional case of the
speed measure $m(x)\,dx$ in the one-dimensional case.  If all
weights are positive, then it is shown in~\cite{Epstein_Mazzeo_2016} that the
stationary measure for the process generated by $L$ is a bounded function times
$d\mu_P.$ 

The weighted $L^2$-space, $L^2(P;d\mu),$ consists of measurable functions, $u:P\rightarrow\RR$, with the property that the norm
\begin{equation}
\label{eq:L_2_norm}
\|u\|^2_{L^2(P;d\mu)} := \int_P |u|^2\, d\mu_P<\infty.
\end{equation}

We prove in \S \ref{subsec:Tangent_boundary} that, after hitting a tangent boundary component of the manifold $P$, the Kimura diffusion is absorbed. We have the following result concerning the marginal distributions of the paths of the Kimura diffusion that are not absorbed on the tangent boundary components $\partial^T P$ of the compact manifold $P$ with corners. 

\begin{thm}[Dirichlet heat kernel]
\label{thm:Distribution_absorbed}
Suppose that the generalized Kimura operator satisfies the standard
assumptions. For all $t>0$ and $p\in P\backslash \partial^T P$, there
is a non-negative measurable function, $k(t,p,\cdot) \in L^2(P;d\mu)
\cap L^1(P;d\mu)$, such that for all Borel measurable sets,
$B\subseteq P\backslash\partial^T P$, we have that
\begin{equation}
\label{eq:Density_Dirichlet_heat_kernel}
\QQ^p(\omega(t) \in B, t < \tau_{\partial^T P}) = \int_B k(t,p,p')\, d\mu_P(p'),
\end{equation}
where we let $\tau_{\partial^T P}$ be defined by
\begin{equation}
\label{eq:tau_tangent}
\tau_{\partial^T P} := \inf\{t \geq 0: \, \omega(t) \in\partial^T P\},
\end{equation}
and we let $\QQ^p$ be the unique solution to the martingale problem in Definition \ref{defn:Martingale_problem}. 
\end{thm}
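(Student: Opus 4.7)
The plan is to realize the sub-probability measure on the left-hand side of \eqref{eq:Density_Dirichlet_heat_kernel} as the action of a semigroup possessing an integral kernel with respect to the weighted measure $d\mu_P$. The crucial structural fact to exploit is the symmetry of $L$ on $L^2(P;d\mu_P)$: in an adapted system of local coordinates, integration by parts against $\prod_i x_i^{b_i(z)-1}\prod_l dy_l$ exactly absorbs the first-order drift $b_i \partial_{x_i}$ into the divergence form, producing a symmetric Dirichlet form. This is precisely the reason the weighted measure $d\mu_P$ was introduced in \cite{Epstein_Mazzeo_2016}.

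The first step is to identify the semigroup
\[
T_t f(p) := \EE^{\QQ^p}\!\left[f(\omega(t))\mathbf{1}_{t<\tau_{\partial^T P}}\right]
\]
with the semigroup $e^{t\bar L}$, where $\bar L$ denotes the Friedrichs extension of $L$ on $L^2(P;d\mu_P)$ supplemented by Dirichlet conditions on $\partial^T P$ and the natural (transverse) conditions on $\partial^{\pitchfork}P$ dictated by the weights. Given a sufficiently regular solution $v(s,q)$ of the initial-boundary value problem $\partial_s v = Lv$, $v(0,\cdot)=f$, $v=0$ on $\partial^T P$, I would apply the martingale property of $M^{\varphi}$ from Definition \ref{defn:Martingale_problem} with $\varphi(s,q):=v(t-s,q)$ and use optional sampling at $t\wedge \tau_{\partial^T P}$. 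This requires regularity of $v$ up to both $\partial^T P$ and $\partial^{\pitchfork}P$, which is provided by the degenerate parabolic theory of \cite{Epstein_Mazzeo_annmathstudies}.

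The second step is the construction of the kernel. Under the standard assumptions, the same degenerate elliptic theory shows that $\bar L$ has compact resolvent on $L^2(P;d\mu_P)$, hence discrete spectrum $\{-\lambda_k\}_{k\geq 0}$ accumulating only at $+\infty$ with an orthonormal basis of eigenfunctions $\{\phi_k\}$. The candidate kernel is
\[
k(t,p,p') := \sum_{k\geq 0} e^{-\lambda_k t}\phi_k(p)\phi_k(p').
\]
To see that $k(t,p,\cdot)\in L^2(P;d\mu_P)$ for fixed $p\notin \partial^T P$ and $t>0$, one uses the semigroup identity $k(t,p,\cdot)=e^{(t/2)\bar L}k(t/2,p,\cdot)$, which reduces the claim to $\sum_k e^{-2\lambda_k t}\phi_k(p)^2<\infty$; this in turn follows from a Weyl-type eigenvalue asymptotic together with local boundedness of eigenfunctions away from $\partial^T P$, both furnished by the interior and transverse-boundary regularity estimates of \cite{Epstein_Mazzeo_annmathstudies}.

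To finish, nonnegativity of $k$ follows from the positivity-preservation of $T_t$ (maximum principle), and $L^1(P;d\mu_P)$-integrability from the sub-Markov bound
\[
\int_P k(t,p,p')\,d\mu_P(p') = T_t\mathbf{1}(p) \leq 1,
\]
verified by approximating $\mathbf{1}$ monotonically from below by a sequence in $C_c^{\infty}(P\setminus\partial^T P)$ and appealing to the semigroup identification on this dense class; \eqref{eq:Density_Dirichlet_heat_kernel} then follows by a monotone class argument. The main obstacle is the joint regularity and kernel estimates needed at two different types of degeneracy — the tangent faces, where $d\mu_P$ is non-integrable and the process is absorbed, and the transverse faces, where the process is reflected into a face of lower dimension and the weights enforce the symmetry of $L$. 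Controlling the semigroup uniformly up to both kinds of boundaries is precisely where the full strength of the degenerate elliptic theory is required.
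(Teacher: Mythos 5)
Your first step—identifying $T_tf(p)=\EE^{\QQ^p}[f(\omega(t))\mathbf{1}_{\{t<\tau_{\partial^T P}\}}]$ with the semigroup generated by $L$ with Dirichlet conditions on $\partial^T P$, via the martingale property applied to $v(t-s,\cdot)$ and the tangency of $L$ to $\partial^T P$—is essentially what the paper does in Proposition~\ref{prop:Dirichlet_heat_kernel_existence}, and that part is sound.

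The genuine gap is in your second step. You assert that $L$ is symmetric on $L^2(P;d\mu_P)$ because integrating by parts against $\prod_i x_i^{b_i(z)-1}\prod_l dy_l$ ``exactly absorbs the first-order drift,'' and you build the kernel as a spectral expansion $k(t,p,p')=\sum_k e^{-\lambda_k t}\phi_k(p)\phi_k(p')$ in an orthonormal eigenbasis of a self-adjoint Friedrichs extension. This is false for the generalized Kimura operator under the standing hypotheses. The cancellation you describe only works for the diagonal model terms $x_i\partial_{x_i}^2+b_i\partial_{x_i}$; after integrating by parts the full operator \eqref{eq:Operator}—with off-diagonal coefficients $x_ix_ja_{ij}$, mixed terms $x_ic_{il}$, variable $d_{lk}$, and $z$-dependent weights $b_i(z)$—one does not obtain a symmetric form. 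The paper records this explicitly: the Dirichlet form decomposes as $Q(u,v)=Q_{\hbox{\tiny{sym}}}(u,v)+(Vu,v)_{L^2(P;d\mu)}$ with a nontrivial residual vector field $V$ given by \eqref{eq:V}, which moreover has logarithmic singularities $\ln x_k\cdot\partial b_k$ whenever the weights are non-constant. Consequently there is no reason for $\bar L$ to have a real spectrum or an orthonormal eigenbasis, and the formula $k(t,p,p')=\sum e^{-\lambda_k t}\phi_k(p)\phi_k(p')$ has no meaning; even a biorthogonal replacement would require controlling a generically non-normal operator, which your outline does not address.

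The paper avoids this entirely by working with the non-symmetric bilinear form in the Lions--Magenes variational framework: Lemma~\ref{lem:Properties_bilinear_form} shows $Q$ is continuous and satisfies the G\r{a}rding inequality on $H^1(P;d\mu)$ (this is where the cleanness condition is needed, to control the log-singular part of $V$), which yields a semigroup $\{T_t\}$ without any appeal to self-adjointness. The kernel is then produced not by eigenfunction expansion but by the Riesz representation theorem, after Lemma~\ref{lem:Holder_regularity_semigroup} establishes local H\"older bounds $\|T_tf\|_{C^\alpha_{\hbox{\tiny{WF}}}([t_1,t_2]\times\bar U)}\leq C\|f\|_{L^2(P;d\mu)}$ for $\bar U\Subset P\setminus\partial^T P$. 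If you want to salvage a spectral route you would have to either restrict to the special case where $V\equiv 0$ (e.g., constant weights and a compatible choice of $a_{ij},c_{il},d_{lk}$), or replace the orthonormal expansion by an argument that tolerates non-self-adjointness, such as the one the paper actually uses.

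Your sub-Markov bound $\int_P k(t,p,\cdot)\,d\mu_P\leq 1$ for the $L^1$ claim and the nonnegativity via positivity preservation are fine and parallel to what the paper obtains as consequences of \eqref{eq:Density_Dirichlet_heat_kernel}.
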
 

We call the integral kernel $k(\cdot,\cdot,\cdot)$ constructed in Theorem
\ref{thm:Distribution_absorbed} the \emph{Dirichlet heat kernel}.  In \S
\ref{sec:Dirichlet_heat_kernel}, we prove that $k(\cdot, \cdot, q)$ is a weak solution to the \emph{backward Kolmogorov equation} defined by the generalized Kimura operator, and
$k(\cdot,p,\cdot)$ is a weak solution to the \emph{forward Kolmogorov equation} or the \emph{Fokker-Planck equation} defined by the adjoint of the generalized Kimura operator. The adjoint operator is computed with respect to the weighted $L^2$-space
$L^2(P;d\mu)$ and, in general, it is not an operator of Kimura-type due to the
fact that the (possibly) non-constant weights of the generalized Kimura
operator can lead to logarithmic singularities in the coefficients of the first
order terms in the adjoint operator. Nevertheless, relying on results in
\cite{Epstein_Pop_2016}, we are able to prove suitable estimates for the Dirichlet heat
kernel that allow us to establish Theorem \ref{thm:Distribution_absorbed}. 

We next give a description of the hitting distributions of the Kimura diffusion of the tangent portion of the boundary 
$\partial^T P$ of the manifold $P$. For this purpose, we need to introduce
additional notation. Let $\Sigma$ be a boundary component to which the
generalized Kimura operator is tangent, i.e. there are indices
$i_1,\ldots,i_k\in I^T$ such that $\Sigma$ is a connected component of
\begin{equation}
\label{eq:Sigma}
\bigcap_{j=1}^k H_{i_j}.
\end{equation}
We have the following observation due to Sato.

\begin{rmk}[Restriction operator to tangent boundary components]
\label{rmk:Sato}
\cite{Sato_1978}, \cite[Lemma 2.4]{Shimakura_1981}
Let $\Sigma$ be a tangent boundary component. There is a well defined Kimura diffusion operator $L_{\Sigma}$ on
$\Sigma,$ so that for any smooth function $u$ on $\Sigma$, we have that
\begin{equation}
\label{eq:Restriction_operator}
  L_{\Sigma}u=(LU)\restrictedto_{\Sigma},
\end{equation}
where $U$ is any smooth extension of $u$ to a full neighborhood of $\Sigma$ in $P.$ The restriction operator $L_{\Sigma}$ also satisfies Assumptions \ref{assump:Cleanness} and \ref{assump:Coeff}, when written in a local system of coordinates. We denote by 
$d\mu_{\Sigma}$ the weighted measure defined by analogy to the measure $d\mu$ in \eqref{eq:Weight}, but where we replace the role of the operator $L$ by that of $L_{\Sigma}$ and the role of the manifold $P$ by that of $\Sigma$, i.e. we set
\begin{equation}
\label{eq:mu_Sigma}
d\mu_{\Sigma}(p) := \prod\limits_{\{j\notin \{i_1,\dots,i_k\}\}}
\rho_{j}(p)^{B_{j}(p)-1}\, dV_{\Sigma}(p).
\end{equation}
\end{rmk}

We can now state

\begin{thm}[Hitting distribution of the interior of tangent hypersurfaces]
\label{thm:Hitting_distribution_hypers}
Suppose that the generalized Kimura operator satisfies the standard assumptions.
Then, for all $i\in I^T$ and for all $p\in P\backslash\partial^T P$, there is a Borel measurable function,
\begin{equation}
\label{eq:Hitting_density_H_i}
h_i(\cdot,p,\cdot):(0,\infty)\times H_i \rightarrow [0,\infty),
\end{equation}
with the property that, for all Borel measurable sets, $B_i\subseteq\Int(H_i)$, we have that
\begin{equation}
\label{eq:Hitting_H_i}
\QQ^p\left(\tau_{\partial^T P}\in(s,s+ds], \omega(\tau_{\partial^T P})\in B_i \right)
 = \int_{B_i} h_i(s,p,q)\, d\mu_{H_i}(q)\, ds,
\end{equation}
where the stopping time $\tau_{\partial^T P}$ is defined in \eqref{eq:tau_tangent} and $\QQ^p$ is the unique solution to the martingale problem in Definition \ref{defn:Martingale_problem}. Moreover, we have that
\begin{equation}
\label{eq:h_i_reg}
h_i(\cdot,p,\cdot) \in C^{\infty}((0,\infty)\times (\Int(H_i)\cup (\partial H_i \cap \Int(\partial^T P)))),\quad\forall\, p\in P\backslash\partial^T P,
\end{equation}
where $\Int(\partial^T P)$ is taken with respect to $\partial P$.
\end{thm}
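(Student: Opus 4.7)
The plan is to construct $h_i$ as the integral kernel of the time-derivative of an auxiliary function $u_\psi$ solving a parabolic boundary-value problem for $L$, and then to derive the stated smoothness from the boundary regularity of the Dirichlet heat kernel $k$ developed in Section~\ref{sec:Dirichlet_heat_kernel}. Throughout, the strong Markov property and the absorption result of \S\ref{subsec:Tangent_boundary} will convert probabilistic quantities into PDE quantities, after which the regularity theory of generalized Kimura operators does the heavy lifting.

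First, for each $\psi \in C_c^\infty(\Int(H_i))$, define
\begin{equation*}
u_\psi(t, p) := \EE^p\bigl[\psi(\omega(\tau_{\partial^T P}))\, \mathbf{1}_{\{\tau_{\partial^T P} \leq t,\ \omega(\tau_{\partial^T P}) \in H_i\}}\bigr], \qquad t \geq 0,\ p \in P \setminus \partial^T P.
\end{equation*}
By the strong Markov property (Corollary~\ref{cor:Strong_Markov}) and the absorbing character of $\partial^T P$ proved in \S\ref{subsec:Tangent_boundary}, the function $u_\psi$ is a weak solution of the backward Kolmogorov problem with Dirichlet datum $\psi$ on $(0, \infty) \times H_i$, Dirichlet datum $0$ on $(0, \infty) \times (\partial^T P \setminus H_i)$, and vanishing initial condition at $t = 0$. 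Since $\psi \mapsto \partial_t u_\psi(t, p)$ is linear and continuous on $C_c^\infty(\Int(H_i))$, the Schwartz kernel theorem produces a distribution $h_i(t, p, \cdot)$ on $\Int(H_i)$ with
\begin{equation*}
\partial_t u_\psi(t, p) = \int_{H_i} h_i(t, p, q)\, \psi(q)\, d\mu_{H_i}(q).
\end{equation*}
Integrating in $t$ and extending by a monotone-class argument from $C_c^\infty$-functions to indicators of Borel subsets $B_i \subset \Int(H_i)$ yields~\eqref{eq:Hitting_H_i}. Non-negativity of $h_i$ is immediate since $u_\psi \geq 0$ whenever $\psi \geq 0$.

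To upgrade $h_i$ from a distribution to a smooth function and prove~\eqref{eq:h_i_reg}, I would identify it with a renormalized boundary trace of the Dirichlet heat kernel $k$ of Theorem~\ref{thm:Distribution_absorbed}. In adapted local coordinates near a point of $\Int(H_i)$ with $H_i = \{x_1 = 0\}$ and tangency condition $b_1(0, y) \equiv 0$, integration by parts of $\int_P k(s, p, p') L F(p') \, d\mu_P(p')$ against the weighted measure $d\mu_P = x_1^{b_1 - 1}\cdots\, dV_P$ produces a boundary contribution on $H_i$ which, once rewritten against $d\mu_{H_i}$, equals $\int_{H_i} h_i(s, p, q) F(q) \, d\mu_{H_i}(q)$. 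The polyhomogeneous expansion of $k$ in the incoming variables, developed in Section~\ref{sec:Dirichlet_heat_kernel} via the Kimura Schauder theory of \cite{Epstein_Mazzeo_annmathstudies, Epstein_Pop_2016}, then gives smoothness of $h_i$ in $(t, q) \in (0, \infty) \times \Int(H_i)$. Near a point $q \in \partial H_i \cap \Int(\partial^T P)$ where $H_i$ meets other tangent hypersurfaces $H_{i_2}, \ldots, H_{i_k}$ cleanly, an analogous analysis in a higher-codimension adapted chart applies: the cleanness hypothesis (Assumption~\ref{assump:Cleanness}) forces all relevant weights to vanish, and the product-type structure of the Kimura heat kernel near the corner extends smoothly, yielding~\eqref{eq:h_i_reg}.

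The principal technical difficulty is to justify the trace formula for $h_i$ and the propagation of heat-kernel smoothness across the tangent corners $\partial H_i \cap \Int(\partial^T P)$. The measure $d\mu_P$ is singular precisely along the tangent boundaries, and one must carefully track the renormalization that converts a boundary derivative of $k$ into a genuine density with respect to $d\mu_{H_i}$. This in turn rests on the sharp asymptotic behavior of $k$ near clean tangent boundaries and on the compatibility of these asymptotics across successive boundary strata, which are ultimately consequences of the Schauder estimates and polyhomogeneous expansions for Kimura heat kernels available from the authors' previous work.
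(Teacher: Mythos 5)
Your strategy — identify $h_i$ as a renormalized conormal derivative of the Dirichlet heat kernel $k$ by matching a stochastic representation with an integration-by-parts representation of the same Dirichlet-problem solution — is the same core idea as the paper's proof. However, there are a few genuine gaps in the way you set it up.

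First, you take time-independent boundary datum $\psi$ and assert that $u_\psi$ solves the Dirichlet problem with vanishing initial condition. But $u_\psi(0^+,q)=\psi(q)$ for $q\in H_i$ (since $\tau_{\partial^T P}=0$ there), which is incompatible with $u_\psi(0,\cdot)\equiv 0$; the function $u_\psi$ is discontinuous at $\{0\}\times H_i$ and therefore does not fit the class \eqref{eq:Nice_space} to which Theorem \ref{thm:Parabolic_Dirichlet} applies. The paper sidesteps this by working with $\zeta\in C^\infty_c((0,\infty)\times\bar B^\infty_r)$, smooth in time and vanishing near $t=0$, and recovering the time-independent statement only afterward by varying $\zeta$. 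Second, the Schwartz kernel theorem applied to $\psi\mapsto\partial_t u_\psi(t,p)$ requires knowing a priori that $u_\psi$ is $C^1$ in $t$ and that the map is continuous in $\psi$; establishing such regularity \emph{is} the hard content (Lemmas \ref{lem:Initial_value_Dirichlet}--\ref{lem:Inhomogeneous_Dirichlet}), so invoking the kernel theorem here is circular. The paper instead produces the density $k_{x_i}$ directly as a genuine smooth function via Proposition \ref{prop:Integral_representation}, never passing through a distributional intermediate.

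Third, and most substantively, the integration-by-parts step you sketch is exactly where the real work lives, and it cannot be dismissed as routine. The weight measure $d\mu_P$ is not locally integrable near $\partial^T P$ (the factor $x_i^{b_i-1}$ with $b_i=0$), so Green's identity must be run on $\varepsilon$-truncated domains $S^\varepsilon_{n,m}$, and one must show that the boundary contributions from the transverse faces $H_j$, $j\in I^\pitchfork$, are $O(\varepsilon^{b_j})$, which vanish only because $b_j\geq\beta_0>0$ under the cleanness hypothesis. This requires the pointwise estimates \eqref{eq:Sup_est_sol_1_tran_prob}--\eqref{eq:Sup_est_sol_2_tran_prob} of Theorem \ref{thm:Sup_est_tran_prob} (Schauder-type sup-norm bounds from \cite{Epstein_Pop_2016}, not a polyhomogeneous expansion). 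Your outline gestures at this but does not carry it out; in the paper these estimates are what make both the trace formula and the regularity assertion \eqref{eq:h_i_reg} go through.
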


The densities $h_i$, for $i\in I^T$, are also called the \emph{caloric measures} corresponding to the boundary hypersurface $H_i$. In 
\S \ref{prop:Integral_representation}, we prove that the caloric measures $h_i$ are the normal derivatives on $H_i$ of the Dirichlet heat kernel, and we use this property to prove in Theorem \ref{thm:Doubling_property} that the hitting distributions satisfy a \emph{doubling property}. Thus, the regularity properties of the Dirichlet heat kernel play a significant role in our article. We next state the result that the probability of hitting a boundary component of co-dimension larger than $1$ is zero.

\begin{thm}[Hitting probability of two hypersurfaces]
\label{thm:Hitting_corner}
Suppose that the generalized Kimura operator satisfies the standard assumptions.
Then, for all $p\in P\backslash\partial^T P$ and for all $i\in I^T$ and $j\in I^T\cup I^{\pitchfork}$, we have that
\begin{equation}
\label{eq:Hitting_corner}
\QQ^p(\omega(\tau_{\partial^T P}) \in H_i\cap H_j) = 0,
\end{equation}
where the stopping time $\tau_{\partial^T P}$ is defined in \eqref{eq:tau_tangent} and $\QQ^p$ is the unique solution to the martingale problem in Definition \ref{defn:Martingale_problem}.
\end{thm}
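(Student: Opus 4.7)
My strategy is to reduce the statement, via localization in an adapted chart, to showing that two coordinate-wise hitting times $\tau_1$ and $\tau_2$ do not coincide, and then to deduce non-coincidence from a joint absolute-continuity argument built on the Dirichlet heat kernel and caloric measure already established in Theorems \ref{thm:Distribution_absorbed} and \ref{thm:Hitting_distribution_hypers}.

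\textbf{Step 1 (Localization).} By the strong Markov property (Corollary \ref{cor:Strong_Markov}) and a finite cover of $H_i\cap H_j$ by adapted coordinate patches, it suffices to fix a point $q_0$ in the relative interior of $H_i\cap H_j$, choose adapted coordinates $(x_1,\ldots,x_n,y_1,\ldots,y_m)$ on a neighborhood $U$ of $q_0$ so that $H_i=\{x_1=0\}$ and $H_j=\{x_2=0\}$, and prove $\QQ^p(\omega(\tau_{U})\in H_i\cap H_j)=0$, where $\tau_U$ is the exit time from $U\cap(P\setminus\partial^T P)$. Writing $\tau_k:=\inf\{t\geq 0:X_k(t)=0\}$ for $k=1,2$, the event in question is contained in $\{\tau_1=\tau_2\}$. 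By the cleanness condition and the convention $b_k=B_k$ in adapted coordinates, $b_1$ vanishes on $\{x_1=0\}$, hence factors as $b_1(z)=x_1\tilde b_1(z)$ with $\tilde b_1$ smooth; analogously for $b_2$ when $j\in I^T$, whereas $b_2\geq\beta_0>0$ near $\{x_2=0\}$ when $j\in I^\pitchfork$.

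\textbf{Step 2 (Marginal absolute continuity of the hitting times).} Theorem \ref{thm:Hitting_distribution_hypers} applied to $H_i$ gives that the joint law of $(\tau_1,\omega(\tau_1))$ admits the smooth density $h_i(s,p,q)\,ds\,d\mu_{H_i}(q)$ on $(0,\infty)\times\Int(H_i)$, and in particular $\tau_1$ has an absolutely continuous marginal on $(0,\infty)$. The same theorem, applied to $H_j$ when $j\in I^T$, yields analogous absolute continuity of $\tau_2$. When $j\in I^\pitchfork$ we instead appeal to the one-dimensional Feller classification in the $x_2$-direction: either $B_j\geq 1$ so that $\tau_2=+\infty$ almost surely (in which case $\{\tau_1=\tau_2\}\subseteq\{\tau_1=+\infty\}$ is already $\QQ^p$-null since $\tau_1<\infty$ a.s.\ for tangent $H_i$), or $0<B_j<1$ and $\tau_2<\infty$ a.s.\ with an absolutely continuous law.

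\textbf{Step 3 (Joint absolute continuity and conclusion).} I pass from marginal to joint absolute continuity of $(\tau_1,\tau_2)$ by conditioning on the $\sigma$-algebra $\mathcal{F}^{\neq 1,2}$ generated by the trajectories of the remaining coordinates $(X_k(s),y_l(s))$ with $k\notin\{1,2\}$. Conditional on $\mathcal{F}^{\neq 1,2}$, the pair $(X_1,X_2)$ solves a two-dimensional Kimura-type martingale problem with random, time-dependent coefficients inherited from \eqref{eq:Operator}; Theorem \ref{thm:Distribution_absorbed} applied to this reduced $2$D sub-system produces a conditional smooth Dirichlet heat kernel in $(x_1,x_2)$, and differentiation yields a conditional joint density for $(\tau_1,\tau_2)$ with respect to two-dimensional Lebesgue measure on $(0,\infty)^2$. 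Because the diagonal $\{s=t\}$ is Lebesgue-null in the plane, $\QQ^p(\tau_1=\tau_2\mid\mathcal{F}^{\neq 1,2})=0$ almost surely, and tower gives $\QQ^p(\tau_1=\tau_2)=0$, completing the proof.

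\textbf{Main obstacle.} The conditioning argument in Step 3 is the most delicate point: the cross-terms $a_{12}$, $c_{1l}$, $c_{2l}$ in \eqref{eq:Operator} couple $(X_1,X_2)$ to the other coordinates, so the reduced $2$D operator is genuinely random and time-dependent, and Theorems \ref{thm:Distribution_absorbed}--\ref{thm:Hitting_distribution_hypers} must be invoked in a parametric family stable under adapted perturbations of the coefficients. A more direct alternative, which the authors may prefer, bypasses conditioning entirely: extend the identity \eqref{eq:Hitting_H_i} from $B_i\subseteq\Int(H_i)$ to $B_i\subseteq H_i$ by an inner-exhaustion together with the regularity statement \eqref{eq:h_i_reg}, which bounds $h_i$ locally up to the tangent--tangent corner $H_i\cap H_j$ (when $j\in I^T$), and then observe that $H_i\cap H_j$ carries $d\mu_{H_i}$-measure zero; the transverse--tangent corner ($j\in I^\pitchfork$) is handled by the repulsion coming from $B_j\geq\beta_0>0$.
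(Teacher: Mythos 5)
Your Step 1 reduction already fails in the case $j\in I^{\pitchfork}$. Because the process reflects off a transverse boundary rather than being absorbed, the coordinate $X_2$ may touch $\{x_2=0\}$ many times before $\tau_{\partial^T P}$; the event $\{\omega(\tau_{\partial^T P})\in H_i\cap H_j\}$ is $\{\tau_1\in Z_2\}$ where $Z_2:=\{t: X_2(t)=0\}$ is the (possibly uncountable) zero set of $X_2$, not the singleton $\{\tau_2\}$. So ruling out $\{\tau_1=\tau_2\}$ would not rule out the event. Step 2 also has two issues: first, Theorem \ref{thm:Hitting_distribution_hypers} only produces a density on $(0,\infty)\times\Int(H_i)$ and does not by itself assert that the full hitting law of $\partial^T P$ concentrates there, so inferring absolute continuity of $\tau_1$ from it is close to assuming the conclusion; second, your Feller classification of $\tau_2$ treats $X_2$ as a one-dimensional diffusion, but the cross terms $a_{12}$, $c_{2l}$ in \eqref{eq:Operator} make $X_2$ a non-Markov coordinate, and the dichotomy you invoke is not justified. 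Step 3, as you acknowledge, is speculative: the reduced $(X_1,X_2)$ system has random, time-dependent, non-Kimura coefficients, and none of the results of the paper apply to it. Even if it did work, joint absolute continuity of $(\tau_1,\tau_2)$ is the wrong target given the failure of Step 1.

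The paper's actual argument is structured quite differently, and you should compare it to your closing ``alternative'' paragraph. For the tangent--tangent case ($j\in I^T$), the paper proves Lemma \ref{lem:Hitting_corner_tangent_tangent} using the integral representation of Proposition \ref{prop:Integral_representation}: one tests the non-homogeneous Dirichlet problem against cutoffs $\zeta_k$ concentrating on $H_i\cap H_j$, uses the smoothness of $k_{x_i}$ up to $\partial^c P$ (Theorem \ref{thm:Sup_est_tran_prob}) to show the caloric measure of a set shrinking to $H_i\cap H_j$ tends to zero, and then handles corners of $H_i\cap H_j$ by a case split on whether a transverse face is present. This is close in spirit to the ``inner-exhaustion plus \eqref{eq:h_i_reg}'' you sketch, but it is carried out at the level of boundary-value problems, not by massaging the caloric density. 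For the tangent--transverse case ($j\in I^{\pitchfork}$), the paper takes a completely different route: the function $u(p)=\QQ^p(\omega(\tau_{\partial^TP})\in H_i\cap H_j)$ is shown in Lemma \ref{lem:Regularity_hitting_probability} to be $L$-harmonic, bounded between $0$ and $1$, continuous up to $P\setminus(H_i\cap H_j)$, and vanishing on $\partial^T P\setminus(H_i\cap H_j)$; one then applies the Landis-type growth Lemma \ref{lem:Growth_lemma} (proved in the appendix via barriers and the scaling of Remark \ref{rmk:A_scaling}) to get $\sup_{\dist<r/2}u\leq\theta\sup_{\dist<r}u$ with $\theta<1$, which together with monotonicity forces $u\equiv 0$. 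Your phrase ``handled by the repulsion coming from $B_j\geq\beta_0>0$'' points in that direction but omits the actual mechanism (a barrier/growth lemma, not a density calculation), and no caloric-measure density is used in that case at all.

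In short: your primary argument (Steps 1--3) contains genuine gaps in the reduction, in the appeal to marginal absolute continuity, and in the conditioning step, and it would not yield the theorem as written. The ``direct alternative'' you mention at the end does capture the flavor of the paper's tangent--tangent proof, but you would still need the full apparatus of the paper's Lemma \ref{lem:Growth_lemma} and Lemma \ref{lem:Regularity_hitting_probability} to cover the transverse case.
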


Theorems \ref{thm:Distribution_absorbed}, \ref{thm:Hitting_distribution_hypers}, and \ref{thm:Hitting_corner} are the main ingredients in the proof of the structure of the transition probabilities of Kimura diffusions. The rough idea is that the probability distribution of the Kimura process at a time $t$ can be decomposed into the distribution of the paths of the process that have not been absorbed on one of the tangent boundary components of the manifold up to time $t$, which are described by the Dirichlet heat kernel in Theorem \ref{thm:Distribution_absorbed}, and by the distributions of the paths on each tangent boundary component, of dimension $0$ to $N-1$. To describe the latter probability distribution a more careful analysis is needed and it is based on the following observations:
\begin{itemize}
\item[1.]
When the Kimura process hits the tangent boundary component, $\partial^T P$, it always hits it in the interior of a hypersurface component, with probability $1$. This is proved in Theorem \ref{thm:Hitting_corner}.
\item[2.]
The hitting distribution of a tangent hypersurface is described in Theorem \ref{thm:Hitting_distribution_hypers}.
\item[3.]  After hitting a tangent boundary hypersurface, a path remains there,
  a.s., and the Kimura process performs a diffusion of the same type, but in this
  lower dimensional compact manifold with corners. This statement is a
  consequence of Remarks \ref{rmk:Sato} and \ref{rmk:Sato_equation}.
\end{itemize}
It seems very likely that without the cleanness hypothesis the
  qualitative behavior of paths will be considerably more complicated. For
  example, if the vector field is tangent to a relatively compact subset of a
  boundary face, then it seems possible that paths will be absorbed into this
  portion of the boundary, only to re-emerge later from another portion of the
  boundary to which the vector field is transverse.

Before giving the statement of Theorem \ref{thm:Tran_prob}, we need to introduce additional notation.
For all $(t,p)\in(0,\infty)\times P$ and for all Borel measurable sets $B\subseteq P$, we denote the transition probabilities of the canonical process on $C([0,\infty);P)$ under the probability measure $\QQ^p$ in Definition \ref{defn:Martingale_problem} by
\begin{equation}
\label{eq:tran_prob}
\Gamma^P(t,p,B) = \QQ^p(\omega(t)\in B).
\end{equation}
For all $1\leq k\leq N$, where we recall that $\hbox{dim } P = N$, we let $d_k$ denote the number of boundary components of dimension $k$ to which the generalized Kimura operator is tangent. For all $1\leq i\leq d_k$, we denote by $P^k_i$ the boundary components of dimension $k$ to which the generalized Kimura operator is tangent.  We let $d\mu_{P^k_i}$ denote the measure constructed in Remark \ref{rmk:Sato}, for all $1\leq i\leq d_k$ and for all $1\leq k\leq N-1$. We can now state the main result of our article that describes the structure of the transition probabilities of the Kimura diffusion on the compact manifold $P$ with corners. 

\begin{thm}[Transition probabilities]
\label{thm:Tran_prob}
Let $P$ be a connected compact manifold with corners. Suppose that the
generalized Kimura operator satisfies the standard assumptions.  Let
$p\in P\backslash \partial^T P$. Then there are non-negative,
measurable functions,
\begin{align*}
&k^P(\cdot,p,\cdot):(0,\infty)\times P\rightarrow [0,\infty),\\
&k^{P;P^k_i}(\cdot,p,\cdot):(0,\infty)\times P^k_i\rightarrow [0,\infty),\quad\forall\, 1\leq i\leq d_k,\quad\forall\,0\leq k\leq N-1,
\end{align*}
such that the transition probabilities of the canonical process on $C([0,\infty);P)$ under the probability measure $\QQ^p$ in Definition \ref{defn:Martingale_problem} has the following expression:
\begin{equation}
\label{eq:Tran_prob}
\begin{aligned}
\Gamma^P(t,p,dw) &= k^{P}(t,p,w)\, d\mu_P(w) \,\delta_{\Int(P)}(w)\\
&\quad + \sum_{k=1}^{N-1}\sum_{i=1}^{d_k} k^{P;P^k_i}(t,p,w)\, d\mu_{P^k_i}(w) \,\delta_{\Int(P^k_i)}(w)\\
&\quad + \sum_{i=1}^{d_0} k^{P;P^0_i}(t,p,w)\, \delta_{P^0_i}(w).
\end{aligned}
\end{equation}
\end{thm}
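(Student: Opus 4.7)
The strategy is induction on $N = \dim P$. The inputs are the Dirichlet heat kernel of Theorem \ref{thm:Distribution_absorbed} for the part of the distribution supported away from $\partial^T P$, the hitting-time densities of Theorem \ref{thm:Hitting_distribution_hypers} together with the vanishing of corner-hitting probabilities in Theorem \ref{thm:Hitting_corner}, the strong Markov property (Corollary \ref{cor:Strong_Markov}) at $\tau_{\partial^T P}$, and the identification in Remark \ref{rmk:Sato} of the restricted operator on a tangent hypersurface as a generalized Kimura operator satisfying the standard assumptions, so that the inductive hypothesis applies to it.

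For the inductive step, assume the statement holds on all such manifolds of dimension less than $N$. I split
\[
\Gamma^P(t,p,dw) = \QQ^p(\omega(t)\in dw,\, t<\tau_{\partial^T P}) + \QQ^p(\omega(t)\in dw,\, t\geq \tau_{\partial^T P}).
\]
Theorem \ref{thm:Distribution_absorbed} gives the first term directly as $k(t,p,w)\, d\mu_P(w)$ on Borel subsets of $P\setminus \partial^T P$; because $d\mu_P$ assigns zero mass to every proper boundary stratum (it is absolutely continuous with respect to Lebesgue measure in adapted local coordinates), this summand equals $k^P(t,p,w)\, d\mu_P(w)\, \delta_{\Int(P)}(w)$ with $k^P := k$. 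For the second term, Theorem \ref{thm:Hitting_corner} lets me restrict to the event that $\omega(\tau_{\partial^T P})\in \Int(H_i)$ for a unique $i\in I^T$, and on this event the strong Markov property combined with Theorem \ref{thm:Hitting_distribution_hypers} yields
\[
\QQ^p(\omega(t)\in dw,\, t\geq\tau_{\partial^T P},\,\omega(\tau_{\partial^T P})\in \Int(H_i))
= \int_0^t\!\int_{\Int(H_i)}\! \Gamma^{H_i}(t-s,q,dw)\, h_i(s,p,q)\, d\mu_{H_i}(q)\, ds,
\]
where $\Gamma^{H_i}$ denotes the transition probability of the Kimura process on $H_i$ generated by $L_{H_i}$.

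Since $\dim H_i = N-1$ and $L_{H_i}$ satisfies the standard assumptions, the inductive hypothesis decomposes each $\Gamma^{H_i}(t-s,q,\cdot)$ into a top-dimensional density on $\Int(H_i)$ and densities (or atoms) on the tangent strata of $H_i$, which are precisely the $P^k_j \subseteq H_i$. Substituting, summing over $i\in I^T$, and regrouping terms that land on the same $P^k_j$ produces the decomposition \eqref{eq:Tran_prob} with
\[
k^{P;P^k_j}(t,p,w) = \sum_{\{i\in I^T:\, P^k_j \subseteq H_i\}} \int_0^t\!\int_{\Int(H_i)} k^{H_i;P^k_j}(t-s,q,w)\, h_i(s,p,q)\, d\mu_{H_i}(q)\, ds,
\]
where $k^{H_i;P^k_j}$ is interpreted as the top-dimensional density $k^{H_i}$ when $P^k_j = H_i$ (i.e.\ $k = N-1$), and the formula specializes appropriately at atoms when $k=0$. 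Measurability of each $k^{P;P^k_j}$ is automatic from Fubini together with the measurability assertions of Theorems \ref{thm:Distribution_absorbed} and \ref{thm:Hitting_distribution_hypers}.

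The main technical obstacle I anticipate is the identification of the post-absorption law as the $L_{H_i}$-Kimura process, which is what licenses the substitution of $\Gamma^{H_i}$ above. Concretely, one needs to verify that on $\{\omega(\tau_{\partial^T P})\in \Int(H_i)\}$ the shifted trajectory $\theta_{\tau_{\partial^T P}}\omega$ stays in $H_i$ almost surely and has law $\QQ^{\omega(\tau_{\partial^T P})}_{H_i}$, the unique solution of the martingale problem for $L_{H_i}$ on $H_i$ furnished by Theorem \ref{thm:Wellposed_mart_problem} in dimension $N-1$. The martingale property for test functions on $H_i$ is inherited from smooth extensions to $P$ via the compatibility \eqref{eq:Restriction_operator}, and then uniqueness of the lower-dimensional martingale problem closes the identification; this combines with Fubini to re-express the absorbed contribution as the claimed superposition of inductive pieces.
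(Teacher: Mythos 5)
Your proposal follows the paper's argument essentially verbatim: induction on $\dim P$, splitting off the non-absorbed contribution via the Dirichlet heat kernel of Theorem \ref{thm:Distribution_absorbed}, using Theorem \ref{thm:Hitting_corner} to restrict to hitting the interior of a single tangent hypersurface, conditioning at $\tau_{\partial^T P}$ by the strong Markov property together with Theorem \ref{thm:Hitting_distribution_hypers}, and invoking Remarks \ref{rmk:Sato} and \ref{rmk:Sato_equation} with Lemma \ref{lem:Absorption_tangent_boundary} and uniqueness of the lower-dimensional martingale problem to identify the absorbed trajectory as the $L_{H_i}$ diffusion, to which the inductive hypothesis applies. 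The only omission is an explicit base case: the paper treats $N=1$ separately in Step 1 of its proof, using the hitting probability $p_{P^0_i}(t,p)$ from Lemma \ref{lem:Hitting_tangent_boundary} to set up the atomic terms before the induction.
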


\begin{rmk}[Structure of the transition probabilities]
\label{rmk:tran_prob}
The function $k^P(\cdot,p,\cdot)$ appearing in the structure of the transition
probability in \eqref{eq:Tran_prob} is the Dirichlet heat kernel constructed in
Theorem \ref{thm:Distribution_absorbed}.  When all the weights of the generalized Kimura operator are positive, the transition probability $\Gamma^P(t,p,dw)$ coincides with the measure defined by the Dirichlet heat kernel $k^{P}(t,p,w)\, d\mu_P(w)$, which satisfies suitable Gaussian estimates as proved in \cite[Theorems 1.2 and 5.2]{Epstein_Mazzeo_2016}. However, when the operator $L$ has weights equal to zero, the transition probability measure contains additional terms described in formula \eqref{eq:Tran_prob}, and in this case the Dirichlet heat kernel $k^{P}(t,p,w)$ satisfies suitable Gaussian estimates only away from the tangent boundary components. Along tangent boundary components we describe the estimates satisfied by the Dirichlet heat kernel in Theorem \ref{thm:Sup_est_tran_prob}.

The remaining terms appearing in
equality \eqref{eq:Tran_prob}, $k^{P;P^k_i}(\cdot,p,\cdot)$, are convolutions
between Dirichlet heat kernels, as constructed in Theorem
\ref{thm:Distribution_absorbed}, and hitting distributions, as constructed in
Theorem \ref{thm:Hitting_distribution_hypers}, relative to suitable boundary
components of the compact manifold $P$ with
corners. 
\end{rmk}

As a consequence of Theorem \ref{thm:Tran_prob}, we have the following result in which we prove that the generalized Kimura process spends zero time on the boundary components to which the operator is transverse:

\begin{cor}
\label{cor:Zero_times_spent_on_trans_boundary}
Let $P$ be a connected compact manifold with corners. Suppose that the generalized Kimura operator satisfies the standard assumptions, then, for all $p\in P$, we have that
\begin{equation}
\label{eq:Zero_times_spent_on_trans_boundary}
\int_0^{\infty}\mathbf{1}_{\{\omega(t) \in\partial^{\pitchfork} P\}}\, dt = 0,\quad\QQ^p\hbox{-a.s.},
\end{equation}
where $\QQ^p$ is the unique solution to the martingale problem in Definition \ref{defn:Martingale_problem}.
\end{cor}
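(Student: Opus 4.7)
The plan is to apply Tonelli's theorem to reduce the statement to showing that the transition probability $\Gamma^P(t,p,\cdot)$ assigns zero mass to $\partial^\pitchfork P$ for a.e.\ $t>0$, and then to extract this vanishing from the explicit decomposition supplied by Theorem~\ref{thm:Tran_prob}. Since the set $\partial^\pitchfork P$ is a finite union of boundary hypersurfaces, hence closed in $P$, the indicator $\mathbf{1}_{\{\omega(t)\in\partial^\pitchfork P\}}$ is measurable on $[0,\infty)\times C([0,\infty);P)$, and Tonelli yields
\begin{equation*}
\EE^{\QQ^p}\!\left[\int_0^\infty\mathbf{1}_{\{\omega(t)\in\partial^\pitchfork P\}}\,dt\right]
= \int_0^\infty \Gamma^P(t,p,\partial^\pitchfork P)\,dt.
\end{equation*}
It therefore suffices to prove $\Gamma^P(t,p,\partial^\pitchfork P)=0$ for every $t>0$, since then the non-negative integrand on the left vanishes $\QQ^p$-almost surely.

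For $p\in P\setminus\partial^T P$ the representation~\eqref{eq:Tran_prob} expresses $\Gamma^P(t,p,dw)$ as a sum of measures supported on $\Int(P)$, on the interiors $\Int(P^k_i)$ of the positive-dimensional tangent strata, and on the tangent vertices $P^0_i$. I would then verify that each of these supports is disjoint from $\partial^\pitchfork P$. The set $\Int(P)$ meets no boundary face; any point in $\Int(P^k_i)$ lies only on the tangent hypersurfaces that cut out $P^k_i$, since lying on any further hypersurface would place it in a higher-codimension stratum, i.e.\ in $\partial P^k_i$; finally, a tangent $0$-stratum $P^0_i$ is by convention an intersection of tangent hypersurfaces only, so $P^0_i\subseteq\partial^T P\setminus\partial^\pitchfork P$. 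Combining these three checks gives $\Gamma^P(t,p,\partial^\pitchfork P)=0$ for all $t>0$.

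For $p\in\partial^T P$ Theorem~\ref{thm:Tran_prob} does not apply directly, so I would argue by induction on $\dim P$, treating $\dim P=0$ as the trivial base case (the process is constant, and $p\in\partial^T P$ implies $p\notin\partial^\pitchfork P$ under the tangent-stratum convention). For the inductive step, let $\Sigma$ be the unique tangent boundary stratum whose relative interior contains $p$. By Remark~\ref{rmk:Sato} together with the pathwise invariance assertion in Remark~\ref{rmk:Sato_equation}, the measure $\QQ^p$ is concentrated on paths that remain in $\Sigma$ for all $t\geq 0$, and the restriction evolves as a generalized Kimura process on the lower-dimensional compact manifold with corners $\Sigma$ with operator $L_\Sigma$ satisfying the standard assumptions. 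Since $\partial^\pitchfork P\cap\Sigma$ is contained in the transverse boundary of $\Sigma$ with respect to $L_\Sigma$ (weights of $L_\Sigma$ on a face $H_j\cap\Sigma$ inherit positivity from the corresponding weight of $L$), the inductive hypothesis applied to $(\Sigma,L_\Sigma)$ yields $\int_0^\infty\mathbf{1}_{\{\omega(t)\in\partial^\pitchfork P\}}\,dt=0$ $\QQ^p$-a.s.

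The main obstacle I anticipate is the bookkeeping in the second and third paragraphs: one must carefully record that the supports appearing in~\eqref{eq:Tran_prob} are genuinely disjoint from $\partial^\pitchfork P$, which hinges on the convention that a ``tangent'' stratum means an intersection of tangent hypersurfaces (and that the interior of a stratum avoids all other hypersurfaces), and one must check that the induced operator $L_\Sigma$ inherits the classification of its faces into tangent and transverse from that of $L$. Once these structural statements are made precise, the corollary reduces to Fubini and the Sato reduction, with no further analytic input.
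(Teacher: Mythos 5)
Your proposal takes essentially the same route as the paper: reduce via Tonelli to showing that the marginal law $\Gamma^P(t,p,\cdot)$ puts no mass on $\partial^{\pitchfork}P$, and then read this off from the decomposition~\eqref{eq:Tran_prob}. The paper's own proof is just this one-liner, stated for the expectation over a finite window $[0,T]$.

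Where you diverge is in noticing that Theorem~\ref{thm:Tran_prob} is only stated for $p\in P\setminus\partial^T P$, so the corollary's claim ``for all $p\in P$'' is not literally covered by the cited theorem when $p\in\partial^T P$. The paper does not comment on this case at all; you handle it by induction on $\dim P$, using the absorption result (Lemma~\ref{lem:Absorption_tangent_boundary}) together with the Sato restriction (Remarks~\ref{rmk:Sato}, \ref{rmk:Sato_equation}) to push the problem down to the lower-dimensional tangent stratum containing $p$, and checking that transverse faces of $P$ restrict to transverse faces of $\Sigma$ for $L_\Sigma$ (the weights are inherited). That last inheritance check is exactly the right thing to verify, and your argument is sound. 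So your proof is a more careful version of the paper's: same core idea, but it closes a small gap the paper leaves implicit for starting points on the tangent boundary.
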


\begin{rmk}[The cleanness condition (Assumption \ref{assump:Cleanness})]
The cleanness condition plays a central role in our paper. The construction of the Dirichlet heat kernel in Theorem \ref{thm:Distribution_absorbed} is based on the fact that the Kimura operator $L$ in \eqref{eq:Operator} defines a continuous bilinear form that satisfies the G\r{a}rding inequality. This result is not true in general when the weights of the operator $L$ do not satisfy property \eqref{eq:Cleanness}. Moreover, our result in Theorem \ref{thm:Hitting_corner} concerning the fact that the Kimura diffusion hits with zero probability a boundary component of co-dimension two, when it first hits the boundary of the manifold, no longer holds in the absence of the cleanness condition. For example, the two-dimensional operator $Lu = x_1u_{x_1x_1}+x_2u_{x_2x_2}+x_2u_{x_1}+x_1u_{x_2}$ does not satisfy the cleanness condition, and the underlying diffusion does not satisfy the conclusion of Theorem \ref{thm:Hitting_corner}. More precisely, the underlying diffusion is given by:
\begin{align*}
dX_1(t) &= X_2(t)\, dt+ \sqrt{2X_1(t)}\, dW_1(t),\\
dX_2(t) &= X_1(t)\, dt+ \sqrt{2X_2(t)}\, dW_2(t),
\end{align*}
where $\{(W_1(t), W_2(t))\}_{t\geq 0}$ is a two-dimensional Brownian
motion. The results of our paper imply that the coordinate process
$\{X_i(t)\}_{t\geq 0}$ is absorbed in the interior of the boundary $\{x_i=0\}$
with zero probability, for $i=1,2$. However, the one-dimensional sum
process $S(t):=X_1(t)+X_2(t)$ satisfies the stochastic differential
equation $dS(t) = S(t)\, dt+ \sqrt{2S(t)}\, dW(t)$, 
where the Brownian motion $\{W(t)\}_{t\geq 0}$ is defined by
$$
dW(t) = \sqrt{\frac{X_1(t)}{X_1(t) + X_2(t)}} \,dW_1(t) + \sqrt{\frac{X_2(t)}{X_1(t) + X_2(t)}} \,dW_2(t),\quad\forall\,t\geq 0,
$$
Such a process hits the origin, a boundary component of co-dimension two, with probability $1$ and so Theorem \ref{thm:Hitting_corner} does not hold.
\end{rmk}

\subsection{Comparison with previous research}
\label{sec:Previous_research}

Shimakura obtained a closed-form expression for the transition probabilities of the multi-dimensional Wright-Fisher process, 
\cite{Fisher_1922, Fisher_1930, Wright_1931}, \cite[Equation (3.3)]{Shimakura_1981}, which is described by its infinitesimal generator, 
\begin{equation}
\label{eq:WF_operator}
L_{\hbox{\tiny{WF}}} u 
= \frac{1}{2}\sum_{i,j=1}^n\left(\delta_{ij}x_i-x_ix_j\right)u_{x_ix_j} 
+ \sum_{i=1}^n\left(b_i-x_i\sum_{j=1}^{n+1}b_j\right)u_{x_i},
\end{equation}
where $b_1$, $b_2$, \ldots, $b_{n+1}$ are non-negative constants and $\delta_{ij}$ denotes the Kronecker delta symbol. The operator
$L_{\hbox{\tiny{WF}}}$ acts on functions defined on the $n$-dimensional simplex, 
\begin{equation}
\label{eq:Simplex}
\Sigma_n := \left\{x=(x_1,\ldots,x_n) \in\RR^n: 0\leq x_i,\quad\forall\,1\leq
  i\leq n,\quad x_1+\ldots+x_n\leq 1\right\}.
\end{equation}
In this case all weights are constant, so the cleanness condition
  is immediate. Shimakura's formula again expresses the heat kernel as a sum of
  terms indexed by the strata of the $\pa\Sigma_n$ to which
  $L_{\hbox{\tiny{WF}}}$ is tangent.

Shimakura used completely different ideas than those employed in our article to
find the structure of the transition probabilities for the Wright-Fisher
process in \cite[Formula (5.11)]{Shimakura_1981}. Shimakura computes, in closed
form, the eigenvalues and eigenfunctions (eigenpolynomials) of the
Wright-Fisher operator. This is possible because $L_{\hbox{\tiny{WF}}}$ maps
polynomials of degree $k$ into polynomials of degree $k,$ for all $k.$ This
observation is no longer true for more general versions of the Wright-Fisher
operator with arbitrary variable coefficients, such as generalized Kimura
operators. Our analysis is based on understanding the regularity properties of
the Dirichlet heat kernel, which follow from the fact that it is a weak
solution to the forward Kolmogorov equation. In addition, we study the
properties of the hitting distributions (caloric measure) and we prove that
they satisfy the doubling property. This is the extension to the degenerate
framework of generalized Kimura operators of the doubling property satisfied by
the caloric measure of parabolic equations defined by strictly elliptic
operators, \cite[Theorem 1.1]{Safonov_Yuan_1999}, \cite[Theorem
2.4]{Fabes_Garofalo_Salsa_1986}.

In~\cite{Chen_Stroock_2010}  Chen and Stroock study the Dirichlet heat kernel, its small time asymptotics, and its boundary behavior for the one-dimensional Wright-Fisher operator,
$$
L u = x(1-x) u_{xx},\quad\forall\, x\in (0,1).
$$
Their method of the proof is based on series expansions of the fundamental solutions of related model operators.

Hofrichter, Tran, and Jost employ a hierarchical scheme in \cite{Hofrichter_Tran_Jost_2014a, Hofrichter_Tran_Jost_2014c} to study the existence and uniqueness of solutions to the backward parabolic equation defined by the multi-dimensional Wright-Fisher operator,
\begin{equation}
\label{eq:WF_operator_zw}
L_{\hbox{\tiny{WF}}} u 
= \frac{1}{2}\sum_{i,j=1}^n\left(\delta_{ij}x_i-x_ix_j\right)u_{x_ix_j},\quad\forall\, x\in\Sigma_n. 
\end{equation}
This method is extended in \cite{Hofrichter_Tran_Jost_2014b} to the forward
Wright-Fisher equation, but where the adjoint of the Wright-Fisher operator in
\eqref{eq:WF_operator_zw} is computed with respect to the standard
$L^2(\Sigma_n; dx)$ space, as opposed to the weighted $L^2$-space that we use
in our article, $L^2(\Sigma_n;d\mu)$, where the measure $d\mu$ is defined in
\eqref{eq:Weight} where $B_i=0$, for all $i$.

It is also very interesting to compare the structure of the heat kernel given
in equation~\eqref{eq:Tran_prob} with that given
in~\cite{Epstein_Mazzeo_2016}, where it is assumed that the weights are
strictly positive. In this case the kernel consists of a single term of the
form $k(t,p,q)d\mu_P(q).$ The kernel function, $k$ belongs to
$C^{\infty}((0,\infty)\times P\times (P\setminus \pa P)$) and, for positive
times, is uniformly bounded on $P\times P.$ It satisfies estimates that are
quite similar to standard Gaussian estimates, provided that the distance is
measured with respect to the metric defined by the principal symbol of $L$ 
and the Lebesgue measure is replaced by the weighted measure \eqref{eq:Weight}. 
In this case the associated stochastic process has a unique stationary measure,
and the paths of the process wander forever, almost surely, in the interior of
$P.$ As is shown in~\cite{Epstein_Mazzeo_annmathstudies}, a Kimura diffusion
that is tangent to part of $\pa P,$ and meets the boundary cleanly, may have a
multiplicity of stationary measures: there is a stationary measure for each
terminal component of the boundary of $P.$ A component, $\Sigma,$ of $\pa P$ is
terminal if $L$ is tangent to $\Sigma,$ and the restricted operator
$L_{\Sigma}$ has strictly positive weights. As the analysis in this paper
shows, a path of the associated process will almost surely reach a terminal boundary
component in finite time, where it then remains forever.

\subsection{Outline of the article}
\label{sec:Outline}
We divide \S \ref{sec:Martingale_problem} into two parts. In \S \ref{subsec:Martingale_problem}, we begin by proving the well-posedness of the martingale problem associated to the generalized Kimura operator in Definition \ref{defn:Martingale_problem}. We then prove in \S\ref{subsec:Tangent_boundary} that the Kimura diffusion defined by the martingale problem is absorbed on tangent boundary components of the compact manifold with corners. 

In \S \ref{sec:Dirichlet_heat_kernel}, we introduce the framework that allows
us to prove the existence of the Dirichlet heat kernel for the parabolic
problem with homogeneous Dirichlet boundary conditions on the tangent
components of the boundary of the manifold, $\partial^T P$. We give a
variational formulation to the parabolic problem defined by the generalized
Kimura operator, in \S \ref{sec:Bilinear_form}, by defining a suitable bilinear
form. The homogeneous Dirichlet boundary condition is embedded in the choice of
the weighted Sobolev space with respect to which the bilinear form is
continuous and satisfies the G\r{a}rding inequality. The weak solutions of the
parabolic problem with homogeneous Dirichlet boundary conditions on $\partial^T
P$ define a semigroup, which has an integral representation in terms of the
Dirichlet heat kernel, a fact that we prove in \S \ref{sec:Weak_solutions}. We
continue in \S \ref{sec:Adjoint} to prove that the Dirichlet heat kernel
satisfies the forward Kolmogorov equation defined by the adjoint of the
generalized Kimura operator and to determine its regularity properties along
$\partial^TP$. We conclude with the proof of Theorem
\ref{thm:Distribution_absorbed}.

In \S \ref{sec:Parabolic_Dirichlet_problem}, we prove the existence and
uniqueness of solutions to the parabolic problem with non-homogene-ous boundary
conditions on $\partial^T P$ and establish their connections with the semigroup
constructed in \S \ref{sec:Weak_solutions} and with Kimura diffusions.  This
connection is then used in \S \ref{sec:Prob} to give the proof of Theorem
\ref{thm:Hitting_distribution_hypers}. We are able to apply Theorem
\ref{thm:Hitting_distribution_hypers} to give the proof of Theorem
\ref{thm:Hitting_corner}, when we use in addition a Landis-type growth lemma
proved by M.V. Safonov, \cite{Safonov_2016}, and a regularity result concerning
the hitting probabilities of tangent boundary components. We prove the last two
results that we mentioned in Appendix \ref{sec:Appendix}. We provide the proof
of our main result, Theorem \ref{thm:Tran_prob}, and of Corollary
\ref{cor:Zero_times_spent_on_trans_boundary} at the end of \S \ref{sec:Prob}.

\subsection{Acknowledgment}
We would like to thank M.V.~Safonov for providing the proof of Lemma
A.1 in the fundamental 2d case and for valuable
discussions. We would also like to that the referee for
  his/her very careful reading of our paper and many suggestions for
  improvement.

\subsection{Notation}
For all $a,b\in\RR$, we denote
$$
a\wedge b := \min\{a,b\}\quad\hbox{ and }\quad a\vee b := \max\{a,b\}.
$$
We use $\NN_0$ to denote $\{0\}\cup\NN.$ Many of the estimates in our paper involve positive
  constants that depend on, among other things, the operator $L:$
  $C=C(L,\ldots)$. The dependence of these constants on the $L$ is
  through the positive lower bound $\beta_0,$ appearing in
  \eqref{eq:Cleanness}, the ellipticity constant, $\Lambda,$ and  the supremum
  norms of the coefficients of $L,$ and possibly their higher order
  derivatives.  

\section{The martingale problem associated to generalized Kimura operators}
\label{sec:Martingale_problem}

In this section we give the proof of the well-posedness of the martingale problem introduced in Definition \ref{defn:Martingale_problem} and we establish qualitative properties of the underlying diffusion process at the boundary of the compact manifold with corners. In 
\S \ref{subsec:Martingale_problem}, we prove Theorem \ref{thm:Wellposed_mart_problem} and Corollary \ref{cor:Strong_Markov}, while in 
\S \ref{subsec:Tangent_boundary} we study the boundary behavior of the underlying diffusion process on boundary hypersurfaces to which the operator is tangent.

\subsection{Well-posedness of the martingale problem}
\label{subsec:Martingale_problem}

Our goal in this section is to build a (strong) Markov process on the compact manifold $P$ with corners by solving the martingale problem in Definition \ref{defn:Martingale_problem}, which is analogous to \cite[Definition 1.3.1 (ii)]{Hsu_2002} and \cite[\S 4.1.1]{Stroock_manifolds}. Generally speaking, such problems can also be addressed by solving a suitable stochastic differential equation on $P$, as in \cite[Theorem V.1.1]{Ikeda_Watanabe2}. However, this alternative approach poses several difficulties, which we now describe to motivate our choice of the method to solve the martingale problem. In \cite[Theorem V.1.1]{Ikeda_Watanabe2}, the construction of the global solution to the stochastic differential equation is done by patching local solutions. Because the coefficients of the differential operator in \cite[Theorem V.1.1]{Ikeda_Watanabe2} are Lipschitz, the solutions in local coordinate charts are \emph{strong}, and so the Brownian motion appearing in the stochastic differential equation remains unchanged when we pass from one coordinate chart to another. In our case, the coefficients of the corresponding stochastic differential equation are only $1/2$-H\"older continuous, and so we only have local \emph{weak} solutions in coordinate charts of the manifold, which means that the Brownian motion appearing in the stochastic differential equation may change between coordinate charts. To circumvent this technical problem related to the definition of the Brownian motion, we solve instead the martingale problem associated to the generalized Kimura operator as introduced in Definition \ref{defn:Martingale_problem}.

Even though Definition \ref{defn:Martingale_problem} of our martingale problem is analogous to \cite[Definition 1.3.1 (ii)]{Hsu_2002} and \cite[\S 4.1.1]{Stroock_manifolds}, our method of establishing the existence of solutions is different. In \cite[\S 1.3]{Hsu_2002} and \cite[\S 4.1.1]{Stroock_manifolds}, the martingale problem is solved by embedding the manifold into $\RR^N$, for some positive integer $N$, as a closed submanifold, via Whitney's Embedding Theorem 
\cite{Whitney_1936}, \cite[Theorem 1.2.5]{Hsu_2002}. This embedding result is not available for compact manifolds with corners, and so we take a different approach. In \cite{Pop_2013a} we prove the well-posedness of the martingale problem associated to generalized Kimura operators defined on the unbounded set $\bar S_{n,m}$, as opposed to a compact manifold $P$ with corners. This indicates that a natural way to construct solutions to the martingale problem on the manifold is to cover $P$ by coordinate charts, such that each coordinate chart is homeomorphic to a bounded neighborhood of the origin in $\bar S_{n,m}$. We can apply \cite[Propositions 2.2 and 2.8]{Pop_2013a} to construct local solutions in coordinate charts. The process of patching the local probability measures into a global solution results in issues related to the measurability and adaptedness of the local solutions with respect to a common, global filtration defined on the manifold, which we address with the aid of a result of Stroock and Varadhan, \cite[Theorem 6.1.2]{Stroock_Varadhan}.

In the proof of Theorem \ref{thm:Wellposed_mart_problem}, we write the
generalized Kimura operator in a local system of coordinates as in
\eqref{eq:Operator}, but we also need to extend the operator from a
neighborhood of the origin in $\bar S_{n,m}$ to the whole space $\bar
S_{n,m}$. When we do this extension, we  assume that the coefficients of
the operator $L$ defined in \eqref{eq:Operator} at all points $z\in\bar
S_{n,m}$ satisfy the following assumptions. For any set of indices,
$I\subseteq\{1,\ldots,n\}$, we let
\begin{align}
\label{eq:M_I}
M_I:=\left\{z=(x,y) \in S_{n,m}: x_i \in (0,1)\hbox{ for all } i \in I,\hbox{ and } x_j \in (1,\infty)\hbox{ for all } j \in I^c\right\},
\end{align}
where we denote $I^c:=\{1,\ldots,n\}\backslash I$. We make the following assumptions on the coefficients of the operator $L$ in \eqref{eq:Operator} on $\bar S_{n,m}$, as opposed to $\barB^{\infty}_1$.

\begin{assump}
\label{assump:Coeff_extended}
The coefficients of the operator $L$ defined in \eqref{eq:Operator} satisfy:
\begin{enumerate}
\item[1.] The functions $a_{ij}(z)$, $b_i(z)$, $c_{il}(z)$, $d_{lk}(z)$, and $e_l(z)$ are smooth and bounded functions on $\bar S_{n,m}$, for all $1\leq i, j\leq n$ and $1\leq l, k \leq m$.
\item[2.] The drift coefficients $b_i(z)$ satisfy condition \eqref{eq:Nonnegative} with $\barB^{\infty}_1$ replaced by $\bar S_{n,m}$.
\item[3.] The \emph{strict ellipticity} condition holds: there is a positive constant, $\Lambda$, such that for all sets of indices, $I\subseteq \{1,\ldots,n\}$, for all $z \in \bar M_I$, $\xi\in\RR^n$ and $\eta\in\RR^m$, we have 
\begin{equation}
\label{eq:Uniform_ellipticity_extended}
\begin{aligned}
&  \sum_{i\in I} \xi_i^2
  +\sum_{i\in I^c} x_i\xi_i^2
  + \sum_{i,j\in I} a_{ij}(z)\xi_i\xi_j\\
& +\sum_{i\in I} \sum_{j\in I^c} x_j(a_{ij}(z)+a_{ji}(z))\xi_i\xi_j
  + \sum_{i,j\in I^c} x_ix_j a_{ij}(z)\xi_i\xi_j\\
& +\sum_{i\in I} \sum_{l=1}^m c_{il}(z)\xi_i\eta_l + \sum_{i\in I^c} \sum_{l=1}^m x_ic_{il}(z)\xi_i\eta_l
  +\sum_{l,k=1}^m d_{lk}(z)\eta_k\eta_l\\
&\quad\geq \Lambda\left(|\xi|^2+|\eta|^2\right).
\end{aligned}
\end{equation}
\end{enumerate}
\end{assump}

We notice that Assumption \ref{assump:Coeff_extended} implies that, when the operator $L$ is restricted to $\barB^{\infty}_1,$ it also satisfies Assumption \ref{assump:Coeff}. We can now give 

\begin{proof}[Proof of Theorem \ref{thm:Wellposed_mart_problem}]
We divide the proof into three steps. In Step \ref{step:Local_mart_problem}, we construct local solutions to the martingale problem in coordinate charts. In Step \ref{step:Global_mart_problem}, we apply \cite[Theorem 6.1.2]{Stroock_Varadhan} to patch the local solutions into a global solution of the martingale problem. In Step \ref{step:Uniqueness_mart_problem}, we prove the uniqueness of solutions to the martingale problem.

\setcounter{step}{0}
\begin{step}[Construction of local solutions]
\label{step:Local_mart_problem}
Let $p\in P$ and let $U\subseteq P$ be a relatively open neighborhood of $p$. Let $V\subset\bar S_{n,m}$ be a relatively open neighborhood of $z\in\bar S_{n,m}$, such that $\psi_p:U\rightarrow V$ is a homeomorphism and $\psi_p(p)=z$. We assume without loss of generality that in this local system of coordinates, $(U,V,\psi_p)$, the generalized Kimura operator takes the form \eqref{eq:Operator}, which we denote by $L^{\psi_p}$ to indicate the dependency on the local system of coordinates. We extend the coefficients of the operator $L^{\psi_p}$ from $V$ to $\bar S_{n,m}$, so that $L^{\psi_p}$ satisfies Assumption \ref{assump:Coeff_extended}. By abuse of notation, but to keep the notation simple, we denote the extended operator by same symbol $L^{\psi_p}$.

Assumption \ref{assump:Coeff_extended} allows us to apply \cite[Propositions 2.2 and 2.4]{Pop_2013a} to obtain that there is a unique solution to the martingale problem associated to the operator $L^{\psi_p}$ on $\bar S_{n,m}$, with initial condition $z$, where we recall that $z=\psi_p(p)$. We denote by $\PP^z_{\psi_p}$ this solution. We let $\Theta_V$ denote the first exit time from the set 
$V$ of the canonical process defined in  $\bar S_{n,m}$, which we denote by $\{Z(t)\}_{t\geq 0}$. We see that we can write $Z(t)=\psi_p(\omega(t))$, for all $t \leq \Theta_{V}$ and for all 
$\omega\in C([0,\infty);P)$. Let $\tau_U$ denote the first exit time from the
set $U$ of the canonical process on $P$, $\{\omega(t)\}_{t\geq 0}$. We define a
probability measure, 
\begin{equation}
\label{eq:Local_mart_problem_0}
\begin{aligned}
\bar \QQ^p_{\psi_p}(\omega(t_i\wedge\tau_U) \in B_i,\, 1\leq i\leq k) 
:= \PP^z_{\psi_p}(Z(t_i\wedge\Theta_V) \in \psi_p(B_i),\, 1\leq i\leq k),
\\
\bar \QQ^p_{\psi_p}(\omega(t_i) \in B_i,\, i=1,2,\ldots,k) 
:= \bar \QQ^p_{\psi_p}(\omega(t_i\wedge\tau_U) \in B_i,\, 1\leq i\leq k),
\end{aligned}
\end{equation}
for all $0\leq t_1<t_2<\ldots<t_k$, $B_i\in \cB(P)$, for all $1\leq i\leq k$, and $k\in\NN$. Because $\PP^z_{\psi_p}$ is a solution to the martingale problem defined by the operator $L^{\psi_p}$ on $\bar S_{n,m}$, it follows from definition \eqref{eq:Local_mart_problem_0} that $\bar\QQ^p_{\psi_p}$ is a solution to the stopped martingale problem in Definition \ref{defn:Martingale_problem} up to time $\tau_{U}$. That is, for all test functions $f\in C^{\infty}([0,\infty)\times P)$, we have that
$$
M^{f,U}_t:=f(t,\omega(t\wedge\tau_{U})) - f(0,\omega(0))
-\int_0^{t\wedge\tau_{U}} \left(f_s(s,\omega(s))+L^{\psi_p}f(s,\omega(s))\right)\, ds,
\quad\forall\, t\geq 0,
$$
is a martingale with respect to $\bar\QQ^p_{\psi_p}$.

We next show that the construction of the probability measure $\bar\QQ^p_{\psi_p}$ in \eqref{eq:Local_mart_problem_0} is coordinate-invariant, as we describe in the sequel. Let $\psi'_p:U'\rightarrow V'$ be another system of coordinates around $p$ and denote 
$z':=\psi'_p(p)$. Then, the map 
$$
\varphi:=\psi'_p\circ\psi^{-1}_p:\psi_p(U\cap U')\rightarrow \psi'_p(U\cap U')
$$
is a diffeomorphism and the change of coordinates $u(w) = v(w')$ and $w'=\varphi(w)$ gives us that
\begin{equation}
\label{eq:Change_of_coordinates}
L^{\psi_p} u(w) = L^{\psi'_p}v(w'),\quad\forall\, w\in \psi_p(U\cap U').
\end{equation}
Applying again \cite[Propositions 2.2 and 2.4]{Pop_2013a}, we let $\PP^{z'}_{\psi'_p}$ be the unique solution to the martingale problem associated to the operator $L^{\psi'_p}$ on $\bar S_{n,m}$ with initial condition $z'$. We let $\Theta_{\psi'_p(U\cap U')}$ denote the first exit time from the set $\psi'_p(U\cap U')$ of the canonical process, $\{Z'(t)\}_{t\geq 0}$, defined on $\bar S_{n,m}$ and with initial condition $z'$. Combining \eqref{eq:Change_of_coordinates} with the property that the solutions to the martingale problem associated to Kimura operators $L^{\psi_p}$ and $L^{\psi'_p}$ are unique, by 
\cite[Proposition 2.4]{Pop_2013a}, it follows that
$$
\PP^z_{\psi_p}(Z(t_i\wedge \Theta_{\psi_p(U\cap U')}) \in B_i,\, 1\leq i\leq k) 
= \PP^{z'}_{\psi'_p}(Z'(t_i\wedge \Theta_{\psi'_p(U\cap U')}) \in \varphi(B_i),\, 1\leq i\leq k),
$$
for all $0\leq t_1<t_2<\ldots<t_k$, $B_i\in \cB(\bar S_{n,m})$, $1\leq i\leq k$, and $k\in\NN$. Defining now 
$\bar \QQ^p_{\psi'_p}$ analogously to the identities in~\eqref{eq:Local_mart_problem_0}, but replacing $\psi_p$ by $\psi'_p$, 
$U$ by $U'$, $\{Z(t)\}_{t\geq 0}$ by $\{Z'(t)\}_{t\geq 0}$, and $\tau_{U}$ by $\tau'_{U'}$, the preceding identity gives us that
\begin{equation}
\label{eq:Invariance_under_change_of_coordinates}
\bar \QQ^p_{\psi_p}(\omega(t_i\wedge\tau_{U}\wedge\tau'_{U'}) \in B_i,\, 1\leq i\leq k) 
= \bar \QQ^p_{\psi'_p}(\omega(t_i\wedge\tau_{U}\wedge\tau'_{U'}) \in B_i,\, 1\leq i\leq k), 
\end{equation}
for all $0\leq t_1<t_2<\ldots<t_k$, $B_i\in \cB(P)$, $1\leq i\leq k$, and $k\in\NN$. Thus, indeed definition \eqref{eq:Local_mart_problem_0} of the local probability measure $\bar \QQ^p_{\psi_p}$ is coordinate-invariant.

For the remaining part of the proof, we fix a finite atlas on the compact manifold $P$, which we denote by 
$\cA:=\{(U_{\alpha}, V_{\alpha}, \psi_{\alpha})\}_{\alpha}$. We use the atlas $\cA$ to construct local solutions to the martingale problem that cover the manifold. For a point $p\in P$, we let $U_{\alpha_1},\ldots,U_{\alpha_l}$ be the relatively open sets in the atlas $\cA$ containing the point $p$. We let
\begin{equation}
\label{eq:tau_p}
\tau_p:=\min\{\tau_{U_{\alpha_i}}:\, 1\leq i\leq l\}
\end{equation}
be the first time that the canonical process on $P$, $\{\omega(t)\}_{t\geq 0}$, exits the neighborhood 
$\bigcap\{U_{\alpha_i}: 1\leq i\leq l\}$. Using the coordinate-invariance property \eqref{eq:Invariance_under_change_of_coordinates}, we can define a probability measure, $\bar\QQ^p$, on $P$ by letting
\begin{equation}
\label{eq:Local_mart_problem}
\begin{aligned}
\bar \QQ^p(\omega(t_i\wedge\tau_p) \in B_i,\, 1\leq i\leq k) := \bar\QQ^p_{\psi_{\alpha_j}}(\omega(t_i\wedge\tau_p) \in B_i,\, 1\leq i\leq k),
\\
\bar \QQ^p(\omega(t_i) \in B_i,\, 1\leq i\leq k) := \bar \QQ^p(\omega(t_i\wedge\tau_p) \in B_i,\, 1\leq i\leq k),
\end{aligned}
\end{equation}
for all $0\leq t_1<t_2<\ldots<t_k$, $B_i\in \cB(P)$, $1\leq i\leq k$, and $k\in\NN$, where $1\leq j\leq l$ is arbitrarily chosen.
The measure $\bar\QQ^p$ is a local solution to the stopped martingale problem for $L$ up to time $\tau_p$, i.e. for all test functions, $f\in C^{\infty}([0,\infty)\times P)$, we have that
$$
M^{f,\cA}_t:=f(t,\omega(t\wedge\tau_p)) - f(0,\omega(0))
-\int_0^{t\wedge\tau_p} \left(f_s(s,\omega(s))+L^{\psi_{\alpha_j}}f(s,\omega(s))\right)\, ds,\quad\forall\, t\geq 0,
$$
is a martingale with respect to $\bar\QQ^p$, for all $1\leq j\leq l$. This completes the construction of the local solutions to the martingale problem.
\end{step}

\begin{step}[Construction of global solutions]
\label{step:Global_mart_problem}
Let $p_0\in P$. We now construct a global solution to the martingale problem starting from $p_0$ using an inductive argument, which consists in building a sequence of probability measures, $\QQ^n$, that are solutions to the martingale problem up to a stopping time 
$\zeta_n$, such that $\zeta_n$ tends to $\infty$, as $n\rightarrow\infty$.

\setcounter{case}{0}
\begin{case}[Base case]
\label{case:Base_case_mart_problem}
Let $\zeta_1:=\tau_{p_0}\wedge 1$, where the stopping time $\tau_{p_0}$ is defined as in \eqref{eq:tau_p} with $p$ replaced by $p_0$.  We denote $\QQ^1:=\bar\QQ^{p_0}$, where $\bar\QQ^{p_0}$ is the probability measure built in Step \ref{step:Local_mart_problem} with $p$ replaced by $p_0$. To construct $\QQ^2$, for all $\omega\in C([0,\infty);P)$, we set $\QQ^1_{\omega}:=\bar\QQ^{\omega(\zeta_1(\omega))}$ be the probability measure constructed in Step \ref{step:Local_mart_problem}, where we choose $p:=\omega(\zeta_1(\omega))$. We denote $\zeta_2:=(\zeta_1+\tau_{\omega(\zeta_1(\omega))})\wedge 2$. Our goal is to apply \cite[Theorem 6.1.2]{Stroock_Varadhan} to the probability measures $\QQ^1$ and $\QQ^1_{\omega}$ to construct a new probability measure, 
$$
\QQ^2:=\QQ^1 \otimes_{\zeta_1} \QQ^1_{\omega},
$$
and establish that $\QQ^2$ is a solution to the martingale problem up to time $\zeta_2$. We begin by checking that the hypotheses of \cite[Theorem 6.1.2]{Stroock_Varadhan} hold in our setting. Because \cite[Hypothesis (ii) of Theorem 6.1.2]{Stroock_Varadhan} is clearly satisfied by the probability measure $\QQ^1_{\omega}$, it remains to verify that \cite[Hypothesis (i) of Theorem 6.1.2]{Stroock_Varadhan} holds. That is, we need to establish that, for all measurable sets $A\in \cF$, we have that the map $C([0,\infty);P) \ni\omega\mapsto\QQ^1_{\omega}(A)$ is $\cF_{\zeta_1}$-measurable. This is equivalent to the property that, for all Borel measurable sets, $B\in \cB([0,1])$, we have that 
\begin{equation}
\label{eq:SV_meas}
\{\omega\in C([0,\infty);P):\, \QQ^1_{\omega}(A) \in B\}\in \cF_{\zeta_1}.
\end{equation}
Notice that we have the identity:
\begin{equation}
\label{eq:Set_decomposed_in_coordinate_charts_0}
\begin{aligned}
&\left\{\omega\in C([0,\infty);P):\, \QQ^1_{\omega}(A) \in B\right\} \\
&\quad\quad = \bigcup_{\alpha} 
\left\{\omega\in C([0,\infty);P):\,
\omega(\zeta_1(\omega))\in U_{\alpha}\,\hbox{  and  }\, \bar \QQ^{\omega(\zeta_1(\omega))}(A) \in B\right\}.
\end{aligned}
\end{equation}
Without loss of generality, we can assume that we can represent
$$
A=\bigcap\limits_{i=1}^k\{\omega(t_i) \in B_i\},
$$
for all $0\leq t_1<t_2<\ldots<t_k$, $B_i\in \cB(P)$, $1\leq i\leq k$, and
$k\in\NN$. For all systems of coordinates $(U_{\alpha},
V_{\alpha},\psi_{\alpha})$, let $L^{\psi_{\alpha}}$ be an extension of the
operator from $V_{\alpha}$ to $\bar S_{n_{\alpha},m_{\alpha}}$, as constructed
in Step \ref{step:Local_mart_problem}. For all $z\in \bar
S_{n_{\alpha},m_{\alpha}}$, we let $\PP^z_{\psi_{\alpha}}$ be the unique
solution to the martingale problem associated to $L^{\psi_{\alpha}}$ with
initial condition $z$, established in \cite[Propositions 2.2 and
2.4]{Pop_2013a}. We denote $\{Z_{\alpha}(t)\}_{t\geq 0}$ the canonical process
on $\bar S_{n_{\alpha},m_{\alpha}}$. We introduce the sets
\begin{align*}
M_{\alpha} &:= \bigcap\limits_{i=1}^k\{\xi\in C([0,\infty);\bar S_{n_{\alpha},m_{\alpha}}):\, 
Z_{\alpha}(t_i\wedge \nu^{\alpha}_{\xi(0)}(\xi),\xi) \in \psi_{\alpha}(B_i)\},\\
S_{\alpha} &:= \left\{\omega\in C([0,\infty);P):\,
\omega(\zeta_1(\omega))\in U_{\alpha},\, 
\PP^{\psi_{\alpha}(\omega(\zeta_1(\omega)))}_{\psi_{\alpha}}
(M_{\alpha}) \in B\right\},
\end{align*}
where $\nu^{\alpha}_z$ is the first exit time of the process $\{Z_{\alpha}(t)\}_{t\geq 0}$ from the set $\psi_{\alpha}(\bigcap\{U_{\beta}: \psi^{-1}_{\alpha}(z) \in U_{\beta}\})$. Definition \eqref{eq:Local_mart_problem} of the probability measure $\bar\QQ^p$ and identity \eqref{eq:Set_decomposed_in_coordinate_charts_0} give us that
\begin{equation}
\label{eq:Set_decomposed_in_coordinate_charts}
\left\{\omega\in C([0,\infty);P):\, \QQ^1_{\omega}(A) \in B\right\} = \bigcup_{\alpha} S_{\alpha}.
\end{equation}
From \cite[Propositions 2.2 and 2.4]{Pop_2013a} it follows that the martingale problem for $L^{\psi_{\alpha}}$ on $\bar S_{n_{\alpha},m_{\alpha}}$ is well-posed, and so by \cite[Exercise 6.7.4 and Theorem 6.2.2]{Stroock_Varadhan}, it follows that $z\mapsto\PP^z_{\psi_{\alpha}}(M_{\alpha})$ is a Borel measurable function, for all indices $\alpha$, which implies that the set
$$
T_{\alpha}:=\left\{z\in\bar S_{n_{\alpha},m_{\alpha}}:\, \PP^z_{\psi_{\alpha}}(M_{\alpha})\in B\right\}
$$ 
is Borel measurable. Using the fact that we can write $S_{\alpha}$ as
$$
S_{\alpha}=\left\{\omega\in C([0,\infty);P):\,\omega(\zeta_1(\omega)) \in U_{\alpha}\cap \psi_{\alpha}^{-1}(T_{\alpha})\right\},
$$
we conclude that $S_{\alpha}$ is $\cF_{\zeta_1}$-measurable. Combining the preceding fact with identity \eqref{eq:Set_decomposed_in_coordinate_charts}, we obtain that property \eqref{eq:SV_meas} holds. Thus, \cite[Hypothesis (i) of Theorem 6.1.2]{Stroock_Varadhan} is satisfied, and we can now conclude that the measure $\QQ^2:=\QQ^1\otimes_{\zeta_1}\QQ^1_{\omega}$ is indeed a solution to the stopped martingale problem for the generalized Kimura operator up to the stopping time $\zeta_2$.
\end{case}

\begin{case}[Inductive step]
\label{case:Inductive_step_mart_problem}
Inductively, we define the sequence of stopping times,
\begin{equation}
\label{eq:zeta_n}
\zeta_n:=\left(\zeta_{n-1}+\tau_{\omega(\zeta_{n-1}(\omega))}\right)\wedge n,\quad\forall\, n\geq 2. 
\end{equation}
We construct a sequence of probability measures, 
$$
\QQ^n:=\QQ^{n-1} \otimes_{\zeta_{n-1}} \QQ^{n-1}_{\omega},
$$
where analogously to Step \ref{step:Local_mart_problem}, we let $\QQ^{n-1}_{\omega}:=\bar\QQ^{\omega(\zeta_{n-1}(\omega))}$, for all 
$\omega\in C([0,\infty);P)$. Employing the same argument as in the proof of Case \ref{case:Base_case_mart_problem}, we obtain that the probability measure $\QQ^n$ is a solution to the stopped martingale problem for the generalized Kimura operator up to time $\zeta_n$. 
\end{case}

Letting 
\begin{equation}
\label{eq:limit_zeta_n}
\zeta:=\lim_{n\rightarrow\infty}\zeta_n,
\end{equation}
we define the probability measure $\QQ^{p_0}$ by setting
\begin{align*}
\QQ^{p_0} (\omega(t_i\wedge\zeta)\in B_i,\, 1\leq i \leq k) := 
\lim_{n\rightarrow\infty}\QQ^n\left(\omega(t_i\wedge\zeta_n)\in B_i,\, 1\leq i \leq k\right),
\end{align*}
for all $0\leq t_1<t_2<\ldots<t_k$, $B_i\in \cB(P)$, $1\leq i \leq k$, and $k\in\NN$. The construction of the measure $\QQ^{p_0}$ is concluded once we show that:

\begin{claim}
\label{claim:limt_zeta_n}
We have that $\{\zeta = \infty\} = C([0,\infty);P)$.
\end{claim}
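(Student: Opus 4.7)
Fix $\omega \in C([0,\infty);P)$ and assume for contradiction that $\zeta(\omega) = T < \infty$. Since $\zeta_n \le n$ by the cap in \eqref{eq:zeta_n}, the cap is inactive for all $n > T$, so in that range $\zeta_n - \zeta_{n-1} = \tau_{p_n}$ (applied to the shifted path), where $p_n := \omega(\zeta_{n-1})$; the monotone convergence $\zeta_n \nearrow T$ therefore forces $\tau_{p_n} \to 0$. Continuity of $\omega$ at $T$ then gives $p_n \to q := \omega(T)$ and $\omega(\zeta_n) \to q$. By definition of $\tau_{p_n}$ as the first exit time from $W_{p_n}:=\bigcap_{\alpha\in I_{p_n}}U_\alpha$, we have $\omega(\zeta_n) \in \partial W_{p_n}$; combining this with uniform continuity of $\omega$ on the compact interval $[0, T+1]$ (modulus $\rho$) yields the key bound
\begin{equation*}
\dist\bigl(p_n,\,\partial W_{p_n}\bigr) \;\le\; d\bigl(p_n,\omega(\zeta_n)\bigr) \;\le\; \rho(\tau_{p_n}) \;\longrightarrow\; 0.
\end{equation*}

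Since $\cA$ is finite, the chart-index set $I_p := \{\alpha : p \in U_\alpha\}$ takes only finitely many values, and a pigeonhole argument lets me pass to a subsequence on which $I_{p_n} \equiv I$ is constant, so $W_{p_n} = W_I$ is fixed. Openness of each $U_\alpha \ni q$ gives $I_q \subseteq I_p$ for $p$ close enough to $q$, hence $I_q \subseteq I$. I split on whether $I = I_q$ or $I \supsetneq I_q$. If $I = I_q$, then $W_I = W_q$ is open and $q$ lies in its interior, so $\dist(p_n,\partial W_I) \to \dist(q,\partial W_q) > 0$, directly contradicting the displayed bound.

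In the remaining case $I \supsetneq I_q$, a further pigeonhole on the exit chart (the index $\alpha(n) \in I$ for which $\omega(\zeta_n) \in \partial U_{\alpha(n)}$, available because $\partial W_I \subseteq \bigcup_{\alpha \in I}\partial U_\alpha$) produces a fixed $\alpha^* \in I$ with $\omega(\zeta_n) \in \partial U_{\alpha^*}$. The sub-case $\alpha^* \in I_q$ is ruled out because then $U_{\alpha^*}$ is open around $q$ and $\omega(\zeta_n) \to q$ would force $\omega(\zeta_n) \in U_{\alpha^*}$, contradicting $\omega(\zeta_n) \in \partial U_{\alpha^*}$. Hence $\alpha^* \in I \setminus I_q$, and $p_{n+1} = \omega(\zeta_n) \in \partial U_{\alpha^*}$ forces $\alpha^* \notin I_{p_{n+1}}$, giving a one-step descent.

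The main obstacle is to promote this one-step removal into a terminating descent that eventually stabilizes at $I_q$. I would localize to a ball $B(q,r)$ on which $I_p \subseteq I_q \cup A$ with $A := \{\alpha \notin I_q : q \in \partial U_\alpha\}$ a finite set, so that $I_{p_n}$ lies in the finite family $\cI := \{J : I_q \subseteq J \subseteq I_q \cup A\}$; iterated pigeonholing on $\cI$ combined with the displayed uniform-continuity bound (which forces $p_n$ to approach $\partial U_\beta$ for each chart $\beta$ that persists in $I_{p_n} \setminus I_q$ along a subsequence, and forces the corresponding chart-removal inherited from the one-step descent across infinitely many subsequence steps) produces a sub-subsequence along which $I_{p_n} = I_q$, reducing us to the first case and closing the contradiction.
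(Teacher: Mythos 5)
You have correctly isolated the real difficulty in this claim: $\tau_{p_n}$ is the first exit time from $W_{p_n}=\bigcap_{\alpha\in I_{p_n}}U_\alpha$, not from the fixed set $W_q=\bigcap_{\alpha\in I_q}U_\alpha$, and these coincide only when $I_{p_n}=I_q$. (The paper's own proof silently identifies the two, asserting that $\tau_{\omega(\zeta_n)}$ ``is the first time the canonical process exits the set $U$''.) Your key bound $\dist(p_n,\partial W_{p_n})\le\rho(\tau_{p_n})\to 0$ and your disposal of the case $I=I_q$ are correct, and the latter is essentially the paper's argument carried out carefully.

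The gap is the case $I\supsetneq I_q$, which you only sketch, and the sketched descent cannot be completed: the one-step removal $\alpha^*\notin I_{p_{n+1}}$ is not monotone, because the path may re-enter $U_{\alpha^*}$ before the next stopping time, so $I_{p_n}\setminus I_q$ need never stabilize at the empty set. Concretely, suppose two charts satisfy, in coordinates centered at $q$, $U_2\cap B(q,r_0)=\{x<0\}\cap B(q,r_0)$ and $U_3\cap B(q,r_0)=\{y<0\}\cap B(q,r_0)$, while every other chart meeting $B(q,r_0)$ contains all of $B(q,r_0)$. The piecewise-linear path through $(0,-\eps)\to(-\tfrac{\eps}{2},0)\to(0,-\tfrac{\eps}{4})\to(-\tfrac{\eps}{8},0)\to\cdots$, traversing the $n$-th segment in time $4^{-n}$, is continuous and converges to $q$; each segment lies in the chart to be exited until its endpoint, so each stopping time lands exactly on $\partial U_2\cap U_3$ or $\partial U_3\cap U_2$, the index set $I_{p_n}\setminus I_q$ alternates between $\{3\}$ and $\{2\}$, and $\zeta_n$ converges to a finite limit. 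Thus no amount of pigeonholing on the finite family $\cI$ can force a sub-subsequence with $I_{p_n}=I_q$, and the contradiction cannot be reached by this combinatorial route. A repair requires changing the localization rather than refining the case analysis: for instance, shrink the cover to $V_\alpha$ with $\overline{V_\alpha}\subset U_\alpha$ and define $\tau_p$ as the exit time from a single chart $U_{\alpha(p)}$ with $p\in V_{\alpha(p)}$, so that $\dist(p,\partial U_{\alpha(p)})$ is bounded below by a Lebesgue-number constant and uniform continuity alone prevents $\tau_{p_n}\to 0$.
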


\begin{proof}
We assume by contradiction that there is a path $\omega\in C([0,\infty);P)$ with the property that $\zeta(\omega) < \infty$. From identities \eqref{eq:limit_zeta_n} and \eqref{eq:zeta_n}, it follows that
\begin{equation}
\label{eq:limit_tau_omega_zeta}
\tau_{\omega(\zeta_n(\omega))} \rightarrow 0,\quad\hbox{ as } n\rightarrow\infty.
\end{equation}
Let $U =U(\omega) = \bigcap\{U_{\alpha} : \omega(\zeta(\omega)) \in U_{\alpha}\}$. Then $U$ is a relatively open set in $P$ and because $\zeta_n(\omega)$ converges to $\zeta(\omega)$, as $n\rightarrow\infty$, we have that there is a rank $N=N(\omega)$ such that $\omega(\zeta_n(\omega)) \in U$, for all $n > N$. Using the definition of the stopping time $\tau_{\omega(\zeta_n(\omega))}$ in \eqref{eq:tau_p}, we know that $\tau_{\omega(\zeta_n(\omega))}$ is the first time the canonical process exits the set $U$, when started from the point $\omega(\zeta_n(\omega))$. Because the sequence of stopping times $\{\zeta_n\}_{n\in\NN}$ is non-decreasing, the canonical process started from $\omega(\zeta_n(\omega))$ will pass through all the points $\omega(\zeta_m(\omega))$, with $m > n$, and also through the point $\omega(\zeta(\omega))$ before exiting $U$. Thus, we have that
$$
\tau_{\omega(\zeta_n(\omega))} \geq \tau_{\omega(\zeta_{n+1}(\omega))} \geq \tau_{\omega(\zeta_{n+2}(\omega))}
\geq \ldots \geq \tau_{\omega(\zeta(\omega))}, \quad\forall\, n > N.
$$
The preceding property together with \eqref{eq:limit_tau_omega_zeta} give us that
\begin{equation}
\label{eq:limit_tau_omega_zeta_1}
\tau_{\omega(\zeta(\omega))} = 0.
\end{equation}
We also know that $\omega(\zeta(\omega))$ is a point of a relatively open set, and so using the continuity of the paths of the canonical process, it follows that the first time the canonical process started at $\omega(\zeta(\omega))$ exits the set $U$ is positive, i.e.
\begin{equation}
\label{eq:limit_tau_omega_zeta_2}
\tau_{\omega(\zeta(\omega))} > 0.
\end{equation}
Relations \eqref{eq:limit_tau_omega_zeta_1} and \eqref{eq:limit_tau_omega_zeta_2} are in contradiction, which implies that our assumption that $\zeta(\omega) < \infty$ is false. Because $\omega\in C([0,\infty);P)$ was arbitrarily chosen, we see that the conclusion of Claim \ref{claim:limt_zeta_n} is true.
\end{proof}

Applying Claim \ref{claim:limt_zeta_n} we conclude that $\QQ^{p_0}$ is a solution to the martingale problem associated to the generalized Kimura operator with initial condition $p_0$, because each $\QQ^n$ is a solution to the martingale problem with initial condition $p_0$ up to the stopping time $\zeta_n$ and $\zeta_n$ converges to $\infty$, as $n$ tends to $\infty$.
\end{step}

\begin{step}[Uniqueness]
\label{step:Uniqueness_mart_problem}
Assume by contradiction that the martingale problem associated to the operator $L$ on $P$ has two solutions, $\QQ^{p_0,1}$ and $\QQ^{p_0,2}$, both with initial condition $p_0$. For all $T>0$ and $f\in C^{\infty}(P)$, let $u\in C^{\infty}([0,T]\times P)$ be the unique solution to the parabolic equation $u_t-Lu=0$ on $(0,\infty)\times P$ and $u(0)=f$, given by \cite[Theorem 10.0.2]{Epstein_Mazzeo_annmathstudies}. Plugging in $\varphi(s,p):=u(T-s,p)$ in identity \eqref{eq:Mart_problem_martingales} and taking expectations under the measures $\QQ^{p_0,1}$ and $\QQ^{p_0,2}$, we obtain that
$$
\EE_{\QQ^{p_0,1}}\left[f(\omega(T))\right] = \EE_{\QQ^{p_0,2}}\left[f(\omega(T))\right].
$$
Thus, the probability measures $\QQ^{p_0,1}$ and $\QQ^{p_0,2}$ have the same marginal distributions. Combining the matching of the marginal distributions with the analogue of \cite[Proposition 5.4.27]{KaratzasShreve1998} adapted to our framework, we obtain that the probability measures $\QQ^{p_0,1}$ and $\QQ^{p_0,2}$ coincide.
\end{step}

This completes the proof.
\end{proof}

We conclude this section with

\begin{proof}[Proof of Corollary \ref{cor:Strong_Markov}]
The proof of Corollary \ref{cor:Strong_Markov} can be done using the same argument as the one employed in the proof of 
\cite[Theorem 5.4.20]{KaratzasShreve1998} and we apply the well-posedness of the martingale problem in Theorem \ref{thm:Wellposed_mart_problem}. We omit the detailed proof for brevity.
\end{proof}

\subsection{Diffusion behavior along boundary components to which the operator is tangent}
\label{subsec:Tangent_boundary}

In this section we describe the behavior of the generalized Kimura process when
it hits a tangent boundary component.  We begin by stating a property of the
restriction operator that we often use:

\begin{rmk}[Equations for the restriction operator]
\label{rmk:Sato_equation}
Let $\Sigma$ be a tangent boundary component of the manifold $P$. Let $u\in C^{\infty}([0,\infty)\times P)$ be a solution to the parabolic problem,
\begin{align*}
\left\{\begin{array}{rl}
u_t-Lu= g & \hbox{ on } (0,\infty)\times P,\\ 
u(0) = f & \hbox{ on } P. 
\end{array} \right.
\end{align*}
Then the solution $u$ restricted to the boundary component $\Sigma$, $u\restrictedto_{\Sigma}$, belongs to 
$C^{\infty}([0,\infty)\times \Sigma)$ and is a solution to the parabolic problem defined by the operator $L_{\Sigma}$:
\begin{align*}
\left\{\begin{array}{rl}
(\partial_t-L_{\Sigma}) u \restrictedto_{\Sigma} = g \restrictedto_{\Sigma} & \hbox{ on } (0,\infty)\times \Sigma,\\ 
u\restrictedto_{\Sigma}(0) = f\restrictedto_{\Sigma} & \hbox{ on } \Sigma. 
\end{array} \right.
\end{align*}
\end{rmk}

We can now state:

\begin{lem}[Hitting of boundary components to which the operator is tangent]
\label{lem:Hitting_tangent_boundary}
Suppose that the generalized Kimura operator satisfies the standard assumptions. Let $\Sigma$ be a tangent boundary component of the manifold with corners $P$. Let
\begin{equation}
\label{eq:Hitting_time_Sigma}
\tau_{\Sigma} := \inf\{t \geq 0: \omega(t)\in\Sigma\}
\end{equation}
be the first hitting time of the boundary component $\Sigma$, and let
\begin{equation}
\label{eq:Probability_hit_Sigma}
p_{\Sigma}(t,p) := \QQ^p(\tau_{\Sigma} \leq t),\quad\forall\, (t,p) \in [0,\infty)\times P. 
\end{equation}
The hitting probability $p_{\Sigma}(t,p)$ has the following properties:
\begin{enumerate}
\item[(i)] $p_{\Sigma} \in C^{\infty}((0,\infty)\times P) \cap C^{\infty}([0,\infty)\times (P\backslash\Sigma))$.
\item[(ii)] $p_{\Sigma} (t,p)>0$, for all $(t,p) \in (0,\infty)\times P\backslash \partial^T P$.
\item[(iii)] $p_{\Sigma} (t,p)=0$, for all $(t,p) \in (0,\infty)\times H_i$ such that $i\in I^T$ and $H_i\cap\Sigma = \emptyset$.
\end{enumerate}
\end{lem}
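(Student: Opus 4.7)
My plan is to identify $p_{\Sigma}$ as the solution of a parabolic Dirichlet-type problem for $L$, and then read off (i), (ii), (iii) from standard regularity/maximum principle/absorption arguments.

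First, I would establish that $\Sigma$ is \emph{absorbing} for the Kimura process: if $\Sigma$ is a connected component of $\bigcap_{j=1}^{k} H_{i_j}$ with $i_j \in I^T$, then once a path enters $H_{i_j}$ it must remain in $H_{i_j}$ almost surely. This follows from Sato's Remark 1.3 applied to each $H_{i_j}$ together with the uniqueness half of Theorem 1.1: the law of the process restricted to $H_{i_j}$ solves a martingale problem of the same type on $H_{i_j}$, and by uniqueness coincides with the restriction of $\mathbb{Q}^p$. Absorption then lets me invoke the strong Markov property (Corollary 1.2) to derive the semigroup identity
\[
p_{\Sigma}(t+s,p) = \mathbb{E}^{\mathbb{Q}^p}\bigl[p_{\Sigma}(t,\omega(s))\bigr], \qquad s,t \ge 0,\; p \in P,
\]
since for paths already in $\Sigma$ at time $s$ the integrand is $1$. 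Consequently $p_{\Sigma}$ is a bounded weak solution of $(\partial_t - L)p_{\Sigma} = 0$ on $(0,\infty)\times P$ with the Cauchy condition $p_{\Sigma}(0,\cdot) = \mathbf{1}_{\Sigma}$ and the Dirichlet-type condition $p_{\Sigma} \equiv 1$ on $\Sigma$.

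For part (i), the smoothness on $(0,\infty)\times P$ is then obtained by the regularity theory for the parabolic Kimura equation of Epstein--Mazzeo \cite[Theorem 10.0.2]{Epstein_Mazzeo_annmathstudies}: bounded weak solutions are smooth up to the full boundary (including tangent faces) for positive times, because $p_{\Sigma}(t,\cdot) = P_{t/2}[p_{\Sigma}(t/2,\cdot)]$ and the semigroup $P_s$ is smoothing. The smoothness on $[0,\infty)\times(P\setminus \Sigma)$ including $t=0$ is obtained by a barrier argument applied in a relatively open neighborhood $U \subset P$ of any $p \notin \Sigma$ with $\overline{U}\cap \Sigma = \emptyset$: comparing $p_{\Sigma}$ against the solution of the Kimura parabolic problem with zero initial data and boundary data equal to $1$ on $\partial U \cap \partial\Sigma'$, one gets uniform decay of $p_{\Sigma}$ to $0$ as $t \to 0^+$ and then smoothness from the same Epstein--Mazzeo regularity applied to the shifted problem.

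For part (ii), I would invoke the strong minimum principle for the parabolic Kimura equation. The standard assumptions (cleanness plus transversality along $\partial^{\pitchfork}P$) ensure that $P \setminus \partial^T P$ is a single ``communicating'' region with $\Sigma$: the operator is non-characteristic along each transverse face and properly elliptic in the interior, so any non-negative solution which equals $1$ on $\Sigma$ and $0$ at an interior point would be forced to vanish in a neighborhood of that point, eventually propagating to $\Sigma$ itself by a standard chain-of-balls argument, contradicting $p_{\Sigma} \equiv 1$ on $\Sigma$. For part (iii), let $i \in I^T$ with $H_i \cap \Sigma = \emptyset$ and $p \in H_i$. By the absorption on $H_i$ just established, $\mathbb{Q}^p(\omega(t)\in H_i\text{ for all } t \ge 0)=1$, hence $\tau_{\Sigma} = \infty$ $\mathbb{Q}^p$-a.s.\ and $p_{\Sigma}(t,p) = 0$.

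The main obstacle is part (i): rigorously justifying the absorption of trajectories on $\Sigma$ (which is a corner, not a hypersurface, and whose own boundary may involve further tangent/transverse faces) and using it to pass to the parabolic PDE formulation with enough regularity to quote \cite[Theorem 10.0.2]{Epstein_Mazzeo_annmathstudies}. In particular, absorption must be derived before the strong Markov identity for $p_{\Sigma}$ is available, which is delicate because the restriction operator $L_{\Sigma}$ itself may have tangent faces along $\partial\Sigma$. I expect to deal with this by peeling off the hypersurfaces $H_{i_j}$ one at a time and appealing inductively to Sato's restriction and the well-posedness of the martingale problem on each tangent stratum.
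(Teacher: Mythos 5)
Your proposal inverts the paper's order of argument: you establish absorption on the tangent strata first (the paper's Lemma 2.6) and then try to treat $p_{\Sigma}$ directly as a bounded solution of the parabolic Kimura equation, whereas the paper builds $p_{\Sigma}$ \emph{from above} as a limit of smooth solutions $u_l$ with smooth compactly supported initial data $\Phi_l\downarrow\mathbf{1}_{\Sigma}$ and deduces (i)--(iii) from uniform estimates on the $u_l$. These are genuinely different routes, but yours has two real gaps.

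For (i), the invocation of \cite[Theorem 10.0.2]{Epstein_Mazzeo_annmathstudies} does not close the argument. That theorem is an existence/uniqueness statement for \emph{smooth} initial data; it is not a smoothing estimate for merely bounded data. Your putative initial condition $\mathbf{1}_{\Sigma}$ is discontinuous on $P$, and worse, $\Sigma$ has $d\mu$-measure zero, so $\mathbf{1}_{\Sigma}=0$ in $L^{2}(P;d\mu)$ and the paper's variational/weak framework sees the wrong Cauchy data; and the classical framework requires data regular enough to apply the cited theorem. Writing $p_{\Sigma}(t,\cdot)=P_{t/2}[p_{\Sigma}(t/2,\cdot)]$ begs the question, since $p_{\Sigma}(t/2,\cdot)$ is not yet known to be continuous. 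What the paper actually uses is the \emph{uniform interior-in-time Schauder estimate} of Pop (cited as \cite[Theorem 1.2]{Pop_2013b}) applied to the smooth approximants $u_l$, which gives $\|u_l\|_{C^{\kappa}([T_0,T]\times P)}\le C$ with a constant independent of $l$; one then passes to a $C^{\infty}$ limit by Arzel\`a--Ascoli. Near $t=0$ away from $\Sigma$ the paper invokes the \emph{local} Schauder estimate \cite[Theorem 1.3]{Pop_2013b}, exploiting that $\Phi_l\equiv 0$ there for $l$ large. You would need these estimates, not Theorem 10.0.2, and you would need the approximation device because $\mathbf{1}_{\Sigma}$ is not admissible data.

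For (ii), a ``strong minimum principle'' for the degenerate operator $L$ up to tangent faces is not standard and is not stated anywhere in the paper; the operator is not uniformly elliptic and the propagation-of-zero argument must be justified. The paper instead uses the Harnack inequality established for Kimura-type operators on $P\setminus\partial^{T}P$ (\cite[Theorem 4.1]{Epstein_Mazzeo_2016}, \cite[Theorem 1.2]{Epstein_Pop_2013b}): if $p_{\Sigma}(t_0,p_0)=0$ with $p_0\notin\partial^T P$, Harnack forces $p_{\Sigma}\equiv 0$ nearby, and a chaining argument plus continuity up to $\Sigma$ contradicts $p_{\Sigma}\equiv 1$ on $\Sigma$. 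Your part (iii), via absorption on $H_i$ giving $\tau_{\Sigma}=\infty$ a.s.\ when $H_i\cap\Sigma=\emptyset$, is correct and is a simpler argument than the paper's (which deduces it from $u_l\restriction_{H_i}\equiv 0$ via Sato's restriction). So your (iii) works, but (i) and (ii) need to be rebuilt along the lines of the approximation and Harnack arguments sketched above.
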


In the next lemma, we prove that the process is absorbed when it hits a tangent boundary component.

\begin{lem}[Absorption on boundary components to which the operator is tangent]
\label{lem:Absorption_tangent_boundary}
Suppose that the generalized Kimura operator satisfies the standard assumptions. Let $\Sigma$ be a tangent boundary component of the manifold $P,$ then, we have that
\begin{equation}
\label{eq:Absorption_tangent_boundary}
\QQ^p(\omega(t) \in \Sigma,\,\forall\, t\geq\tau_{\Sigma}) = 1,\quad\forall\,p\in P,
\end{equation}
where the stopping time $\tau_{\Sigma}$ is defined in \eqref{eq:Hitting_time_Sigma}.
\end{lem}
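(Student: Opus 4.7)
The plan is to reduce to the case $p\in\Sigma$ via the strong Markov property and then exploit the cleanness assumption, together with the extension convention for the weights $B_i$, to produce non-negative local martingales that pin the path to each tangent hypersurface containing $\Sigma$. First I would dispose of $\{\tau_\Sigma=\infty\}$, where the conclusion is vacuous; on $\{\tau_\Sigma<\infty\}$, continuity of paths gives $\omega(\tau_\Sigma)\in\Sigma$, and Corollary~\ref{cor:Strong_Markov} applied at $\tau_\Sigma$ shows that the shifted process has law $\QQ^{\omega(\tau_\Sigma)}$. Thus it suffices to prove
\begin{equation*}
\QQ^q\bigl(\omega(t)\in\Sigma,\ \forall\, t\ge 0\bigr)=1 \quad\hbox{for every } q\in\Sigma.
\end{equation*}

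For the local step, I would fix $q\in\Sigma$ and choose an adapted chart $(U,V,\psi)$ centered at $q$ in which $L$ takes the form~\eqref{eq:Operator} and $\Sigma\cap U=\{x_{i_1}=\cdots=x_{i_k}=0\}$, where $i_1,\dots,i_k\in I^T$ index the tangent hypersurfaces containing $\Sigma$. By the convention on extending the weights (applied since each $B_{i_j}\restrictedto_{H_{i_j}}\equiv 0$ is a constant), the coefficients $b_{i_j}$ in~\eqref{eq:Operator} vanish identically on $V$, so $Lx_{i_j}\equiv 0$ throughout this chart. Next I would pick a relatively open $U'\Subset U$ with $q\in U'$, set $\tau:=\inf\{t\ge 0:\omega(t)\notin U'\}$, and let $\varphi_j\in C^\infty(P)$ agree with $x_{i_j}\circ\psi$ on $U'$ (e.g., by multiplication with a smooth cutoff supported in $U$). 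By Definition~\ref{defn:Martingale_problem} and optional stopping,
\begin{equation*}
t\mapsto\varphi_j(\omega(t\wedge\tau))-\int_0^{t\wedge\tau}L\varphi_j(\omega(s))\,ds
\end{equation*}
is a $\QQ^q$-martingale; since $L\varphi_j\equiv 0$ on $U'$, the integral term vanishes, so $\varphi_j(\omega(t\wedge\tau))$ is a non-negative martingale starting at $\varphi_j(q)=0$, hence identically zero. This forces $\omega(t\wedge\tau)\in H_{i_j}$ for all $t\ge 0$, $\QQ^q$-a.s.; intersecting over $j=1,\dots,k$ and using path continuity together with the fact that $\Sigma$ is a connected component of $\cap_j H_{i_j}$, I would conclude $\omega(t\wedge\tau)\in\Sigma$ for all $t\ge 0$.

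To upgrade ``for $t\le\tau$'' to ``for all $t\ge 0$'', I would iterate along the lines of the construction of the global solution in Theorem~\ref{thm:Wellposed_mart_problem}: cover $P$ by a finite atlas and define inductively $\sigma_{n+1}:=(\sigma_n+\tau^{(n)})\wedge(n+1)$, where $\tau^{(n)}$ is the analogue of $\tau$ in a chart chosen around $\omega(\sigma_n)\in\Sigma$. Applying the previous local argument at $\omega(\sigma_n)$ via the strong Markov property would give $\omega(t\wedge\sigma_{n+1})\in\Sigma$ for all $t\ge 0$ a.s., and the same continuity-plus-compactness argument used in the proof of Claim~\ref{claim:limt_zeta_n} would yield $\sigma_n\to\infty$ a.s., completing the proof.

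The main obstacle will be this globalization step: one must verify that the sequence $\{\sigma_n\}$ does not accumulate in finite time, uniformly across the compact set $\Sigma$, in particular at points where $\Sigma$ meets further tangent hypersurfaces at its own corners (so that the index set of relevant tangent directions grows). My expectation is that this can be handled by exactly the finite-atlas and path-continuity machinery already developed in Step~2 of the proof of Theorem~\ref{thm:Wellposed_mart_problem}, so no new analytic input is required—the genuine content of the lemma lies in the local martingale identity $Lx_{i_j}\equiv 0$ enforced by the cleanness condition.
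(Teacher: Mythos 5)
Your proposal is correct, and it takes a genuinely different route from the paper.  The paper does not localize or use martingales at all: it fixes a test function $\varphi\in C^\infty_c(P\setminus\Sigma)$, invokes the smooth solvability of $u_t-Lu=0$, $u(0)=\varphi$ from \cite[Theorem 10.0.2]{Epstein_Mazzeo_annmathstudies}, uses the Sato restriction operator (Remark~\ref{rmk:Sato_equation}) to see that $u\equiv0$ on $[0,\infty)\times\Sigma$, and then applies the strong Markov property at $\tau_\Sigma$ to conclude $\EE_{\QQ^p}[\varphi(\omega(t))\mathbf 1_{\{t\ge\tau_\Sigma\}}]=0$.  Your argument instead replaces the PDE input by the elementary observation that a cut-off version of the coordinate function $x_{i_j}$ is locally $L$-harmonic, so $\varphi_j(\omega(\cdot\wedge\tau))$ is a non-negative martingale started at $0$ and is therefore identically zero; the global statement then follows by iterating via the strong Markov property and the Claim~\ref{claim:limt_zeta_n}-type accumulation argument.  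Your route is more elementary in that it avoids the heat kernel machinery entirely.

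One point you should make explicit, because it is the crux of your local step.  You need $L x_{i_j}\equiv 0$, i.e.\ $b_{i_j}\equiv 0$ throughout the chart, not merely on $H_{i_j}$.  You justify this via the extension convention stated after \eqref{eq:Weight_b_i}.  That convention, read literally, only says the abstract weight $B_{i_j}$ is extended as the constant $0$; it does not directly say the operator's coefficient $b_{i_j}$ is identically zero in \eqref{eq:Operator}.  The paper does in fact use this stronger reading elsewhere (the proof of Lemma~\ref{lem:Properties_bilinear_form} explicitly treats derivatives of $b_i$ as vanishing when the weight is constant), so under the paper's standing conventions your claim holds.  But you should note that your proof is not robust to relaxing this: if $b_{i_j}\ge 0$ merely vanishes on $H_{i_j}$, then $x_{i_j}(\omega(\cdot\wedge\tau))$ is only a non-negative \emph{sub}martingale started at $0$ (since $Lx_{i_j}=b_{i_j}\ge0$), and the argument collapses.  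The paper's proof, by contrast, only requires $b_{i_j}\restrictedto_\Sigma=0$, because the well-definedness of $L_\Sigma$ in Remark~\ref{rmk:Sato} needs only that $L$ preserve the ideal of functions vanishing on $\Sigma$.  This is worth acknowledging as the real trade-off between the two proofs: you buy a shorter, PDE-free argument at the cost of leaning harder on the normalization of $b_{i_j}$.
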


We begin with 

\begin{proof}[Proof of Lemma \ref{lem:Hitting_tangent_boundary}]
We split the proof of the properties of the hitting probabilities $p_{\Sigma}$ into several steps. 

\setcounter{step}{0}
\begin{step}[Proof of property (i)]
\label{step:Hitting_prob_regularity}
For all $l\in\NN$, let $\varphi_l\in C^{\infty}([0,\infty))$ be such that 
$$
\varphi_l(s)=1,\quad\hbox{ for } 0\leq s\leq 1/l,
\quad\hbox{ and }\quad
\varphi_l(s)=0,\quad\hbox{ for } s\geq 2/l,
$$
and we set $\Phi_l(p):=\prod_{j=1}^k\varphi_l(\rho_{i_j}(p))\chi_{\Sigma}(p)$,
where we recall the definition of $H_{i_j}$ in \eqref{eq:Sigma}. The function
$\rho_i(p)$ denotes the Riemannian distance on the manifold $P$ from the point
$p$ to the boundary hypersurface $H_i,$ and $\chi_{\Sigma}$ is a non-negative
smooth function on $P$ that is 1 on $\Sigma$ and zero on the other components of the
intersection in \eqref{eq:Sigma}. Let $u_l\in C^{\infty}([0,\infty)\times P)$
be the unique solution, given by \cite[Theorem 1.5]{Pop_2013b}, to the
initial-value problem:
\begin{align*}
\left\{\begin{array}{rl}
u_t-Lu=0 & \hbox{ on } (0,\infty)\times P,\\ 
u(0) = \Phi_l& \hbox{ on } P. 
\end{array} \right.
\end{align*}
Letting $\varphi(s,p):=u_l(t-s,p)$, for all $(s,p)\in [0,t]\times P$, in \eqref{eq:Mart_problem_martingales} it follows from \eqref{eq:Mart_problem_martingales}, by taking expectation under the measure $\QQ^p$, that
$$
u_l(t,p) = \EE_{\QQ^p} \left[\Phi_l(\omega(t))\right],
$$
which, using definition \eqref{eq:Hitting_time_Sigma}, can be written as
\begin{equation}
\label{eq:Hitting_probability_approximation}
u_l(t,p) = \EE_{\QQ^p}\left[\Phi_l(\omega(t))\mathbf{1}_{\{\tau_{\Sigma}\leq t\}}\right]
+ \EE_{\QQ^p}\left[\Phi_l(\omega(t))\mathbf{1}_{\{\tau_{\Sigma} > t\}}\right].
\end{equation}
The maximum principle \cite[Corollary 5.2]{Pop_2013b} implies that
$$
\sup_{(t,p)\in [0,\infty)\times P} |u_l(t,p)| \leq 1,\quad\forall\, l\in\NN,
$$
and the supremum estimates \cite[Theorem 1.2]{Pop_2013b} show that, for all $0<T_0<T$ and for all $\kappa\in\NN$, there is a positive constant, $C=C(\kappa,T_0,T)$, such that
$$
\|u_l\|_{C^{\kappa}([T_0,T]\times P)} \leq C,\quad\forall\, l, \kappa\in \NN.
$$
Applying the Arzel\`a-Ascoli Theorem, the sequence of functions
$\{u_l\}_{l\in\NN}$ contains a subsequence that converges in
$C^{\kappa}([T_0,T]\times P)$, for all $\kappa\in\NN$, to a function $u\in
C^{\infty}((0,\infty)\times P)$. Notice that for all points $p\notin \Sigma$,
there is a positive constant $r$ and an integer $l_0\in\NN$ such that $\Phi_l
=0$ on $B_r(p)$, for all $l\geq l_0$. Thus, the local a priori Schauder
estimates \cite[Theorem 1.3]{Pop_2013b} imply that, for all $T>0$ and for all
$\kappa\in\NN$, there is a positive constant, $C=C(\kappa,r,T_0,T)$, such that
$$
\|u_l\|_{C^{\kappa}([0,T]\times B_r(p))} \leq C,\quad\forall\, l,\kappa\in \NN.
$$
We conclude from here that $u$ belongs to $C^{\infty}([0,\infty)\times(P\backslash \Sigma))$. 

We next show that $u=p_{\Sigma}$. Letting $l$ tend to $\infty$ in equation \eqref{eq:Hitting_probability_approximation} and using the fact that $\{\Phi_l\}_{l\geq 0}$ is a sequence of uniformly bounded functions that tend to $0$ on $P\backslash\Sigma$, it follows that
$$
\EE_{\QQ^p}\left[\Phi_l(\omega(t))\mathbf{1}_{\{\tau_{\Sigma} > t\}}\right] \rightarrow 0,\quad\hbox{ as } l\rightarrow\infty,
$$
and so, we obtain that
\begin{align*}
u(t,p) &= \lim_{l\rightarrow\infty} \EE_{\QQ^p}\left[\Phi_l(\omega(t))\mathbf{1}_{\{\tau_{\Sigma} \leq t\}}\right]\\
&= \lim_{l\rightarrow\infty} 
\EE_{\QQ^p}\left[\mathbf{1}_{\{\tau_{\Sigma} \leq t\}}\EE_{\QQ^p}\left[\Phi_l(\omega(t))\big{|} \cF_{\tau_{\Sigma}}\right]\right]\\
&= \lim_{l\rightarrow\infty} \EE_{\QQ^p}\left[\mathbf{1}_{\{\tau_{\Sigma} \leq t\}}
\EE_{\QQ^{\omega(\tau_{\Sigma}(\omega))}}\left[\Phi_l(\omega(t-\tau_{\Sigma}(\omega)))\right]\right],
\end{align*}
where in the second line we use the law of iterated expectations, and in the
last line of the preceding sequence of identities we use the strong Markov
property of the family of probability measure, $\{\QQ^p: p\in P\}$, established
in Corollary \ref{cor:Strong_Markov}. Using again equation
\eqref{eq:Hitting_probability_approximation}, we have that
\begin{equation}
\label{eq:Hitting_prob_approx}
u(t,p) = \lim_{l\rightarrow\infty} \EE_{\QQ^p}\left[\mathbf{1}_{\{\tau_{\Sigma} \leq t\}}
u_l(t-\tau_{\Sigma}(\omega)),\omega(\tau_{\Sigma}(\omega)))\right].
\end{equation}
From Remark \ref{rmk:Sato_equation}, we have that the functions 
$u_l\restriction_{[0,\infty)\times\Sigma}$ are smooth solutions to the initial-value problems,
\begin{align*}
\left\{\begin{array}{rl}
u_t-L_{\Sigma}u=0 & \hbox{ on } (0,\infty)\times \Sigma,\\ 
u(0) = 1& \hbox{ on } \Sigma. 
\end{array} \right.
\end{align*}
By the uniqueness of smooth solutions, established in \cite[Theorem 10.0.2]{Epstein_Mazzeo_annmathstudies}, it follows that 
$$
u_l\restriction_{[0,\infty)\times\Sigma}\equiv 1,\quad\forall\, l\in\NN,
$$
which implies that
$$
u_l(t,p) = \EE_{\QQ^p}\left[\Phi_l(\omega(t))\right] = 1,\quad\forall\, (t,p)\in [0,\infty)\times\Sigma,\quad\forall\, l\in \NN.
$$
Using this property in equation \eqref{eq:Hitting_prob_approx} and the definition of $\Phi_l$, it follows that
$$
u(t,p) = \EE_{\QQ^p}\left[\mathbf{1}_{\{\tau_{\Sigma} \leq t\}}\right] = \QQ^p(\tau_{\Sigma} \leq t).
$$
Since we established that $u$ belongs to $C^{\infty}((0,\infty)\times P))\cap C^{\infty}([0,\infty)\times (P\backslash\Sigma))$, the preceding equality  and \eqref{eq:Probability_hit_Sigma} imply that the hitting probability $p_{\Sigma}$ belongs to the same space of functions.
\end{step}

\begin{step}[Proof of property (ii)]
\label{step:Hitting_prob_positivity}
From the proof of Step \ref{step:Hitting_prob_regularity}, it follows that $p_{\Sigma}$ is a non-negative solution to equation 
$$
(\partial_t-L)p_{\Sigma} = 0\quad\hbox{ on } (0,\infty)\times P.
$$
Notice from Step \ref{step:Hitting_prob_regularity} that $p_{\Sigma}(t,p)=1$,
for all $(t,p)\in [0,\infty)\times\Sigma$. It remains to prove that
$p_{\Sigma}(t,p)$ is positive, for all $(t,p)\in
(0,\infty)\times(P\backslash \partial^T P)$. Assume by contradiction there is a
point $(t_0, p_0)\in (0,\infty)\times (P\backslash \partial^T P)$ such that
$p_{\Sigma}(t_0,p_0)=0$. Because $p_0\notin\pa^T P$ we can apply the Harnack
inequality \cite[Theorem 4.1]{Epstein_Mazzeo_2016}, \cite[Theorem
1.2]{Epstein_Pop_2013b} to conclude that there is a neighborhood of the point
$(t_0,p_0)$ where the function $p_{\Sigma}$ is identically $0$. Applying the
Harnack inequality to neighboring points, we conclude that $p_{\Sigma}$ is
identically $0$ on $(0,\infty)\times (P\backslash \partial^T P)$. Using the
continuity of the function $p_{\Sigma}$ on $(0,\infty)\times P$, we obtain in
particular that $p_{\Sigma} \equiv 0$ on $(0,\infty)\times\Sigma$, which
contradicts the proof in Step \ref{step:Hitting_prob_regularity}, where we
found that $p_{\Sigma} \equiv 1$, on $(0,\infty)\times\Sigma$. This concludes
the proof of property (ii).
\end{step}

\begin{step}[Proof of property (iii)]
Let $i\in I^T$ be such that $H_i\cap\Sigma=\emptyset$, where we recall the definition of $I^T$ in \eqref{eq:I_tangent}. From the definition of the function $\Phi_l$ in Step \ref{step:Hitting_prob_regularity}, it follows that there is $l_0\in\NN$ such that 
$\Phi_l \equiv 0$ on $H_i$, for all $l\geq l_0$. Because the operator $L$ is tangent to $H_i$, we obtain from Remark 
\ref{rmk:Sato_equation} that $u_l$ is a smooth solution to the parabolic equation $(\partial_t-L_{H_i}) u_l = 0$ on 
$(0,\infty)\times H_i$ with zero initial data, for all $l\geq l_0$. It follows that the functions $u_l\restriction_{[0,\infty)\times H_i}\equiv 0$ and letting $l$ tend to $\infty$, we obtain that the hitting distribution $p_{\Sigma}\restriction_{[0,\infty)\times H_i}\equiv 0$.
\end{step}
The proof of now complete.
\end{proof}

\begin{proof}[Proof of Lemma \ref{lem:Absorption_tangent_boundary}]
Because the paths of the canonical process are continuous and the boundary component $\Sigma$ is a closed set, we can write
$$
\{\omega(t)\in\Sigma,\,\forall\,t\geq \tau_{\Sigma}\} = \bigcap_{t \in \QQ_+}\{\omega(t)\in\Sigma,\,t\geq \tau_{\Sigma}\},
$$
and so, property \eqref{eq:Absorption_tangent_boundary} follows as soon as we prove that
\begin{equation}
\label{eq:Absorption_tangent_boundary_at_t}
\QQ^p(\omega(t)\in\Sigma,\ t\geq \tau_{\Sigma}) = 1,\quad\forall\, t\in\QQ_+,\quad\forall\, p\in P.
\end{equation}
In order to establish identity \eqref{eq:Absorption_tangent_boundary_at_t}, it is sufficient to show that, for all functions $\varphi\in C^{\infty}_c(P\backslash\Sigma)$, we have that
\begin{equation}
\label{eq:Absorption_tangent_boundary_at_t_test_function}
\EE_{\QQ^p}\left[\varphi(\omega(t)) \mathbf{1}_{\{t\geq \tau_{\Sigma}\}}\right] = 0.
\end{equation}
Let $u\in C^{\infty}([0,\infty)\times P)$ be the unique smooth solution to the equation $u_t-Lu=0$ on $(0,\infty)\times P$, with initial condition $u(0)=\varphi$, given by \cite[Theorem 10.0.2]{Epstein_Mazzeo_annmathstudies}. Because the operator $L$ is tangent to $\Sigma$ and $\varphi\restriction_{\Sigma}\equiv0$, it follows from Remark \ref{rmk:Sato_equation} that 
$u\equiv 0$ on $[0,\infty)\times\Sigma$. Using the test function $(s,p)\mapsto u(t-s,p)$ in equation \eqref{eq:Mart_problem_martingales}, it follows that
\begin{equation}
\label{eq:Evaluation_on_Sigma}
u(t,p) = \EE_{\QQ^p}\left[\varphi(\omega(t))\right] =0,\quad\forall\, (t,p)\in (0,\infty)\times\Sigma.
\end{equation}
Using the strong Markov property established in Corollary \ref{cor:Strong_Markov}, we have that
\begin{align*}
\EE_{\QQ^p}\left[\varphi(\omega(t)) \mathbf{1}_{\{t\geq \tau_{\Sigma}\}}\right]
&= \EE_{\QQ^p}\left[\mathbf{1}_{\{t\geq \tau_{\Sigma}\}}\EE_{\QQ^p}\left[\varphi(\omega(t))\big{|}\cF_{\tau_{\Sigma}}\right]\right]\\
&= \EE_{\QQ^p}\left[\mathbf{1}_{\{t\geq \tau_{\Sigma}\}}\EE_{\QQ^{\omega(\tau_{\Sigma}(\omega))}}\left[\varphi(\omega(t-\tau_{\Sigma}(\omega)))\right]\right]\\
&= \EE_{\QQ^p}\left[\mathbf{1}_{\{t\geq \tau_{\Sigma}\}} u(t-\tau_{\Sigma},\omega(\tau_{\Sigma}))\right]
\quad\hbox{(by the first equality in \eqref{eq:Evaluation_on_Sigma})}\\
&=0\quad\hbox{(by the second equality in \eqref{eq:Evaluation_on_Sigma})}.
\end{align*}
Thus, we conclude that identity \eqref{eq:Absorption_tangent_boundary_at_t_test_function} indeed holds, which implies \eqref{eq:Absorption_tangent_boundary_at_t}. This completes the proof.
\end{proof}

\section{The Dirichlet heat kernel}
\label{sec:Dirichlet_heat_kernel}

In \S \ref{sec:Bilinear_form} we introduce a bilinear form associated to the generalized Kimura operator. This allows us to give a variational formulation to the parabolic equation defined by the generalized Kimura operator and define the notion of weak solutions in \S \ref{sec:Weak_solutions}. The weak solutions to the initial value problem associated to the generalized Kimura operator form a semigroup that admits an integral representation in terms of the \emph{Dirichlet heat kernel}. We then introduce in \S \ref{sec:Adjoint} the adjoint operator of the generalized Kimura operator (with respect to a suitable weighted Sobolev space) and prove that the Dirichlet heat kernel is a weak solution to the adjoint parabolic equation. This enables us to study the boundary regularity properties of the Dirichlet heat kernel, which in turn determines the \emph{hitting distributions} of the Kimura diffusion on tangent boundaries of the compact manifold $P$.

\subsection{A bilinear form associated to the generalized Kimura operator}
\label{sec:Bilinear_form}
We define a bilinear form $Q(u,v)$ associated to the generalized Kimura operator and we prove that $Q(u,v)$ is continuous and satisfies the G\r{a}rding inequality on suitable weighted Sobolev spaces. Following \cite[\S 3]{Epstein_Mazzeo_2016}, we can associate a bilinear form $Q(u,v)$ to the generalized Kimura operator by letting
\begin{equation}
\label{eq:Q_L}
Q(u,v) = -(Lu,v)_{L^2(P;d\mu)},
\end{equation}
which can be written as
\begin{equation}
\label{eq:Bilinear_form}
Q(u,v) := Q_{\hbox{\tiny{sym}}}(u,v) + (Vu, v)_{L^2(P; d\mu)},\quad\forall\, u,v \in C^{\infty}_c(\Int(P)),
\end{equation}
where $Q_{\hbox{\tiny{sym}}}(u,v)$ is a symmetric bilinear form and $V$ is a first order vector field on the manifold $P$. When written in a local system of coordinates on a neighborhood $B^{\infty}_R$ of the origin in $\bar S_{n,m}$, the symmetric bilinear form $Q_{\hbox{\tiny{sym}}}(u,v)$ takes the form
\begin{equation}
\label{eq:Q_sym}
\begin{aligned}
Q_{\hbox{\tiny{sym}}} (u,v) 
&= \int_{B^{\infty}_R}\left(\sum_{i=1}^nx_iu_{x_i}v_{x_i} 
+ \sum_{i,j=1}^n \frac{1}{2}x_ix_ja_{ij}(u_{x_i}v_{x_j} + u_{x_j}v_{x_i})\right)\, d\mu\\
&\quad + \int_{B^{\infty}_R}\left(\sum_{i=1}^n\sum_{l=1}^m \frac{1}{2}x_ic_{il}(u_{x_i}v_{y_l} + u_{y_l}v_{x_i}) 
+ \sum_{l,k=1}^m \frac{1}{2}d_{lk}(u_{y_l}v_{y_k} + u_{y_k}v_{y_l})\right)\, d\mu,
\end{aligned}
\end{equation}
for all $u,v\in C^1(B^{\infty}_R)$, and $d\mu,$ the weighted measure,
is defined by analogy with \eqref{eq:Weight} by
\begin{equation}
\label{eq:Weight_local}
d\mu(z) =\prod_{i=1}^n x_i^{b_i(z)-1}\, dx_i\prod_{l=1}^m dy_l,\quad\forall\, z=(x,y)\in B^{\infty}_R.
\end{equation}
The first order vector field $V$ satisfies
\begin{equation}
\label{eq:V}
\begin{aligned}
V &= 
\sum_{i=1}^n x_i 
\left(\alpha_i(z) +\sum_{k=1}^n\ln x_k\left(\sum_{j=1}^n\gamma_{ikj}(z) \cdot\partial_{x_j}b_k\left(z\right)
 + \sum_{l=1}^m\nu_{ikl}(z) \cdot\partial_{y_l}b_k\left(z\right)\right)\right)\partial_{x_i}\\
&\quad+\sum_{l,k=1}^m\sum_{j=1}^n \beta_{lkj}(z)(\ln x_j\cdot\partial_{y_k} b_j\left(z\right)) \partial_{y_l},
\end{aligned}
\end{equation}
where the functions $\alpha_i,\beta_{lkj},\gamma_{ikj}, \nu_{ikl}:\bar \cB_R\rightarrow\RR$ are smooth. But for the log-singularities, this is a tangent vector field,
which is why it is, in essence, a lower order perturbation. We define the space of
functions $H^1(P;d\mu)$ to be the closure of $C^1_c(P\backslash \partial^T P)$
with respect to the norm:
\begin{equation}
\label{eq:H_1_norm}
\|u\|^2_{H^1(P;d\mu)} := Q_{\hbox{\tiny{sym}}} (u, u) + \|u\|^2_{L^2(P;d\mu)},
\end{equation}
where we recall that the portion of the boundary $\partial^T P$ is
defined in \eqref{eq:Tangent_boundary}.

In the next result we prove that the bilinear form $Q(u,v)$ is continuous and satisfies the G\r{a}rding inequality. These properties where also established in \cite[\S 3]{Epstein_Mazzeo_2016} in the case when the weights $\{B_i:1\leq i\leq n\}$ of the generalized Kimura operator are all positive. In Lemma \ref{lem:Properties_bilinear_form} we show how to extend these results to the case when the weights $\{B_i:1\leq i\leq n\}$ satisfy the cleanness condition, \eqref{eq:Cleanness}, and so we can allow for zero weights. The reason why this situation needs consideration is that the first order vector field $V$ in \eqref{eq:V} can have logarithmically divergent terms.

\begin{lem}[Properties of the bilinear form $Q(u,v)$]
\label{lem:Properties_bilinear_form}
Suppose that the generalized Kimura operator satisfies the standard
assumptions, then for all $\eps>0$, there is a positive constant,
$C=C(L,\eps,m,n)$, such that for all $u,v\in H^1(P;d\mu)$, we have
that
\begin{equation}
\label{eq:Control_perturbation}
\left|(Vu,v)_{L^2(P;d\mu)}\right| \leq \|u\|_{H^1(P;d\mu)}\left(\eps \|v\|_{H^1(P;d\mu)} + C\|v\|_{L^2(P;d\mu)}\right).
\end{equation}
Moreover, there are positive constants, $c_1, c_2$ and $c_3$, depending on the coefficients of the generalized Kimura operator, such that for all $u,v\in H^1(P;d\mu)$, we have that
\begin{align}
\label{eq:Continuity_Dirichlet_form}
|Q(u,v)| &\leq c_1 \|u\|_{H^1(P;d\mu)} \|v\|_{H^1(P;d\mu)},\\
\label{eq:Coercivity_Dirichlet_form}
Q(u,u) & \geq c_2 \|u\|_{H^1(P;d\mu)}^2 - c_3 \|u\|_{L^2(P;d\mu)}^2.
\end{align}
\end{lem}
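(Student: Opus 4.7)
The plan is to reduce all three estimates to the perturbation bound \eqref{eq:Control_perturbation}. Once that is in hand, \eqref{eq:Continuity_Dirichlet_form} follows by combining it (taking $\eps = 1$) with the trivial Cauchy--Schwarz inequality $|Q_{\mathrm{sym}}(u,v)| \leq \|u\|_{H^1(P;d\mu)} \|v\|_{H^1(P;d\mu)}$, which is built into the definition \eqref{eq:H_1_norm}, and \eqref{eq:Coercivity_Dirichlet_form} follows by specializing \eqref{eq:Control_perturbation} to $v=u$ with $\eps$ small enough to absorb the perturbation into the identity $Q_{\mathrm{sym}}(u,u) = \|u\|^2_{H^1(P;d\mu)} - \|u\|^2_{L^2(P;d\mu)}$.

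The main work is \eqref{eq:Control_perturbation}. First I would localize using a smooth partition of unity subordinate to a finite atlas of coordinate charts on $P$; this reduces the estimate to one on a local ball $B^\infty_R \subseteq \bar S_{n,m}$, where $L$ has the normalized form \eqref{eq:Operator} and $V$ is given explicitly by \eqref{eq:V}. I would then split the analysis of each term of $V$ according to whether the face $H_k$ associated with the logarithm $\ln x_k$ is transverse ($b_k \geq \beta_0$ on $H_k$) or tangent ($b_k \equiv 0$ on $H_k$), invoking the cleanness Assumption~\ref{assump:Cleanness}. For the transverse case the weight $x_k^{b_k-1}$ behaves like the integrable density $x_k^{\beta_0-1}$ near $H_k$, and the argument of \cite{Epstein_Mazzeo_2016} applies essentially without change: a direct Cauchy--Schwarz followed by an $\eps$-Young inequality suffices.

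The new difficulty is the tangent case. Here cleanness lets us factor $b_k(z) = x_k\,\tilde b_k(z)$ with $\tilde b_k$ smooth, which forces $\partial_{x_j} b_k = x_k \partial_{x_j}\tilde b_k$ for $j \neq k$ and $\partial_{y_l} b_k = x_k \partial_{y_l}\tilde b_k$. Since $x_k|\ln x_k|$ is bounded on $(0,1]$, every contribution of these types to $V$ has a bounded coefficient and is harmless. What remains are only the terms coming from the diagonal derivative $\partial_{x_k} b_k = \tilde b_k + x_k \partial_{x_k}\tilde b_k$, which produce a genuinely logarithmic contribution of the schematic form $x_i\,\tilde b_k\,\ln x_k\,u_{x_i}$ (trivial again when $i=k$, since then the prefactor is the bounded $x_k\ln x_k$), and a similar contribution in the $\partial_{y_l}$ part of $V$. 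These have to be paired with $v$ in $L^2(P;d\mu)$ whose density degenerates like $x_k^{-1}\,dx_k$ near $H_k$.

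To handle them I would split via Cauchy--Schwarz,
\begin{equation*}
\Bigl|\int x_i\,\tilde b_k\,\ln x_k\,u_{x_i}\,v\, d\mu\Bigr|
\leq
\Bigl(\int x_i u_{x_i}^2\,d\mu\Bigr)^{1/2}
\Bigl(\int x_i |\ln x_k|^2 v^2\,d\mu\Bigr)^{1/2},
\end{equation*}
and bound the second factor by means of a weighted Hardy-type inequality
\begin{equation*}
\int_P |\ln x_k|^2\, v^2\, d\mu
\;\leq\; C\, Q_{\mathrm{sym}}(v,v) + C\|v\|^2_{L^2(P;d\mu)},
\qquad v\in H^1(P;d\mu),
\end{equation*}
proved for each tangent face $H_k$. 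Because $H^1(P;d\mu)$ is the closure of $C^1_c(P\setminus\partial^T P)$, it suffices to establish this for test functions compactly supported away from $H_k$; for such $v$ the pointwise bound $|v(x_k)|^2 \leq x_k \int_0^{x_k} |v_{x_k}(t)|^2\, dt$ combined with Fubini (noting that $\int_0^\delta |\ln x|^2\,dx < \infty$) reduces the inequality to the control of $\int v_{x_k}^2\,dx_k$, which is comparable to $Q_{\mathrm{sym}}(v,v)$ near $H_k$ since $x_k v_{x_k}^2\,d\mu$ is, up to a bounded factor $x_k^{b_k}$, equal to $v_{x_k}^2\,dx_k$ times the transverse measure. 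An analogous Hardy inequality, using the estimate $|v(x_k)|^2 \leq 2|v(\delta)|^2 + C x_k^{1-\beta_0}\int |v_{x_k}|^2 t^{\beta_0}\,dt$, handles the transverse variables uniformly. Applying $\eps$-Young yields \eqref{eq:Control_perturbation} with the claimed constants.

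The main obstacle will be the weighted Hardy inequality on a tangent face: the singular measure $d\mu \sim x_k^{-1}\,dx_k$ is precisely borderline for logarithmic integrability, so one has to use the $x_k$-factor in the pointwise bound $|v(x_k)|^2 \leq x_k \int_0^{x_k}|v_{x_k}|^2\,dt$ in a critical way. Once this is in place, the rest of the proof is bookkeeping across the finitely many indices $i,j,k,l$ and the finitely many charts in the atlas.
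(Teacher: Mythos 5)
Your proposal is correct, but it diverges from the paper's argument at the point where the tangent faces are treated, and it is worth noting what each approach buys.

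The paper's proof exploits the convention, stated in the introduction, that when $B_i\restrictedto_{H_i}$ is constant (in particular when it vanishes), the extension $b_i$ of that weight to $P$ is taken to be the \emph{constant} function, so that $\partial_{x_j}b_i\equiv 0$ and $\partial_{y_l}b_i\equiv 0$. Since every logarithmic term in \eqref{eq:V} carries a factor $\partial_\bullet b_k$, the logarithms associated with tangent faces simply disappear: after localizing by a partition of unity, the singular weight $q_\alpha$ only contains $|\ln x_j|^2$ for $j\notin J_\alpha$, and the cleanness condition forces $b_j\geq\beta_0$ there. The estimate then reduces to the positive-weight case of \cite[Lemma~1.1]{Epstein_Mazzeo_2016}, where $x_j^{b_j-1}|\ln x_j|^2$ is integrable with margin. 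Your proof does not use this convention: you take a general smooth extension with $b_k\restrictedto_{H_k}=0$, so $b_k=x_k\tilde b_k$, and then the diagonal derivative $\partial_{x_k}b_k=\tilde b_k+x_k\partial_{x_k}\tilde b_k$ leaves a genuinely logarithmic remainder. You control it with the weighted Hardy inequality
\begin{equation*}
\int_P |\ln x_k|^2 v^2\, d\mu \;\leq\; \eps\, Q_{\mathrm{sym}}(v,v) + C(\eps)\, \|v\|^2_{L^2(P;d\mu)}
\end{equation*}
for tangent $k$, which, as you note, is exactly at the borderline because $x_k^{-1}|\ln x_k|^2\,dx_k$ is nonintegrable; the saving comes precisely from the factor $x_k$ in $|v(x_k)|^2\leq x_k\int_0^{x_k}|v_{x_k}|^2\,dt$ and from the vanishing of $\int_0^\delta|\ln x|^2\,dx$ as $\delta\to 0$, which supplies the small factor $\eps$ after splitting $[0,\delta]\cup[\delta,1]$ and integrating the fibered estimate against the transverse part of $d\mu$. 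Your route is therefore valid, and it is somewhat more robust than the paper's in that it does not depend on the normalizing choice of extension; the cost is that you must establish the Hardy inequality on tangent faces, which the paper's convention renders unnecessary. One small imprecision worth flagging: as written you stated the Hardy bound with a fixed constant $C$ in front of $Q_{\mathrm{sym}}$, which would only yield $C\|v\|_{H^1}$ rather than $\eps\|v\|_{H^1}$; the version with $\eps$ is needed for \eqref{eq:Control_perturbation} and is obtainable by the $\delta$-splitting described above, so this is a presentational rather than a substantive gap.
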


\begin{proof}
It is sufficient to prove estimate \eqref{eq:Control_perturbation} because
inequalities \eqref{eq:Continuity_Dirichlet_form} and
\eqref{eq:Coercivity_Dirichlet_form} are  straightforward consequences of
\eqref{eq:Control_perturbation}, of definition~\eqref{eq:H_1_norm} of the norm $\|\cdot\|_{H^1(P;d\mu)}$, and of the expressions \eqref{eq:Q_sym} and \eqref{eq:V}. Let 
$\cA:=\{(\psi_{\alpha}, U_{\alpha}, V_{\alpha})\}_{\alpha}$ be a finite atlas of adapted local coordinates on the compact manifold $P$ with corners. Let $\{\varphi_{\alpha}\}_{\alpha}$ be a partition of the unity subordinate to the open cover $\{U_{\alpha}\}_{\alpha}$. Then we have that
\begin{equation}
\label{eq:Control_perturbation_1}
\left|(Vu,v)_{L^2(P;d\mu)}\right| \leq \sum_{\alpha} \left|(Vu,\varphi_{\alpha}v)_{L^2(P;d\mu)}\right|.
\end{equation}
For $\alpha$ fixed, we assume that the relatively open set $U_{\alpha}\subset P$ 
is homeomorphic to a relatively open neighborhood of the origin in 
$\bar S_{n_{\alpha}, m_{\alpha}}$. We denote by 
$J_{\alpha}\subseteq\{1,\ldots,n_{\alpha}\}$ the set of indices
corresponding to the constant weights of the generalized Kimura operator when written in the
coordinate chart $(\psi_{\alpha}, U_{\alpha}, V_{\alpha})$. Identity \eqref{eq:V} implies that there is a positive constant, $C=C(L, m_{\alpha}, n_{\alpha})$, such that
\begin{align*}
\left|(Vu,\varphi_{\alpha}v)_{L^2(P;d\mu)}\right| 
&\leq C\int_{S_{n_{\alpha},m_{\alpha}}} \left(1+\sum_{j\notin J_{\alpha}}|\ln x_j|\right) \left(\sum_{i=1}^{n_{\alpha}} x_i|u_{x_i}|+\sum_{l=1}^{m_{\alpha}}|u_{y_l}|\right) |\varphi_{\alpha}v|\, d\mu,
\end{align*}
where we used the fact that the terms $\ln x_i$ in \eqref{eq:V} are multiplied by derivatives of $b_i$, which are zero when the weight $b_i$ is constant. Denoting by $q_{\alpha}(z)$ the first parenthesis in the preceding inequality and applying H\"older's inequality, we obtain
\begin{equation}
\label{eq:Control_perturbation_2}
\left|(Vu,\varphi_{\alpha}v)_{L^2(P;d\mu)}\right| 
\leq \|u\|_{H^1(P;d\mu)} \left[\int_{S_{n_{\alpha},m_{\alpha}}}
  |q_{\alpha}| |\varphi_{\alpha}v|^2\, d\mu\right]^{\frac 12},
\end{equation}
where there is a positive constant, $C_{\alpha}$, such that
$$
|q_{\alpha}(z)| \leq C_{\alpha} \left(1+\sum_{j\notin J_{\alpha}} |\ln x_j|^2\right),\quad\forall\, z\in S_{n_{\alpha},m_{\alpha}}.
$$
The definition of the set $J_{\alpha}$ implies that the function $q_{\alpha}(z)$ contains logarithmically divergent terms $\ln x_j$ only along coordinates $j\notin J_{\alpha}$, for which the corresponding weight, $b_j$, is positive on $\{x_j=0\}\cap\partial B^{\infty}_1$. This observation allows us to easily adapt the proof of \cite[Lemma 1.1]{Epstein_Mazzeo_2016}, from the case of positive weights  on all boundary components to the case of weights that satisfy the cleanness condition \eqref{eq:Cleanness}, and obtain that for all $\eps>0$, there is a positive constant, $C=C(\eps)$, such that
\begin{equation}
\label{eq:Control_perturbation_3}
\left[\int_P |q_{\alpha}| |\varphi_{\alpha}v|^2\, d\mu\right]^{\frac 12}\leq \eps\|v\|_{H^1(P;d\mu)} + C\|v\|_{L^2(P;d\mu)}.
\end{equation}
Estimates \eqref{eq:Control_perturbation_1}, \eqref{eq:Control_perturbation_2}, and \eqref{eq:Control_perturbation_3} imply inequality \eqref{eq:Control_perturbation}. This concludes the proof.
\end{proof}

\subsection{The semigroup associated to the generalized Kimura operator and the Dirichlet heat kernel}
\label{sec:Weak_solutions}
We prove that weak solutions to the initial value problem associated to the generalized Kimura operator,
\begin{equation}
\label{eq:Initial_value_problem}
\begin{aligned}
\left\{\begin{array}{rl}
u_t-Lu=0 & \hbox{ on } (0,T)\times P,\\ 
u(0) = f& \hbox{ on } P, 
\end{array} \right.
\end{aligned}
\end{equation}
where $T>0$, generate a strongly continuous contraction semigroup on $L^2(P;d\mu)$. We then establish that the semigroup admits an integral representation in terms of the so-called Dirichlet heat kernel. The Dirichlet heat kernel gives us the distributions of the paths of the Kimura processes that are not absorbed on the tangent boundary components of the compact manifold with corners. This is the main result of this section, which we prove in Proposition \ref{prop:Dirichlet_heat_kernel_existence}.

To give the definition of weak solutions to the parabolic equation \eqref{eq:Initial_value_problem}, we first need to introduce additional function spaces. We let $C([0,T], L^2(P;d\mu))$ be the space of continuous functions, 
$u:[0,T]\rightarrow L^2(P;d\mu)$, endowed with the norm
$$
\|u\|_{C([0,T], L^2(P;d\mu))}:=\sup_{t\in [0,T]} \|u(t)\|_{L^2(P;d\mu)} <\infty.
$$
We let $L^2((0,T),L^2(P;d\mu))$ denote the space of measurable functions, $u:(0,T)\times P\rightarrow\RR$, endowed with the norm,
$$
\|u\|^2_{L^2((0,T),L^2(P;d\mu))}:=\int_{(0,T)} \|u(t)\|^2_{L^2(P;d\mu)} \, dt<\infty.
$$
Let $H^{-1}(P;d\mu)$ denote the dual space of $H^1(P;d\mu)$. We define the spaces $L^2((0,T),H^1(P;d\mu))$ and 
$L^2((0,T),H^{-1}(P;d\mu))$ analogously to $L^2((0,T),L^2(P;d\mu))$ by replacing the space $L^2(P;d\mu)$ by $H^1(P;d\mu)$ and $H^{-1}(P;d\mu)$, respectively. Finally, the space of weak solutions to the parabolic problem \eqref{eq:Initial_value_problem} is denoted by 
$\cF((0,T)\times P)$ and it consists of measurable functions, $u:(0,T)\times P\rightarrow\RR$, with the property that
\begin{equation}
\label{eq:Space_weak_sol}
u \in L^2((0,T); H^1(P;d\mu))
\quad\hbox{ and }\quad
\frac{du}{dt} \in L^2((0,T); H^{-1}(P;d\mu)),
\end{equation}
where the time derivative, $\frac{du}{dt}$, is understood as a distribution. The remark following \cite[Chapter 3, Section 4, Theorem 4.1]{Lions_Magenes1} shows that
\begin{equation}
\label{eq:inclusion_cF_C}
\cF((0,T)\times P) \subset C([0,T], L^2(P;d\mu)).
\end{equation}
Following \cite[Chapter 3, Section 4]{Lions_Magenes1}, we define the notion of a weak solution:

\begin{defn}[Weak solution]
\label{defn:Weak_solution}
Let $f\in L^2(P;d\mu)$. A function $u\in \cF((0,T)\times P)$ is a weak solution to the initial-value problem \eqref{eq:Initial_value_problem} if the following hold:
\begin{enumerate}
\item[1.] For all test functions $v\in \cF((0,T)\times P)$, we have that
\begin{equation}
\label{eq:Weak_sol_var_eq}
\int_0^T\left\langle \frac{du(t)}{dt}, v(t)\right\rangle\, dt+\int_0^T Q(u(t),v(t))\, dt= 0,
\end{equation}
where $\left\langle\cdot, \cdot\right\rangle$ denotes the dual pairing of $H^{-1}(P;d\mu)$ and $H^1(P;d\mu)$.
\item[2.] The initial condition is satisfied in the $L^2(P;d\mu)$-sense, that is
\begin{equation}
\label{eq:Weak_sol_initial_cond}
\|u(t)-f\|_{L^2(P;d\mu)}\rightarrow 0,\quad\hbox{as } t\downarrow 0.
\end{equation}
\end{enumerate}
\end{defn}

An application of \cite[Chapter 3, Section 4, Theorem 4.4.1 and Remark 4.4.3]{Lions_Magenes1} combined with the property that the bilinear form $Q(u,v)$ is continuous and satisfies the G\r{a}rding inequality, by Lemma \ref{lem:Properties_bilinear_form}, yields:

\begin{lem}[Existence and uniqueness of solutions]
\label{lem:Existence_uniqueness_homogeneous}
\cite[Chapter 3, Section 4, Theorem 4.4.1 and Remark
  4.4.3]{Lions_Magenes1} Suppose that the generalized Kimura operator
satisfies the standard assumptions, then, for all $f\in L^2(P;d\mu)$,
there is a unique weak solution, $u\in\cF((0,T)\times P)$, to the
initial-value problem \eqref{eq:Initial_value_problem}
and satisfies the energy estimate:
\begin{equation}
\label{eq:Energy_estimate}
\begin{aligned}
&\sup_{t\in [0,T]}\|u(t)\|_{L^2(P;d\mu)} + \|u\|_{L^2((0,T);H^1(P;d\mu))} +\left\|\frac{du}{dt}\right\|_{L^2((0,T);H^{-1}(P;d\mu))}\\
&\quad\leq C\left(\|f\|_{L^2(P;d\mu)} + \|g\|_{L^2((0,T);L^2(P;d\mu))}\right),
\end{aligned}
\end{equation}
where $C=C(T, c_1, c_2, c_3)$ is a positive constant and $c_1, c_2, c_3$ are the constants appearing in \eqref{eq:Continuity_Dirichlet_form} and \eqref{eq:Coercivity_Dirichlet_form}.
\end{lem}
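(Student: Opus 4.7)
The plan is to apply directly the abstract theory of variational parabolic equations from Lions-Magenes, now that Lemma \ref{lem:Properties_bilinear_form} has verified the necessary structural hypotheses on the bilinear form $Q$. First I would record the Gelfand triple $H^1(P;d\mu) \hookrightarrow L^2(P;d\mu) \hookrightarrow H^{-1}(P;d\mu)$, with the pivot space identified with its dual via the weighted inner product; density of $H^1(P;d\mu)$ in $L^2(P;d\mu)$ is immediate from the definition of $H^1(P;d\mu)$ as the closure of $C^1_c(P\setminus \partial^T P)$. Extending $Q$ by continuity from $C^\infty_c(\Int P)\times C^\infty_c(\Int P)$ to $H^1(P;d\mu)\times H^1(P;d\mu)$ via \eqref{eq:Continuity_Dirichlet_form}, the pair \eqref{eq:Continuity_Dirichlet_form}--\eqref{eq:Coercivity_Dirichlet_form} is precisely the hypothesis of [Lions-Magenes, Ch.~3, Sec.~4, Thm.~4.4.1 and Rmk.~4.4.3], which produces a weak solution $u\in\cF((0,T)\times P)$ to \eqref{eq:Initial_value_problem}.

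For the energy estimate, I would test the variational identity \eqref{eq:Weak_sol_var_eq} against $v=u$ itself. Combining the chain-rule identity $\langle u_t(t), u(t)\rangle = \frac{1}{2}\frac{d}{dt}\|u(t)\|^2_{L^2(P;d\mu)}$ for $u\in\cF((0,T)\times P)$, a standard consequence of \eqref{eq:inclusion_cF_C}, with the coercivity \eqref{eq:Coercivity_Dirichlet_form} yields
\[
\frac{1}{2}\frac{d}{dt}\|u(t)\|^2_{L^2(P;d\mu)} + c_2\|u(t)\|^2_{H^1(P;d\mu)} \le c_3\|u(t)\|^2_{L^2(P;d\mu)}.
\]
Gr\"onwall's inequality then controls $\sup_{t\in[0,T]}\|u(t)\|_{L^2(P;d\mu)}$ by $e^{c_3 T}\|f\|_{L^2(P;d\mu)}$, and integrating the previous display over $[0,T]$ recovers the $L^2((0,T);H^1(P;d\mu))$ bound. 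The $L^2((0,T);H^{-1}(P;d\mu))$ bound on $u_t$ follows from the pointwise-in-$t$ identity $\langle u_t(t),v\rangle = -Q(u(t),v)$ for all $v\in H^1(P;d\mu)$ together with \eqref{eq:Continuity_Dirichlet_form}, which gives $\|u_t(t)\|_{H^{-1}(P;d\mu)} \le c_1\|u(t)\|_{H^1(P;d\mu)}$.

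Uniqueness is then immediate: applied to the difference $w$ of two weak solutions with the same initial data, the same energy estimate forces $\|w(t)\|_{L^2(P;d\mu)}\equiv 0$.

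The main difficulty lies upstream, in Lemma \ref{lem:Properties_bilinear_form}, where the logarithmic singularities of the vector field $V$ in \eqref{eq:V} along zero-weight boundary components had to be controlled using the cleanness hypothesis; given that lemma, the present statement is a routine specialization of the Lions-Magenes framework. One minor technical point: the energy estimate as displayed includes a forcing contribution $\|g\|_{L^2((0,T);L^2(P;d\mu))}$ that does not appear in \eqref{eq:Initial_value_problem}. This is harmless, as the same argument works verbatim for the inhomogeneous problem $u_t-Lu=g$ with $g\in L^2((0,T);L^2(P;d\mu))$, the extra term being absorbed via $|\langle g,u\rangle|\le \frac{1}{2}(\|g\|^2_{L^2(P;d\mu)}+\|u\|^2_{L^2(P;d\mu)})$ before invoking Gr\"onwall.
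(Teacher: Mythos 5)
Your proposal is correct and follows the same route as the paper: cite the Lions--Magenes variational theory once the continuity and G\r{a}rding inequality for $Q$ have been established in Lemma \ref{lem:Properties_bilinear_form}, with the paper omitting the routine Gr\"onwall argument for the energy estimate that you spell out. You also correctly flag the small inconsistency that \eqref{eq:Energy_estimate} carries a forcing term $\|g\|_{L^2((0,T);L^2(P;d\mu))}$ which does not appear in the homogeneous problem \eqref{eq:Initial_value_problem}; the bound is simply inherited from the Lions--Magenes statement for the inhomogeneous problem $u_t - Lu = g$ and, as you observe, is harmless here.
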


\begin{rmk}[The boundary condition satisfied by weak solutions]
\label{rmk:Boundary_cond}
From~\eqref{eq:Weight_local} it is apparent that the measure $d\mu$ is
not locally integrable near boundary points where a weight $b_i$
vanishes.  In the companion article, \cite{Epstein_Pop_2016}, we prove
that weak solutions to the initial-value problem
\eqref{eq:Initial_value_problem} satisfy a homogeneous Dirichlet
boundary condition along these portions of the boundary, $\partial^T P,$
to which $L$ is tangent. This property is explained in more detail in
\cite[Remark 2.2]{Epstein_Pop_2016}. In \S
\ref{sec:Parabolic_Dirichlet_problem}, we consider the Dirichlet
problem with non-homogeneous Dirichlet boundary conditions, which
plays a central role in our study of the hitting distributions on
tangent boundary components of the Kimura diffusions.
\end{rmk}

It is shown in \cite[Property (1.19)]{Sturm_1995} that the unique weak solution, $u$, to problem \eqref{eq:Initial_value_problem} with initial condition 
$f\in L^2(P;d\mu)$ has the property that
\begin{align}
\label{eq:Monotonicity_solutions}	
[0,T]\ni t\mapsto \|u(t)\|_{L^2(P;d\mu)} \quad\hbox{ is non-increasing}.
\end{align}
The uniqueness statement in Lemma \ref{lem:Existence_uniqueness_homogeneous}, the inclusion relation \eqref{eq:inclusion_cF_C}, and  property \eqref{eq:Monotonicity_solutions}	show that there is a strongly continuous, contraction semigroup on $L^2(P;d\mu)$, 
$\{T_t\}_{t\geq 0}$, such that the unique weak solution, $u$, to problem \eqref{eq:Initial_value_problem} with initial condition 
$f\in L^2(P;d\mu)$, can be represented in the form 
\begin{equation}
\label{eq:Representation_homogeneous_sol}
u(t)= T_t f,\quad\forall\, t\geq 0.
\end{equation}

To prove the existence of the Dirichlet heat kernel, we next apply a local a priori estimate from \cite{Epstein_Mazzeo_2016} to prove that the weak solutions to the initial-value problem \eqref{eq:Initial_value_problem} are H\"older continuous on $(0,T)\times (P\backslash\partial^T P)$. Let 
$d:P\times P\rightarrow [0,\infty)$ be the Riemannian distance induced on the compact manifold $P$ by the principal symbol of the generalized Kimura operator. Let $\alpha\in (0,1)$, $0<t_1<t_2$, and $U$ be a relatively open set such that $\bar U\subset P\backslash\partial^T P$. We say that a function $u:[t_1,t_2]\times\bar U\rightarrow \RR$ belongs to the H\"older space 
$C^{\alpha}_{\hbox{\tiny{WF}}}([t_1,t_2]\times\bar U)$ if the semi-norm:
$$
\|u\|_{C^{\alpha}_{\hbox{\tiny{WF}}}([t_1,t_2]\times\bar U)} 
:= \sup_{(t',p')\neq (t'',p'')} 
\frac{|u(t',p') - u(t'', p'')|}{\left(\sqrt{|t'-t''|} + d(p',p'')\right)^{\alpha}} <\infty.
$$
We can now state:

\begin{lem}[H\"older regularity on $P\backslash\partial^T P$]
\label{lem:Holder_regularity_semigroup}
Suppose that the generalized Kimura operator satisfies the standard
assumptions, then for all $t>0$ and $p\in P\backslash \partial^T P$,
there is a neighborhood of the point $(t,p)$, $(t_1,t_2)\times U$,
such that
$$
[t_1,t_2]\times\bar U\subset (0,\infty)\times P\backslash \partial^T P,
$$ 
and there are positive constants, 
$\alpha\in (0,1)$ and $C=C(L,t_1,t_2,U)$, with the property that for all $f\in L^2(P;d\mu)$, we have
\begin{equation}
\label{eq:Holder_estimate_semigroup}
\|T_tf\|_{C^{\alpha}_{\hbox{\tiny{WF}}}([t_1,t_2]\times\bar U)} \leq C \|f\|_{L^2(P;d\mu)}.
\end{equation}
\end{lem}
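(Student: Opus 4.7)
The plan is to combine three ingredients: the $L^2$-contractivity of the semigroup $\{T_t\}_{t\geq 0}$, which follows from \eqref{eq:Monotonicity_solutions} and the representation \eqref{eq:Representation_homogeneous_sol}; the local a priori H\"older estimate for weak solutions of the parabolic Kimura equation at points located away from the tangent boundary, proved in \cite{Epstein_Mazzeo_2016, Epstein_Pop_2013b}; and a finite covering argument to pass from the pointwise local estimates to a single estimate on the compact cylinder $[t_1,t_2]\times\bar U$.

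First I would select the neighborhoods. Since $\partial^T P$ is closed and $p\in P\backslash\partial^T P$, I can choose $0<t_0<t_1\leq t\leq t_2$ together with relatively open sets $U$ and $U'$ satisfying $p\in U$, $\bar U\subset U'$, and $\bar{U'}\subset P\backslash\partial^T P$. The crucial geometric consequence of working away from $\partial^T P$ is that at every point $q\in\bar{U'}$ and every boundary hypersurface $H_i$ with $q\in H_i$, one must have $i\in I^{\pitchfork}$; by the cleanness hypothesis \eqref{eq:Cleanness} the weight $B_i$ is then bounded below by $\beta_0$ in an open neighborhood of $q$. Consequently, around each $q\in\bar{U'}$ the generalized Kimura operator falls within the class for which the Harnack and H\"older theory of \cite{Epstein_Mazzeo_2016,Epstein_Pop_2013b} applies, either as an interior strictly elliptic operator (Assumption \ref{assump:Coeff}) or as a model Kimura operator in $\bar S_{n,m}$ with strictly positive drift coefficients $b_i$.

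Second, covering the compact set $\bar{U'}$ by finitely many adapted coordinate charts of the two types above and invoking the local H\"older estimate on each (for instance, as a consequence of the Harnack inequality \cite[Theorem 4.1]{Epstein_Mazzeo_2016}, \cite[Theorem 1.2]{Epstein_Pop_2013b}, combined with a local $L^2$-to-$L^{\infty}$ Moser-type iteration adapted to the measure $d\mu$) yields the existence of constants $\alpha\in(0,1)$ and $C=C(L,t_0,t_1,t_2,U,U')>0$ such that every weak solution $v$ of $v_t-Lv=0$ on $(t_0,t_2)\times U'$ satisfies
\begin{equation*}
\|v\|_{C^{\alpha}_{\hbox{\tiny{WF}}}([t_1,t_2]\times\bar U)} \leq C\, \|v\|_{L^2((t_0,t_2);\, L^2(U';d\mu))}.
\end{equation*}
Applying this to $v(s,\cdot):=T_s f$ and using the contractivity of $T_s$ on $L^2(P;d\mu)$,
\begin{equation*}
\|v\|_{L^2((t_0,t_2);\, L^2(U';d\mu))}^2 \leq \int_{t_0}^{t_2}\|T_s f\|_{L^2(P;d\mu)}^2\, ds \leq (t_2-t_0)\,\|f\|_{L^2(P;d\mu)}^2,
\end{equation*}
produces the desired estimate \eqref{eq:Holder_estimate_semigroup}.

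The principal obstacle is justifying the local $L^2\to C^{\alpha}_{\hbox{\tiny{WF}}}$ step above. The results cited from \cite{Epstein_Mazzeo_2016, Epstein_Pop_2013b} are naturally formulated for continuous weak solutions with $L^{\infty}$ control, so one must interpolate through an intermediate local $L^{\infty}$ bound. In the strictly elliptic interior this is a routine application of the classical Moser iteration; at a transverse boundary hypersurface one must adapt the iteration to the weighted measure $d\mu$ in \eqref{eq:Weight_local} and to the degenerate coefficient $x_i\partial_{x_i}^2$ paired with the strictly positive drift $b_i\geq\beta_0$. This is precisely the regime where the cleanness hypothesis is used, providing the uniform lower bound on the weights that the Harnack theory requires and that makes the Moser scheme work up to the transverse boundary.
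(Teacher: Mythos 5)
Your proof is correct and follows essentially the same route as the paper: localize to a compact set away from $\partial^T P$ where the weights are uniformly positive by cleanness, apply the local a priori H\"older estimate of \cite[Corollary 4.1]{Epstein_Mazzeo_2016}, and close the argument with an $L^2$-to-$L^\infty$ bound and the $L^2$-contractivity of $T_t$. The only gap you flag as a ``principal obstacle''---the local $L^2$-to-$L^\infty$ Moser step for the weighted Dirichlet form---is exactly where the paper invokes \cite[Theorem 2.1]{Sturm_1995} (a Moser-type sup-norm estimate for local Dirichlet forms), so no new iteration scheme needs to be constructed.
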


\begin{rmk}
Lemma \ref{lem:Holder_regularity_semigroup} shows that we can choose a continuous version on 
$(0,\infty)\times (P\backslash \partial^T P)$ of the semigroup $\{T_t\}_{t\geq 0}$ defined in \eqref{eq:Representation_homogeneous_sol}.
\end{rmk}

\begin{proof}[Proof of Lemma \ref{lem:Holder_regularity_semigroup}]
For a point $p\in P\backslash \partial^T P$, we can find a relatively open set $V$ such that $\bar U\subset V\subset P\backslash\partial^T P$ and $\hbox{dist}(U, P\backslash V) > 0$. The generalized Kimura operator has the property that is has positive weights when restricted to the set $\bar V$, which implies that we can apply the local a priori H\"older seminorm estimate 
\cite[Corollary 4.1]{Epstein_Mazzeo_2016}, which together with the immediate generalization of supremum norm estimate 
\cite[Theorem 2.1]{Sturm_1995} to the generalized Kimura operator give us estimate \eqref{eq:Holder_estimate_semigroup}. 
This completes the proof.
\end{proof}

We next prove the existence of the Dirichlet heat kernel:

\begin{prop}[Dirichlet heat kernel]
\label{prop:Dirichlet_heat_kernel_existence}
Suppose that the generalized Kimura operator satisfies the standard
assumptions. For all $t>0$ and $p\in P\backslash \partial^T P$, there
is a non-negative measurable function, $k(t,p,\cdot) \in L^2(P;d\mu)$,
such that
\begin{enumerate}
\item[(i)] 
For all $f\in L^2(P;d\mu)$, we have that
\begin{equation}
\label{eq:Semigroup_kernel}
T_tf(p) =  \int_P k(t,p,p') f(p')\, d\mu(p').
\end{equation}
\item[(ii)]
For all $f\in C^{\infty}_c(P\backslash \partial^T P)$, we have
\begin{equation}
\label{eq:Dirichlet_heat_kernel_existence}
\EE_{\QQ^p}\left[f(\omega(t))\right] = \int_P k(t,p,p') f(p')\, d\mu(p').
\end{equation}
\item[(iii)]
For all Borel measurable sets, $B\subseteq P\backslash\partial^T P$, we have that property \eqref{eq:Density_Dirichlet_heat_kernel} holds, where we let $\QQ^p$ be the unique solution to the martingale problem associated to the generalized Kimura operator constructed in Theorem \ref{thm:Wellposed_mart_problem}.
\item[(iv)]
For all $0<t_1<t_2$ and for all compact sets, $K\subset P\backslash\partial^TP$, there is a positive constant, $C=C(L,t_1,t_2,K)$, such that 
\begin{equation}
\label{eq:Weighted_L_2_estimates_Diri_kernel}
\|k(t,p,\cdot)\|_{L^2(P;d\mu)} \leq C,
\quad\forall\, t\in [t_1,t_2],\quad\forall\, p \in K.
\end{equation}
\end{enumerate}
\end{prop}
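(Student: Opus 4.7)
The plan is to realize $k(t,p,\cdot)$ as the $L^2$-representative of the point-evaluation functional $f\mapsto (T_tf)(p)$, and then identify the resulting kernel probabilistically by matching the weak semigroup $T_t$ against the classical solution of the Cauchy problem, whose path-expectation form comes for free from the martingale identity.

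To produce the kernel and prove (i) and (iv), I will fix a compact $K\subset P\setminus\partial^T P$ and $0<t_1<t_2$. By Lemma~\ref{lem:Holder_regularity_semigroup}, each $T_tf$ admits a continuous representative on $(0,\infty)\times(P\setminus \partial^T P)$, and point evaluation at $(t,p)\in [t_1,t_2]\times K$ defines a bounded linear functional $\delta_p\circ T_t$ on $L^2(P;d\mu)$ whose operator norm is uniformly controlled by the H\"older constant in \eqref{eq:Holder_estimate_semigroup}. Riesz representation then produces $k(t,p,\cdot)\in L^2(P;d\mu)$ satisfying \eqref{eq:Semigroup_kernel}, and the uniform operator-norm bound is exactly \eqref{eq:Weighted_L_2_estimates_Diri_kernel}.

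For the probabilistic identification (ii), I will fix $f\in C^\infty_c(P\setminus \partial^T P)$ and invoke \cite[Theorem 10.0.2]{Epstein_Mazzeo_annmathstudies} to obtain a smooth $u\in C^\infty([0,\infty)\times P)$ solving $u_t=Lu$, $u(0)=f$. Plugging $\varphi(s,q)=u(t-s,q)$ into the martingale identity \eqref{eq:Mart_problem_martingales} and taking expectation under $\QQ^p$ gives $u(t,p)=\EE_{\QQ^p}[f(\omega(t))]$. Since $f|_{\partial^T P}\equiv 0$, Remark~\ref{rmk:Sato_equation}, applied inductively over the stratification of $\partial^T P$ into tangent hypersurfaces and their corner intersections, shows that on each such stratum $\Sigma$ the restriction $u|_\Sigma$ solves the parabolic equation defined by $L_\Sigma$ with zero initial data, so $u\equiv 0$ on $[0,\infty)\times \partial^T P$ by uniqueness. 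Consequently $u\in\cF((0,T)\times P)$ for every $T>0$ (it meets the homogeneous Dirichlet condition encoded in $H^1(P;d\mu)$, cf.\ Remark~\ref{rmk:Boundary_cond}), and uniqueness of weak solutions (Lemma~\ref{lem:Existence_uniqueness_homogeneous}) forces $u=T_tf$. Combined with the previous step, this establishes \eqref{eq:Dirichlet_heat_kernel_existence}.

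For the non-negativity of $k$ and for (iii), I will use the identity just proved. For any non-negative $g\in C^\infty_c(P\setminus \partial^T P)$, $\int k(t,p,\cdot)g\,d\mu=\EE_{\QQ^p}[g(\omega(t))]\geq 0$, and approximating non-negative $L^2$ functions by such $g$ (via truncation and a smooth cutoff away from $\partial^T P$) yields $k(t,p,\cdot)\geq 0$ a.e. For \eqref{eq:Density_Dirichlet_heat_kernel} I will approximate $\mathbf{1}_B$ by an increasing sequence $f_n\in C^\infty_c(P\setminus \partial^T P)$ with $f_n\uparrow \mathbf{1}_B$ and pass to the limit in both sides of \eqref{eq:Dirichlet_heat_kernel_existence}; Lemma~\ref{lem:Absorption_tangent_boundary} ensures $\{\omega(t)\in B\}=\{\omega(t)\in B,\,t<\tau_{\partial^T P}\}$ since $B\cap \partial^T P=\emptyset$, yielding \eqref{eq:Density_Dirichlet_heat_kernel}. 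The main obstacle will be verifying that the classical solution $u$ lies in $\cF((0,T)\times P)$: this requires propagating the Dirichlet vanishing through the stratified structure of $\partial^T P$ by iterating Remark~\ref{rmk:Sato_equation}, and extracting $L^2((0,T);H^1(P;d\mu))$ and $L^2((0,T);H^{-1}(P;d\mu))$ membership of $u$ and $u_t$ from the smoothness of $u$ together with this boundary vanishing.
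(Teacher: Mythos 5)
Your proposal is correct and follows essentially the same route as the paper: Riesz representation via the interior H\"older estimate of Lemma~\ref{lem:Holder_regularity_semigroup} yields (i) and (iv); the martingale identity applied to the classical solution of \cite[Theorem 10.0.2]{Epstein_Mazzeo_annmathstudies}, combined with the restriction observation of Remark~\ref{rmk:Sato_equation} and uniqueness of weak solutions, yields (ii); and (iii) then follows from Lemma~\ref{lem:Absorption_tangent_boundary}. You spell out two points the paper leaves implicit (non-negativity of $k$, and that the classical solution lies in $\cF((0,T)\times P)$), but the argument is otherwise the same.
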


We can now give the

\begin{proof}[Proof of Theorem \ref{thm:Distribution_absorbed}]
It is immediate that property \eqref{eq:Density_Dirichlet_heat_kernel} is a direct consequence of Proposition \ref{prop:Dirichlet_heat_kernel_existence} (iii). 
\end{proof}

Before we give the proof of Proposition \ref{prop:Dirichlet_heat_kernel_existence}, we have the following:

\begin{rmk}
\label{rmk:T_t_bounded_functions}
Property \eqref{eq:Density_Dirichlet_heat_kernel} implies that we can extend the action of the semigroup $\{T_t\}_{t\geq 0}$ from
$L^2(P;d\mu)$ to the space of bounded measurable functions as follows.  Let $f:P\rightarrow\RR$ be a bounded Borel measurable function, let $t>0$, and $p\in P\backslash \partial^T P$. Then, we set:
\begin{equation}
\label{eq:T_t_bounded_functions_nonabsorbed}
\begin{aligned}
T_t f(p) &:= \EE_{\QQ^p}\left[f(\omega(t)) \mathbf{1}_{\{t <\tau_{\partial^T P}\}}\right] \\ 
&= \int_{P\backslash\partial^T P} k(t,p,p') f(p')\, d\mu(p')\\
&= \int_P k(t,p,p') f(p')\, d\mu(p').
\end{aligned}
\end{equation}
When $t>0$ and $p \in \partial^T P$, we have that $\tau_{\partial^T P} = 0$, and so it is natural to define:
\begin{equation}
\label{eq:T_t_bounded_functions_absorbed}
T_t f(p) := 0.
\end{equation}
\end{rmk}

\begin{proof}[Proof of Proposition \ref{prop:Dirichlet_heat_kernel_existence}]
Estimate \eqref{eq:Holder_estimate_semigroup} implies that for all $(t,p)\in (0,\infty)\times(P\backslash \partial^T P)$, there is a positive constant, $C$, such that
$$
|T_tf(p)| \leq C \|f\|_{L^2(P;d\mu)},\quad\forall\, f\in L^2(P;d\mu),
$$
and so, by the Riesz Representation Theorem, there is a measurable function, $k(t,p,\cdot)\in L^2(P;d\mu)$, such that identity \eqref{eq:Semigroup_kernel} holds.

For $f\in C^{\infty}_c(P\backslash \partial^T P)$, let $u\in C^{\infty}([0,\infty)\times P)$ be the unique solution to the initial value problem \eqref{eq:Initial_value_problem} given by \cite[Theorem 10.0.2]{Epstein_Mazzeo_annmathstudies}. Choosing 
$\varphi(s,p') := u(t-s,p')$ as test function in constructing the martingale in \eqref{eq:Mart_problem_martingales} and taking expectation under the probability measure $\QQ^p$, it follows that
\begin{equation}
\label{eq:Stochastic_representation1}
u(t,p) = \EE_{\QQ^p}\left[f(\omega(t))\right].
\end{equation}
Notice that on any tangent boundary hypersurface, $H\subseteq \partial^T P$, the initial data $f\equiv 0$ on $H$. Then it follows from Remark \ref{rmk:Sato} that the restriction of $u$ to $H$ solves the initial-value problem 
$(\partial_t-L_H)u = 0$ on $(0,\infty)\times H$ with initial condition $u(0)=0$ on $H$. Thus, we have that $u\equiv 0$ on $H$, from which we conclude that $u$ is also the unique weak solution to the initial-value problem 
\eqref{eq:Initial_value_problem} (with $P=H$) given by Lemma \ref{lem:Existence_uniqueness_homogeneous}. Combining identities 
\eqref{eq:Stochastic_representation1} and \eqref{eq:Semigroup_kernel}, we obtain \eqref{eq:Dirichlet_heat_kernel_existence}. 
Property \eqref{eq:Dirichlet_heat_kernel_existence} and Lemma \ref{lem:Absorption_tangent_boundary} imply 
\eqref{eq:Density_Dirichlet_heat_kernel}.

To conclude the proof, identity \eqref{eq:Semigroup_kernel} and estimate \eqref{eq:Holder_estimate_semigroup} give that there is a positive constant, 
$C=C(L,t_1,t_2,K)$, such that for all $t\in [t_1,t_2]$ and for all $p\in K$ we have that
\begin{align*}
\|k(t,p,\cdot)\|_{L^2(P;d\mu)} &=\sup\{|T_tf(p)|:\, f\in L^2(P;d\mu), \|f\|_{L^2(P;d\mu)} = 1\}\\
&\leq C,
\end{align*}
implying estimate \eqref{eq:Weighted_L_2_estimates_Diri_kernel}. This completes the proof.
\end{proof}

\subsection{Adjoint operator}
\label{sec:Adjoint}
We introduce the adjoint of the generalized Kimura operator and we prove that the Dirichlet heat kernel is a weak solution in the forward variables, $k(\cdot, p, \cdot)$, where $p\in P\backslash\partial^T P$ is fixed, to the parabolic equation defined by the adjoint operator. This property allows us to study the boundary regularity of the Dirichlet heat kernel along boundary components with constant weights. We prove these results in Theorems \ref{thm:Sup_est_tran_prob} and \ref{thm:Sup_est_tran_prob}.

We define the bilinear form $\widehat Q$ by
\begin{equation}
\label{eq:Adjoint_Dirichlet_form}
\widehat Q(u,v) := Q(v,u),\quad\forall\, u,v\in H^1(P;d\mu),
\end{equation}
and we define the adjoint operator of the generalized Kimura operator, $\widehat L$, by
$$
-\left(\widehat Lu,v\right)_{L^2(P,d\mu)} = \widehat Q(u,v),\quad\forall\,u, v\in C^1_c(P\backslash\partial^T P),
$$
where we recall the definition of the weighted measure $d\mu$ in \eqref{eq:Weight}. We also call $\{B_i:1\leq i\leq \eta\}$ the weights of the adjoint $\widehat L$ of the generalized Kimura operator of the compact manifold $P$ with corners. The adjoint operator is \emph{not} a generalized Kimura operator because, as shown in~\eqref{eq:Adjoint_operator_near_absorbant_boundary}, the coefficients of its zeroth and first order terms may have logarithmic singularities.

Analogously to Definition \ref{defn:Weak_solution} of weak solutions to the parabolic initial-value problem defined by the generalized Kimura operator, we define the notion of weak solutions to the initial-value problem for the adjoint operator $\widehat L$,
\begin{equation}
\label{eq:Initial_value_problem_adjoint}
\begin{aligned}
\left\{\begin{array}{rl}
u_t-\widehat Lu=0 & \hbox{ on } (0,T)\times P,\\ 
u(0) = f& \hbox{ on } P, 
\end{array} \right.
\end{aligned}
\end{equation}
where $T>0$ and $f\in L^2(P;d\mu)$, as follows:

\begin{defn}[Weak solution to the adjoint equation]
\label{defn:Weak_solution_adjoint}
Let $f\in L^2(P;d\mu)$. A function $u\in \cF((0,T)\times P)$ is a weak solution to the initial-value problem \eqref{eq:Initial_value_problem_adjoint} if the following hold:
\begin{enumerate}
\item[1.] For all test functions $v\in \cF((0,T)\times P)$, we have that
\begin{equation}
\label{eq:Weak_sol_var_eq_adjoint}
\int_0^T\left\langle \frac{du(t)}{dt}, v(t)\right\rangle\, dt+\int_0^T \widehat Q(u(t),v(t))\, dt= 0,
\end{equation}
where $\left\langle\cdot, \cdot\right\rangle$ denotes the dual pairing of $H^{-1}(P;d\mu)$ and $H^1(P;d\mu)$.
\item[2.] The function $u$ satisfies property \eqref{eq:Weak_sol_initial_cond}.
\end{enumerate}
\end{defn}

Because the bilinear form $Q(u,v)$ is continuous and satisfies the G\r{a}rding inequality, it follows by 
\eqref{eq:Adjoint_Dirichlet_form} that $\widehat Q(u,v)$ also satisfies
\begin{align}
\label{eq:Continuity_Dirichlet_form_adjoint}
|\widehat Q(u,v)| &\leq c_1 \|u\|_{H^1(P;d\mu)} \|v\|_{H^1(P;d\mu)},\\
\label{eq:Coercivity_Dirichlet_form_adjoint}
\widehat Q(u,u) & \geq c_2 \|u\|_{H^1(P;d\mu)}^2 - c_3 \|u\|_{L^2(P;d\mu)}^2,
\end{align}
for all $u,v\in H^1(P;d\mu)$, where $c_1,c_2,c_3$ are positive constants depending only on the coefficients of the generalized Kimura operator. Thus, as in the case of the bilinear form $Q(u,v)$, we can apply 
\cite[Chapter 3, Section 4, Theorem 4.4.1 and Remark 4.4.3]{Lions_Magenes1} to conclude that:

\begin{lem}[Existence and uniqueness of solutions to the adjoint equation]
\label{lem:Existence_uniqueness_homogeneous_adjoint}
\cite[Chapter 3, Section 4, Theorem 4.4.1 and Remark 4.4.3]{Lions_Magenes1}
Suppose that the generalized Kimura operator satisfies the standard assumptions, then, for all 
$f\in L^2(P;d\mu)$, there is a unique weak solution, $u\in\cF((0,T)\times P)$, to the initial-value problem 
\eqref{eq:Initial_value_problem_adjoint}.
\end{lem}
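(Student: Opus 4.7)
The plan is to apply the abstract Lions--Magenes existence theorem \cite[Chapter 3, Section 4, Theorem 4.4.1 and Remark 4.4.3]{Lions_Magenes1} to the transposed bilinear form $\widehat Q$ in exactly the same way it was invoked for $Q$ in Lemma \ref{lem:Existence_uniqueness_homogeneous}. Since the Gelfand triple $H^1(P;d\mu) \hookrightarrow L^2(P;d\mu) \hookrightarrow H^{-1}(P;d\mu)$ has already been set up, the only task is to verify the two structural hypotheses required by the abstract theorem: continuity and a G\r{a}rding inequality for $\widehat Q$ on $H^1(P;d\mu)$.

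Both properties are immediate from the definition $\widehat Q(u,v) := Q(v,u)$ combined with Lemma \ref{lem:Properties_bilinear_form}. For any $u,v \in H^1(P;d\mu)$ we have
$$
|\widehat Q(u,v)| = |Q(v,u)| \leq c_1 \|v\|_{H^1(P;d\mu)} \|u\|_{H^1(P;d\mu)},
$$
giving \eqref{eq:Continuity_Dirichlet_form_adjoint}, and $\widehat Q(u,u) = Q(u,u) \geq c_2 \|u\|_{H^1(P;d\mu)}^2 - c_3 \|u\|_{L^2(P;d\mu)}^2$, giving \eqref{eq:Coercivity_Dirichlet_form_adjoint}. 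Here the right-hand side of the continuity estimate is symmetric in its two arguments, which is precisely what makes the transposition from $Q$ to $\widehat Q$ painless.

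With these two estimates in hand, the cited Lions--Magenes result produces, for every $f \in L^2(P;d\mu)$, a function $u \in L^2((0,T);H^1(P;d\mu))$ with $du/dt \in L^2((0,T);H^{-1}(P;d\mu))$ — hence $u \in \cF((0,T)\times P)$ — satisfying the variational identity \eqref{eq:Weak_sol_var_eq_adjoint} together with the initial condition in the sense \eqref{eq:Weak_sol_initial_cond}. For uniqueness, if $u_1$ and $u_2$ are two such weak solutions, then $w := u_1 - u_2 \in \cF((0,T)\times P)$ solves the homogeneous problem with zero initial data; using $w$ itself as a test function and the standard chain rule for $\cF$ (see the remark following \cite[Chapter 3, Section 4, Theorem 4.1]{Lions_Magenes1}) together with the G\r{a}rding inequality \eqref{eq:Coercivity_Dirichlet_form_adjoint} and Gronwall's inequality forces $w \equiv 0$.

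There is no genuine obstacle in this argument: the result is a direct transcription of Lemma \ref{lem:Existence_uniqueness_homogeneous}, and the only minor point to check — that the continuity and G\r{a}rding estimates are inherited by $\widehat Q$ from $Q$ under transposition — is essentially automatic because both estimates are symmetric in their arguments. The only conceptual subtlety worth flagging is that $\widehat L$ is \emph{not} a generalized Kimura operator (its lower-order coefficients may have logarithmic singularities at the tangent boundary), so the existence theory for $\widehat L$ truly does need to proceed through the variational machinery rather than through the classical theory developed in \cite{Epstein_Mazzeo_annmathstudies}; fortunately, the variational approach is entirely insensitive to this asymmetry.
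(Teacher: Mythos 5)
Your proof takes exactly the same route as the paper: observe that $\widehat Q(u,v)=Q(v,u)$ inherits the continuity estimate and the G\r{a}rding inequality verbatim from Lemma \ref{lem:Properties_bilinear_form}, and then invoke the same Lions--Magenes abstract theorem. The only difference is that you spell out the uniqueness step via Gronwall, which the paper treats as included in the cited result; this is correct and adds nothing beyond what the citation already covers.
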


We denote by $\{\widehat T_t\}_{t\geq 0}$ the semigroup generated by the adjoint operator, i.e. for all functions $f\in L^2(P;d\mu)$, we let 
$u\in \cF((0,T)\times P)$ be the unique weak solution to \eqref{eq:Initial_value_problem_adjoint} and we define
$$
\widehat T_t f := u(t),\quad\forall\, t>0,
$$
where the positive constant $T$ above is arbitrarily chosen. 

Our goal is next to prove in Lemma \ref{lem:Equation_Dirichlet_heat_kernel} that the Dirichlet heat kernel is a weak solution to the adjoint parabolic equation, which allows us to study its regularity properties on the boundary of the manifold $P$. 
For this purpose, we first need to understand the form of the adjoint operator, $\widehat L$, when written in an adapted system of coordinates. Direct calculations using the form \eqref{eq:Operator} of the generalized Kimura operator in an adapted local system of coordinates, give us that the adjoint operator can be decomposed as 
\begin{equation}
\label{eq:Decomposition_adjoint}
\widehat L=L+\widehat V+\widehat c,
\end{equation}
where $\widehat V$ is the first order vector field and $\widehat c$ is the zeroth-order term, which take the form:
\begin{equation}
\label{eq:Adjoint_operator_near_absorbant_boundary}
\begin{aligned}
\widehat V &= \sum_{i=1}^n x_i \left(\alpha_i(z)
+ \sum_{k,j=1}^n \alpha_{kj}'(z)\partial_{x_k} b_j(z)\ln x_j
+ \sum_{j=1}^n \sum_{k=1}^m \alpha_{ijk}''(z)\partial_{y_k} b_j(z)\ln x_j \right)\partial_{x_i} \\
&\quad
+ \sum_{l=1}^m \left(\gamma_l(z)
+ \sum_{i,j=1}^n \gamma_{ijl}'(z)\partial_{x_i} b_j(z)\ln x_j
+ \sum_{j=1}^n \sum_{k=1}^m \gamma_{jkl}''(z)\partial_{y_k} b_j(z)\ln x_j \right)\partial_{y_l} \\
\widehat c& = \rho_l(z) 
+ \sum_{i,j=1}^n \sum_{k,p=1}^m\left(\rho_{ijl}'(z)\partial_{x_i} b_j(z) 
+ \rho_{ijl}''(z)\partial_{x_i}\partial_{y_k} b_j(z)
+ \rho_{jkl}'''(z)\partial_{y_k} b_j(z)\right)\ln x_j \\
&\quad
+ \sum_{i,j,k,l=1}^n \zeta_{ijkl}(z)\partial_{x_k} b_i(z) \ln x_i \partial_{x_l} b_j(z) \ln x_j\\
&\quad
+ \sum_{i,j=1}^n \sum_{l,k=1}^m \zeta_{ijkl}'(z)\partial_{y_k} b_i(z) \ln x_i \partial_{y_l} b_j(z) \ln x_j\\
&\quad
+ \sum_{i,j,k=1}^n \sum_{l=1}^m \zeta_{ijkl}''(z)\partial_{x_k} b_i(z) \ln x_i \partial_{y_l} b_j(z) \ln x_j,
\end{aligned}
\end{equation}
where the coefficients appearing in the expression of $\widehat V$ and $\widehat c$ are smooth functions on $\barB^{\infty}_R$, where $R$ is chosen small enough. We also call the coefficients $\{b_i:1\leq i\leq n\}$ the weights of the adjoint operator $\widehat L$ in an adapted local system of coordinates. We need

\begin{defn}
\label{defn:constant_boundary}
We denote by $\partial^c P$ be the set of points $p \in \partial P$ with the
property that there is a relatively open neighborhood $U\subset P$ of $p$ such that the weights of the generalized Kimura operator $L$ are constant on $U$. 
\end{defn}

From identity \eqref{eq:Adjoint_operator_near_absorbant_boundary}, it
follows that if $p\in\partial^c P$, then in an adapted local system of
coordinates, centered at $p,$ the adjoint operator $\widehat L$ takes
the form of a generalized Kimura operator \eqref{eq:Operator} to which
we add a smooth and bounded zeroth order term. Thus, the logarithmic
singularities in expressions
\eqref{eq:Adjoint_operator_near_absorbant_boundary} disappear.

We next prove that the Dirichlet heat kernel constructed in Proposition \ref{prop:Dirichlet_heat_kernel_existence} is a weak solution with respect to the forward variables, $k(\cdot, p, \cdot)$, to the homogeneous parabolic equation defined by the adjoint operator.

\begin{lem}[Equation satisfied by the Dirichlet heat kernel]
\label{lem:Equation_Dirichlet_heat_kernel}
Suppose that the generalized Kimura operator satisfies the standard assumptions.
For all $t>0$ and $p,q \in P\backslash \partial^T P$, the following hold:
\begin{enumerate}
\item[(i)]
The function $u:=k(t+\cdot,p,\cdot)$ is the unique weak solution to the initial-value problem \eqref{eq:Initial_value_problem_adjoint} with initial condition $f:=k(t,p,\cdot)$, and $k(\cdot, p, \cdot)$ belongs to $C^{\infty}((0,\infty)\times(\hbox{int}(P)\cup\partial^cP))$.
\item[(ii)]
The function $u:=k(t+\cdot,\cdot, q)$ is the unique weak solution to the initial-value problem \eqref{eq:Initial_value_problem} with initial condition $f:=k(t,\cdot,q)$, and $k(\cdot, \cdot, q)$ belongs to $C^{\infty}((0,\infty)\times P)$.
\end{enumerate} 
\end{lem}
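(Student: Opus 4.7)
The plan is to extract everything from the Chapman--Kolmogorov identity for the kernel together with the duality between the semigroup $\{T_t\}$ and its adjoint $\{\widehat T_t\}$, and then to combine these identifications with the interior/boundary regularity theory available for Kimura-type operators.

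First, I would establish the semigroup identity
\begin{equation*}
k(s+t,p,q)=\int_{P} k(s,p,p')\,k(t,p',q)\,d\mu(p'),
\end{equation*}
valid for a.e.\ $q$ and extended in the $q$ variable by the $L^2$-representation. This comes directly from the semigroup property $T_{s+t}=T_s\circ T_t$ established in \S\ref{sec:Weak_solutions}, combined with the kernel representation \eqref{eq:Semigroup_kernel} and Fubini's theorem, using that $k(t,\cdot,q),\,k(s,p,\cdot)\in L^2(P;d\mu)$ by Proposition~\ref{prop:Dirichlet_heat_kernel_existence}. Part (ii) is then immediate: fixing $q\in P\setminus\partial^T P$ and setting $v(s,p):=k(t+s,p,q)$, the identity says exactly that $v(s,\cdot)=T_s\,[k(t,\cdot,q)]$, so by Lemma~\ref{lem:Existence_uniqueness_homogeneous} the function $v$ is the unique weak solution to \eqref{eq:Initial_value_problem} with initial datum $k(t,\cdot,q)\in L^2(P;d\mu)$.

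For part (i), the key observation is that $\widehat Q(u,v)=Q(v,u)$ makes $\{\widehat T_t\}$ the $L^2(P;d\mu)$-adjoint of $\{T_t\}$, so $(T_s f,g)_{L^2(d\mu)}=(f,\widehat T_s g)_{L^2(d\mu)}$ for all $f,g\in L^2(P;d\mu)$. Substituting the kernel representation of $T_s$ and applying Fubini gives
\begin{equation*}
\widehat T_s g(p')=\int_{P} k(s,p,p')\,g(p)\,d\mu(p),
\end{equation*}
i.e.\ $\widehat T_s$ is represented by $k$ with the spatial roles of $p,p'$ interchanged. Fixing $p\in P\setminus\partial^T P$ and setting $u(s,q):=k(t+s,p,q)$, the Chapman--Kolmogorov identity then reads $u(s,\cdot)=\widehat T_s\,[k(t,p,\cdot)]$, identifying $u$ as the unique weak solution of \eqref{eq:Initial_value_problem_adjoint} with initial datum $k(t,p,\cdot)\in L^2(P;d\mu)$ provided by Lemma~\ref{lem:Existence_uniqueness_homogeneous_adjoint}.

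The remaining task is smoothness. Interior smoothness on $(0,\infty)\times\mathrm{int}(P)$ in both parts is standard: in $\mathrm{int}(P)$ the operator $L$ is uniformly elliptic with smooth coefficients, and by \eqref{eq:Decomposition_adjoint}--\eqref{eq:Adjoint_operator_near_absorbant_boundary} so is $\widehat L$ (the logarithmic factors $\ln x_j$ appear only through coefficients which vanish in the interior), so classical parabolic regularity promotes $L^2$-weak solutions to smooth ones. For boundary regularity in part (ii), the operator $L$ is a genuine generalized Kimura operator with smooth coefficients up to every boundary hypersurface, so one applies the Kimura-type parabolic regularity theory from~\cite{Epstein_Mazzeo_annmathstudies,Pop_2013b} (as already invoked, e.g., in Step~1 of the proof of Lemma~\ref{lem:Hitting_tangent_boundary}) to obtain $k(\cdot,\cdot,q)\in C^{\infty}((0,\infty)\times P)$. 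In part (i) the statement is restricted to $\mathrm{int}(P)\cup\partial^c P$ because \eqref{eq:Adjoint_operator_near_absorbant_boundary} shows that all logarithmic singularities in the coefficients of $\widehat L$ carry a factor of a derivative of some weight $b_j$; near points of $\partial^c P$ these factors vanish by Definition~\ref{defn:constant_boundary}, so $\widehat L$ reduces there to a generalized Kimura operator with smooth coefficients plus a smooth bounded zeroth-order term, and the same Kimura regularity theory applies.

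The main obstacle is the last point: one must verify that the boundary Schauder/regularity machinery from~\cite{Epstein_Mazzeo_annmathstudies,Epstein_Pop_2016} is stable under the smooth zeroth- and first-order perturbation that $\widehat L-L$ contributes near $\partial^c P$, and that one can localize away from the singular locus $\partial P\setminus(\partial^c P\cup\mathrm{int}(P))$ while preserving the weak-solution property of $k(\cdot,p,\cdot)$. Once this is in place, a bootstrap argument iterating the $C^{\alpha}_{\mathrm{WF}}$ estimates and their higher-order analogues upgrades $k(\cdot,p,\cdot)$ to $C^{\infty}((0,\infty)\times(\mathrm{int}(P)\cup\partial^c P))$, completing the proof.
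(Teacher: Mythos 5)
Your overall strategy — exploit the semigroup identity, the $L^2(d\mu)$-duality between $\{T_t\}$ and $\{\widehat T_t\}$, and then apply the boundary regularity theory from \cite{Epstein_Pop_2016} — is the same one the paper uses, and your analysis of where the smoothness lives (all of $P$ for the backward kernel, $\mathrm{int}(P)\cup\partial^c P$ for the forward one because the $\ln x_j$-factors in \eqref{eq:Adjoint_operator_near_absorbant_boundary} are multiplied by derivatives of the weights that vanish near $\partial^c P$) is exactly right. However, there are two concrete gaps that would need to be closed before the argument is complete.

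First, part (ii) hinges on the claim $k(t,\cdot,q)\in L^2(P;d\mu)$, which you attribute to Proposition~\ref{prop:Dirichlet_heat_kernel_existence}. That proposition gives $k(t,p,\cdot)\in L^2(P;d\mu)$ for each fixed $p$ — square-integrability in the \emph{forward} variable — and says nothing about square-integrability in the \emph{backward} variable for a fixed target $q$. Without that, the map $T_s[k(t,\cdot,q)]$ is not defined and Lemma~\ref{lem:Existence_uniqueness_homogeneous} cannot be invoked. The paper fills precisely this hole by running the Riesz-representation argument of Proposition~\ref{prop:Dirichlet_heat_kernel_existence} anew for $\{\widehat T_t\}$ (which is possible because, near $\mathrm{int}(P)\cup\partial^c P$ and by the estimates of \cite{Epstein_Pop_2016} elsewhere, the analogue of the H\"older bound \eqref{eq:Holder_estimate_semigroup} is available), producing a kernel $\widehat k(t,q,\cdot)\in L^2(P;d\mu)$ and then identifying $\widehat k(t,q,p)=k(t,p,q)$. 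Only after that identification is the backward-variable $L^2$-membership available.

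Second, your derivation of the Chapman--Kolmogorov identity at the kernel level, and of the dual representation $\widehat T_s g(p')=\int k(s,p,p')g(p)\,d\mu(p)$, each invoke Fubini without verifying the requisite joint integrability, and this is not free: the bound $\|k(s,p,\cdot)\|_{L^2(d\mu)}\le C$ in \eqref{eq:Weighted_L_2_estimates_Diri_kernel} holds only uniformly over compact subsets of $P\setminus\partial^T P$, and there is no a priori control on $p\mapsto\|k(s,p,\cdot)\|_{L^2(d\mu)}$ over all of $P$. Even granting Fubini, the identity you obtain holds for a.e.\ $q$ at each fixed $p$, which is the wrong quantifier order for freezing $q$ and varying $p$. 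The paper sidesteps this entirely by never leaving the operator level: from $T_{t+s}f(p)=(k(t,p,\cdot),T_sf)_{L^2(d\mu)}$ one passes to $(\widehat T_s k(t,p,\cdot),f)_{L^2(d\mu)}$ using Lemma~\ref{lem:Relationship_semigroups} (duality of the two $L^2$-bounded semigroups, no Fubini required), and then reads off $k(t+s,p,\cdot)=\widehat T_s k(t,p,\cdot)$ as an equality in $L^2(P;d\mu)$ by density. You should replace the kernel-level Chapman--Kolmogorov manipulations with this $L^2$-pairing argument, and add the construction of the adjoint kernel $\widehat k$ to justify part (ii).
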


We prove Lemma \ref{lem:Equation_Dirichlet_heat_kernel} with the aid of the following auxiliary result.

\begin{lem}[Relationship between $\{T_t\}_{t\geq 0}$ and $\{\widehat T_t\}_{t\geq 0}$]
\label{lem:Relationship_semigroups}
Suppose that the generalized Kimura operator satisfies the standard assumptions, then, for all 
$f, g\in L^2(P;d\mu)$ and for all $t\geq 0$, we have that
\begin{equation}
\label{eq:Relationship_semigroups}
\left(f, T_tg\right)_{L^2(P;d\mu)} = \left(\widehat T_tf, g\right)_{L^2(P;d\mu)}.
\end{equation}
\end{lem}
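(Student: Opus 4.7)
The plan is to apply a time-dependent duality argument, showing that the pairing $(u(s), w(s))_{L^2(P;d\mu)}$ is independent of $s\in[0,t]$, where $u(s):=T_s g$ solves the forward equation and $w(s):=\widehat T_{t-s} f$ solves the adjoint equation run backward. Since both semigroups are obtained as weak solutions in the sense of Definitions \ref{defn:Weak_solution} and \ref{defn:Weak_solution_adjoint}, both maps $s\mapsto u(s)$ and $s\mapsto w(s)$ belong to $\cF((0,t)\times P)$, and hence, by the inclusion \eqref{eq:inclusion_cF_C}, both lie in $C([0,t];L^2(P;d\mu))$. In particular, the boundary values $u(0)=g,\ u(t)=T_t g,\ w(0)=\widehat T_t f,\ w(t)=f$ are attained in the $L^2$-sense, so the identity $\phi(0)=\phi(t)$ for $\phi(s):=(u(s),w(s))_{L^2(P;d\mu)}$ is exactly the assertion \eqref{eq:Relationship_semigroups}.

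The key computation is the product rule in time for pairs of elements of $\cF((0,t)\times P)$, which is the abstract integration-by-parts formula in Lions--Magenes \cite[Chapter 3, \S 4, Lemma 1.2 / Theorem 4.1]{Lions_Magenes1}: for $u,w\in\cF((0,t)\times P)$, the scalar function $\phi(s)=(u(s),w(s))_{L^2(P;d\mu)}$ is absolutely continuous on $[0,t]$ and satisfies
\begin{equation*}
\frac{d}{ds}(u(s),w(s))_{L^2(P;d\mu)} = \left\langle\frac{du}{ds}(s),w(s)\right\rangle + \left\langle u(s),\frac{dw}{ds}(s)\right\rangle \quad\text{for a.e. } s\in(0,t).
\end{equation*}
Applied to $u(s)=T_s g$ and $w(s)=\widehat T_{t-s} f$, the variational identity \eqref{eq:Weak_sol_var_eq} for $u$ yields $\langle du/ds, w\rangle = -Q(u(s),w(s))$, while \eqref{eq:Weak_sol_var_eq_adjoint} applied to $\widehat T_{r} f$ as a function of $r$, combined with the chain rule $dw/ds=-d(\widehat T_r f)/dr\mid_{r=t-s}$, gives $\langle u, dw/ds\rangle = +\widehat Q(w(s),u(s))$. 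Invoking the definition $\widehat Q(a,b)=Q(b,a)$ from \eqref{eq:Adjoint_Dirichlet_form}, the two terms cancel and $\phi'(s)=0$ a.e., so $\phi$ is constant and evaluating at $s=0$ and $s=t$ gives the desired equality.

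The main technical obstacle is verifying that the integration-by-parts formula applies at the level of rigor we need. Strictly speaking, the variational formulations \eqref{eq:Weak_sol_var_eq} and \eqref{eq:Weak_sol_var_eq_adjoint} are written with a test function $v\in\cF((0,T)\times P)$ that is integrated in time, whereas we require the pointwise-in-$s$ identities $\langle du/ds,v\rangle = -Q(u,v)$ in $H^{-1}(P;d\mu)$ for a.e. $s$. This passage is standard (one localizes in $s$ by multiplying the test function by a smooth cutoff and uses Lebesgue differentiation; see \cite[Chapter 3, \S 4.4]{Lions_Magenes1}). Once this identity is available, no further regularity of $u$ or $w$ near $\partial^T P$ is needed, since all manipulations take place in the weighted Sobolev spaces $H^1(P;d\mu)$ and $H^{-1}(P;d\mu)$, where the homogeneous Dirichlet condition on $\partial^T P$ is already encoded (cf. Remark \ref{rmk:Boundary_cond}). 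For the boundary case $t=0$, the identity \eqref{eq:Relationship_semigroups} reduces to the trivial equality $(f,g)=(f,g)$ since $T_0$ and $\widehat T_0$ are the identity on $L^2(P;d\mu)$.
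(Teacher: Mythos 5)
Your proof is correct and is exactly the standard duality argument. The paper itself does not write out the proof but merely cites \cite[Lemma 1.5]{Sturm_1995} as ``a straightforward adaptation,'' and that lemma is proved by precisely the mechanism you describe (showing $s\mapsto (T_s g,\widehat T_{t-s}f)_{L^2(P;d\mu)}$ is constant via the Lions--Magenes integration-by-parts formula and the identity $\widehat Q(a,b)=Q(b,a)$), so your proposal fills in the same argument the authors intended rather than offering a different route.
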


\begin{proof}
The proof is a straightforward adaptation of the argument used to prove \cite[Lemma 1.5]{Sturm_1995}. We omit the detailed proof as it is standard.
\end{proof}

We can now give the proof of Lemma \ref{lem:Equation_Dirichlet_heat_kernel}.

\begin{proof}[Proof of Lemma \ref{lem:Equation_Dirichlet_heat_kernel}]
The semigroup property of $\{T_t\}_{t\geq 0}$ gives us that, for all $s>0$, $f\in L^2(P;d\mu)$, and $p\in P\backslash \partial^T P$, we have that $T_{t+s}f(p) = T_tT_sf(p)$, where we also use the fact that we can choose a continuous version of the semigroup 
$\{T_t\}_{t\geq 0}$ on $(0,\infty)\times (P\backslash\partial^T P)$, by Lemma \ref{lem:Holder_regularity_semigroup}.
Combining the preceding property with identity \eqref{eq:Semigroup_kernel}, we obtain that
\begin{align*}
T_{t+s}f(p) &= \int_P k(t,p,p') T_sf(p')\, d\mu(p')\\
&= \left(k(t,p,\cdot), T_sf\right)_{L^2(P;d\mu)},
\quad\forall\, f\in L^2(P;d\mu).
\end{align*}
Using the duality property \eqref{eq:Relationship_semigroups} in the last identity from above, we obtain
$$
T_{t+s}f(p) = \left(\widehat T_s k(t,p,\cdot), f\right)_{L^2(P;d\mu)},\quad\forall\, f\in L^2(P;d\mu),\quad\forall\, s>0.
$$
Using again the representation property \eqref{eq:Semigroup_kernel}, it follows that
$$
T_{t+s}f(p) = \left(k(t+s,p,\cdot), f\right)_{L^2(P;d\mu)},\quad\forall\, f\in L^2(P;d\mu),\quad\forall\, s>0,
$$
and so we deduce that $k(t+s,p,\cdot) = \widehat T_sk(t,p,\cdot)$, for all $s>0$. 
Using the fact that $k(t+\cdot,p,\cdot)$ is a weak solution to the initial-value problem \eqref{eq:Initial_value_problem_adjoint}, we can apply \cite[Theorem 1.2]{Epstein_Pop_2016} in a neighborhood of points $(t,p)\in (0,\infty)\times(\hbox{int}(P)\cup\partial^cP)$ to conclude that $k(\cdot, p,\cdot)$ belongs to $C^{\infty}((0,\infty)\times (\hbox{int}(P)\cup\partial^cP))$. This completes the proof of (i).

The argument used to prove (i) can be applied with $\{T_t\}_{t\geq 0}$ replaced by $\{\widehat T_t\}_{t\geq 0}$. We obtain that for all $(t,q)\in (0,\infty)\times (P\backslash\partial^T P)$ there is a kernel $\widehat k(t,q,\cdot) \in L^2(P;d\mu)$ such that, for all $f\in L^2(P;d\mu)$, we have that
$$
u(t,q) = \widehat T_t f(q) = \int_P f(p) \widehat k(t,q,p)\, d\mu(p)
$$
is the unique weak solution to the initial-value problem \eqref{eq:Initial_value_problem}. Moreover, the semigroup property of $\{\widehat T_t\}_{t\geq 0}$ implies that the function $\widehat k(t+\cdot, q, \cdot)$ is the unique weak solution to the initial-value problem \eqref{eq:Initial_value_problem} with initial condition $f:=\widehat k(t,q,\cdot)$, while  
\cite[Theorem 1.2]{Epstein_Pop_2016} gives us that $\widehat k(\cdot, \cdot, q)$ belongs to $C^{\infty}((0,\infty)\times P)$. 
Identity \eqref{eq:Relationship_semigroups} implies that $k(t,p,q) = \widehat k(t,q,p)$, for all $t>0$ and for all $p,q\in P\backslash\partial^T P$. This completes the proof of (ii).
\end{proof}

For $p\in\partial^c P$, let $\{b_i:1\leq i\leq n\}$ be the weights of the generalized Kimura operator in an adapted local system of coordinates. Without loss of generality, we can assume that there is an integer $n_0\in\NN$ such that the weights $\{b_i:1\leq i\leq n_0\}$ are $0$ and the weights $\{b_i:n_0+1\leq i\leq n\}$ are positive, i.e.
\begin{equation}
\label{eq:n_0}
b_i\restrictedto_{\{x_i=0\}\cap \partial B^{\infty}_R} =0,\quad\forall\, 1\leq i\leq n_0,
\quad\hbox{ and }\quad
b_i\restrictedto_{\{x_i=0\}\cap \partial B^{\infty}_R} \geq\beta_0/2>0,\quad\forall\, n_0+1\leq i\leq n,
\end{equation}
when $R>0$ is chosen small enough and we recall the choice of the positive constant $\beta_0$ in Assumption \ref{assump:Cleanness}. 
We denote by $\fe_i\in\NN^n$ the unit vector in $\RR^n$ with all coordinates 0, except for the $i$-th coordinate, which is equal to 1. We denote by $\ff_l\in\NN^m$ the unit vector in $\RR^m$ with all coordinates 0, except for the $l$-th coordinate, which is equal to 1. For all $\fa=(\fa_1,\ldots,\fa_n)\in\NN_0^n$, $\fb=(\fb_1,\ldots,\fb_m)\in\NN_0^m$, and $\fc\in\NN_0$ we denote
$$
D^{\fa}_xD^{\fb}_yD^{\fc}_t := 
\frac{\partial^{|\fa|}}{\partial x_1^{\fa_1}\ldots\partial x_n^{\fa_n}}
\frac{\partial^{|\fb|}}{\partial y_1^{\fb_1}\ldots\partial y_m^{\fb_m}}
\frac{\partial^{\fc}}{\partial t^{\fc}},
$$ where $|\fa|:=|\fa_1|+\ldots+|\fa_n|$ and
$\fb=|\fb_1|+\ldots+|\fb_m|$. If a coordinate of $\fa,\fb$ or $\fc$ is
zero then there is no derivative taken in the corresponding direction.

Theorem \ref{thm:Sup_est_tran_prob} states the
  estimates satisfied by the Dirichlet heat kernel in a neighborhood
  of tangent boundary components. This result is a direct application
  of the boundary estimates obtained in \cite[Theorem
    1.2]{Epstein_Pop_2016}, where the authors use methods based on
  higher order derivatives estimates in weighted Sobolev spaces and
  suitable Morrey-type embedding theorems to describe the boundary
  behavior of solution to generalized Kimura equations along tangent
  boundary components. Given the fact that the Dirichlet heat kernel
  is a solution to the forward Kolmogorov equation
  \eqref{eq:Initial_value_problem_adjoint}, it follows from Lemma
  \ref{lem:Equation_Dirichlet_heat_kernel}~(i) that we can apply
  \cite[Theorem 1.2]{Epstein_Pop_2016} and the estimate
  \eqref{eq:Weighted_L_2_estimates_Diri_kernel} to obtain: 

\begin{thm}[Boundary regularity along boundaries with constant weights]
\label{thm:Sup_est_tran_prob}
Suppose that the generalized Kimura operator satisfies the standard assumptions.
Let $q\in\partial^c P$ and let $R>0$ be such that in an adapted system of coordinates we identify $q$ with the origin and the restriction of the adjoint operator $\widehat L$ to $B^{\infty}_R$ is a generalized Kimura operator with constant weights. Then for all 
$0<r<R$, $0<t<T$, and $\fb\in\NN_0^m$, there is a positive constant, $C=C(\fb,L,r,R,t,T)$, such that for all $s\in [t,T]$, $p\in P\backslash\partial^T P$, and for all $z\in B^{\infty}_r$, we have that
\begin{align}
\label{eq:Sup_est_sol_1_tran_prob}
|D^{\fb}_y k(s,p,z)| + |D^{\fe_k}_x D^{\fb}_y k(s,p,z)| &\leq C \prod_{i=1}^{n_0} x_i,
\quad\forall\, n_0+1\leq k\leq n,\\
\label{eq:Sup_est_sol_2_tran_prob}
|D^{\fe_k}_x D^{\fb}_y k(s,p,z)| &\leq C \prod_{\stackrel{i=1}{i\neq k}}^{n_0} x_i,
\quad\forall\, 1\leq k\leq n_0,
\end{align}
where we recall the definition of the integer $n_0$ in \eqref{eq:n_0}.
\end{thm}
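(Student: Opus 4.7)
The plan is to reduce the estimates to a direct application of the interior/boundary regularity result \cite[Theorem~1.2]{Epstein_Pop_2016}, using the fact that, for fixed $p$, the Dirichlet heat kernel solves the forward Kolmogorov equation in the remaining two variables.

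\emph{First step.} I would work in adapted local coordinates on $B_R^\infty$ centered at $q$. Because $q \in \partial^c P$, by \eqref{eq:n_0} the weights $\{b_i\}_{i=1}^n$ are constant on $B_R^\infty$; in particular all derivatives $\partial_{x_k} b_j$ and $\partial_{y_k} b_j$ vanish identically there. Inspecting the decomposition \eqref{eq:Decomposition_adjoint}--\eqref{eq:Adjoint_operator_near_absorbant_boundary} of $\widehat L$, every logarithmic term disappears, and $\widehat L$ reduces on $B_R^\infty$ to a generalized Kimura operator (in the sense of \eqref{eq:Operator}, satisfying Assumption \ref{assump:Coeff}) plus a smooth, bounded first-order perturbation tangent to the boundary faces $\{x_i=0\}$ and a smooth bounded zeroth-order term.

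\emph{Second step.} For fixed $p \in P\setminus\partial^T P$ and $\tau > 0$ with $\tau < t$, Lemma \ref{lem:Equation_Dirichlet_heat_kernel}~(i) says that $u(s,z) := k(\tau + s, p, z)$ is a weak solution of $\partial_s u - \widehat L u = 0$ on $(0,T) \times P$ with initial datum $k(\tau, p, \cdot) \in L^2(P;d\mu)$. Restricted to $(0, T)\times B_R^\infty$, this fits exactly into the setting addressed by \cite[Theorem~1.2]{Epstein_Pop_2016}: a weak $L^2$-solution to a generalized Kimura equation with constant weights $b_1, \dots, b_{n_0} = 0$ and $b_{n_0+1},\dots, b_n > 0$.

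\emph{Third step.} I would apply \cite[Theorem~1.2]{Epstein_Pop_2016} on the concentric pair $B_r^\infty \subset\subset B_R^\infty$. That theorem uses higher-order weighted Sobolev estimates combined with a Morrey-type embedding to produce pointwise boundary estimates for derivatives of $u$, with the prefactors $\prod_{i=1}^{n_0} x_i$ and $\prod_{i\neq k} x_i$ arising naturally from the homogeneous Dirichlet condition imposed along each tangent face $\{x_i = 0\}$ with $i \leq n_0$ (cf.\ Remark \ref{rmk:Boundary_cond}). This yields \eqref{eq:Sup_est_sol_1_tran_prob} and \eqref{eq:Sup_est_sol_2_tran_prob}, with the right-hand side of the cited estimate controlled by $\sup_{s'\in[t/2,T]}\|k(s', p,\cdot)\|_{L^2(P;d\mu)}$ and the $C^\infty$-norms of the coefficients of $\widehat L$ on $B_R^\infty$.

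\emph{Fourth step.} It remains to bound the $L^2$-norm of $k(s',p,\cdot)$ uniformly in $p \in P\setminus\partial^T P$; this is the only genuine obstacle, since Proposition \ref{prop:Dirichlet_heat_kernel_existence}~(iv) only gives such a bound for $p$ in compact subsets of $P\setminus\partial^T P$. I would circumvent this by inserting an initial semigroup step: for $s' \ge t$ write $k(s', p, \cdot) = \widehat T_{s'-t/2}\, k(t/2, p, \cdot)$ via the semigroup identity established in the proof of Lemma \ref{lem:Equation_Dirichlet_heat_kernel}, and use the $L^2$-contractivity of $\{\widehat T_t\}_{t\ge 0}$ together with the duality $k(t,p,q) = \widehat k(t,q,p)$, which converts the question into one about $\widehat k(t/2,\cdot,p)$; by Proposition \ref{prop:Dirichlet_heat_kernel_existence}~(iv) applied to $\widehat L$ and a fixed compact slice, this is bounded by a constant depending only on $t$ and $T$, not on $p$. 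For $p \in \partial^T P$ both sides of \eqref{eq:Sup_est_sol_1_tran_prob}--\eqref{eq:Sup_est_sol_2_tran_prob} vanish by the absorption result Lemma \ref{lem:Absorption_tangent_boundary}, so the estimate trivially extends across the tangent boundary. Combining Steps 1--4 gives the claimed estimates with a constant $C$ of the asserted form $C(\mathfrak b, L, r, R, t, T)$.
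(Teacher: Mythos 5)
Your Steps 1--3 reproduce the paper's argument exactly: the paper deduces the estimates by observing that, since $q\in\partial^c P$, all the logarithmic terms in the decomposition \eqref{eq:Decomposition_adjoint}--\eqref{eq:Adjoint_operator_near_absorbant_boundary} vanish so that $\widehat L$ is of Kimura type near $q$, then uses Lemma~\ref{lem:Equation_Dirichlet_heat_kernel}~(i) to see $k(\cdot,p,\cdot)$ solves the forward Kolmogorov equation, and then invokes \cite[Theorem~1.2]{Epstein_Pop_2016} together with the $L^2$ bound \eqref{eq:Weighted_L_2_estimates_Diri_kernel}. That part is on target.

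Your Step~4, however, is circular and does not fix the issue it raises. You propose to bound $\|k(t/2,p,\cdot)\|_{L^2}$ uniformly in $p$ by passing through the duality $k(t,p,q)=\widehat k(t,q,p)$ and then applying the analogue of Proposition~\ref{prop:Dirichlet_heat_kernel_existence}~(iv) for $\widehat L$. But Proposition~\ref{prop:Dirichlet_heat_kernel_existence}~(iv) for $\widehat L$ controls $\|\widehat k(t,q,\cdot)\|_{L^2}$ uniformly for $q$ in a \emph{compact} subset of $P\setminus\partial^T P$ -- i.e.\ the norm in the \emph{second} (forward) slot -- whereas what you need is $\|\widehat k(t/2,\cdot,p)\|_{L^2}$, the norm in the \emph{first} slot. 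These are not the same object, and in fact $\widehat k(t/2,\cdot,p)$ is, by the very duality you invoke, identically the function $k(t/2,p,\cdot)$ whose $L^2$ norm you started out trying to bound. So the step goes in a circle. Note that the paper itself simply cites the compact-set estimate \eqref{eq:Weighted_L_2_estimates_Diri_kernel} without addressing uniformity in $p$ explicitly, and in the downstream uses (Proposition~\ref{prop:Integral_representation}, Theorem~\ref{thm:Doubling_property}) the source point $p$ is held fixed; so if you want to match the paper's proof, the safest course is to either drop Step~4 and state the constant with dependence on the compact set containing $p$, or to supply a genuinely different argument (e.g.\ an $L^1\to L^\infty$ ultracontractivity bound, which would be uniform since $\|k(t,p,\cdot)\|_{L^1(d\mu)}\le 1$ for every $p$) if the cited reference supports it.
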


\section{Non-homogeneous Parabolic Dirichlet problem}
\label{sec:Parabolic_Dirichlet_problem}

We recall from Remark \ref{rmk:Boundary_cond} that in \S \ref{sec:Dirichlet_heat_kernel} we studied weak solutions to the initial-value problem \eqref{eq:Initial_value_problem} defined by the generalized Kimura operator, which satisfied homogeneous Dirichlet boundary conditions along the tangent portion of the boundary of the compact manifold $P$ with corners. This allowed us to construct the Dirichlet heat kernel. This section is preparation towards studying the hitting distributions on tangent boundary components of Kimura diffusions and, for this purpose, we study the parabolic problem for the generalized Kimura operator with \emph{non-homogeneous} Dirichlet boundary conditions along the tangent portion of the boundary,
\begin{equation}
\label{eq:Parabolic_Dirichlet}
\begin{aligned}
\left\{\begin{array}{rl}
u_t-Lu=0 & \hbox{ on } (0,\infty)\times P\backslash \partial^T P,\\
u=\zeta&\hbox{ on } (0,\infty)\times \partial^T P,\\ 
u = 0& \hbox{ on } \{0\}\times P. 
\end{array} \right.
\end{aligned}
\end{equation}
Our main result is Theorem \ref{thm:Parabolic_Dirichlet} where we prove that the unique solution to the non-homogeneous Dirichlet problem \eqref{eq:Parabolic_Dirichlet} can be represented by a Duhamel formula in terms of the semigroup $\{T_t\}_{t\geq 0}$ constructed in \S \ref{sec:Weak_solutions} and as a stochastic representation in terms of the unique solution to the martingale problem associated to the generalized Kimura operator introduced in Definition \ref{defn:Martingale_problem}.

\begin{thm}[Parabolic boundary-value problem]
\label{thm:Parabolic_Dirichlet}
Suppose that the generalized Kimura operator satisfies the standard assumptions.
Let $\zeta\in C^{\infty}([0,\infty)\times P)$ be such that 
\begin{equation}
\label{eq:eta_t_0}
\zeta(0,p)=0,\quad\forall\, p\in P.
\end{equation}
Then the parabolic boundary-value problem \eqref{eq:Parabolic_Dirichlet} has a unique solution, 
\begin{equation}
\label{eq:Nice_space}
u\in C^{\infty}([0,\infty)\times P\backslash \partial^T P) \cap C([0,\infty)\times P),
\end{equation}
and the following hold:
\begin{itemize}
\item[(i)]
The solution $u$ satisfies the semigroup representation,
\begin{equation}
\label{eq:Parabolic_Dirichlet_rep_semigroup}
u(t) = \zeta(t) - \int_0^t T_{t-s}(\partial_s-L)\zeta(s)\,ds,\quad\forall\,(t,p)\in (0,\infty)\times P.
\end{equation}
\item[(ii)]
The solution $u$ satisfies the stochastic representation,
\begin{equation}
\label{eq:Parabolic_Dirichlet_rep_stoch}
u(t,p) = \EE_{\QQ^p}\left[\zeta(t-(t\wedge\tau_{\partial^T P}),\omega((t\wedge\tau_{\partial^T P})))\right],
\quad\forall\,(t,p)\in (0,\infty)\times P,
\end{equation}
where the stopping time $\tau_{\partial^T P}$ is defined in \eqref{eq:tau_tangent} and 
$\QQ^p$ is the unique solution to the martingale problem in Definition \ref{defn:Martingale_problem}.

\end{itemize}
\end{thm}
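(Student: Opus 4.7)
The plan is to reduce the non-homogeneous Dirichlet problem \eqref{eq:Parabolic_Dirichlet} to a Cauchy problem with homogeneous Dirichlet condition by subtracting $\zeta$. Set $v := u - \zeta$ and $g(s,q) := -(\partial_s - L)\zeta(s,q)$, which lies in $C^{\infty}([0,\infty)\times P)$. Any solution $u$ of \eqref{eq:Parabolic_Dirichlet} forces $v$ to solve $v_t - Lv = g$ on $(0,\infty)\times P\setminus\partial^T P$, $v = 0$ on $(0,\infty)\times\partial^T P$, and $v(0,\cdot)=0$ by the compatibility hypothesis \eqref{eq:eta_t_0}. Duhamel's principle then prescribes $v(t)=\int_0^t T_{t-s}g(s)\,ds$, which gives formula \eqref{eq:Parabolic_Dirichlet_rep_semigroup} for $u = \zeta + v$. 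The construction proceeds by taking this as the definition of $u$ and verifying the claimed properties.

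Next I establish the regularity \eqref{eq:Nice_space}. Since $g(s,\cdot)\in C^{\infty}(P)$ with bounds uniform for $s$ in compact intervals, \cite[Theorem 10.0.2]{Epstein_Mazzeo_annmathstudies} and interior/transverse-face Schauder estimates from \cite{Pop_2013b} give $T_{t-s}g(s)\in C^{\infty}((0,\infty)\times (P\setminus\partial^T P))$, with estimates that survive integration in $s$, so $u\in C^{\infty}([0,\infty)\times (P\setminus\partial^T P))$. The heart of the matter is the continuity of $u$ up to $\partial^T P$ with boundary value $\zeta|_{\partial^T P}$, which amounts to showing $T_{t-s}g(s)(p)\to 0$ as $p\to\partial^T P$, uniformly on compact $(s,t)$-intervals. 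On faces where the weights are constant, this follows from Theorem~\ref{thm:Sup_est_tran_prob}, whose estimates carry a vanishing factor $\prod x_i$ on the corresponding tangent face; on other tangent faces I rely on Lemma~\ref{lem:Absorption_tangent_boundary} together with \eqref{eq:Weighted_L_2_estimates_Diri_kernel} to deduce the same vanishing. This boundary continuity at $\partial^T P$ is the chief technical obstacle.

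Verification that the PDE and initial data hold is then a routine computation: differentiate \eqref{eq:Parabolic_Dirichlet_rep_semigroup} in $t$, use $T_0=\id$ to cancel the forcing term, and use the interior PDE satisfied by each $T_{t-s}g(s)$. Uniqueness follows from the maximum principle. If $u_1,u_2$ are two solutions in the class \eqref{eq:Nice_space}, then $w := u_1 - u_2$ is continuous on $[0,\infty)\times P$, smooth in the interior, vanishes on $\{0\}\times P$ and on $(0,\infty)\times\partial^T P$, and satisfies $w_t - Lw = 0$ in the interior. Applying \cite[Corollary 5.2]{Pop_2013b} to $w$ and $-w$ forces $w\equiv 0$.

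For the stochastic representation \eqref{eq:Parabolic_Dirichlet_rep_stoch}, fix $(t,p)\in(0,\infty)\times P$ and consider $\varphi(s,q) := u(t-s,q)$, which satisfies $\varphi_s + L\varphi = 0$ wherever $u$ is smooth. Because $\varphi$ is not in $C^{\infty}([0,\infty)\times P)$, I localize by the stopping times
$$\tau_k := \inf\{s \geq 0 : \dist(\omega(s),\partial^T P) < 1/k\},$$
so that on $[0,t\wedge\tau_k]$ the canonical path remains in a compact subset of $P\setminus\partial^T P$ where $\varphi$ is smooth; truncating and smoothing $\varphi$ outside this region produces an admissible test function in $C^{\infty}([0,\infty)\times P)$. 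The martingale property \eqref{eq:Mart_problem_martingales} and optional stopping then give
$$u(t,p) = \EE_{\QQ^p}\!\left[u(t - t\wedge\tau_k,\omega(t\wedge\tau_k))\right].$$
By path-continuity and Lemma~\ref{lem:Absorption_tangent_boundary}, $t\wedge\tau_k \uparrow t\wedge\tau_{\partial^T P}$ almost surely as $k\to\infty$; the boundary value $u|_{\partial^T P} = \zeta|_{\partial^T P}$ together with $u(0,\cdot)=0=\zeta(0,\cdot)$ imply that the integrand converges a.s. to $\zeta(t - t\wedge\tau_{\partial^T P},\omega(t\wedge\tau_{\partial^T P}))$, and dominated convergence yields \eqref{eq:Parabolic_Dirichlet_rep_stoch}.
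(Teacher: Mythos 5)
Your reduction $u=\zeta+v$ with $v=\int_0^t T_{t-s}g(s)\,ds$ and $g:=-(\partial_s-L)\zeta$ is exactly the paper's decomposition (it is the content of Lemma~\ref{lem:Inhomogeneous_Dirichlet}), and your derivation of the stochastic representation via the stopping times $\tau_k$ and optional stopping is also the paper's argument. Two of your intermediate steps, however, contain genuine gaps.

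First, the boundary continuity of $u$ up to $\partial^T P$: you reduce it to ``$T_{t-s}g(s)(p)\to 0$ as $p\to\partial^T P$, uniformly on compact $(s,t)$-intervals.'' That uniform statement is false. As $s\to t$, $T_{t-s}g(s)\to g(s)$, and $g(s,\cdot)$ has no reason to vanish on $\partial^T P$, so the vanishing degenerates as $t-s\to 0$. Moreover, Theorem~\ref{thm:Sup_est_tran_prob} gives decay of the kernel $k(s,p,z)$ as $z\to\partial^T P$ in the \emph{forward} variable, whereas to control $T_{t-s}g(s)(p)=\int_P k(t-s,p,p')g(s,p')\,d\mu(p')$ as $p\to\partial^T P$ you need decay in the \emph{backward} variable $p$; the paper does not state an estimate of that form. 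Citing Lemma~\ref{lem:Absorption_tangent_boundary} and \eqref{eq:Weighted_L_2_estimates_Diri_kernel} for the non-constant-weight faces likewise does not produce the vanishing. What the paper actually does (in the proof of Lemma~\ref{lem:Inhomogeneous_Dirichlet}) is split $\int_0^t = \int_0^{t-\eps} + \int_{t-\eps}^{t}$: on $[0,t-\eps]$, Lemma~\ref{lem:Initial_value_Dirichlet} shows $T_\tau f\in C^\infty((0,\infty)\times P)$ with $T_\tau f|_{(0,\infty)\times\partial^T P}\equiv 0$ and $\|T_\tau f\|_{C^l([\eps/2,T]\times P)}\le C\|f\|_{C(P)}$ (obtained by cutting off $f$ near $\partial^T P$ and passing to the $C^l$-limit of the solutions $u_k$ which vanish identically on $\partial^T P$ by Remark~\ref{rmk:Sato}), so the first piece tends to zero; on $[t-\eps,t]$ one only uses the coarse bound $\|T_\tau g(s)\|_{C(P)}\le\|g\|_{C([0,T_0]\times P)}$, which yields a contribution $O(\eps)$. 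No kernel estimates are needed.

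Second, the uniqueness argument. You invoke \cite[Corollary 5.2]{Pop_2013b} applied to $w=u_1-u_2$, but that maximum principle is for smooth solutions of the Cauchy problem on all of $P$. Here $w$ is continuous on $P$ but only smooth on $P\setminus\partial^T P$, with prescribed (zero) data on $\partial^T P$; the cited result does not apply, and a Dirichlet-type maximum principle on $P\setminus\partial^T P$ would need a separate proof. Fortunately your last paragraph makes this unnecessary: the stochastic representation \eqref{eq:Parabolic_Dirichlet_rep_stoch}, which you derive for an arbitrary solution in the class \eqref{eq:Nice_space} by stopping inside $P\setminus\partial^T P$ and passing to the limit, already expresses $u$ uniquely in terms of $\zeta$ and $\QQ^p$. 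This is precisely how the paper obtains uniqueness, so you should drop the maximum-principle step and cite your own stopped-martingale argument.
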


\begin{rmk}[The semigroup representation of the solution to \eqref{eq:Parabolic_Dirichlet}]
We recall that the action of the semigroup $\{T_t\}_{t\geq 0}$ on the space of bounded Borel measurable functions is explained in Remark \ref{rmk:T_t_bounded_functions}. This is the sense in which we should understand the integrand on the right-hand side of identity \eqref{eq:Parabolic_Dirichlet_rep_semigroup}.
\end{rmk}

While the assumptions in Theorem \ref{thm:Parabolic_Dirichlet} are strong in terms of the regularity of the boundary data, it suffices for the application to the study of the hitting distributions on the tangent boundary components of the Kimura diffusion.
We prove Theorem \ref{thm:Parabolic_Dirichlet} with the aid of Lemmas \ref{lem:Initial_value_Dirichlet} and 
\ref{lem:Inhomogeneous_Dirichlet}.

\begin{lem}[Initial-value Dirichlet problem]
\label{lem:Initial_value_Dirichlet}
Suppose that the generalized Kimura operator satisfies the standard assumptions.
Let $f\in C^{\infty}(P)$ and let $u$ be defined by
\begin{align}
\label{eq:Initial_value_Dirichlet_rep_semigroup}
u(t,p) &= T_tf(p)\\
\label{eq:Initial_value_Dirichlet_rep_stoch}
& = \EE_{\QQ^p}\left[f(\omega(t)) \mathbf{1}_{\{t<\tau_{\partial^T P}\}}\right], 
\end{align}
for all $(t,p) \in (0,\infty)\times P$ and 
where we recall that the stopping time $\tau_{\partial^T P}$ is defined in \eqref{eq:tau_tangent} and $\QQ^p$ is the unique solution to the martingale problem in Definition \ref{defn:Martingale_problem}. Then we have that 
$$
u\in C^{\infty}([0,\infty)\times P\backslash \partial^T P)\cap C^{\infty}((0,\infty)\times P),
$$ 
and for all $0<t<T$, $l\in\NN$, relatively open sets $U\subset V\subset P\backslash \partial^T P$ such that $\hbox{dist}(\bar U, P\backslash\bar V)>0$, there are positive constants,
$C_1=C_1(l,L,t,T)$ and $C_2=C_2(l,L,T, U, V)$, such that
\begin{align}
\label{eq:Initial_value_Dirichlet_Holder_est}
\|u\|_{C^l([t,T]\times P)} &\leq C_1\|f\|_{C(P)},\\
\label{eq:Initial_value_Dirichlet_Holder_est_up_to_time_0}
\|u\|_{C^l([0,T]\times \bar U)} &\leq C_2\|f\|_{C^{l+3}(\bar V)}.
\end{align}
\end{lem}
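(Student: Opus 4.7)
The proof has three parts: the equivalence of the two expressions for $u$, the smoothness claim, and the two a priori estimates.

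The equivalence of \eqref{eq:Initial_value_Dirichlet_rep_semigroup} and \eqref{eq:Initial_value_Dirichlet_rep_stoch} is the extension of Proposition \ref{prop:Dirichlet_heat_kernel_existence}(ii) from $C^{\infty}_c(P\setminus\partial^T P)$ to $C^{\infty}(P)$ suggested by Remark \ref{rmk:T_t_bounded_functions}. The plan is to approximate $f\in C^{\infty}(P)$ by a uniformly bounded sequence $\{f_n\}\subset C^{\infty}_c(P\setminus\partial^T P)$ converging pointwise and in $L^2(P;d\mu)$; the semigroup side $\int k(t,p,\cdot)f_n\,d\mu$ converges since $k(t,p,\cdot)\in L^2(P;d\mu)$, and the stochastic side converges by bounded convergence. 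Lemma \ref{lem:Absorption_tangent_boundary} then justifies inserting the indicator $\mathbf{1}_{\{t<\tau_{\partial^T P}\}}$, since on $\{t\geq\tau_{\partial^T P}\}$ the path stays in $\partial^T P$, where the Dirichlet heat kernel representation vanishes.

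For the regularity, I would combine the integral representation $u(t,p)=\int_P k(t,p,q)f(q)\,d\mu(q)$ with Lemma \ref{lem:Equation_Dirichlet_heat_kernel}(ii): for each $q\in P\setminus\partial^T P$, the function $k(\cdot,\cdot,q)$ lies in $C^{\infty}((0,\infty)\times P)$ because it satisfies the backward $L$-equation, whose coefficients carry no logarithmic singularities. To promote this to $u\in C^{\infty}((0,\infty)\times P)$, I would observe that $u=T_t f$ is a weak solution of $u_t-Lu=0$ on $(0,\infty)\times P$ (by the semigroup construction and Lemma \ref{lem:Existence_uniqueness_homogeneous}) and then invoke the interior Schauder-type a priori estimates for generalized Kimura operators in \cite[Theorem 1.3]{Pop_2013b}, which apply up to all boundary components of $P$ since $L$ has smooth coefficients. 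Smoothness on $[0,\infty)\times (P\setminus\partial^T P)$ follows from the same localized Schauder estimates applied on compact subsets of $P\setminus\partial^T P$, where $f\in C^{\infty}(P)$ serves as smooth initial data and no absorption compatibility issue arises.

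The estimate \eqref{eq:Initial_value_Dirichlet_Holder_est} is obtained from the sup contraction $\|T_tf\|_{\sup}\leq \|f\|_{\sup}$ inherent in \eqref{eq:Initial_value_Dirichlet_rep_stoch}, combined with the semigroup factorization $T_tf=T_{t/2}(T_{t/2}f)$: applying the interior Schauder estimates on $[t/2,T]\times P$ to the function $T_{\cdot}(T_{t/2}f)$ yields $\|T_tf\|_{C^l([t,T]\times P)}\leq C_1\|T_{t/2}f\|_{\sup}\leq C_1\|f\|_{\sup}$. Estimate \eqref{eq:Initial_value_Dirichlet_Holder_est_up_to_time_0} follows from the localized Schauder estimate of \cite[Theorem 1.3]{Pop_2013b} on $[0,T]\times\bar U$: the separation $\hbox{dist}(\bar U, P\setminus\bar V)>0$ allows one to localize away from boundary singularities outside $\bar V$ and to control derivatives uniformly down to $t=0$ in terms of $\|f\|_{C^{l+3}(\bar V)}$. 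The main technical obstacle I anticipate is justifying smoothness of $u$ up to tangent boundary components with non-constant weights, where the adjoint operator $\widehat L$ has logarithmic singularities; this is sidestepped by working in the forward variable $p$ of the kernel, which obeys the original $L$-equation and is thus free of such singularities.
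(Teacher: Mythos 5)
Your overall skeleton is right: the equivalence of the two representations comes from extending the kernel identity to all of $C^{\infty}(P)$ via an approximation, and the regularity estimates come from the a priori estimates of Pop (interior supremum estimate and local Schauder estimate). You also correctly flag that one must avoid the adjoint operator's logarithmic singularities, which rules out arguing through $k(\cdot,p,\cdot)$ as a forward-equation solution near nonconstant-weight faces. That diagnostic matches the paper's strategy.

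There is, however, a genuine bootstrap gap in the regularity part of your argument. You propose to ``observe that $u = T_t f$ is a weak solution of $u_t - Lu = 0$'' and then ``invoke the interior Schauder-type a priori estimates in \cite[Theorem 1.3]{Pop_2013b}.'' Those estimates are stated for classical solutions; you cannot apply them directly to a weak solution without first establishing that the weak solution is classical, which is exactly what you are trying to prove. The same issue affects both your use of \cite[Theorem 1.2]{Pop_2013b} on $[t/2,T]\times P$ (where $T_{t/2}f$ is only known to be in $L^2(P;d\mu)$) and the local Schauder estimate on $[0,T]\times\bar U$. Your alternative suggestion of differentiating the integral representation $u(t,p)=\int_P k(t,p,q)f(q)\,d\mu(q)$ under the integral sign has a related problem: knowing that $k(\cdot,\cdot,q)$ is smooth for each fixed $q$ does not give the uniform-in-$q$ bounds on derivatives needed to justify the interchange.

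The paper closes this gap by extending your approximation idea to the regularity step. It sets $f_k := f\varphi_k$ with smooth cutoffs $\varphi_k$ vanishing near $\partial^T P$, so that each $f_k\in C^{\infty}_c(P\setminus\partial^T P)$ and the corresponding $u_k$ is a genuine classical solution in $C^{\infty}([0,\infty)\times P)$ provided by \cite[Theorem 10.0.2]{Epstein_Mazzeo_annmathstudies}, automatically vanishing on $[0,\infty)\times\partial^T P$ by Remark \ref{rmk:Sato}. The estimates of \cite[Theorems 1.2 and 1.3]{Pop_2013b} are then legitimately applied to each $u_k$, uniformly in $k$ (using $\|f_k\|_{C(P)}\le\|f\|_{C(P)}$ and, for the local estimate, that $f_k=f$ on $B_{2r}(p)$ once $k$ is large), and Arzel\`a--Ascoli plus the stochastic representation identify the limit with $u$. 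Incorporating the cutoff-solution approximation into your regularity argument, not only into the representation-equivalence step, repairs the proposal and aligns it with the paper.
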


\begin{proof}
We recall from \eqref{eq:T_t_bounded_functions_nonabsorbed} that a function $u$ defined by 
\eqref{eq:Initial_value_Dirichlet_rep_semigroup} also satisfies identity 
\eqref{eq:Initial_value_Dirichlet_rep_stoch}. To prove the remaining conclusions of Lemma \ref{lem:Initial_value_Dirichlet}, we use an approximation argument. Let
$\varphi_k:P\rightarrow[0,1]$ be a smooth cut-off function such that
\begin{equation}
\label{eq:Defn_cutoff_varphi}
\begin{aligned}
\varphi_k\equiv 1&\quad\hbox{ on }\quad \{p\in P: \hbox{dist}(p, \partial^T P)\geq 1/k\},\\
\varphi_k\equiv 0&\quad\hbox{ on } \quad \{p\in P:\hbox{dist}(p, \partial^T P)\leq 1/(2k)\},
\end{aligned}
\end{equation}
where the distance is taken with respect to the Riemannian metric induced on the manifold $P$ by the principal symbol of the generalized Kimura operator. Setting 
$$
f_k:=f\varphi_k,\quad\forall\, k\in\NN,
$$
we apply \cite[Theorem 10.0.2]{Epstein_Mazzeo_annmathstudies} and we let $u_k\in C^{\infty}([0,\infty)\times P)$ be
the unique smooth solution to the initial-value problem
\begin{equation}
\label{eq:Initial_value_Dirichlet}
\begin{aligned}
\left\{\begin{array}{rl}
u_t-Lu=0 & \hbox{ on } (0,\infty)\times P,\\
u=0&\hbox{ on } (0,\infty)\times \partial^T P,\\ 
u = f_k& \hbox{ on } \{0\}\times P.
\end{array} \right.
\end{aligned}
\end{equation}
Because $f_k\equiv 0$ on $\partial^T P$, it follows from Remark \ref{rmk:Sato} that $u_k\equiv 0$ on $[0,\infty)\times \partial^T P$. By Proposition \ref{prop:Dirichlet_heat_kernel_existence}, we have that
\begin{align}
\label{eq:Initial_value_Dirichlet_rep_seq_semigroup}
u_k(t,p) &= T_tf_k(p) \\
\label{eq:Initial_value_Dirichlet_rep_seq_stoch}
&= \EE_{\QQ^p}\left[f_k(\omega(t))\right],
\end{align}
for all $k\in\NN$ and for all $(t,p)\in[0,\infty)\times P$. The preceding identity and definition \eqref{eq:Defn_cutoff_varphi} of the cutoff functions $\varphi_k$ yield the uniform bound,
\begin{equation}
\label{eq:Initial_value_Dirichlet_sup_est_seq}
\|u_k\|_{C([0,\infty)\times P)} \leq \|f\|_{C(P)},\quad\forall\, k\in\NN.
\end{equation}
The supremum estimate \cite[Theorem 1.2]{Pop_2013b} implies that for all $0<t<T$ and $l\in\NN$ there is a positive constant, 
$C_1=C_1(l,L,t,T)$, such that
\begin{equation}
\label{eq:Initial_value_Dirichlet_Holder_est_approx}
\|u_k\|_{C^l([t,T]\times P)} \leq C_1\|f\|_{C(P)},\quad\forall\, k\in\NN,
\end{equation}
and so we can extract a subsequence converging uniformly in $C^l([t,T]\times P)$, for all $l\in\NN$, to a function $\bar u\in C^{\infty}((0,\infty)\times P)$. Letting $k$ tend to $\infty$ in identity \eqref{eq:Initial_value_Dirichlet_rep_seq_stoch}, using \eqref{eq:Defn_cutoff_varphi} and \eqref{eq:Initial_value_Dirichlet_rep_stoch}, we obtain that $\bar u = u$. Thus, inequality \eqref{eq:Initial_value_Dirichlet_Holder_est_approx} implies that $u$ satisfies estimate \eqref{eq:Initial_value_Dirichlet_Holder_est} and we have that $u$ belongs to $C^{\infty}((0,\infty)\times P)$. 

It remains to show that the solution $u$ belongs to $C^{\infty}([0,\infty)\times P\backslash \partial^T P)$. 
For $p\in P\backslash \partial^T P$, we choose $r>0$ such that $B_{2r}(p)\subset P\backslash \partial^T P$, where $B_r(p)$ denotes the relative open ball in the compact manifold $P$ centered at $p$ and of radius $r$ with respect to the Riemannian metric induced on the manifold $P$ by the generalized Kimura operator. Because  $f_k=f$ on $B_{2r}(p)$, for $k$ large enough, we can apply 
\cite[Theorem 1.3]{Pop_2013b} to obtain that for all $T>0$ and $l\in\NN$ there is a positive constant, 
$C_2=C_2(l,L,r,T)$, such that
\begin{equation}
\label{eq:Initial_value_Dirichlet_Holder_est_up_to_time_0_approx}
\|u_k\|_{C^l([0,T]\times \bar B_r(p))} \leq C_2\|f\|_{C^{l+3}(\bar B_{2r}(p))},\quad\forall\, k\in\NN,
\end{equation}
which implies that $u$ belongs to 
$C^{\infty}([0,\infty)\times P\backslash \partial^T P)$, by letting $k$ tend to $\infty$. Choosing relatively open sets $U$ and $V$
such that $\bar U\subset\bar V\subset P\backslash \partial^T P$ and $\hbox{dist}(\bar U, P\backslash\bar V)>0$, there is a positive constant $r$ and there are points $\{p_i:i=1,\ldots,I\}\subset P\backslash \partial^T P$, such that
$$
U\subset \bigcup_{i=1}^I B_r(p_i),\quad\hbox{ and }\quad  \bigcup_{i=1}^I B_{2r}(p_i) \subset V.
$$
Applying estimate \eqref{eq:Initial_value_Dirichlet_Holder_est_up_to_time_0_approx} on each ball $B_r(p_i)$, it follows that for all $0<T$ and $l\in\NN$, there is a positive constant, 
$C_3=C_3(l,L,T, U, V)$, such that
$$
\|u_k\|_{C^l([0,T]\times \bar U)} \leq C_3\|f\|_{C^{l+3}(\bar V)},\quad\forall\, k\in\NN,\quad\forall\,l\in\NN,
$$
and letting $k$ tend to $\infty$, we obtain that estimate \eqref{eq:Initial_value_Dirichlet_Holder_est_up_to_time_0} holds. This completes the proof.
\end{proof}
	
\begin{lem}[Non-homogeneous Dirichlet problem]
\label{lem:Inhomogeneous_Dirichlet}
Suppose that the generalized Kimura operator satisfies the standard assumptions.
Let $g\in C^{\infty}([0,\infty)\times P)$. Then the non-homogeneous Dirichlet problem,
\begin{equation}
\label{eq:Inhomogeneous_Dirichlet}
\begin{aligned}
\left\{\begin{array}{rl}
u_t-Lu=g & \hbox{ on } (0,\infty)\times P\backslash \partial^T P,\\
u=0&\hbox{ on } (0,\infty)\times \partial^T P,\\ 
u = 0& \hbox{ on } \{0\}\times P, 
\end{array} \right.
\end{aligned}
\end{equation}
has a unique solution $u$ that satisfies \eqref{eq:Nice_space}. Moreover, the solution $u$ satisfies the representations:
\begin{align}
\label{eq:Inhomogeneous_Dirichlet_rep_semigroup}
u(t,p) &= \int_0^t T_{t-s}g(s)\, ds \\
\label{eq:Inhomogeneous_Dirichlet_rep_stoch}
&= \EE_{\QQ^p}\left[\int_0^{t\wedge\tau_{\partial^T P}}g(t-s,\omega(s))\, ds\right],
\end{align}
for all $(t,p)\in (0,\infty)\times P$, 
where the stopping time $\tau_{\partial^T P}$ is defined in \eqref{eq:tau_tangent} and $\QQ^p$ is the unique solution to the martingale problem in Definition \ref{defn:Martingale_problem}. 
\end{lem}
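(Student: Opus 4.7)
The plan is to define $u$ by the semigroup Duhamel formula \eqref{eq:Inhomogeneous_Dirichlet_rep_semigroup}, derive the stochastic representation \eqref{eq:Inhomogeneous_Dirichlet_rep_stoch} from it, verify the PDE together with the boundary and initial conditions, and finally establish uniqueness. Setting
$$
u(t,p):=\int_0^t T_{t-s}g(s,\cdot)(p)\,ds,
$$
and applying Lemma \ref{lem:Initial_value_Dirichlet} and Remark \ref{rmk:T_t_bounded_functions} to each slice $g(s,\cdot)\in C^\infty(P)$, I would obtain
$$
T_{t-s}g(s,\cdot)(p)=\EE_{\QQ^p}\!\left[g(s,\omega(t-s))\mathbf{1}_{\{t-s<\tau_{\partial^T P}\}}\right],\quad\forall\,p\in P.
$$
Fubini's theorem and the change of variables $r=t-s$ then transform \eqref{eq:Inhomogeneous_Dirichlet_rep_semigroup} into \eqref{eq:Inhomogeneous_Dirichlet_rep_stoch}.

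These representations immediately yield the boundary, initial, and continuity properties: if $p\in\partial^T P$ then $\tau_{\partial^T P}=0$ under $\QQ^p$, so $u\equiv 0$ on $(0,\infty)\times\partial^T P$; the bound $|u(t,p)|\leq t\,\|g\|_{C([0,t]\times P)}$ yields $u(0,\cdot)\equiv 0$, and continuity of $u$ on $[0,\infty)\times P$ follows by dominated convergence in \eqref{eq:Inhomogeneous_Dirichlet_rep_stoch} together with continuity in $p$ of the stopped process $\omega(\cdot\wedge\tau_{\partial^T P})$ under $\QQ^p$. For any relatively open sets $U,V$ with $\bar U\subset V\subset P\backslash\partial^T P$ and $\hbox{dist}(\bar U,P\backslash\bar V)>0$, estimate \eqref{eq:Initial_value_Dirichlet_Holder_est_up_to_time_0} applied to $T_\tau g(s,\cdot)$ is uniform in $(s,\tau)\in [0,T]^2$, justifying differentiation under the integral in \eqref{eq:Inhomogeneous_Dirichlet_rep_semigroup} to any order on $[0,T]\times\bar U$. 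This gives $u\in C^\infty([0,\infty)\times(P\backslash\partial^T P))$, and splitting the difference quotient into the contributions from $s\in[t,t+h]$ and $s\in[0,t]$ yields $\partial_t u=g(t,\cdot)+\int_0^t LT_{t-s}g(s,\cdot)\,ds=g+Lu$.

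For uniqueness, let $v$ be the difference of two solutions satisfying \eqref{eq:Nice_space}: $v$ is continuous on $[0,\infty)\times P$, smooth on $[0,\infty)\times(P\backslash\partial^T P)$, solves $v_t-Lv=0$ there, and vanishes on $\{0\}\times P\cup(0,\infty)\times\partial^T P$. I would plug $\varphi(s,q):=v(t-s,q)\chi_k(q)$, with $\chi_k$ as in \eqref{eq:Defn_cutoff_varphi}, into the martingale identity \eqref{eq:Mart_problem_martingales}, take expectation under $\QQ^p$, and send $k\to\infty$ to obtain
$$
v(t,p)=\EE_{\QQ^p}\!\left[v(t-(t\wedge\tau_{\partial^T P}),\omega(t\wedge\tau_{\partial^T P}))\right]=0,
$$
since $v$ vanishes on the parabolic boundary. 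The main obstacle is precisely this limit: the commutator $[L,\chi_k]v$ is supported in a shrinking neighborhood of $\partial^T P$, and its expected contribution must be shown to vanish. This step relies on the tangency of $L$ along $\partial^T P$ encoded in the cleanness condition (Assumption \ref{assump:Cleanness}), together with Lemma \ref{lem:Hitting_tangent_boundary}, which controls the probability and timing with which paths approach $\partial^T P$ prior to absorption.
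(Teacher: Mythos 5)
Your skeleton matches the paper's: define $u$ by the Duhamel formula, derive the stochastic representation via Fubini and $r=t-s$, get interior smoothness from \eqref{eq:Initial_value_Dirichlet_Holder_est_up_to_time_0}, verify the PDE by differentiating under the integral sign, and prove uniqueness by a stochastic representation. Two steps, however, rest on claims that are not substantiated and would need to be filled in.

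First, the boundary continuity. You assert that $u\in C([0,\infty)\times P)$ "follows by dominated convergence in \eqref{eq:Inhomogeneous_Dirichlet_rep_stoch} together with continuity in $p$ of the stopped process $\omega(\cdot\wedge\tau_{\partial^T P})$ under $\QQ^p$." That last ingredient is not available: for a degenerate diffusion the hitting time $\tau_{\partial^T P}$ is in general not a continuous functional of the initial point, and the martingale-problem well-posedness in Theorem~\ref{thm:Wellposed_mart_problem} only gives measurability of $p\mapsto\QQ^p$, not the sharper Feller-type continuity of the stopped process that you are invoking. Indeed, proving the kind of regularity needed to make this argument go through is essentially the content of this section. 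The paper instead proves continuity purely analytically: for a sequence $(t_k,p_k)\to(t,p)$ with $p\in\partial^TP$, it splits $u(t_k,p_k)=\int_0^{t-\eps}+\int_{t-\eps}^{t_k}$, applies Lemma~\ref{lem:Initial_value_Dirichlet} (which shows $T_\tau g(s)$ is continuous on $(0,\infty)\times P$ and vanishes on $\partial^T P$) to the first piece, and bounds the second by $2\eps\|g\|_{C}$. You should argue along those lines rather than by an unproven probabilistic continuity.

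Second, uniqueness. You test the martingale identity against $\varphi(s,q)=v(t-s,q)\chi_k(q)$ and correctly identify the commutator $[L,\chi_k]v$, supported in a shrinking collar of $\partial^T P$, as "the main obstacle," but you leave it unresolved, gesturing at cleanness and Lemma~\ref{lem:Hitting_tangent_boundary}. That lemma gives regularity and positivity of the hitting probability $p_\Sigma$; it does not give the occupation-time bound near $\partial^T P$ that would be needed to make $\EE_{\QQ^p}\int_0^t[L,\chi_k]v(t-s,\omega(s))\,ds\to 0$. The paper avoids the commutator entirely: it plugs $\varphi(s,p):=u(t-s,p)$ into \eqref{eq:Mart_problem_martingales} but only for $s\in[0,t\wedge\tau_{\partial^T P})$, i.e.\ it uses the stopped martingale on the time interval where $\omega(s)\notin\partial^T P$ and hence $u(t-s,\cdot)$ is smooth. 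Taking expectations then yields \eqref{eq:Inhomogeneous_Dirichlet_rep_stoch} directly, which determines $u$ from $g$. This stopping-time localization is the standard replacement for your cutoff; if you want to keep the cutoff route you would need a genuine occupation-time estimate near $\partial^T P$ that the paper does not prove and does not need.
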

\begin{rmk} In general the function $u(t,p)$ cannot be differentiable along
  $\pa^T P.$
\end{rmk}
\begin{proof}
We first prove the \emph{existence} of solutions to the non-homogeneous Dirichlet problem \eqref{eq:Inhomogeneous_Dirichlet} using Duhamel's principle. We define $u$ as in \eqref{eq:Inhomogeneous_Dirichlet_rep_semigroup} and we notice that identity 
\eqref{eq:Inhomogeneous_Dirichlet_rep_stoch} holds by the first equality in
\eqref{eq:T_t_bounded_functions_nonabsorbed} and property
\eqref{eq:Absorption_tangent_boundary}. Estimate
\eqref{eq:Initial_value_Dirichlet_Holder_est_up_to_time_0} and the
representation formula \eqref{eq:Initial_value_Dirichlet_rep_semigroup} give us
that for all $t>0$, $l\in\NN$, and relatively open sets $U\subset V\subset
P\backslash \pa^{T} P$ such that $\hbox{dist}(\bar U, P\backslash\bar V)>0$, there is a positive constant,
$C=C(l,L,t,U,V)$, such that
$$
\|T_{\cdot-s}g(s)\|_{C^l([s,t]\times\bar U)} \leq C\|g(s)\|_{C^{l+3}(\bar V)},
$$
from which it follows that the function $u$ defined by \eqref{eq:Inhomogeneous_Dirichlet_rep_semigroup} belongs to
$C^{\infty}([0,\infty)\times P\backslash \partial^T P)$. It remains to show that $u$ is continuous up to 
$[0,\infty)\times \partial^T P$ and it is identically $0$ on this portion of the boundary. 
For $(t,p)\in [0,\infty)\times \partial^T P$, we let $\{(t_k,p_k)\}_{k\in\NN}$ be a sequence of points in 
$[0,\infty)\times(P\backslash \partial^T P)$ that converges to $(t,p)$. If $t=0$, we notice that 
$$
|u(t_k,p_k)| \leq \|g\|_{C([0,T_0]\times P)} t_k\rightarrow 0,\quad\hbox{ as } k\rightarrow\infty,
$$
where we used the fact that
\begin{equation}
\label{eq:Inhomogeneous_Dirichlet_sup}
\|T_tg(s)\|_{C(P)} \leq \|g\|_{C([0,T_0]\times P)},\quad\forall\, s\in [0,T_0],
\end{equation}
and where we denote $T_0:=\sup_{k} t_k$. If $t>0$, we choose $\eps\in (0,t/2)$ and we decompose
\begin{equation}
\label{eq:Inhomogeneous_Dirichlet_decompose}
u(t_k,p_k) = \int_0^{t-\eps} T_{t_k-s}g(s)(p_k)\, ds + \int_{t-\eps}^{t_k} T_{t_k-s}g(s)(p_k)\, ds,\quad\forall\, k\in\NN.
\end{equation}
We let $K_1=K_1(\eps)$ be an integer such that $t_k\in (t-\eps/2,t+\eps/2)$, for all $k\geq K_1$. Then for all $s\in [0,t-\eps]$, we have that $t_k-s\geq \eps/2$, and so Lemma \ref{lem:Initial_value_Dirichlet} gives us that $T_{t_k-s}g(s)(p_k)$ converges to $0$, as $(t_k,p_k)$ converges to $(t,p)$. Using also the uniform bound \eqref{eq:Inhomogeneous_Dirichlet_sup}, the Dominated Convergence Theorem implies that there is an integer $K_2=K_2(\eps)$ such that 
$$
\left|\int_0^{t-\eps} T_{t_k-s}g(s)(p_k)\, ds\right| <\eps,\quad\forall\, k\geq K_2\vee K_1.
$$
The uniform bound \eqref{eq:Inhomogeneous_Dirichlet_sup} and the fact that $|t_k-(t-\eps)|<2\eps$ imply that
$$
\left|\int_{t-\eps}^{t_k} T_{t_k-s}g(s)(p_k)\, ds\right| <2\eps\|g\|_{C([0,T_0]\times P)},\quad\forall\, k\geq K_2\vee K_1.
$$
Combining the preceding two inequalities, we conclude that for all $\eps>0$, there is and integer $K_0=K_0(\eps):=K_2\vee K_1$, such that $|u(t_k,p_k)|<\eps$, for all $k\geq K_0$. Combining the cases when $t=0$ and $t>0$, we proved that $u$ is continuous up to 
$[0,\infty)\times \partial^TP$ and it is identically $0$ on this portion of the boundary.

To prove the \emph{uniqueness} of solutions $u$ such that \eqref{eq:Nice_space} holds, we choose $\varphi(s,p):=u(t-s,p)$ in the definition of the martingales \eqref{eq:Mart_problem_martingales}, where we let $s \in [0,t\wedge\tau_{\partial^T P})$ and $p\in P$. Taking expectation under the probability measure $\QQ^p$, it follows that $u$ satisfies the stochastic representation \eqref{eq:Inhomogeneous_Dirichlet_rep_stoch}, which implies that the solution is uniquely defined in terms of the source function $g$. This completes the proof.
\end{proof}

We conclude this section with

\begin{proof}[Proof of Theorem \ref{thm:Parabolic_Dirichlet}]
By defining the solution $u$ by
\begin{equation}
\label{eq:Sol_inhom_Dirichlet}
u:=\zeta + v,
\end{equation}
and letting $v$ be the unique solution to the non-homogeneous Dirichlet problem \eqref{eq:Inhomogeneous_Dirichlet} with source function
$g:=-(\partial_t-L)\zeta$, it follows that $u$ satisfies \eqref{eq:Nice_space} and the semigroup representation 
\eqref{eq:Parabolic_Dirichlet_rep_semigroup}. Because the function $v$ is identically $0$ on $\{0\}\times P$ 
and on $(0,\infty)\times\partial^T P$ by construction, property \eqref{eq:eta_t_0} is used to ensure that the function 
$u$ defined in \eqref{eq:Sol_inhom_Dirichlet} satisfies the initial condition $u=0$ on $\{0\}\times P$, and so indeed $u$ is a classical solution to \eqref{eq:Parabolic_Dirichlet}. As usual, the stochastic representation \eqref{eq:Parabolic_Dirichlet_rep_stoch} follows by choosing $\varphi(s,p):=u(t-s,p)$ in the definition of the martingales \eqref{eq:Mart_problem_martingales}, where we let 
$s \in [0,t\wedge \tau_{\partial^T P})$, and by taking expectation under the probability measure $\QQ^p$ and using the fact that the function $u$ belongs to the space \eqref{eq:Nice_space}. This completes the proof.
\end{proof}

\section{Hitting distributions of tangent boundary components and transition probabilities}
\label{sec:Prob}

In this section we give the proofs of our main results concerning the distribution probabilities of Kimura diffusions. We divide the section into two parts. In \S\ref{sec:Hitting_distributions} we study the properties of the hitting distributions (caloric measure) of the generalized Kimura process, and in \S\ref{sec:Proof_main_result} we establish the structure of the transition probabilities in Theorem \ref {thm:Tran_prob}.

\subsection{Properties of hitting distributions}
\label{sec:Hitting_distributions}

We begin with the proof of Theorem \ref{thm:Hitting_distribution_hypers} by first establishing in Proposition \ref{prop:Integral_representation} an integral representation of solutions to the non-homogeneous parabolic Dirichlet problem \eqref{eq:Parabolic_Dirichlet}. To prove Theorem \ref{thm:Hitting_corner} we consider separately the case when two tangent boundary components meet and when a tangent and a transverse boundary component meet. The former case is considered in Lemma \ref{lem:Hitting_corner_tangent_tangent}, which is a direct consequence of Theorem \ref{thm:Hitting_distribution_hypers}. The latter case requires more careful consideration because we do not have good estimates on the derivatives of the Dirichlet heat kernel on neighborhoods of transverse boundary components due to the presence of the logarithmic singularities in the coefficients of the adjoint operator. Nevertheless, we prove the latter case in 
Lemma \ref{lem:Hitting_corner_tangent_transverse} via a Landis-type growth estimate, Lemma \ref{lem:Growth_lemma}, and a regularity result for the hitting probabilities of lower dimensional boundary components, Lemma \ref{lem:Regularity_hitting_probability}. We conclude \S \ref{sec:Hitting_distributions} with the proof of the doubling property of the hitting distribution in Theorem \ref{thm:Doubling_property}. 

We denote 
\begin{equation}
\label{eq:Widehat_x_i}
\widehat x_i = (x_1,\ldots,x_{i-1},0,x_{i+1},\ldots,x_n),\quad\forall\, x=(x_1,\ldots,x_n)\in\bar\RR_+^n.
\end{equation}
We first prove an integral representation of solutions to the non-homogeneous Dirichlet problem \eqref{eq:Parabolic_Dirichlet}: 

\begin{prop}[Integral representation of solutions to the non-homogeneous Dirichlet problem]
\label{prop:Integral_representation}
Suppose that the generalized Kimura operator satisfies the standard assumptions.
Let $q\in\partial^c P$ and let $R>0$ be such that in an adapted system of coordinates we identify $q$ with the origin and the generalized Kimura operator has constant weights on $B^{\infty}_R$.
Let $0<r<R$ and $\zeta\in C^{\infty}_c((0,\infty)\times \barB^{\infty}_r)$ and let $u$ be the unique solution to the non-homogeneous parabolic Dirichlet problem \eqref{eq:Parabolic_Dirichlet}. Let 
\begin{equation}
\label{defn:time_t_0}
t_0:=\inf\{t\geq 0:\, \hbox{supp }\zeta \subseteq [0,t]\times \barB^{\infty}_r\}.
\end{equation}
Then, for all $t>2t_0$ and $p\in P\backslash \partial^T P$, the solution $u$ satisfies the integral representation,
\begin{equation}
\label{eq:Integral_representation}
u(t,p) =
\int_0^t\left(\sum_{i=1}^{n_0}\int_{\Int(H_i)} k_{x_i}(t-s,p,(\widehat x_i,y))
\zeta(s,(\widehat x_i,y)) d\mu_{H_i}(\widehat x_i,y)\right)\,ds,
\end{equation}
where without loss of generality we choose the integer $n_0$ such that it satisfies property \eqref{eq:n_0}, we denote 
$H_i=\{z\in\bar S_{n,m}: x_i=0\}$, and we recall the definition of the weighted measure $d\mu_{H_i}$ in Remark \ref{rmk:Sato}.
\end{prop}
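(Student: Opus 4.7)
The plan is to start from the semigroup representation given in Theorem \ref{thm:Parabolic_Dirichlet}(i). Since $t > 2t_0$, we have $\zeta(t,\cdot) \equiv 0$, so combining with the kernel representation \eqref{eq:Semigroup_kernel} of Proposition \ref{prop:Dirichlet_heat_kernel_existence} yields
\begin{equation*}
u(t,p) = -\int_0^t \int_P k(t-s,p,q)\,(\partial_s - L_q)\zeta(s,q)\,d\mu(q)\,ds.
\end{equation*}
I would then integrate by parts in both the time and spatial variables to transfer the operator $\partial_s - L_q$ onto $k(t-s,p,\cdot)$, using the adjoint equation $k_t = \widehat L k$ in the forward variables established in Lemma \ref{lem:Equation_Dirichlet_heat_kernel}(i) to cancel the interior contributions. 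What remains is exactly the boundary term produced by the spatial integration by parts, which will account for the right-hand side of \eqref{eq:Integral_representation}.

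For the time variable, the boundary contributions at $s=0$ and $s=t$ vanish: the former by the compact support of $\zeta$ in $(0,\infty)\times \barB^{\infty}_r$, and the latter by the choice $t > 2t_0$ together with the definition of $t_0$ in \eqref{defn:time_t_0}. Using $\partial_s[k(t-s,p,q)] = -k_t(t-s,p,q)$, this step yields
$
-\int_0^t\int_P k\,\partial_s\zeta\,d\mu\,ds
= -\int_0^t\int_P \zeta\,k_t(t-s,p,q)\,d\mu\,ds.
$
For the spatial integration by parts, the crucial observation is that the support of $\zeta$ lies in $\barB^{\infty}_r \subset B^{\infty}_R$, where the weights are constant, so by \eqref{eq:Adjoint_operator_near_absorbant_boundary} the adjoint $\widehat L$ has no logarithmic singularities and $k(\cdot,p,\cdot) \in C^\infty$ in this region by Lemma \ref{lem:Equation_Dirichlet_heat_kernel}(i). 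Far-face boundary contributions at $\{x_i = r\}$ and $\{|y_l| = r\}$ vanish since $\zeta$ is compactly supported. For transverse directions $n_0+1 \leq i \leq n$, the measure factor $x_i^{b_i-1}\,dx_i$ with $b_i > 0$ produces a prefactor $x_i^{b_i}$ that kills the boundary terms at $\{x_i=0\}$, since $k$ and $k_{x_i}$ are bounded there by Theorem \ref{thm:Sup_est_tran_prob}. Applying $k_t = \widehat L k$ then leaves precisely the tangent-face contributions.

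For each tangent direction $1\leq i\leq n_0$, where $b_i \equiv 0$ near $H_i$, the measure factor is $x_i^{-1}\,dx_i$ and the drift $b_i\partial_{x_i}$ is absent, so the only surviving $x_i$-derivative term is $x_i\partial_{x_i}^2$. Two integrations by parts across $x_i=0$ give
\begin{equation*}
\int_0^r \zeta_{x_i x_i}\,k\,dx_i
= -\zeta_{x_i}(0)k(0) + \zeta(0)k_{x_i}(0) + \int_0^r \zeta\,k_{x_i x_i}\,dx_i.
\end{equation*}
The term $\zeta_{x_i}(0)k(0)$ vanishes because $k(t-s,p,(\widehat x_i, y)) = 0$ on $H_i$ by the estimate \eqref{eq:Sup_est_sol_1_tran_prob} of Theorem \ref{thm:Sup_est_tran_prob}. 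Integrating the surviving boundary contribution against the remaining transverse measure on $H_i$, which is exactly $d\mu_{H_i}$ by Remark \ref{rmk:Sato}, and summing over $i=1,\ldots,n_0$ produces \eqref{eq:Integral_representation}.

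The main obstacle will be the rigorous justification of the integration by parts uniformly up to the tangent boundary. This requires pointwise control of $k$ and $k_{x_i}$ near $\partial^T P \cap \barB^{\infty}_r$, together with the sharp vanishing rate $|k| \lesssim \prod_{i=1}^{n_0} x_i$. Both are provided precisely by Theorem \ref{thm:Sup_est_tran_prob}, whose applicability hinges on the constant-weight hypothesis $q \in \partial^c P$ that removes the logarithmic singularities of $\widehat L$ and allows the regularity statement of Lemma \ref{lem:Equation_Dirichlet_heat_kernel}(i) to hold up to $\{x_i = 0\}$.
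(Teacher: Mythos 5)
Your proposal follows essentially the same route as the paper's proof: starting from the semigroup (Duhamel) representation of Theorem~\ref{thm:Parabolic_Dirichlet}(i), integrating by parts in time and space to transfer $\partial_s-L$ onto the heat kernel, canceling interior terms via the adjoint equation of Lemma~\ref{lem:Equation_Dirichlet_heat_kernel}(i), and using the constant-weight estimates of Theorem~\ref{thm:Sup_est_tran_prob} to identify the surviving tangent-face boundary term as $\int k_{x_i}\zeta\,d\mu_{H_i}$. The only real difference is that you perform the integration by parts directly up to $\{x_i=0\}$ and appeal to the pointwise vanishing rates to justify it, whereas the paper makes this rigorous by first truncating to $S^{\eps}_{n,m}=(\eps,\infty)^n\times\RR^m$, carefully cataloguing each boundary term $J^{\eps}_{i,i'}$ produced by the IBP (including the cross-derivative terms $x_jx_ia_{ij}\partial_{x_i}\partial_{x_j}$ and $x_ic_{il}\partial_{x_i}\partial_{y_l}$, which your sketch passes over), and showing all but the normal-derivative term vanish as $\eps\to 0$; this $\eps$-regularization is exactly the ``rigorous justification'' obstacle you flag at the end, and the ingredients you name — the adjoint regularity and the estimates \eqref{eq:Sup_est_sol_1_tran_prob}--\eqref{eq:Sup_est_sol_2_tran_prob} — are the ones the paper uses to close it.
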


\begin{proof}
We divide the proof in several steps. In Step \ref{step:RHS_welldefined} we establish that the right-hand side in identity 
\eqref{eq:Integral_representation} is well-defined, while in Steps \ref{step:Approximation} and \ref{step:Convergence} we employ an approximation procedure to prove that identity \eqref{eq:Integral_representation} holds.

\setcounter{step}{0}
\begin{step}[The right-hand side in \eqref{eq:Integral_representation} is well-defined]
\label{step:RHS_welldefined}
Remark \ref{rmk:Sato} and formula \eqref{eq:Weight_local} give us that we can write the measure $d\mu_{H_i}$ is adapted local coordinates as:
\begin{equation}
\label{eq:d_mu_H_i}
d\mu_{H_i} = \frac{x_i^{-b_i+1}d\mu(x,y)}{dx_i},\quad\forall\, (x,y)\in S_{n,m},\quad\forall\, 1\leq i\leq n.
\end{equation}
The assumption that $t>2t_0$, where the positive constant $t_0$ is defined in \eqref{defn:time_t_0}, gives us that the integrand 
$k_{x_i}(t-s,p,(\widehat x_i,y))$ on the right-hand side of \eqref{eq:Integral_representation} is evaluated only for $t-s>t_0$.
Because the operator has constant weights in the adapted local system of coordinates, we can apply
Theorem \ref{thm:Sup_est_tran_prob}
to obtain that the normal derivative $k_{x_i}(\cdot,p,\cdot)$ is a smooth function on 
$(0,\infty)\times(\Int(H_i)\cap\bar B_R^{\infty})$, for all $p\in P\backslash\partial^TP$ and for all $1\leq i\leq n_0$. Thus, we have that $k_{x_i}(\cdot,p,\cdot)$ is a continuous function up to 
$(0,\infty)\times (\Int(H_i)\cap\bar B_R^{\infty})$. Identity \eqref{eq:d_mu_H_i} and estimate 
\eqref{eq:Sup_est_sol_2_tran_prob} give us that there is a positive constant, 
$C=C(L,t_0,t,r,R)$, such that
\begin{align*}
|k_{x_i}(s,p,(\widehat x_i,y))| \,d\mu_{H_i}(\widehat x_i,y) 
&\leq C \prod_{\stackrel{j=1}{j\neq i}}^{n_0} x_jx_j^{-1}\,dx_j
\prod_{k=n_0+1}^n x_k^{b_k-1}\,dx_k\prod_{l=1}^mdy_l\\
&\leq C \prod_{k=n_0+1}^n x_k^{b_k-1}\,dx_k 
\prod_{\stackrel{j=1}{j\neq i}}^{n_0}\,dx_j
\prod_{l=1}^mdy_l,
\end{align*}
for all $s\in [t_0,t]$, $p\in P\backslash \partial^T P$, $(\widehat x_i,y)\in\Int(H_i)\cap\bar B^{\infty}_R$, and $1\leq i\leq n_0$. It is easy to see that the right-hand side of the preceding inequality is integrable on $\Int(H_i)\cap\bar B^{\infty}_R$. This completes the proof of the fact that the right-hand side in identity \eqref{eq:Integral_representation} is well-defined.
\end{step}

\begin{step}[Approximation]
\label{step:Approximation}
In an adapted local system of coordinates, using identity
\eqref{eq:Parabolic_Dirichlet_rep_semigroup} and Remark
\ref{rmk:T_t_bounded_functions}, for $t>t_0,$ we can then write
\begin{equation}
\label{eq:u_kernel}
u(t,p) = -\int_0^t \int_{S_{n,m}} k(t-s,p,(x,y))(\partial_s-L)\zeta(s,(x,y))\,d\mu(x,y)\, ds,
\end{equation}
where we use the fact that $\zeta(t,\cdot)=0$, for all $t\geq t_0$ by \eqref{defn:time_t_0}. We recall that the measure $d\mu$ can be represented as in \eqref{eq:Weight_local}. The fact that the right-hand side in the preceding identity is well-defined can be proved using a similar argument of the one described in Step \ref{step:RHS_welldefined}, in which we replace the appeal to estimate 
\eqref{eq:Sup_est_sol_2_tran_prob} to that of estimate \eqref{eq:Sup_est_sol_1_tran_prob} (with $\fb=0$) and we replaced the measure $d\mu_{H_i}$ by $d\mu$. For all $\eps>0$ and $1\leq i\leq n$, we denote 
$$
S^{\eps}_{n,m}:=(\eps,\infty)^n\times\RR^m
\quad\hbox{ and }\quad
H^{\eps}_i:=\partial S^{\eps}_{n,m}\cap\{x_i=\eps\},
$$
and we set
$$
u^{\eps}(t,p) := -\int_0^t \int_{S^{\eps}_{n,m}} k(t-s,p,(x,y))(\partial_s-L)\zeta(s,(x,y))\,d\mu(x,y)\, ds,
$$
which converges to $u(t,p)$, as $\eps$ tends to 0, by the Dominated Convergence
Theorem since the right-hand side in \eqref{eq:u_kernel} is an integrable function. We write $u^{\eps}(t,p)=I^{\eps}_1+I^{\eps}_2$, where we define
\begin{align*}
I^{\eps}_1&:=-\int_0^t \int_{S^{\eps}_{n,m}} k(t-s,p,(x,y))\partial_s\zeta(s,(x,y))\,d\mu(x,y)\, ds,\\
I^{\eps}_2&:= \int_0^t \int_{S^{\eps}_{n,m}} k(t-s,p,(x,y))L\zeta(s,(x,y))\,d\mu(x,y)\, ds.
\end{align*}
Using Lemma \ref{lem:Equation_Dirichlet_heat_kernel} and the fact that the adjoint operator $\widehat L$ is hypo-elliptic on 
$\Int(P)$, we have that $k(\cdot,p,\cdot)$ is a smooth function on $((0,\infty)\times \bar S^{\eps}_{n,m})\cap \hbox{supp }\zeta$, and so we can integrate by parts in the preceding integrals. Using the fact that that $\zeta(0)=\zeta(t)=0$, for all $t\geq t_0$, we obtain
\begin{equation}
\label{eq:I_1}
I^{\eps}_1:=-\int_0^t \int_{S^{\eps}_{n,m}} \partial_t k(t-s,p,(x,y))\zeta(s,(x,y))\,d\mu(x,y)\, ds.
\end{equation}
Integration by parts in the term $I^{\eps}_2$ gives us
\begin{equation}
\label{eq:I_2}
I^{\eps}_2:=\sum_{i=1}^n J^{\eps}_i 
-\int_0^t \int_{S^{\eps}_{n,m}} 
\widehat L k(t-s,p,(x,y))\zeta(s,(x,y))\,d\mu(x,y)\, ds,
\end{equation}
where the boundary terms $J^{\eps}_i$, for $1\leq i\leq n$, have the following expressions
\begin{equation*}
\begin{aligned}
J^{\eps}_i
&:= 
-\eps^{b_i}\int_0^t\int_{H^{\eps}_i}(1+x_ia_{ii})k(t-s,p,(x,y))\zeta_{x_i}(s,(x,y)) 
\,d\mu_{H_i}(\widehat x_i,y)\,ds
\\
&\quad 
+ \eps^{b_i} \int_0^t\int_{H^{\eps}_i}
\left(a_{ii}+x_i\partial_{x_i}a_{ii}+\sum_{j=1}^n(b_ja_{ij}+x_j\partial_{x_j}a_{ij})\right) 
k(t-s,p,(x,y))\zeta(s,(x,y))
\,d\mu_{H_i}(\widehat x_i,y)\,ds
\\
&\quad 
+ \eps^{b_i} \sum_{j\neq i}\int_0^t\int_{H^{\eps}_i} x_ja_{ij} k(t-s,p,(x,y))\zeta_{x_j}(s,(x,y))
\,d\mu_{H_i}(\widehat x_i,y)\,ds
\\
&\quad 
+ \eps^{b_i} \sum_{l=1}^m\int_0^t\int_{H^{\eps}_i} c_{il} k(t-s,p,(x,y)))\zeta_{y_l}(s,(x,y))
\,d\mu_{H_i}(\widehat x_i,y)\,ds
\\
&\quad 
+ \eps^{b_i} \sum_{j\neq i}\int_0^t\int_{H^{\eps}_i} x_ja_{ij} k_{x_j}(t-s,p,(x,y))\zeta(s,(x,y))
\,d\mu_{H_i}(\widehat x_i,y)\,ds
\\
&\quad 
+ \eps^{b_i} \int_0^t\int_{H^{\eps}_i} (1+x_ia_{ii})k_{x_i}(t-s,p,(x,y))\zeta(s,(x,y)) 
\,d\mu_{H_i}(\widehat x_i,y)\,ds.
\end{aligned}
\end{equation*}
Combining identities \eqref{eq:I_1} and \eqref{eq:I_2} together with Lemma \ref{lem:Equation_Dirichlet_heat_kernel} it follows that 
\begin{equation}
\label{eq:u_eps_approx}
u^{\eps}(t,p) = \sum_{i=1}^n J^{\eps}_i.
\end{equation}
It remains prove that $u^{\eps}(t,p)$ converges, as $\eps\rightarrow 0$, to the expression of $u(t,p)$ in 
\eqref{eq:Integral_representation}.
\end{step}

\begin{step}[Convergence]
\label{step:Convergence}
For all $1\leq i\leq n$, we denote each line in the expression of $J^{\eps}_i$ by 
$J^{\eps}_{i,i'}$, for $1\leq i'\leq 6$. Using the expression of the measure $d\mu_{H_i}$ in \eqref{eq:d_mu_H_i} and applying estimate 
\eqref{eq:Sup_est_sol_1_tran_prob} (with $\fb=0$), there is a positive constant, $C=C(L,t_0,t,r,R)$, such that for all $1\leq i'\leq 4$, we have that
\begin{align*}
\sum_{i'=1}^4 |J^{\eps}_{i,i'}| 
&\leq C \eps^{b_i} \int_{H^{\eps}_i\cap B^{\infty}_r} x_i^{-b_i+1} \prod_{j=1}^{n_0} x_jx_j^{-1} 
\prod_{k=n_0+1}^n  x_k^{b_k-1}
\prod_{\stackrel{l=1}{l\neq i}}^n dx_l \,dy
\\
&\leq C \eps^{b_i}\int_{H^{\eps}_i\cap B^{\infty}_r} x_i^{-b_i+1}
\prod_{k=n_0+1}^n  x_k^{b_k-1}
\prod_{\stackrel{l=1}{l\neq i}}^n dx_l \,dy.
\end{align*}
The preceding sum of four terms is bounded by $C\eps$, for all $1\leq i\leq n_0$, and by $C\eps^{b_i}$, for all $n_0+1\leq i\leq n$. Thus, we have shown that
\begin{equation}
\label{eq:First_4_terms}
\sum_{i'=1}^4 |J^{\eps}_{i,i'}|  \leq C\left(\eps + \eps^{b_i} \right),\quad\forall\, 1\leq i\leq n.
\end{equation}
To bound the term $J^{\eps}_{i,5}$, we apply estimate \eqref{eq:Sup_est_sol_2_tran_prob}, when $1\leq j\leq n_0$ and $j\neq i$, and  estimate \eqref{eq:Sup_est_sol_1_tran_prob}, when $n_0+1\leq j\leq n$ and $j\neq i$, and we obtain, by employing the same argument used to prove \eqref{eq:First_4_terms}, that
\begin{equation}
\label{eq:Fifth_term}
|J^{\eps}_{i,5}|
\leq C \left(\eps + \eps^{b_i}\right),\quad\forall\, 1\leq i\leq n.
\end{equation}
Next we consider the term $J^{\eps}_{i,6}$ and we see that estimate \eqref{eq:Sup_est_sol_1_tran_prob} gives us for all 
$n_0+1\leq i\leq n$ that
\begin{equation}
\label{eq:Sixth_term_after_n_0}
|J^{\eps}_{i,6}|
\leq C \left(\eps + \eps^{b_i}\right),\quad\forall\, n_0+1\leq i\leq n,
\end{equation}
while the argument employed in the proof of Step \ref{step:RHS_welldefined}
gives us, for $1\leq i\leq n_0,$ that
$$
J^{\eps}_{i,6}\rightarrow 
\int_0^t\int_{\Int(H_i)} k_{x_i}(t-s,p,(\widehat x_i,y)) \zeta(s,(\widehat x_i,y))
\, d\mu_{H_i}(\widehat x_i,y)\, ds,
\quad\hbox{ as }
\eps\rightarrow 0.
$$
The preceding property combined with estimates \eqref{eq:First_4_terms}, \eqref{eq:Fifth_term}, \eqref{eq:Sixth_term_after_n_0}, and identity \eqref{eq:u_eps_approx} give us that identity \eqref{eq:Integral_representation} indeed holds.
\end{step}
This completes the proof.
\end{proof}

With the aid of Proposition \ref{prop:Integral_representation}, we can now give

\begin{proof}[Proof of Theorem \ref{thm:Hitting_distribution_hypers}]
Let $i\in I^T$ and $q\in \Int(H_i)$. We choose an adapted system of coordinates such that we identify $q$ with the origin in 
$S_{1, N-1}$ and we write the generalized Kimura operator in the form
\eqref{eq:Operator} with $n=1$ and $m=N-1$ on $B^{\infty}_R$. We notice that because
$i\in I^T$, the weights of the generalized Kimura operator are constant on
$B^{\infty}_R.$ Fixing notation so that $i\leftrightarrow 1,$ we have $b_1\equiv 0,$ and so we can apply Proposition \ref{prop:Integral_representation}. Let $0<r<R$ and $\zeta$ be a smooth function with compact support in 
$(0,\infty)\times (B^{\infty}_r\cup\{x_1=0\})$. Let $u$ be the unique solution to the non-homogeneous parabolic Dirichlet problem 
\eqref{eq:Parabolic_Dirichlet} with boundary data $\zeta$. Let $t_0$ be the positive constant defined in \eqref{defn:time_t_0}. By Proposition \ref{prop:Integral_representation}, for all $t>2t_0$, we obtain the integral representation 
\eqref{eq:Integral_representation} of the solution $u$, and from Proposition \ref{thm:Parabolic_Dirichlet}, we obtain the stochastic representation \eqref{eq:Parabolic_Dirichlet_rep_stoch} of the solution $u$, which yield the identity
\begin{align*}
&\int_0^t \int_{\{x_1=0\}} \zeta(t-s,(0,y)) 
\QQ^p\left(\tau_{\partial^T P}\in(s,s+ds], \omega(\tau_{\partial^T P})\in \prod_{l=1}^{N-1}(y_l,y_l+dy_l]\right)\\
&\qquad=\int_0^t\int_{\{x_1=0\}} k_{x_1}(s,p,(0,y))\zeta(t-s,(0,y)) d\mu_{\{x_1=0\}}(0,y)\,ds,
\end{align*}
for all $(t,p)\in (2t_0,\infty)\times P\backslash \partial^T P$. Because the smooth test function $\zeta$ with compact support in 
$(0,\infty)\times (B^{\infty}_r\cup\{x_1=0\})$ is arbitrarily chosen, the preceding identity implies the existence of a density functions
$h_i$ as in \eqref{eq:Hitting_density_H_i} such that property \eqref{eq:Hitting_H_i} holds. 
The regularity property \eqref{eq:h_i_reg} is a direct consequence of Theorem \ref{thm:Sup_est_tran_prob}.
This completes the proof.
\end{proof}

Our goal is next to prove Theorem \ref{thm:Hitting_corner} with the aid of several auxiliary results. We begin with the proof of

\begin{lem}[Hitting probability of two tangent hypersurfaces]
\label{lem:Hitting_corner_tangent_tangent}
Suppose that the generalized Kimura operator satisfies the standard
assumptions. Let $i, j\in I^T$, and $q \in H_i\cap H_j$ be such that
there is a positive constant $r$ such that
\begin{equation}
\label{eq:Cond_tangent_tangent}
B_r(q) \cap \partial P \subseteq \partial^c P.
\end{equation}
Then, for all points $p\in P\backslash \partial^T P$, we have that 
\begin{equation}
\label{eq:Hitting_tangent_tangent}
\QQ^p(\omega(\tau_{\partial^T P}) \in H_i\cap H_j \cap B_{r/2}(q)) = 0.
\end{equation}
\end{lem}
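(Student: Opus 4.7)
The plan is to apply a test-function argument based on the non-homogeneous Dirichlet problem solved in Theorem \ref{thm:Parabolic_Dirichlet}. By the hypothesis \eqref{eq:Cond_tangent_tangent}, I can choose an adapted local system of coordinates near $q$, identifying $q$ with the origin in $\bar S_{n,m}$, such that $H_i\leftrightarrow\{x_1=0\}$, $H_j\leftrightarrow\{x_2=0\}$, and the weights of $L$ are constant on a chart containing $B_r(q)$; in particular $b_1\equiv b_2\equiv 0$, so $n_0\ge 2$ in the notation of \eqref{eq:n_0}. This places us in the regime of both Proposition \ref{prop:Integral_representation} and Theorem \ref{thm:Sup_est_tran_prob}.

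Next, I would introduce boundary data $\zeta_\delta(t,z):=\psi(t)\chi_\delta(z)$ for $\delta>0$ small, where $\psi\in C^\infty_c((0,T))$ satisfies $\psi(0)=0$ and $\psi\equiv 1$ on a large subinterval, and
\[
\chi_\delta(z):=\eta(x_1/\delta)\,\eta(x_2/\delta)\,\bar\eta(x_3,\ldots,x_n,y),
\]
with $\eta\in C^\infty_c([0,1))$ equal to $1$ on $[0,1/2]$, and $\bar\eta$ a fixed smooth cutoff with compact support in $\{|x_j|<r/2,|y_l|<r/2\}$, identically $1$ on a neighborhood of the origin. Then $\zeta_\delta\in C^\infty_c((0,\infty)\times\bar B^\infty_r)$ and $\zeta_\delta(0,\cdot)\equiv 0$. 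By Theorem \ref{thm:Parabolic_Dirichlet}, the corresponding solution $u_\delta$ to \eqref{eq:Parabolic_Dirichlet} admits the stochastic representation
\[
u_\delta(t,p)=\EE_{\QQ^p}\bigl[\psi(t-\tau)\,\chi_\delta(\omega(\tau))\,\mathbf{1}_{\tau<t}\bigr],\qquad \tau:=\tau_{\partial^T P},
\]
and, choosing $t$ sufficiently large ($t>2\sup\supp\psi$), Proposition \ref{prop:Integral_representation} gives the integral representation
\[
u_\delta(t,p)=\int_0^t\sum_{k=1}^{n_0}\int_{\Int(H_k)}k_{x_k}(t-s,p,(\widehat x_k,y))\,\zeta_\delta(s,(\widehat x_k,y))\,d\mu_{H_k}(\widehat x_k,y)\,ds.
\]

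The core step is to show $u_\delta(t,p)\to 0$ as $\delta\to 0$. For each $1\le k\le n_0$, in the expression \eqref{eq:d_mu_H_i} for $d\mu_{H_k}$ the factors $x_\ell^{b_\ell-1}=x_\ell^{-1}$ (for tangent $\ell\ne k$) are non-integrable; however, the matching factors $x_\ell$ in the bounds \eqref{eq:Sup_est_sol_1_tran_prob}--\eqref{eq:Sup_est_sol_2_tran_prob} for $|k_{x_k}|$ exactly cancel these singularities. After cancellation the integrand is uniformly bounded on the support of $\chi_\delta$, whose portion on $\Int(H_k)$ is contained in $\{x_2\le\delta\}$ for $k=1$, in $\{x_1\le\delta\}$ for $k=2$, and in $\{x_1\le\delta,\,x_2\le\delta\}$ for any other tangent index $k\ge 3$. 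Thus each inner integral is bounded by $O(\delta)$ (or $O(\delta^2)$) uniformly in $s\in[0,t]$ and in $p$ on compact subsets of $P\setminus\partial^T P$, and dominated convergence in the $s$-integral yields $u_\delta(t,p)\to 0$.

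Finally, since $\chi_\delta\to\bar\eta\cdot\mathbf{1}_{H_i\cap H_j}$ pointwise on $P$ with $|\chi_\delta|\le 1$, dominated convergence applied to the stochastic representation gives
\[
\EE_{\QQ^p}\bigl[\psi(t-\tau)\,\bar\eta(\omega(\tau))\,\mathbf{1}_{\omega(\tau)\in H_i\cap H_j}\,\mathbf{1}_{\tau<t}\bigr]=0.
\]
As $\bar\eta\equiv 1$ near the origin in its domain, the non-negative integrand dominates $\psi(t-\tau)\mathbf{1}_{\omega(\tau)\in H_i\cap H_j\cap B_{r/2}(q)}\mathbf{1}_{\tau<t}$, whose expectation therefore also vanishes. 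Varying $t$ and $\psi$ to cover all dyadic time intervals (noting that the constraint $t>2\sup\supp\psi$ forces $\psi(t-\tau)=1$ only on sub-intervals of the form $(t/2,t)$, so countably many such choices are required) and applying monotone convergence, one obtains \eqref{eq:Hitting_tangent_tangent}. The main technical obstacle will be carefully bookkeeping the cancellation between the non-integrable singularities of $d\mu_{H_k}$ at tangent edges and the vanishing rates of $k_{x_k}$ supplied by Theorem \ref{thm:Sup_est_tran_prob}, with estimates uniform in $s$, $\delta$, and in $p$ on compact subsets of $P\setminus\partial^T P$.
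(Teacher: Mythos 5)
Your proof is correct and follows essentially the same route as the paper: approximate the indicator of $H_i\cap H_j$ by smooth boundary data, apply Theorem \ref{thm:Parabolic_Dirichlet} and Proposition \ref{prop:Integral_representation}, and let the approximation shrink. The only cosmetic difference is that you obtain an explicit $O(\delta)$ bound from Theorem \ref{thm:Sup_est_tran_prob} for the integral representation, whereas the paper simply observes that $\zeta_k\to 0$ a.e.\ on $\partial P$ and invokes dominated convergence; both arguments rest on the same cancellation between the $x_j^{-1}$ singularities in $d\mu_{H_k}$ and the vanishing factors in the kernel estimates.
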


\begin{proof}
Let $k\in\NN$ and let $\varphi_k\in C^{\infty}(P)$ be such that
\begin{align*}
\varphi_k = 1&\quad\hbox{ on }\quad \{w\in P: \hbox{ dist}(w, H_i\cap H_j) \leq 1/k\} \cap B_{r/2}(q),\\
\varphi_k = 0&\quad\hbox{ on }\quad \{w\in P: \hbox{ dist}(w, H_i\cap H_j) \geq 2/k\} \cup B^c_{r/2+1/k}(q),
\end{align*}
where the distance in the preceding definition is with respect to the Riemannian metric induced on the manifold $P$ by the generalized Kimura operator, and $B^c_s(q)$ denotes the complement of $B_s(q)$ in $P$. Let $T>0$ and let $\psi_k\in C^{\infty}([0,\infty))$ be such that 
$$
\psi_k(t) = 1\quad\hbox{ on } [1/k, T],
\quad\hbox{ and }\quad 
\psi_k(t) = 0\quad\hbox{ on } [0,1/(2k)]\cup[T+1/k,\infty).
$$
Let $u_k$ be the unique solution to the non-homogeneous parabolic Dirichlet problem 
\eqref{eq:Parabolic_Dirichlet} with boundary condition $\zeta_k(t,p) :=\psi_k(t)\varphi_k(p)$. Identity 
\eqref{eq:Parabolic_Dirichlet_rep_stoch} gives us that
$$
u_k(t,p) = \EE_{\QQ^p}\left[\zeta_k(t-t\wedge\tau_{\partial^T P},\omega(t\wedge\tau_{\partial^T P}))\right],
\quad\forall\,(t,p)\in(0,\infty)\times P,
$$
Condition \eqref{eq:Cond_tangent_tangent} allows us to apply Proposition \ref{prop:Integral_representation}. Using the fact that the function $\zeta_k$ converges to 0 a.e. on $\partial P$ with respect to the Lebesgue measure on $\partial P$, as $k \rightarrow\infty$, we obtain from the integral representation 
\eqref{eq:Integral_representation} that $u_k(t,p)$ tends to 0, as $k\rightarrow\infty$, for all 
$(t,p) \in (2T,\infty)\times P\backslash \partial^T P$. From the construction of the sequence of functions $\{\zeta_k\}_{k\in\NN}$, the right-hand side of the preceding identity converges to 
$$
\QQ^p(t-T\leq\tau_{\partial^T P} \leq t,\, \omega(\tau_{\partial^T P})\in H_i\cap H_j\cap B_{r/2}(q)),
$$
as $k\rightarrow\infty$, for all $(t,p) \in (2T,\infty)\times P\backslash \partial^T P$. Because $T$ can be chosen arbitrarily small and $t$ can be chosen arbitrarily large such that $t>2T$, the preceding identity implies property 
\eqref{eq:Hitting_tangent_tangent}. This completes the proof.
\end{proof}

We next prove

\begin{lem}[Hitting probability of a tangent and a transverse hypersurface]
\label{lem:Hitting_corner_tangent_transverse}
Suppose that the generalized Kimura operator satisfies the standard assumptions,
then, for all points $p\in P\backslash \partial^T P$, $i\in I^T$, and $j \in I^{\pitchfork}$, we have that property 
\eqref{eq:Hitting_corner} holds.
\end{lem}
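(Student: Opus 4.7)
The plan is to study the function
\begin{equation*}
u(p) := \QQ^p\bigl(\omega(\tau_{\partial^T P}) \in H_i \cap H_j\bigr), \qquad p\in P,
\end{equation*}
and show $u\equiv 0$ on $P\backslash\pa^T P$. By the strong Markov property of Corollary \ref{cor:Strong_Markov}, $u$ is nonnegative, bounded by $1$, and weakly $L$-harmonic on $P\backslash\pa^T P$. First I would use the regularity result Lemma \ref{lem:Regularity_hitting_probability} to show that $u$ extends continuously to $P\backslash(H_i\cap H_j)$, and then show that this continuous extension vanishes on all of $\pa^T P\backslash(H_i\cap H_j)$: on tangent hypersurfaces disjoint from $H_i\cap H_j$ this follows from Lemma \ref{lem:Hitting_tangent_boundary}(iii), while on $\Int(H_i)\backslash H_j$ it follows from the absorption result Lemma \ref{lem:Absorption_tangent_boundary} combined with an inductive application of Theorem \ref{thm:Hitting_corner} to the lower-dimensional restricted operator $L_{H_i}$ (noting that $\dim H_i < \dim P$, so the induction is well-founded).

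Next I would analyze $u$ locally at a point $q\in H_i\cap H_j$, using an adapted system of coordinates identifying $q$ with the origin so that $H_i=\{x_1=0\}$ is tangent ($b_1\equiv 0$) and $H_j=\{x_2=0\}$ is transverse ($b_2\geq\beta_0>0$). On $B^{\infty}_r$ the function $u$ is nonnegative, $L$-harmonic, bounded by $1$, and vanishes identically on the relatively open piece $\{x_1=0,\,x_2>0\}\cap B^{\infty}_r$ of the tangent face. The Landis-type growth lemma of Safonov, Lemma \ref{lem:Growth_lemma}, which is tailored in the appendix to exactly this degenerate two-dimensional model
\begin{equation*}
x_1\pa_{x_1}^2+x_2\pa_{x_2}^2+b_2\pa_{x_2},\qquad b_1\equiv 0,\quad b_2\geq\beta_0>0,
\end{equation*}
then produces a constant $\theta\in(0,1)$, independent of $r$, such that
\begin{equation*}
\sup_{B^{\infty}_{r/2}}u\;\leq\;(1-\theta)\sup_{B^{\infty}_{r}}u.
\end{equation*}
Iterating on the nested dyadic boxes $B^{\infty}_{2^{-k}r}$ gives $\sup_{B^{\infty}_{2^{-k}r}}u\leq(1-\theta)^k$, so that $u(p)\to 0$ as $p\to q$ from $P\backslash\pa^T P$. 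Since $q\in H_i\cap H_j$ was arbitrary, the continuous extension of $u$ to $\pa^T P$ is identically zero, and by the uniqueness statement for the Dirichlet problem with homogeneous boundary data on $\pa^T P$ (equivalently, by the martingale-based stochastic representation applied to $u$ together with Lemma \ref{lem:Absorption_tangent_boundary}), one concludes $u\equiv 0$ on $P\backslash\pa^T P$, which is exactly \eqref{eq:Hitting_corner}.

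The main obstacle is that the integral-representation technique used in Lemma \ref{lem:Hitting_corner_tangent_tangent} is unavailable here: the adjoint operator has logarithmic singularities (see \eqref{eq:Adjoint_operator_near_absorbant_boundary}) near transverse faces such as $H_j$, so boundary estimates on $k(t,p,\cdot)$ along $H_j$ in the style of Theorem \ref{thm:Sup_est_tran_prob} are not at hand. The growth lemma circumvents the need for any such estimate on the heat kernel; its applicability rests only on the nonnegativity of $u$ and on its vanishing on a sufficiently large subset of the tangent face $H_i$. The delicate technical point is arranging the induction on dimension so that the vanishing of $u$ on $\Int(H_i)\backslash H_j$, needed before the growth lemma is applied, can be invoked without circularity.
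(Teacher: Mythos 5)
Your overall plan is the same as the paper's: define $u(p)=\QQ^p(\omega(\tau_{\pa^T P})\in H_i\cap H_j)$, establish enough boundary regularity to invoke the Landis-type growth lemma (Lemma \ref{lem:Growth_lemma}), and iterate; and you correctly pinpoint why the integral-representation route of Lemma \ref{lem:Hitting_corner_tangent_tangent} is unavailable (log singularities of $\widehat L$ near transverse faces). You also cite the right supporting lemma, Lemma \ref{lem:Regularity_hitting_probability}. The paper then simply feeds the four conclusions of that lemma into the growth lemma and closes with a supremum comparison.

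The part of your proposal that is off is the inductive detour for the vanishing of $u$ on $\Int(H_i)\backslash H_j$. Two things. First, it is redundant: Lemma \ref{lem:Regularity_hitting_probability} does not merely give continuity of $u$ on $P\backslash(H_i\cap H_j)$ — its conclusion \eqref{eq:L_u_boundary} is precisely that $u=0$ on all of $\pa^T P\backslash(H_i\cap H_j)$, which includes $\Int(H_i)\backslash H_j$; the proof there goes through a direct barrier of the form $\sqrt{x_1/\rho}+\sum_{i\ge 2}x_i+\sum_l y_l$, using $b_1\equiv 0$ on the tangent face and the stochastic exit-time representation \eqref{eq:stoch_rep_local_u}, with no appeal to any lower-dimensional version of Theorem \ref{thm:Hitting_corner}. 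Second, the inductive argument would not actually give what you need. The quantity $u(q)$ is determined by where the path is at the first hitting time $\tau_{\pa^T P}$. Applying Lemma \ref{lem:Absorption_tangent_boundary} and Theorem \ref{thm:Hitting_corner} for $L_{H_i}$ tells you about what the absorbed path does inside $H_i$ \emph{after} $\tau_{\pa^T P}$, but by then the event $\{\omega(\tau_{\pa^T P})\in H_i\cap H_j\}$ has already been decided, so this information is irrelevant to showing $\lim_{q\to p}u(q)=0$ for $p\in\Int(H_i)\backslash H_j$. The barrier argument is what does that work. Your worry about arranging the dimensional induction without circularity is thus a self-inflicted problem: the paper's proof has no such induction, and therefore no circularity. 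Your closing step (dyadic iteration at each corner point $q$, then uniqueness of the Dirichlet problem) is a legitimate variant of the paper's single supremum iteration $M(r/2)\le\theta M(r)$ combined with the maximum-principle monotonicity $M(r/2)\ge M(r)$; the two are essentially equivalent.
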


\begin{proof}
Lemma \ref{lem:Regularity_hitting_probability} implies that the function $u$ defined by \eqref{eq:Function_hitting_corner}:
\begin{equation}
\label{eq:Function_hitting_corner-2}
u(p) := \QQ^p(\omega(\tau_{\partial^T P}) \in H_i\cap H_j),\quad\forall\, p \in P, 
\end{equation}
  locally satisfies the hypotheses of Lemma
 \ref{lem:Growth_lemma}. Thus, a standard covering argument combined
 with Lemma \ref{lem:Growth_lemma} implies that there is a constant,
 $\theta\in (0,1)$, such that
$$
\sup\{u(p): p\in P,\, \hbox{dist}(p,H_i\cap H_j) <r/2\} \leq \theta \sup\{u(p): p\in P,\, \hbox{dist}(p,H_i\cap H_j) <r\},
$$
for all $r>0$ small enough, where the distance is computed with respect to the Riemannian metric induced by the generalized Kimura operator on the manifold $P$. Because the function $u$ defines the hitting probability of the Kimura diffusion on the boundary component $H_i\cap H_j$, we clearly have that
$$
\sup\{u(p): p\in P,\, \hbox{dist}(p,H_i\cap H_j) <r/2\} \geq \sup\{u(p): p\in P,\, \hbox{dist}(p,H_i\cap H_j) <r\}.
$$
The preceding two identities and the fact that $\theta<1$ show that $u(p)=0$, for all $p\in P\backslash(H_i\cap H_j)$, which implies property \eqref{eq:Hitting_corner}. This completes the proof.
\end{proof}

We can now give

\begin{proof}[Proof of Theorem \ref{thm:Hitting_corner}]
When $j\in I^{\pitchfork}$, the conclusion follows from Lemma \ref{lem:Hitting_corner_tangent_transverse}. We next consider the case when $j \in I^T$. Because the set $H_i\cap H_j$ is compact, it is sufficient to prove that for all $q \in H_i\cap H_j$, there is a positive constant $r$ such that property \eqref{eq:Hitting_tangent_tangent} holds. If $q \in \Int(H_i\cap H_j)$, then there is $r>0$ such that the generalized Kimura operator has zero weights on $B_r(q)$, and so the hypotheses of Lemma \ref{lem:Hitting_corner_tangent_tangent} are satisfied, which implies that property \eqref{eq:Hitting_tangent_tangent} holds. When 
$q \in (H_i\cap H_j)\backslash \Int(H_i\cap H_j)$, exactly one of the following two can hold:
\begin{align}
\label{eq:all_tangent}
&\hbox{ for all } k \in \{1,\ldots,\eta\} \hbox{ such that } q \in H_k, \hbox{ then } k \in I^T,\\
\label{eq:at_least_one_transverse}
&\hbox{ there is } k \in \{1,\ldots,\eta\} \hbox{ such that } q \in H_k, \hbox{ and } k \in I^{\pitchfork}.
\end{align}
When property \eqref{eq:all_tangent} holds, we again are in the situation when there is $r>0$ such that the generalized Kimura operator has zero weights on $B_r(q)$, and so we can apply Lemma \ref{lem:Hitting_corner_tangent_tangent} to obtain that \eqref{eq:Hitting_tangent_tangent} holds. When property \eqref{eq:at_least_one_transverse} holds, we can apply Lemma \ref{lem:Hitting_corner_tangent_transverse} to $i\in I^T$ and $k\in I^{\pitchfork}$ to obtain the conclusion. This completes the proof of Theorem \ref{thm:Hitting_corner}.
\end{proof}

We conclude this section with the proof of the doubling property of the hitting distributions (caloric measure). This is the extension to the degenerate framework of generalized Kimura operators of the doubling property satisfied by the caloric measure of parabolic equations defined by strictly elliptic operators, \cite[Theorem 1.1]{Safonov_Yuan_1999}, \cite[Theorem 2.4]{Fabes_Garofalo_Salsa_1986}. Our method of the proof is based on the  boundary Harnack principle established in \cite{Epstein_Pop_2016}. For all $(t,p)\in (0,\infty)\times P$ and for all $r>0$, we denote
\begin{align}
\label{eq:Cylinder}
C_r(t,p) &:= (t-r^2, t+r^2)\times B_r(p),\\
\label{eq:Boundary_cylinder}
\Delta_r(t,p) &:= C_r(t,p) \cap \partial^T P.
\end{align}

We can now state
\begin{thm}[Doubling property of hitting distributions]
\label{thm:Doubling_property}
Let $\delta\in (0,1)$ and $T>0$. Then there is a positive constant, $D=D(\delta, T)$, such that for all $q\in \partial^TP$ such that
\begin{equation}
\label{eq:Interior_tangent_boundary}
\hbox{dist}_{\partial P}(q,\partial^{\pitchfork} P) >\delta,
\footnote{The distance is computed with respect to the intrinsic Riemannian metric}
\end{equation}
we have that, for all $0<r<\delta/8$, $16\delta^2<t<T-16r^2$, and $p\in P\backslash\partial^TP$,
\begin{equation}
\label{eq:Doubling_property}
\QQ^p(\omega(\tau_{\partial^T P}) \in \Delta_{2r}(t,q)) \leq D\QQ^p(\omega(\tau_{\partial^T P}) \in \Delta_r(t,q)).
\end{equation}
\end{thm}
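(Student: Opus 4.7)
The plan is to reduce the doubling estimate to the parabolic boundary Harnack principle for the adjoint operator $\widehat L$ established in \cite{Epstein_Pop_2016}, following the strategy of \cite[Theorem 2.4]{Fabes_Garofalo_Salsa_1986} adapted to the degenerate Kimura setting. First, I would express both sides of \eqref{eq:Doubling_property} as integrals of the hitting densities $h_i$ coming from Theorem \ref{thm:Hitting_distribution_hypers}. The hypothesis $\hbox{dist}_{\partial P}(q,\partial^{\pitchfork} P)>\delta$ together with $r<\delta/8$ guarantees that $B_{2r}(q)\cap\partial P\subset\partial^c P$ meets only the interiors of tangent hypersurfaces through $q$; combined with Theorem \ref{thm:Hitting_corner}, which discards the codimension-two corners, this yields, for $R\in\{r,2r\}$,
$$
\QQ^p(\omega(\tau_{\partial^T P})\in\Delta_R(t,q))
=\sum_{i\in I^T,\,q\in H_i}\int_{t-R^2}^{t+R^2}\!\!\int_{B_R(q)\cap H_i} h_i(s,p,q')\,d\mu_{H_i}(q')\,ds.
$$

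Next I would localize around one tangent hypersurface $H_i\ni q$ and work in adapted coordinates in which $H_i=\{x_i=0\}$. By Proposition \ref{prop:Integral_representation}, $h_i(s,p,q')=\partial_{x_i}k(s,p,q')|_{H_i}$, and by Lemma \ref{lem:Equation_Dirichlet_heat_kernel}(i) the Dirichlet heat kernel $k(\cdot,p,\cdot)$ is a positive weak solution of the forward Kolmogorov equation for $\widehat L$ vanishing on $\partial^T P$. Because $q\in\partial^c P$, the weights of $\widehat L$ are constant on a neighborhood of $q$, so the parabolic boundary Harnack principle of \cite{Epstein_Pop_2016} applies and yields that any two positive solutions vanishing on $\partial^T P$ are comparable on the parabolic subcylinders of $C_{2r}(t,q)$, with constants depending only on $L$, $\delta$, and $T$; the time bounds $16\delta^2<t<T-16r^2$ ensure that the required interior cylinders lie in the domain of definition. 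A quotient argument, pairing $k(\cdot,p,\cdot)$ with a suitable reference profile and passing to normal derivatives as in the strictly elliptic case, transfers this comparability to the densities $h_i$, producing a uniform pointwise estimate on $\Delta_{2r}(t,q)\cap H_i$ in terms of the corresponding average over $\Delta_r(t,q)\cap H_i$. Integrating against the weighted measure $d\mu_{H_i}$ (whose masses on $B_r(q)\cap H_i$ and $B_{2r}(q)\cap H_i$ differ only by a bounded factor, since the weights are constant near $q$) and summing over the finitely many tangent hypersurfaces through $q$ produces the doubling constant $D=D(\delta,T)$.

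The main obstacle will be the passage from the boundary Harnack comparison for $k$ itself to the corresponding comparison for its normal derivative $h_i$. In the classical strictly elliptic setting, \cite{Fabes_Garofalo_Salsa_1986} handle this via a quotient argument: one shows that $k(\cdot,p,\cdot)/w$ extends continuously up to $\partial^T P$ for a suitable reference solution $w$, which forces the normal derivatives, and hence the caloric densities, to be comparable. Carrying this out in the degenerate Kimura framework requires combining the fine boundary regularity of $k$ supplied by Theorem \ref{thm:Sup_est_tran_prob} with the boundary Harnack principle of \cite{Epstein_Pop_2016}, so as to control the quotient despite the possible logarithmic singularities present in the coefficients of $\widehat L$ off $\partial^c P$. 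Once this quotient argument is in place in the degenerate setting, the remaining steps are standard covering and volume comparisons.
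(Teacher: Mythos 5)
Your overall strategy matches the paper's: write the hitting measures $\QQ^p(\omega(\tau_{\partial^T P})\in\Delta_R(t,q))$ via the integral representation of Proposition~\ref{prop:Integral_representation} as integrals of $h_i = \partial_{x_i}k|_{H_i}$ over the boundary cylinders, invoke Lemma~\ref{lem:Equation_Dirichlet_heat_kernel}(i) to view $k(\cdot,p,\cdot)$ as a positive solution of the forward Kolmogorov equation vanishing on $\partial^T P$, appeal to the boundary Harnack/Carleson machinery of \cite{Epstein_Pop_2016} near the constant-weight boundary, and finish with the doubling property of the weighted surface measure. Where you differ is in the mechanism linking the boundary Harnack inequality for $k$ to the doubling of the normal derivatives. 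You propose to construct a reference profile $w$, show $k/w$ extends continuously to $\partial^T P$, and then compare normal derivatives via the quotient, à la Fabes--Garofalo--Salsa; you flag this as the main obstacle. The paper bypasses that step entirely: estimates (1.17)--(1.18) of \cite{Epstein_Pop_2016} are already pointwise two-sided bounds for $k_{x_i}(s,p,z)$ on the boundary cylinder, up to a weight factor, in terms of the value of $k$ at a single interior reference point $A_{r_0}(O)$ at the \emph{fixed} macroscopic scale $r_0=\delta/6$ (not at scale $r$), so the quotient construction is precomputed in the companion paper and one can compare both $\Delta_{2r}$ and $\Delta_r$ directly to the same interior value. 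A second point you did not flag: those estimates compare $k_{x_i}(s,\cdot)$ to $k$ at time-shifted arguments $t+4r_0^2$ and $t-2r_0^2$, so an extra step is needed to control the ratio $k(t+4r_0^2,p,A_{r_0}(O))/k(t-2r_0^2,p,A_{r_0}(O))$; the paper does this by applying the elliptic-type Harnack inequality \cite[Lemma 4.5]{Epstein_Pop_2016} to $k(\cdot,\cdot,q)$, which is a solution in the backward variables by Lemma~\ref{lem:Equation_Dirichlet_heat_kernel}(ii). Once those two items are supplied, your sketch becomes a complete proof; the approaches are otherwise the same, and neither is more elementary, but the paper's version avoids having to build a reference profile by using the off-the-shelf Carleson estimates from the companion paper.
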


\begin{proof}
Without loss of generality we can assume that the positive constant $\delta$ is small enough such that in an adapted local system of coordinates, we can identify $q\in\partial^TP$ with the origin in $\bar S_{n,m}$, and the generalized Kimura operator takes the form of the operator $L$ in \eqref{eq:Operator}. We can also assume that the weights of the operator $L$ satisfy \eqref{eq:n_0}, where $R$ is chosen large enough. Applying Proposition \ref{prop:Integral_representation} we can write
\begin{equation}
\label{eq:Integral_cylinder_boundary}
\QQ^p(\omega(\tau_{\partial^T P}) \in \Delta_{2r}(t,q))
=\sum_{i=1}^{n_0} \int_{\Delta_r(t,O)\cap H_i} k_{x_i}(s,p,z)\, d\mu_{H_i}(z)\, ds,
\end{equation}
where we denote
$$
H_i=\{x_i=0\}\cap\partial S_{n,m}
\quad\hbox{ and }\quad 
d\mu_{H_i}(z)=\prod_{\stackrel{j=1}{j\neq i}}^{n_0} x_j^{-1}\prod_{l=n_0+1}^n x_l^{b_l(z)-1},\quad\forall\, 1\leq i\leq n_0.
$$ 
Let $r_0:=\delta/6;$ define
  $\fe\in\RR^{n+m}$ as the vector having all coordinates equal to $0$ except for the first $n$ coordinates, which are equal to $1$, and 
	$\ff\in\RR^{n+m}$ as the vector having all coordinates equal to $0$ except for the last $m$ coordinates, which are equal to $1$, and 
$$
A_r(O):=:=\frac{r^2}{4n}\fe + \frac{r}{2\sqrt{m}}\ff
\quad\hbox{ and }\quad
w^T(z) := \prod_{j=1}^{n_0}x_j^{-1}.
$$
Applying \cite[Estimates (1.17) and (1.18)]{Epstein_Pop_2016} and Lemma \ref{lem:Equation_Dirichlet_heat_kernel} (i), it follows that there is a positive constant $H$ with the property that
\begin{align*}
k_{x_i}(t,p,z) &\leq H w^T(A_{r_0}(O)) k(t+4r_0^2,p, A_{r_0}(O)) \prod_{\stackrel{j=1}{j\neq i}}^{n_0} x_j,
\quad\forall\, (t,z)\in\Delta_{2r}(t,O),\\
k_{x_i}(t,p,z) &\geq H w^T(A_{r_0}(O)) k(t-2r_0^2,p, A_{r_0}(O)) \prod_{\stackrel{j=1}{j\neq i}}^{n_0} x_j,
\quad\forall\, (t,z)\in\Delta_{r}(t,O),
\end{align*}
which gives us that
\begin{align*}
&\int_{\Delta_{2r}(t,O)\cap H_i} k_{x_i}(s,p,z)\, d\mu_{H_i}(z)\, ds \\
&\qquad\qquad\leq H w^T(A_{r_0}(O)) k(t+4r_0^2,p,A_{r_0}(O)) 
\int_{\Delta_{2r}(t,O)\cap H_i} \prod_{l=n_0+1}^n x_l^{b_l(z)-1},\\
&\int_{\Delta_{r}(t,O)\cap H_i} k_{x_i}(s,p,z)\, d\mu_{H_i}(z)\, ds \\
&\qquad\qquad\geq H w^T(A_{r_0}(O)) k(t-2r_0^2,p,A_{r_0}(O)) 
\int_{\Delta_{r}(t,O)\cap H_i} \prod_{l=n_0+1}^n x_l^{b_l(z)-1}\,dz\,ds.
\end{align*}
By \cite[Proposition 3.1]{Epstein_Mazzeo_2016}, the measure $\prod_{l=n_0+1}^n x_l^{b_l(z)-1}\,dz$ satisfies the doubling property, and so there is a positive constant, $D_0$, such that 
$$
\int_{\Delta_{2r}(t,O)\cap H_i} \prod_{l=n_0+1}^n x_l^{b_l(z)-1}\,dz\,ds
\leq
D_0\int_{\Delta_{r}(t,O)\cap H_i} \prod_{l=n_0+1}^n x_l^{b_l(z)-1}\,dz\,ds.
$$
Combining the preceding three inequalities it follows that
\begin{align*}
&\int_{\Delta_{2r}(t,O)\cap H_i} k_{x_i}(s,p,z)\, d\mu_{H_i}(z)\, ds \\
&\qquad\qquad\leq D_0 \frac{ k(t+4r_0^2,p,A_{r_0}(O))}{k(t-2r_0^2,p,A_{r_0}(O))}
\int_{\Delta_{r}(t,O)\cap H_i} k_{x_i}(s,p,z)\, d\mu_{H_i}(z)\, ds. 
\end{align*}
Lemma \ref{lem:Equation_Dirichlet_heat_kernel} (ii) and the elliptic-type Harnack inequality \cite[Lemma 4.5]{Epstein_Pop_2016}, gives us that there is a positive constant, $H_0$, such that
$$
\frac{ k(t+4r_0^2,p,A_{r_0}(O))}{k(t-2r_0^2,p,A_{r_0}(O))} \leq H_0,\quad\forall\, p \in P\backslash\partial^T P,\quad\forall\, t>16\delta^2.
$$
The preceding two inequalities and identity \eqref{eq:Integral_cylinder_boundary} imply the doubling property \eqref{eq:Doubling_property}. This concludes the proof.
\end{proof}

\subsection{Structure of the transition probabilities}
\label{sec:Proof_main_result}
Finally, we give the proof of the main result in our article:

\begin{proof}[Proof of Theorem \ref{thm:Tran_prob}]
Let $f\in C^{\infty}(P)$, $p\in P\backslash\partial^T P$, and let $\QQ^p$ be the probability measure constructed in Theorem 
\ref{thm:Wellposed_mart_problem}. Using identity \eqref{eq:tran_prob}, we have that
\begin{equation}
\label{eq:Tran_prob_with_test_function}
\int_P f(w) \Gamma^P(t,p,dw) = \EE_{\QQ^p}\left[f(\omega(t))\right],\quad\forall\, t\geq 0.
\end{equation}
The right-hand side of the preceding identity can be written as the sum of two terms as follows:
\begin{align}
\EE_{\QQ^p}\left[f(\omega(t))\right] &= \EE_{\QQ^p}\left[f(\omega(t))\mathbf{1}_{\{t<\tau_{\partial^T P}\}}\right]
+ \EE_{\QQ^p}\left[f(\omega(t))\mathbf{1}_{\{t\geq\tau_{\partial^T P}\}}\right]
\notag\\
&= \EE_{\QQ^p}\left[f(\omega(t))\mathbf{1}_{\{t<\tau_{\partial^T P}\}}\right]
+ \EE_{\QQ^p}\left[\mathbf{1}_{\{t\geq\tau_{\partial^T P}\}}
\EE_{\QQ^p}\left[f(\omega(t)) \big{|}\cF_{\tau_{\partial^T P}}\right]\right]
\notag\\
\label{eq:Tran_prob_with_test_function_decomposition}
&= \EE_{\QQ^p}\left[f(\omega(t))\mathbf{1}_{\{t<\tau_{\partial^T P}\}}\right]
+ \EE_{\QQ^p}\left[\mathbf{1}_{\{t\geq\tau_{\partial^T P}\}}
\EE_{\QQ^p}\left[f(\omega(t)) \big{|}\omega(\tau_{\partial^T P})\right]\right],
\end{align}
where in the last line we used the strong Markov property of the canonical
process established in Corollary \ref{cor:Strong_Markov}. We recall the
definition of the stopping time $\tau_{\partial^T P}$ in \eqref{eq:tau_tangent}
and of the filtration $\{\cF_t\}_{t\geq 0}$ in Definition \ref{defn:Martingale_problem}. Identity \eqref{eq:T_t_bounded_functions_nonabsorbed} yields
\begin{equation}
\label{eq:First_int_tran_prob}
\EE_{\QQ^p}\left[f(\omega(t))\mathbf{1}_{\{t<\tau_{\partial^T P}\}}\right] = \int_{\Int(P)}f(w) k^P(t,p,w)\,d\mu_{P}(w).
\end{equation}
Here we denote  by $k^P(\cdot,p,\cdot)$ the Dirichlet heat kernel constructed in Theorem \ref{thm:Distribution_absorbed}. Thus, we obtain that
\begin{align*}
\EE_{\QQ^p}\left[f(\omega(t))\right] = \int_{\Int(P)}f(w) k^P(t,p,w)\,d\mu_{P}(w)
+ \EE_{\QQ^p}\left[\mathbf{1}_{\{t\geq\tau_{\partial^T P}\}}
\EE_{\QQ^p}\left[f(\omega(t)) \big{|}\omega(\tau_{\partial^T P})\right]\right].
\end{align*}
We prove Theorem \ref{thm:Tran_prob} by induction on the dimension $N$ of the compact manifold $P$ with corners. We divide the proof into two steps. In Step \ref{step:TP_base_case} we establish that equality \eqref{eq:Tran_prob} holds when $N=1$, and in Step \ref{step:TP_induction_step} we prove the induction step.

\setcounter{step}{0}
\begin{step}[Base case]
\label{step:TP_base_case}
When $N=1$, the manifold $P$ is a smooth connected curve with two endpoints
(which we view as sets), $P^0_1$ and $P^0_2,$ We start by assuming that $d_0=2$. Using the identity,
$$
\EE_{\QQ^p}\left[f(\omega(t)) \big{|}\omega(\tau_{\partial^T P})\right] 
= \sum_{i=1}^{d_0} \EE_{\QQ^p}\left[f(\omega(t)) \big{|}\omega(\tau_{\partial^T P})\right] 
\mathbf{1}_{\{\omega(\tau_{\partial^T P}) \in P^0_i\}}, 
\quad \QQ^p\hbox{-a.s. on } \{t\geq \tau_{\partial^T P}\},
$$
and applying Lemma \ref{lem:Absorption_tangent_boundary} with $\Sigma=P^0_i$, for $i=1,2$, it follows that
$$
\EE_{\QQ^p}\left[f(\omega(t)) \big{|}\omega(\tau_{\partial^T P})\right] 
= \sum_{i=1}^{d_0} f(P^0_i) \mathbf{1}_{\{\omega(\tau_{\partial^T P}) \in P^0_i\}}, 
\quad \QQ^p\hbox{-a.s. on } \{t\geq\tau_{\partial^T P}\}.
$$
The preceding equality yields
\begin{align*}
\EE_{\QQ^p}\left[\mathbf{1}_{\{t\geq\tau_{\partial^T P}\}}\EE_{\QQ^p}\left[f(\omega(t)) 
\big{|}\omega(\tau_{\partial^T P})\right]\right]
&= \sum_{i=1}^{d_0} f(P^0_i) \QQ^p(\omega(t\wedge\tau_{\partial^T P}) \in P^0_i)\\
&= \sum_{i=1}^{d_0} f(P^0_i) p_{P^0_i}(t,p),
\end{align*}
where the function $p_{P^0_i}(t,p)$ is defined by \eqref{eq:Probability_hit_Sigma}, in which we choose $\Sigma=P^0_i$, for all $1\leq i\leq d_0$. Denoting $p_{P^0_i}(t,p)$ by $k^{P;P^0_i}(t,p,p_i)$, where $p_i$ denotes the point in $P^0_i$, the preceding identity together with equalities \eqref{eq:First_int_tran_prob}, \eqref{eq:Tran_prob_with_test_function_decomposition}, and \eqref{eq:Tran_prob_with_test_function} yield
\begin{align*}
\int_P f(w) \Gamma^P(t,p,dw) = \int_{\Int(P)} f(w)k^P(t,p,w)\, d\mu_P(w) 
+ \sum_{i=1}^{d_0} f(P^0_i) k^{P;P^0_i}(t,p,w)\delta_{P^0_i}(w),
\end{align*}
where we recall that $\delta_{P^0_i}(w) = 1$, if $w=p_i$, and
$\delta_{P^0_i}(w) = 0$, otherwise. Because the test function $f$ was
arbitrarily chosen in $C^{\infty}(P)$, the preceding identity implies
\eqref{eq:Tran_prob} when $N=1$. This completes the proof of Step
\ref{step:TP_base_case}, assuming that $d_0=2;$ the argument with $d_0=1$ is
quite similar and is left to the reader.
\end{step}

\begin{step}[Induction step]
\label{step:TP_induction_step}
We now assume that identity \eqref{eq:Tran_prob} holds for all connected
compact manifolds $P$ with corners of dimension $N-1$ and generalized Kimura
diffusion operators, satisfying the hypotheses of the theorem. We now prove
that it also holds for connected compact manifolds with corners, with
generalized Kimura diffusion operators, of dimension $N$. 

Let $P$ be a compact manifold with corners of dimension $N$. To simplify
notation for the remaining part of the proof, we introduce the abbreviations:
$$
\tau:= \tau_{\partial^T P},
\quad\hbox{ and }\quad
\tau_{k,i}:= \tau_{\partial^T P^k_i},\quad\forall\, 1\leq i\leq d_k,\quad\forall\, 1\leq k\leq N-1.
$$
When $N>1$, Theorem \ref{thm:Hitting_corner} gives us that
$$
\EE_{\QQ^p}\left[f(\omega(t)) \big{|}\omega(\tau)\right] 
= \sum_{i=1}^{d_{N-1}} \EE_{\QQ^p}\left[f(\omega(t)) \big{|}\omega(\tau_{N-1, i})\right] 
\mathbf{1}_{\{\omega(\tau) \in \Int(P^{N-1}_i)\}}, \quad \QQ^p\hbox{-a.s. on } \{t\geq \tau\},
$$
while Lemma \ref{lem:Absorption_tangent_boundary} implies that
$$
\EE_{\QQ^p}\left[f(\omega(t)) \big{|}\omega(\tau_{N-1,i})\right] 
= \EE_{\QQ^p}\left[f\restriction_{P^{N-1}_i}(\omega(t)) \big{|}\omega(\tau_{N-1, i})\right], 
$$
for all $1\leq i\leq d_{N-1}$. Combining the preceding two identities we obtain that
\begin{align*}
\EE_{\QQ^p}\left[\mathbf{1}_{\{t\geq\tau\}}\EE_{\QQ^p}\left[f(\omega(t)) \big{|}\omega(\tau)\right]\right]
&= \sum_{i=1}^{d_{N-1}} \EE_{\QQ^p}\left[\mathbf{1}_{\{t\geq \tau_{N-1,i}\}} 
\EE_{\QQ^p} \left[f\restriction_{P^{N-1}_i}(\omega(t)) \big{|}\omega(\tau_{N-1,i})\right]
\right].
\end{align*}
Theorem \ref{thm:Hitting_distribution_hypers} shows that there is a non-negative, measurable function,
$$
h_i(\cdot,p,\cdot):[0,\infty)\times P^{N-1}_i\rightarrow [0,\infty),\quad\forall\, 1\leq i\leq d_{N-1},
$$
such that we can express the right-hand side of the preceding equality in integral form as:
\begin{equation}
\label{eq:Exp_1}
\begin{aligned}
&\EE_{\QQ^p}\left[\mathbf{1}_{\{t\geq \tau_{N-1, i}\}} 
\EE_{\QQ^p}\left[f\restriction_{P^{N-1}_i}(\omega(t)) \big{|}\omega(\tau_{N-1, i})\right]\right] \\
&\quad\quad
= \int_0^t\int_{\Int(P^{N-1}_i)} 
h_i(s,p,w) 
\EE_{\QQ^w}\left[f\restriction_{P^{N-1}_i}(\omega(t-s))\right]\,d\mu_{P^{N-1}_i}(w)\, ds,
\end{aligned}
\end{equation}
where we used once again the strong Markov property of the canonical process under the measure $\QQ^p$ established in Corollary \ref{cor:Strong_Markov}. Because the operator $L$ is tangent to the boundary hypersurface $P^{N-1}_i$, it follows from Lemma 
\ref{lem:Absorption_tangent_boundary} that the process $\{\omega(t)\}$, for all $t\geq \tau_{N-1,i}$, has support in compact manifold $P^{N-1}_i$ with corners. Remarks \ref{rmk:Sato} and \ref{rmk:Sato_equation} imply that its law is the solution to the martingale problem associated to the restriction operator $L_{P^{N-1}_i}$, for all $1\leq i\leq d_{N-1}$. Because $P^{N-1}_i$ is a compact manifold with corners of dimension $N-1$, we can apply the induction hypothesis to conclude that, for all 
$w\in P^{N-1}\backslash \partial^T P^{N-1}_i$, we have that there are Borel measurable functions,
\begin{align*}
&k^{P^{N-1}_i}(\cdot,w,\cdot):(0,\infty)\times  P^{N-1}_i\rightarrow [0,\infty),\\
&k^{P^{N-1}_i;P^k_j}(\cdot,w,\cdot):(0,\infty)\times P^k_j\rightarrow [0,\infty),
\end{align*}
for $P^k_j$ with  $P^{N-1}_i\cap P^k_j \neq \emptyset$, for all $1\leq j\leq d_k$, $0\leq k\leq N-1$, 
and the following holds:
\begin{equation}
\label{eq:Exp_2}
\begin{aligned}
&\EE_{\QQ^w}\left[f\restriction_{P^{N-1}_i}(\omega(t-s))\right]\\
&\quad
= \int_{\Int(P^{N-1}_i)} f(\xi) k^{P^{N-1}_i}(t-s,w,\xi)\, d\mu_{P^{N-1}_i}(\xi)\\
&\quad\quad 
+ \sum_{k=1}^{N-2}\sum_{j=1}^{d_k}\mathbf{1}_{\{P^{N-1}_i\cap P^k_j\neq\emptyset\}}
\int_{\Int(P^k_j)} f(\xi) k^{P^{N-1}_i;P^k_j}(t-s,w,\xi)\, d\mu_{P^k_j}(\xi)\\
&\quad\quad 
+ \sum_{j=1}^{d_0} \mathbf{1}_{\{P^{N-1}_i\cap P^0_j\neq\emptyset\}} 
f(\xi)k^{P^{N-1}_i;P^0_j}(t-s,w,\xi) \,\delta_{P^0_j}(\xi).
\end{aligned}
\end{equation}
We introduce the following notation, for all $1\leq i\leq d_{N-1}$, $1\leq j \leq d_k$, and $1\leq k \leq N-2$:
\begin{align*}
k^{P;P^{N-1}_i}(t,p,\xi)&:= \int_0^t\int_{\Int(P^{N-1}_i)} h_i(s,p,w)k^{P^{N-1}_i}(t-s,w,\xi)\,d\mu_{P^{N-1}_i}(w)\, ds,\\
k^{P;P^{N-1}_i;P^k_j}(t,p,\xi)&:= \int_0^t\int_{\Int(P^{N-1}_i)} h_i(s,p,w)k^{P^{N-1}_i;P^k_j}(t-s,w,\xi)
\,d\mu_{P^{N-1}_i}(w)\, ds,\\
k^{P;P^{N-1}_i;P^0_j}(t,p,\xi)&:= \int_0^t\int_{\Int(P^{N-1}_i)} h_i(s,p,w)k^{P^{N-1}_i;P^0_j}(t-s,w,\xi)
\,d\mu_{P^{N-1}_i}(w)\, ds\,\delta_{P^0_j}(\xi).
\end{align*}
Combining identities \eqref{eq:Exp_1} and \eqref{eq:Exp_2}, we obtain
\begin{align*}
&\EE_{\QQ^p}
\left[\mathbf{1}_{\{t\geq \tau_{N-1,i}\}} 
\EE_{\QQ^p}\left[f\restriction_{P^{N-1}_i}(\omega(t)) \big{|}\omega(\tau_{N-1,i})\right]\right] \\
&\quad
= \int_{\Int(P^{N-1}_i)} f(\xi) k^{P;P^{N-1}_i}(t,p,\xi) \, d\mu_{P^{N-1}_i}(\xi)\\
&\quad\quad 
+ \sum_{k=1}^{N-2}\sum_{j=1}^{d_k}\mathbf{1}_{\{P^{N-1}_i\cap P^k_j\neq\emptyset\}}
\int_{\Int(P^k_j)} f(\xi)  k^{P;P^{N-1}_i;P^k_j}(t,p,\xi)\, d\mu_{P^k_j}(\xi)\\
&\quad\quad 
+ \sum_{j=1}^{d_0} \mathbf{1}_{\{P^{N-1}_i\cap P^0_j\neq\emptyset\}} f(\xi) k^{P;P^{N-1}_i;P^0_j}(t,p,\xi)\,\delta_{P^0_j}(\xi).
\end{align*}
The preceding identity together with \eqref{eq:First_int_tran_prob}, \eqref{eq:Tran_prob_with_test_function_decomposition} and \eqref{eq:Tran_prob_with_test_function} give us
\begin{align*}
&\int_P f(w) \Gamma^P(t,p,dw) = \int_{\Int(P)} f(w)k^P(t,p,w)\, d\mu_P(w)\\
&\quad\quad
+ \sum_{i=1}^{d_{N-1}}\int_{\Int(P^{N-1}_i)} f(w) k^{P;P^{N-1}_i}(t,p,w) \, d\mu_{P^{N-1}_i}(w)\\
&\quad\quad 
+ \sum_{i=1}^{d_{N-1}}\sum_{k=1}^{N-2}\sum_{j=1}^{d_k}\mathbf{1}_{\{P^{N-1}_i\cap P^k_j\neq\emptyset\}}
\int_{\Int(P^k_j)} 
f(w)  k^{P;P^{N-1}_i;P^k_j}(t,p,w)\, d\mu_{P^k_j}(w)\\
&\quad\quad 
+ \sum_{i=1}^{d_{N-1}} \sum_{j=1}^{d_0} \mathbf{1}_{\{P^{N-1}_i\cap P^0_j\neq\emptyset\}} 
f(w) k^{P;P^{N-1}_i;P^0_j}(t,p,w) \delta_{P^0_i}(w).
\end{align*}
Letting now, for all $1\leq j\leq d_k$ and $1\leq k \leq N-2$,
\begin{align*}
k^{P;P^k_j}(t,p,w) &:= \sum_{i=1}^{d_{N-1}} \mathbf{1}_{\{P^{N-1}_i\cap P^k_j\neq\emptyset\}} k^{P;P^{N-1}_i;P^k_j}(t,p,w),\\
k^{P;P^0_j}(t,p,w) &:= \sum_{i=1}^{d_{N-1}} \mathbf{1}_{\{P^{N-1}_i\cap P^0_j\neq\emptyset\}} k^{P;P^{N-1}_i;P^0_j}(t,p,w),
\end{align*}
and using the fact that the test function $f$ was arbitrarily chosen in $C^{\infty}(P)$, the preceding identity implies \eqref{eq:Tran_prob}. This completes the proof of the induction step.
\end{step}

Combining Steps \ref{step:TP_base_case} and \ref{step:TP_induction_step}, we obtain that identity \eqref{eq:Tran_prob} holds, for all $N\geq 1$. This completes the proof.
\end{proof}

We conclude with the
\begin{proof}[Proof of Corollary \ref{cor:Zero_times_spent_on_trans_boundary}]
Property \eqref{eq:Zero_times_spent_on_trans_boundary} follows as soon as we prove that
\begin{equation}
\label{eq:Zero_times_spent_on_trans_boundary_aux}
\EE_{\QQ^p}\left[\int_0^T \mathbf{1}_{\{\omega(t)\in \partial^{\pitchfork P}\}}\, dt\right] = 0,\quad\forall\, p\in P,\quad\forall\, T>0.
\end{equation}
Theorem \ref{thm:Tran_prob} shows that the distribution of the generalized Kimura process is supported on 
$P\backslash\partial^{\pitchfork} P$, which is disjoint from the support of the function 
$\mathbf{1}_{\{\omega(t)\in \partial^{\pitchfork} P\}}$. This implies that property \eqref{eq:Zero_times_spent_on_trans_boundary_aux} holds and completes the proof. 
\end{proof}

\appendix

\section{Appendix}
\label{sec:Appendix}

We establish auxiliary results used in the proof of Lemma \ref{lem:Hitting_corner_tangent_transverse}. 
The first result is a Landis-type growth lemma which was proved by M.V. Safonov, \cite{Safonov_2016}, in the two-dimensional case (that is, we take $n=2$ and $m=0$ in the statement of Lemma \ref{lem:Growth_lemma}). Given $r,\mu>0$ and a bounded function $u$, we introduce the notation:
\begin{align}
\label{eq:M_max}
M(r;\mu) &:= \sup\{u(z):\, z\in \bar Q(r;\mu)\}\text{ where,}\\
\label{eq:Q_ball}
Q(r;\mu) &:= (0,\mu r)^n \times (-\mu\sqrt{r},\mu\sqrt{r})^m.
\end{align}
We consider a differential operator $A$ defined on $B^{\infty}_R\subset S_{n,m}$ by
\begin{equation}
\label{eq:A_operator}
\begin{aligned}
Au(z) 
&=\sum_{i,j=1}^n \sqrt{x_ix_j}a_{ij}(z) u_{x_ix_j} +\sum_{i=1}^n b_i(z)u_{x_i}+\sum_{i=1}^n \sum_{l=1}^m \sqrt{x_i}c_{il}(z)u_{x_iy_l}\\
&\quad + \sum_{l,k=1}^m d_{lk}(z) u_{y_ly_k} +\sum_{l=1}^m e_l(z) u_{y_l},\quad\forall\, u\in C^2(B^{\infty}_R),
\end{aligned}
\end{equation}
where we assume that the coefficients $\{b_i(z):1\leq i\leq n\}$ satisfy the cleanness condition,
\begin{align}
\label{eq:A_tangent}
&b_i = 0 \quad\hbox{ on } \partial B^{\infty}_R\cap\{x_i=0\},\\
\label{eq:A_transverse}
&\hbox{or } b_i > 0 \quad\hbox{ on } \partial B^{\infty}_R\cap\{x_i=0\}.
\end{align}
We denote by $I^T$ the set of indices $1\leq i\leq n$ such that
property \eqref{eq:A_tangent} holds and we denote by $I^{\pitchfork}$
the set of indices $1\leq i\leq n$ such that property
\eqref{eq:A_transverse}. In this appendix it is assumed that neither set
  is empty. We also denote:
\begin{align*}
F_i &:=\partial B^{\infty}_R\cap\{x_i=0\},\quad\forall\, 1\leq i\leq n,\\
\partial^T B^{\infty}_R &:= \bigcup_{i\in I^T} F_i.
\end{align*}
We can now state:

\begin{lem}[Growth lemma]
\label{lem:Growth_lemma}
\cite{Safonov_2016}
Let $n,m$ be non-negative integers such that $n\geq 2$ and let $0<R, \nu<1$. 
Suppose that the coefficients of the operator $A$ in \eqref{eq:A_operator} are bounded continuous functions on $\bar B^{\infty}_R$ and that there are positive constants, $\delta$ and $K$, such that for all $1\leq i, j \leq n$ and $1\leq l,k\leq m$, we have that
\begin{align}
\label{eq:A_boundedness}
&\|a_{ij}\|_{C(\barB^{\infty}_R)} +\|b_i\|_{C(\barB^{\infty}_R)}+ \|c_{il}\|_{C(\barB^{\infty}_R)} 
+ \|d_{lk}\|_{C(\barB^{\infty}_R)} + \|e_l\|_{C(\barB^{\infty}_R)} 
\leq K,
\\
\label{eq:A_uniform_ellipticity}
&\sum_{i,j=1}^n a_{ij}(z)\xi_i \xi_j + \sum_{l,k=1}^m d_{lk}(z)\eta_l\eta_k \geq \delta(|\xi|^2 + |\eta|^2),
\quad\forall\, z\in \barB^{\infty}_R, \forall\,\xi\in\RR^n, \forall\,\eta\in \RR^m,
\end{align}
and the coefficients $\{b_i(z):1\leq i\leq n\}$ satisfy one of conditions \eqref{eq:A_tangent} or \eqref{eq:A_transverse}.
Let $i \in I^T$ and $j \in I^{\pitchfork}$. Then there is a positive constant, $\theta \in (0,1)$, depending only on $b_0,\delta,K,\nu, m$, and $n$, such that if 
\begin{equation}
\label{eq:A_u_reg}
u\in C^2(\bar B^{\infty}_R\backslash\partial^T B^{\infty}_R)\cap C(\bar B^{\infty}_R\backslash (F_i\cap F_j))
\end{equation}
is a solution such that
\begin{align}
\label{eq:A_u}
A u = 0&\quad\hbox{ on } \quad\bar B^{\infty}_R\backslash\partial^T B^{\infty}_R,\\
\label{eq:A_u_boundary}
0\leq u \leq \nu &\quad\hbox{ on } \quad\partial^T B^{\infty}_R\backslash (F_i\cap F_j),\\
\label{eq:A_u_bounds}
0 \leq u \leq 1 &\quad\hbox{ on } \quad \barB^{\infty}_R\backslash (F_i\cap F_j),
\end{align}
then $u$ satisfies the growth property
\begin{equation}
\label{eq:Growth_property}
M(r;1/2) \leq \theta M(r; 1),\quad\forall\, r\in (0,R).
\end{equation}
\end{lem}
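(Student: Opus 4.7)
The plan is a classical Landis-style barrier argument: build a supersolution $W$ of $A$ that is $\geq 1$ on the portion of $\partial B^\infty_R$ away from the corner $F_i \cap F_j$, satisfies $W \leq 1 - c_0$ on $\bar Q(r; 1/2)$ for some $c_0 > 0$, and then conclude by the maximum principle. First, using the intrinsic parabolic scaling of $A$ discussed after \eqref{eq:Operator} (which preserves the form of the operator and the structure of the coefficients \eqref{eq:A_boundedness}--\eqref{eq:A_uniform_ellipticity}), reduce to $R = 1$; also relabel indices so that $i = 1$ and $j = 2$.

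\textbf{Barrier construction.} The heart of the argument is to build $W \in C^2(B^\infty_1 \setminus \partial^T B^\infty_1) \cap C(\bar B^\infty_1 \setminus (F_1 \cap F_2))$ with $AW \leq 0$ in the interior, $W \geq 1$ on $\partial B^\infty_1 \setminus (F_1 \cap F_2)$, and $W \leq 1 - c_0$ on $\bar Q(1; 1/2)$, with $c_0 = c_0(\beta_0, \delta, K, m, n) > 0$. I would use a product ansatz combining three pieces: (i) a factor $x_1^\alpha$ with small $\alpha \in (0,1)$ handling the tangent direction, so that $(x_1 a_{11} \partial_{x_1}^2) x_1^\alpha = \alpha(\alpha-1) a_{11} x_1^{\alpha - 1}$ is strictly negative and the drift $b_1 = O(x_1)$ (by \eqref{eq:A_tangent}) contributes only a subleading $O(x_1^\alpha)$; (ii) a factor of the form $1 - e^{-\gamma x_2}$ (or $e^{-\gamma(1 - x_2)}$) for large $\gamma$, where the strict transverse drift $b_2 \geq \beta_0 > 0$ from \eqref{eq:A_transverse} combines with $x_2 \partial_{x_2}^2$ to give a negative contribution; and (iii) a small quadratic perturbation in $(x_k)_{k \geq 3}$ and $y$, whose image under $A$ is uniformly negative by \eqref{eq:A_uniform_ellipticity}, used to absorb sign-indefinite remainders.

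\textbf{Comparison and conclusion.} With $W$ in hand, and a companion barrier $W'$ built in the same fashion with $W' \geq 1$ on $\partial^T B^\infty_1 \setminus (F_1 \cap F_2)$ and $\|W'\|_\infty \leq C'$, the function
\[
v := M(1;1)\, W + \nu W' - u
\]
satisfies $Av \leq 0$ in the interior and $v \geq 0$ on $\partial B^\infty_1 \setminus (F_1 \cap F_2)$. Applying the maximum principle on $B^\infty_1 \setminus \{x_1^2 + x_2^2 < \epsilon^2\}$ and passing to the limit $\epsilon \downarrow 0$ (justified by $0 \leq u \leq 1$ and the continuity in \eqref{eq:A_u_reg}) yields $u \leq M(1;1)(1 - c_0) + C' \nu$ on $\bar Q(1; 1/2)$. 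A dichotomy on whether $M(1;1)$ exceeds $2 C' \nu / c_0$ (absorbing $C'\nu$ into the main term in the large case, and using the trivial bound together with the strict improvement contributed by $W$ in the small case) produces the required bound $M(1;1/2) \leq \theta M(1;1)$ with $\theta = \theta(\nu) < 1$.

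\textbf{Main obstacle.} The critical difficulty is verifying $AW \leq 0$ globally in the interior. The mixed second-order terms $\sqrt{x_1 x_2}\, a_{12} W_{x_1 x_2}$, the mixed gradient terms $\sqrt{x_i}\, c_{il} W_{x_i y_l}$, and the drifts $b_k \partial_{x_k} W$ for $k \geq 3$ are all sign-indefinite; they must be dominated by the diagonal leading contributions through a Cauchy--Schwarz argument based on the uniform ellipticity \eqref{eq:A_uniform_ellipticity}. This forces a careful balancing of $\alpha$, $\gamma$, and the relative weights in the barrier ansatz in a dimension-dependent way, and is where the extension beyond Safonov's 2D argument requires genuine work. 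A secondary technical subtlety is that $W$ need not extend continuously across $F_1 \cap F_2$, necessitating the exhaustion argument above; the regularity hypothesis \eqref{eq:A_u_reg} on $u$ is exactly what is needed to pass to the limit.
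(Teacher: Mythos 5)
Your general Landis/barrier plan is well calibrated, but the comparison step is exactly where the argument breaks down — and it breaks down at the spot the paper flags as the central difficulty. Your exhaustion $D_\epsilon := B^\infty_1 \setminus \{x_1^2 + x_2^2 < \epsilon^2\}$ requires $v = M(1;1)\,W + \nu W' - u \geq 0$ on \emph{all} of $\partial D_\epsilon$, including the inner cylinder $\{x_1^2 + x_2^2 = \epsilon^2\}$. On that cylinder $x_1, x_2 \lesssim \epsilon$, so any barrier of the type you propose (vanishing at the corner through factors like $x_1^\alpha$ or $1 - e^{-\gamma x_2}$) satisfies $M W + \nu W' \to 0$ as $\epsilon \downarrow 0$, while all you know about $u$ there is $0 \leq u \leq 1$. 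The maximum principle then yields only $v \geq -1$ in $D_\epsilon$, which is vacuous; and the clause ``justified by $0\leq u\leq 1$ and the continuity in \eqref{eq:A_u_reg}'' cannot rescue you, since \eqref{eq:A_u_reg} explicitly excludes continuity at $F_i\cap F_j$. You would need a penalty term in the barrier that blows up at the corner, but constructing such a barrier for the degenerate operator $A$ is a nontrivial additional task that you neither provide nor acknowledge. The paper's proof (see Remark~\ref{rmk:A_scaling} and the proof of Lemma~\ref{lem:GL_base_case}) sidesteps this entirely: it applies the comparison principle only on fixed boxes $W_1$, $W_2$ bounded away from the corner, obtains bounds on specific slices such as $\{x_1 = 1/2\}$ and $\{x_2 = 1/2\}$, and then uses the intrinsic parabolic rescaling \eqref{eq:Scaling} to translate those slice estimates into bounds on all the rescaled slices $\{x_i = r/2\}$, $r \in (0,1)$. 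These slices together cover $\partial Q(1;1/2)$, so the estimate ``comes from outside'' the corner and never requires controlling $u$ there. That scaling step is the engine of the proof, and it has no analogue in your proposal.

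Two secondary issues. First, your product ansatz $W = x_1^\alpha(1 - e^{-\gamma x_2})(\dots)$ cannot simultaneously satisfy $W \geq 1$ on the relevant outer boundary portion and $W \leq 1 - c_0$ on $\bar Q(1;1/2)$: for instance at $x_1 = 1$, $x_2 \to 0$ (a point of $\partial B^\infty_1 \setminus (F_1\cap F_2)$) the product vanishes. The paper's barriers are \emph{additive} — e.g.\ $w_2 = \nu + 16(x_2 - 1/2)^2 + \sqrt{x_1/H} + \sum_l y_l^2$ and $w_1 = \theta_2 + (1-\theta_2)\bigl[16(x_1-1/2)^2 + \beta(k-x_2) + \tfrac12 + \tfrac12\sum_l y_l^2\bigr]$ — and each piece independently dominates on its own portion of the boundary. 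Second, you identify the multi-dimensional mixed terms as the ``main obstacle'' but offer no mechanism to resolve it; the paper handles $n > 2$ by induction on $n$ (Lemma~\ref{lem:GL_base_case} is the base case), with the induction step viewing $A$ on the slab $\{x_k > c\}$ as a degenerate operator on $S_{n-1,m+1}$ and invoking the induction hypothesis on a finite cover. This reduction is what dodges a single high-dimensional barrier construction and is missing from your proposal.
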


The proof of Lemma \ref{lem:Growth_lemma} is based on an induction argument, which applies a comparison principle, and on the following scaling property of the operator $A$ defined in \eqref{eq:A_operator}:

\begin{rmk}[Scaling property of the operator $A$]
\label{rmk:A_scaling}
For $\lambda>0$, we consider the rescaling described in \eqref{eq:Scaling}, and using \eqref{eq:A_u} we notice that the rescaled function $v(z')$ satisfies the equation $A' v(z') = 0$, for all 
$z'=(x',y')\in B^{\infty}_{R/\lambda}$, where the operator $A'$ is given by
\begin{align*}
A'v(z') 
&=\sum_{i,j=1}^n \sqrt{x'_ix'_j}a_{ij}(\lambda x', \sqrt{\lambda} y') v_{x'_ix'_j} 
+\sum_{i=1}^n b_i(\lambda x', \sqrt{\lambda} y') v_{x'_i}
+\sum_{i=1}^n \sum_{l=1}^m \sqrt{\lambda x'_i} c_{il}(\lambda x', \sqrt{\lambda} y') v_{x'_iy'_l}\\
&\quad + \sum_{l,k=1}^m d_{lk}(\lambda x', \sqrt{\lambda} y') v_{y'_ly'_k} 
+\sum_{l=1}^m \sqrt{\lambda} e_l(\lambda x', \sqrt{\lambda} y') v_{y'_l}.
\end{align*}
Thus, by rescaling the solutions as in \eqref{eq:Scaling}, the rescaled
functions are solutions to an equation defined by a new operator, $A'$, that
satisfies the same properties as the original operator, $A$, when we choose the
parameter $\lambda$ in a bounded set. In the proof of Lemma
\ref{lem:Growth_lemma}, we apply this scaling property for $\lambda\in
(0,1)$.

Note that in the $n=2, m=0$ case we would have a function $u$ defined on $[0,1]\times
[0,1],$ which is not known to be (and in general will not be) continuous at $(0,0).$
This precludes a simple application of the maximum principle. If we can show
that $u(1/2,z)\leq\theta,$ and $u(z,1/2)\leq\theta$ for $z\in(0,1/2),$ in a manner that only
depends on bounds on the coefficients of the operator and the other hypotheses
on $u,$ then the scaling argument shows that, for $r\in (0,1),$
\begin{equation}
  u(rz,r/2)\leq\theta\text{ and }u(r/2,rz)\leq\theta\text{ for }z\in (0,1/2),
\end{equation}
as well. In other words $u(x,y)\leq \theta,$ for $(x,y)\in (0,1/2)\times
(0,1/2),$ which provides an effective replacement for the maximum
principle. This is, in essence, how the argument below proceeds. The proof that
we provide is a small modification of an argument communicated to us by
M.V. Safonov, \cite{Safonov_2016}.
\end{rmk}

Remark \ref{rmk:A_scaling} implies that to establish the growth property \eqref{eq:Growth_property}, we can assume without loss of generality that $R=1$ and $M(1;1)=1$, and it is sufficient to prove that there is a constant, $\theta \in (0,1)$, such that
\begin{equation}
\label{eq:Growth_lemma_normalized}
M(1;1/2) \leq \theta.
\end{equation}
Because our proof of Lemma \ref{lem:Growth_lemma} relies on an induction argument, we begin with the proof of the base case in the induction. 

\begin{lem}[Growth lemma with $n=2$ and $m\geq 0$]
\label{lem:GL_base_case}
\cite{Safonov_2016}
In the statement of Lemma \ref{lem:Growth_lemma} assume that $n=2$, $m\geq 0$, $i=1$, and $j=2$. Then the conclusion of Lemma 
\ref{lem:Growth_lemma} holds.
\end{lem}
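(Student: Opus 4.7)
The plan is to follow Safonov's strategy sketched in Remark~\ref{rmk:A_scaling}: reduce the growth estimate to a pair of pointwise bounds on two interior walls of the cube via the scaling invariance of the operator class, and establish these pointwise bounds by comparison with a carefully engineered supersolution. After normalizing $R=1$ and $M(1;1)=1$, it will suffice to exhibit $\theta<1$, depending only on $\beta_0,\delta,K,\nu,m$, such that
\[
u(1/2,x_2,y)\le\theta\quad\text{and}\quad u(x_1,1/2,y)\le\theta
\]
for all $(x_1,x_2)\in(0,1/2)^2$ and $|y_l|\le 1/2$. Indeed, applying this bound to the rescaled function $v(x',y')=u(\lambda x',\sqrt{\lambda}\,y')$ for each $\lambda\in(0,1)$, whose governing operator $A'$ satisfies exactly the same structural hypotheses as $A$ by Remark~\ref{rmk:A_scaling}, yields $u\le\theta$ on each wall $\{x_1=\lambda/2\}$ and $\{x_2=\lambda/2\}$, and the union of these walls over $\lambda\in(0,1)$ fills $Q(1;1/2)$.

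Next I will construct a global supersolution $V\in C^2(\bar B^\infty_1\setminus F_1)\cap C(\bar B^\infty_1\setminus(F_1\cap F_2))$ satisfying $AV\le 0$ on $\bar B^\infty_1\setminus F_1$, $V\ge u$ on $\partial\bar B^\infty_1\setminus(F_1\cap F_2)$, and a quantitative interior bound $V\le\theta<1$ on the two walls. Near the tangent face $F_1$ the basic building block is $x_1^\alpha$: the cleanness condition $b_1|_{F_1}=0$, the uniform ellipticity bound $a_{11}\ge\delta$, and the continuity of $b_1$, together imply
\[
A(x_1^\alpha)=\alpha\,x_1^{\alpha-1}\bigl[a_{11}(\alpha-1)+b_1(z)\bigr]\le 0
\]
on a neighborhood $\{x_1<\delta_1\}$ of $F_1$, provided $\alpha$ is chosen sufficiently small depending only on $\delta,K$ and the modulus of continuity of $b_1$. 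Near the transverse face $F_2$ the positive drift $b_2\ge\beta_0>0$ plays a complementary role: a direct computation shows that $A(x_2^\beta)$ has a favorable sign for an appropriate $\beta>0$, so a suitable combination of the two building blocks, adjusted by constants and fitted with a smooth extension making $V$ agree with $1$ on the outer boundaries $\{x_1=1\},\{x_2=1\}$, and $\{|y_l|=1\}$, produces the required $V$.

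Having constructed $V$, I will apply the weak maximum principle to $V-u\ge 0$ on an exhaustion of $\bar B^\infty_1\setminus(F_1\cap F_2)$ by subdomains $\bar B^\infty_1\setminus N_\epsilon(F_1\cap F_2)$, where $N_\epsilon$ is an $\epsilon$-neighborhood of the corner. By construction, $V\to 1$ at the corner and $u\le 1$ globally, so $V-u\ge -o(1)$ on $\partial N_\epsilon$ as $\epsilon\to 0$, and the limiting argument yields $V\ge u$ on the whole domain. Evaluating $V$ on the two walls then gives the quantitative strict inequality $u\le\theta<1$ with $\theta$ depending only on the structural constants, which, combined with the scaling reduction, completes the proof.

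The main obstacle will be the simultaneous construction of the supersolution $V$: it must dominate $u$ on the low-data tangent face $F_1$ (where the gain $1-\nu$ lives) and on the high-data transverse face $F_2$ (where only $u\le 1$ is known), while remaining strictly below $1$ on both interior walls. This balancing forces a coupled $(x_1,x_2)$-structure in $V$, and verifying $AV\le 0$ then requires controlling the cross terms $\sqrt{x_1x_2}\,a_{12}\partial_{x_1x_2}$ and $\sqrt{x_1}\,c_{1l}\partial_{x_1 y_l}$ of $A$ against the dominant diagonal terms via Cauchy--Schwarz, which is where the uniform ellipticity \eqref{eq:A_uniform_ellipticity} is used essentially. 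The scaling reduction and the limiting procedure for the maximum principle are, by comparison, standard.
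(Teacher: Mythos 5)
Your scaling reduction to the two interior walls is correct and is exactly what the paper does via Remark~\ref{rmk:A_scaling}. The gap is in the heart of the argument: the claim that a \emph{single} global supersolution $V$ can be built out of the building blocks $x_1^\alpha$ and $x_2^\beta$ and the auxiliary parabola/linear pieces, satisfying $AV\le 0$ everywhere, $V\ge u$ on the comparison boundary, and $V\le\theta<1$ on both walls. This does not follow from the computations you describe, and the paper's proof shows why it is unlikely to be achievable directly.

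The critical difficulty is on the wall $\{x_1=1/2\}$ for $x_2$ near $0$. There the only a~priori information is $u\le 1$ (the low datum $\nu$ lives only on the tangent face $F_1=\{x_1=0\}$), and the operator is degenerate in $x_2$, so the strong maximum principle cannot be invoked. Your proposed building block does not help here: a direct computation gives $A(x_2^\beta)=\beta x_2^{\beta-1}\bigl[(\beta-1)a_{22}+b_2\bigr]$, whose sign is \emph{not} controlled by the structural constants (when $b_2\le K$ and $a_{22}\ge\delta$ one can have either sign), so $x_2^\beta$ is not a supersolution near $F_2$ in general; the term that does always work, $\beta(k-x_2)$ (with $A(\beta(k-x_2))=-\beta b_2\le-\beta\beta_0<0$), \emph{increases} as $x_2\downarrow 0$, so it can only be used to pull the barrier up near $F_2$, not down below $1$. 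Consequently, no separable (or nearly separable) combination of your building blocks can be simultaneously $\ge1$ at the corner and on the outer faces and $<1$ on the wall $\{x_1=1/2, x_2\to 0\}$ while remaining a supersolution. The paper resolves this with a genuine three-stage bootstrap: (1) a barrier with a $\sqrt{x_1/H}$ singularity, exploiting $b_1|_{F_1}=0$, yields $u\le\theta_1<1$ on strips $[0,hr]\times\{r/2\}$ for every scale $r$; (2) the strong maximum principle on the strictly elliptic region $\{x_1\ge hk/2,\ x_2\ge k/2\}$ propagates this to $u\le\theta_2<1$ on the whole slab $[0,1/2]\times[k,1/2]$; and only then (3) a barrier of the form $\theta_2+(1-\theta_2)[16(x_1-\tfrac12)^2+\beta(k-x_2)+\tfrac12+\cdots]$, with \emph{base level $\theta_2<1$} supplied by step (2), is matched at $\{x_2=k\}$ and closed with the transverse drift $b_2\ge\beta_0$ on the strip $\{x_2<k\}$, giving the gain on $\{x_1=1/2\}$. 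The bound near $F_2$ is thus \emph{transported} from the bound near $F_1$ through the interior, and this chaining is precisely what your single-barrier plan omits. Without it, your supersolution would have to equal at least $1$ along $\{x_2=k\}$ (since there $u\le 1$ is all that is known), and one then loses the quantitative gain $1-\theta$ on $\{x_1=1/2\}$.

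A secondary issue: you invoke the modulus of continuity of $b_1$ to fix $\alpha$. The lemma's constant $\theta$ is allowed to depend only on $\beta_0,\delta,K,\nu,m,n$, and the paper achieves this by localizing the $\sqrt{x_1/H}$ barrier to a thin slab $\{x_1<H\}$ with $H$ fixed, where $b_1=0$ on $F_1$ and uniform ellipticity give $2b_1-a_{11}\le -\delta/2$ near $F_1$; if your argument truly needs a modulus of continuity, the claimed uniform dependence of $\theta$ is not recovered. Your treatment of the corner via exhaustion, and the observation that the comparison principle imposes no boundary condition on the transverse face $F_2$, are both fine and agree with the paper.
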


\begin{proof}
We establish property \eqref{eq:Growth_lemma_normalized} in several steps.

\setcounter{step}{0}

\begin{step}
\label{step:C_1_base}
In this step we prove that there are positive constants, $h,\theta_1\in (0,1)$, such that
\begin{equation}
\label{eq:C_1}
u(x,y) \leq \theta_1,\quad\forall\, (x,y) \in \left(\bigcup_{r\in (0,1)} [0,hr]\times\left\{\frac{r}{2}\right\}\right)
\times\left[-\frac{1}{2},\frac{1}{2}\right]^m.
\end{equation}
We let $H\in (0,1)$ and we consider the set
$
W_2 := (0,H)\times (1/4,3/4)\times(-1,1)^m,
$
and the barrier function
$$
w_2(z) = \nu+16\left(x_2-\frac{1}{2}\right)^2 + \sqrt{\frac{x_1}{H}} + \sum_{l=1}^m y_l^2,
$$
where we recall that $\nu$ belongs to $(0,1);$  it is the constant appearing in \eqref{eq:A_u_boundary}.
Direct calculations give us that
$$
A w_2 = 32\left(x_2a_{22} + b_2\left(x_2-\frac{1}{2}\right)\right) + \frac{2b_1-a_{11}}{4\sqrt{Hx_1}} + \sum_{l=1}^m (d_{ll}+e_ly_l),
$$
and, using the assumption that $b_1=0$ along $\partial B^{\infty}_1\cap\{x_1=0\}$ together with \eqref{eq:A_boundedness} and \eqref{eq:A_uniform_ellipticity}, we see that we can find positive constants, $C$ and $H$, such that
$$
A w_2 \leq -\frac{C}{\sqrt{Hx_1}} + C < 0\quad\hbox{ on } W_2.
$$ 
We also see that on the boundary of the set $W_2$ the following hold. By property \eqref{eq:A_u_boundary}, we have that $u\leq\nu \leq w_2$ on $\partial W_2\cap\{x_1=0\}$. By property \eqref{eq:A_u_bounds}, we have that $u\leq 1\leq w_2$ on $\partial W_2\cap S_{n,m}$. 
Applying the comparison principle, it follows that $u\leq w_2$ on $W_2$. From the choice of the barrier function $w_2$, we see that for all $\theta_1\in (\nu,1)$, there is a positive constant $h\in (0,1)$ such that $u(x,y) \leq \theta_1$, for all points $(x,y)\in \bar W_2$ with the property that $x_2=1/2$, $x_1\in [0,h]$, and $y_l\in [-h,h]$, for all $1\leq l\leq m$. 

Applying Remark \eqref{rmk:A_scaling} with $\lambda := r\in (0,1)$ arbitrarily chosen, we obtain that $u(x,y) \leq \theta_1$, for all points $(x,y)$ with the property that there is $r\in (0,1)$ such that $x_2=r/2$, $x_1\in [0,hr]$, and $y_l\in [-h\sqrt{r},h\sqrt{r}]$, for all 
$1\leq l\leq m$. We note that the scaling property described in Remark \ref{rmk:A_scaling} continues to hold when we apply translations in the $y$-coordinates of solutions. From here we deduce that $u(x,y) \leq \theta_1$, for all points $(x,y)$ with the property that 
$y_l\in [-1/2,1/2]$, for all $1\leq l\leq m$, and there is $r\in (0,1)$ such that $x_2=r/2$ and $x_1\in [0,hr]$. This proves that 
\eqref{eq:C_1} holds.
\end{step}

\begin{step}
\label{step:Extension_base}
In this step, we extend property \eqref{eq:C_1} in the sense that we prove that for all $k\in (0,1/2)$ there is a constant, $\theta_2=\theta_2(k,\theta_1)\in (0,1)$, such that
\begin{equation}
\label{eq:C_1_extension}
u(x,y)\leq \theta_2,\quad\forall\, 
(x,y)\in \left[0,\frac{1}{2}\right]\times\left[k,\frac{1}{2}\right]\times \left[-\frac{1}{2},\frac{1}{2}\right]^m.
\end{equation}
For $k\in (0,1/2)$, we consider the sets $D_1:=[hk,1/2]\times[k,1/2]$ and $D_2 := [hk/2,1]\times[k/2,1]$, which have the property that $D_1\subset D_2$ and the following hold:
\begin{align*}
0\leq u\leq \theta_1<1 &\quad\hbox{ on } \partial D_2\cap\{x_1=k/2\},\quad\hbox{ (by Step \ref{step:C_1_base} applied with $r=k$)}\\
0\leq u\leq 1 &\quad\hbox{ on } D_2,\quad\hbox{ (by \eqref{eq:A_u_bounds})}\\
Au = 0 &\quad\hbox{ on } D_2,\quad\hbox{ (by \eqref{eq:A_u})}.
\end{align*}
Because the operator $A$ is strictly elliptic on $D_2$, the preceding properties of $u$ show that we can apply the strong maximum principle to conclude that there is a positive constant, $\theta_3=\theta_3(A,k)\in (0,1)$, such that $u\leq \theta_3$ on $D_1$. Combining this property with inequality \eqref{eq:C_1} and letting $\theta_2:=\theta_1\vee\theta_3$, we obtain 
\eqref{eq:C_1_extension}.  
\end{step}

\begin{step}
\label{step:C_2_base}
We now prove that there are positive constants, $k\in (0,1/2)$ and $\theta_4\in (0,1)$, such that
\begin{equation}
\label{eq:C_2}
u(x,y)\leq \theta_4,\quad\forall (x,y)\in\left\{\frac{1}{2}\right\}\times (0,k)\times (-k,k)^m.
\end{equation}
For $k\in (0,1/2)$, we let $\theta_2=\theta_2(A,k)\in (0,1)$ be the constant in inequality \eqref{eq:C_1_extension}. We consider the set
$W_1 := (1/4, 3/4) \times (0,k) \times (-1,1)^m$ and the barrier function
$$
w_1(z) = \theta_2 + (1-\theta_2)\left[16\left(x_1-\frac{1}{2}\right)^2 + \beta\left(k-x_2\right) 
+ \frac{1}{2}+\frac{1}{2}\sum_{l=1}^m y_l^2\right], 
$$
where the positive constant $\beta$ is suitably chosen below.
Because $b_1>0$ along $\partial W_1\cap\{x_2=0\}$, we can choose $k$ small enough such that there is a positive constant, $b_0$, with the property that $b_1\geq b_0$ on $\bar W_1$.
Direct calculations give us that
\begin{align*}
Aw_1 &\leq (1-\theta_2)\left[32\left(x_2a_{22}+b_2\left(x_2-\frac{1}{2}\right)\right) -\beta b_0
+ \frac{1}{2} \sum_{l=1}^m(d_{ll}+e_ly_l)\right],
\end{align*}
and so we can choose $\beta=\beta(b_0,K)$ large enough so that $Aw_1 <0$ on $W_1$. We next prove that $u\leq w_1$ on $\partial W_1 \cap S_{n,m}$. We recall that by adapting the argument of the proof of 
\cite[Proposition 3.1.1]{Epstein_Mazzeo_annmathstudies} to the case of elliptic problem for the operator $A$, we do not need to have that $u\leq w_1$ on the portion of the boundary $\partial W_1\cap\{x_2=0\}$ due to the regularity assumption \eqref{eq:A_u_reg} and to the fact that the operator $A$ is degenerate as we approach this boundary portion of $W_1$. We see that on $\partial W_1\cap\{x_2=k\}$, we have that $w_1\geq \theta_2$ and $u\leq \theta_2$ by inequality \eqref{eq:C_1_extension}, and so $u\leq w_1$ on 
$\partial W_1\cap\{x_2=k\}$. On the portion of the boundary $\partial W_1\cap \{x_1=1/4\hbox{ or } x_1=3/4\}$, it is clear that 
$u\leq w_1$ because on this set we have that $w_1\geq 1$ by construction and $u\leq 1$ by \eqref{eq:A_u_bounds}. Thus, the comparison principle implies that $u\leq w_1$ on $W_1$. In particular, when $z=(x,y)$ has the property that $x_1=1/2$, $x_2\in (0,k)$, and 
$y_l\in (0,k)$, for all $1\leq l\leq m$, we have that
$$
u(z) \leq \theta_2+(1-\theta_2)\left(\beta k+ \frac{1}{2}+\frac{1}{2} mk^2\right).
$$  
We can choose the positive constant $k$ small enough such that there is a positive constant $\theta_4\in (0,1)$ with the property that
$u(x,y)\leq \theta_4$, for all $(x,y)$ such that $x_1=1/2$ and $x_2,y_l\in (0,k)$, for all $1\leq l\leq m$. We recall that the scaling property in Remark \ref{rmk:A_scaling} continues to hold when we apply translations in the $y$-coordinates of solutions. This concludes the proof of inequality \eqref{eq:C_2}.
\end{step}

Let $k\in (0,1/2)$ and $\theta_4$ be chosen as in Step \ref{step:C_2_base} and let $\theta_2$ be chosen as in Step \ref{step:Extension_base}. Let $\theta:=\theta_2\vee\theta_4$. Inequalities \eqref{eq:C_1_extension} and \eqref{eq:C_2} give us that $u(x,y)\leq \theta$, for all $(x,y)$ such that $y\in (0,k)^m$, $x_1=1/2$ and $x_2\in (0,1/2)$, or $x_2=1/2$ and $x_1\in (0,1/2)$. Remark \ref{rmk:A_scaling} implies that $u(x,y)\leq \theta$, for all $(x,y)$ such that there is $r\in (0,1)$ with the property that $y\in (0,\sqrt{r}k)^m$, $x_1=r/2$ and $x_2\in (0,r/2)$, or $x_2=r/2$ and $x_1\in (0,r/2)$. We recall that the scaling property in Remark \ref{rmk:A_scaling} continues to hold when we apply translations in the $y$-coordinates of solutions. From here we deduce that $u(x,y) \leq \theta$, for all points $(x,y)\in Q(1;1/2)$, where we recall the definition of $Q(1;1/2)$ in \eqref{eq:Q_ball}. This concludes the proof of inequality \eqref{eq:Growth_lemma_normalized}, when $n=2$ and $m\geq 0$.
\end{proof}

We can now give

\begin{proof}[Proof of Lemma \ref{lem:Growth_lemma}]
We assume without loss of generality that $i=1$, $j=2$, and $R=1$. We prove inequality \eqref{eq:Growth_lemma_normalized} by an induction argument on $n$. The base case, $n=2$, was established in Lemma \ref{lem:GL_base_case}. We next consider the induction step. We assume that \eqref{eq:Growth_lemma_normalized} holds with $n$ replaced by $n-1$ and we want to establish it for $n$. We prove this assertion in several steps, which are adaptations of the steps of the proof of Lemma \ref{lem:GL_base_case} to the multi-dimensional case ($n>2$). For the multi-dimensional case, we do not need an adaptation of Step \ref{step:C_1_base} in Lemma \ref{lem:GL_base_case}. 

\setcounter{step}{0}

\begin{step}
\label{step:Extension}
Analogously to Step \ref{step:Extension_base} in the proof of Lemma \ref{lem:GL_base_case}, we prove that for all $k\in (0,1/2)$ there is a constant, $\theta_1=\theta_1(A,k)\in (0,1)$, such that
\begin{equation}
\label{eq:Comparison_1_extension}
u(x,y)\leq \theta_1,\quad\forall\, 
(x,y)\in \left[0,\frac{1}{2}\right]\times
\left(\left[0,\frac{1}{2}\right]^{n-1}\backslash \left[0,k\right]^{n-1}\right)
\times \left[-\frac{1}{2},\frac{1}{2}\right]^m.
\end{equation}
For $k\in (0,1/2)$, we consider the sets 
\begin{align*}
D_1 & := \left[0,\frac{1}{2}\right]\times
\left(\left[0,\frac{1}{2}\right]^{n-1}\backslash \left[0,k\right]^{n-1}\right)
\times \left[-\frac{1}{2},\frac{1}{2}\right]^m,\\
D_2 & := \left[0, 1\right]\times
\left([0,1]^n\backslash\left[0,k/2\right]^{n-1}\right)
\times \left[-1,1\right]^m,
\end{align*}
which have the property that $D_1\subset D_2$ and the following hold:
\begin{align*}
0\leq u\leq \nu<1 &\quad\hbox{ on } \partial^T D_2\backslash (F_1\cap F_2),
\quad\hbox{ (by \eqref{eq:A_u_boundary} with $i=1$ and $j=2$)}\\
0\leq u\leq 1 &\quad\hbox{ on } D_2\backslash (F_1\cap F_2),\quad\hbox{ (by \eqref{eq:A_u_bounds})}\\
Au = 0 &\quad\hbox{ on } D_2\backslash \partial^T D_2,\quad\hbox{ (by \eqref{eq:A_u})},
\end{align*}
where we let $\partial^T D_2 := \partial^T B^{\infty}_1\cap D_2$. Because on the set $D_1$ the operator $A$ can be viewed as a degenerate operator of the form \eqref{eq:A_operator} defined on $S_{n-1,m+1}$, as opposed to $S_{n,m}$, we can apply the induction hypothesis and by covering $D_1$ by a finite number of balls, we obtain that  there is a positive constant, $\theta_1=\theta_1(A,k)\in (0,1)$, such that $u\leq \theta_1$ on $D_1$. This completes the proof of inequality \eqref{eq:Comparison_1_extension}.  
\end{step}

\begin{step}
\label{step:Comparison_2}
Analogously to Step \ref{step:C_2_base} in Lemma \ref{lem:GL_base_case}, we next prove that there are positive constants, $k\in (0,1/2)$ and $\theta_2\in (0,1)$, such that
\begin{equation}
\label{eq:Comparison_2}
u(x,y)\leq \theta_2,\quad\forall (x,y)\in\left\{\frac{1}{2}\right\}\times (0,k)^{n-1}\times (-k,k)^m.
\end{equation}
We fix $k\in (0,1/2)$ and let $\theta_1=\theta_1(A,k)\in (0,1)$ be chosen as in Step \ref{step:Extension}.
We consider the set
$W_1 := (1/4, 3/4) \times (0,k) \times (-1,1)^m$ and the barrier function
$$
w_1(z) = \theta_2 + (1-\theta_2)\left[16\left(x_1-\frac{1}{2}\right)^2 + \beta\left(k-x_2\right) 
+ \frac{1}{2}+\frac{1}{2}\left(\sum_{i=3}^n x_i + \sum_{l=1}^m y_l^2\right)\right], 
$$
where $\beta$ is a positive constant. The argument of the proof of Step \ref{step:C_2_base} Lemma \ref{lem:GL_base_case} immediately adapts to the present choice of the set $W_1$ and of the barrier function $w_1$, and we obtain that we can choose $\beta$ and $k$ small enough so that there is a constant, $\theta_2\in (0,1)$, such that inequality \eqref{eq:Comparison_2} holds. This completes the argument of Step \ref{step:Comparison_2}.
\end{step} 

Let $k\in (0,1/2)$ and $\theta_2$ be chosen as in Step \ref{step:Comparison_2} and let $\theta_1$ be chosen as in Step 
\ref{step:Extension}. Let $\theta:=\theta_1\vee\theta_2$. Inequalities \eqref{eq:Comparison_1_extension} and \eqref{eq:Comparison_2} gives us that $u(x,y)\leq \theta$, for all $(x,y)\in  S_{n,m}\cap \partial (0,1/2)^n\times(-k,k)^m$. Remark \ref{rmk:A_scaling} implies that $u(x,y)\leq\theta$, for all $(x,y)\in (0,1/2)^n\times(-k,k)^m$. We recall that the scaling property in Remark \ref{rmk:A_scaling}  holds when we apply translations in the $y$-coordinates of solutions. From here we deduce that $u(x,y) \leq \theta$, for all points $(x,y)\in Q(1;1/2)$, where we recall the definition of $Q(1;1/2)$ in \eqref{eq:Q_ball}. This completes the proof of inequality \eqref{eq:Growth_lemma_normalized}.
\end{proof}

We next establish that the probability of hitting the intersection of a tangent and a transverse boundary component defines a function 
(by formula \eqref{eq:Function_hitting_corner}) that satisfies the hypotheses of Lemma \ref{lem:Growth_lemma}.

\begin{lem}[Regularity of the hitting probability]
\label{lem:Regularity_hitting_probability}
Suppose that the generalized Kimura operator satisfies the standard assumptions.
Let $i\in I^T$ and $j\in I^{\pitchfork}$. Then the hitting probability,
\begin{equation}
\label{eq:Function_hitting_corner}
u(p) := \QQ^p(\omega(\tau_{\partial^T P}) \in H_i\cap H_j),\quad\forall\, p \in P, 
\end{equation}
belongs to the space of functions 
\begin{equation}
\label{eq:Nice_space_elliptic}
C^{\infty}(P\backslash \partial^T P) \cap C(P \backslash (H_i\cap H_j))
\end{equation}
and satisfies properties 
\begin{align}
\label{eq:L_u}
L u = 0&\quad\hbox{ on } \quad P\backslash \partial^T P,\\
\label{eq:L_u_boundary}
u=0 &\quad\hbox{ on } \quad \partial^T P\backslash (H_i\cap H_j),\\
\label{eq:L_u_bounds}
0 \leq u \leq 1 &\quad\hbox{ on } \quad P.
\end{align}
\end{lem}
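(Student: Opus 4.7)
The plan is to realize the hitting probability $u$ as a monotone decreasing pointwise limit of smooth solutions to non-homogeneous parabolic Dirichlet problems whose boundary data approximate $\mathbf{1}_{H_i\cap H_j}$ from above, and then to transfer regularity and the PDE by passing to the limit. Choose a decreasing sequence of cutoffs $\{\eta_k\}\subset C^\infty(P)$ with $\eta_k\downarrow\mathbf{1}_{H_i\cap H_j}$ (namely $\eta_k\equiv 1$ on a $1/(2k)$-neighborhood of $H_i\cap H_j$ and $\eta_k\equiv 0$ outside its $1/k$-neighborhood), together with $\psi\in C^\infty([0,\infty))$ with $\psi(0)=0$ and $\psi\equiv 1$ on $[1,\infty)$, and set $\zeta_k(t,p):=\psi(t)\eta_k(p)$, so $\zeta_k(0,\cdot)\equiv 0$. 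Theorem \ref{thm:Parabolic_Dirichlet} then produces the unique solution $u_k\in C^\infty([0,\infty)\times (P\setminus\partial^TP))\cap C([0,\infty)\times P)$ of \eqref{eq:Parabolic_Dirichlet} with this boundary data, satisfying $0\leq u_k\leq 1$ and the stochastic representation
$$u_k(t,p)=\EE_{\QQ^p}\!\bigl[\mathbf{1}_{\{\tau_{\partial^TP}\leq t\}}\,\psi(t-\tau_{\partial^TP})\,\eta_k(\omega(\tau_{\partial^TP}))\bigr].$$

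Bounded convergence gives, for fixed $k$, the pointwise limit $u_k(t,p)\to v_k(p):=\EE_{\QQ^p}[\mathbf{1}_{\{\tau_{\partial^TP}<\infty\}}\eta_k(\omega(\tau_{\partial^TP}))]$ as $t\to\infty$; monotonicity then yields $v_k\downarrow u$ as $k\to\infty$. For interior regularity, the uniform bound $0\leq u_k\leq 1$ combined with the interior parabolic Schauder estimate of \cite[Theorem 1.3]{Pop_2013b} yields, for any compact $K\subset P\setminus\partial^TP$ and $\ell\in\NN$, a uniform bound
$$\sup_{k\in\NN,\,T\geq 1}\|u_k\|_{C^\ell([T,T+1]\times K)}\leq C(\ell,K,L).$$
Arzel\`a-Ascoli delivers $u_k(t,\cdot)\to v_k$ and then $v_k\to u$ in $C^\infty_{\mathrm{loc}}(P\setminus\partial^TP)$, and passing to the limit in $\partial_t u_k=Lu_k$ yields $u\in C^\infty(P\setminus\partial^TP)$ with $Lu=0$ there. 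The bounds $0\leq u\leq 1$ are immediate.

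For continuity and the boundary values, fix $q\in P\setminus(H_i\cap H_j)$. If $q\in P\setminus\partial^TP$, continuity at $q$ comes from the smoothness just established (including up to transverse-only boundary, by the Kimura boundary regularity of \cite{Epstein_Mazzeo_annmathstudies}). Otherwise $q\in\partial^TP\setminus(H_i\cap H_j)$, and for all $k$ large enough $\eta_k\equiv 0$ on an open neighborhood of $q$ in $\partial^TP$; this forces $u_k(t,q)=\zeta_k(t,q)=0$ for every $t$, hence $v_k(q)=0$. The monotone dominance $u\leq v_k$ together with the continuity of $v_k$ at $q$ then yields
$$0\leq\limsup_{p'\to q}u(p')\leq v_k(q)=0$$
for every sufficiently large $k$, proving $u(q)=0$ and continuity of $u$ at $q$.

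The main technical obstacle is establishing the required continuity of the approximants $v_k$ at points of $\partial^TP$, which demands a uniform-in-$t$ boundary H\"older estimate for the parabolic approximants $u_k$ up to the tangent boundary. This amounts to a barrier construction in the spirit of those carried out later in the proofs of Lemmas \ref{lem:GL_base_case} and \ref{lem:Growth_lemma}, adapted so that the vanishing of the boundary data $\eta_k$ on a neighborhood of $q$ in $\partial^TP$ translates into a uniform modulus of decay of $u_k(t,p')\to 0$ as $p'\to q$, independent of $t$ and of $k$ large.
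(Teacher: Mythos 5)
Your Steps 1 and 2 (the cutoff approximation via Theorem~\ref{thm:Parabolic_Dirichlet}, the $t\to\infty$ and $k\to\infty$ limits, the interior $C^\infty$ regularity via the uniform $[0,1]$ bound and interior Schauder estimates, and passing $\partial_t u_k - Lu_k=0$ to the limit to get $Lu=0$) track the paper's proof closely and are fine; the monotone-dominance reduction $\limsup_{p'\to q}u(p')\le v_k(q)$ is also logically sound \emph{given} continuity of $v_k$ at $q$.

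The gap is exactly the point you flag at the end: you never establish the required boundary continuity. Asking for a uniform-in-$t$, uniform-in-$k$ boundary H\"older modulus for the parabolic approximants $u_k$ is both harder than necessary and left unproven, so as written the argument does not close. The paper sidesteps the parabolic approximants entirely and runs a barrier argument directly on the elliptic limit $u$: fix $q\in\partial^TP\setminus(H_i\cap H_j)$, pick a small ball $B_r(q)$ disjoint from $H_i\cap H_j$, and use the strong Markov property (Corollary~\ref{cor:Strong_Markov}) to write $u(z)=\EE_{\QQ^z}[\xi(\omega(\nu))]$, where $\nu$ is the relevant exit time and $\xi$ equals $0$ on the tangent face (by the absorption property, since $H_i\cap H_j$ is far away) and equals $u\le 1$ on the lateral boundary. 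One then exhibits a continuous $w$ with $w(q)=0$, $w>0$ elsewhere, and $Lw<0$ (in adapted coordinates, $w(z)=\sqrt{x_1/\rho}+\sum_{i\ge 2}x_i+\sum_l y_l$, using $b_1\equiv 0$ on the tangent face and $a_{11}\ge\delta>0$); since $\xi\le Kw$ on the relevant boundary, It\^o/optional stopping give $u(z)\le Kw(z)$, hence $u(z)\to 0$ as $z\to q$. You should either carry out this barrier argument directly for $u$ (cleanest, matching the paper), or, if you insist on the $v_k$ route, prove the same barrier bound for each $v_k$ by the identical localization — in either case, the barrier construction has to actually be performed, not deferred.
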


\begin{proof}
From definition \eqref{eq:Function_hitting_corner} of $u$, it is clear that property \eqref{eq:L_u_bounds} holds. Applying Lemma 
\ref{eq:Absorption_tangent_boundary} with $p\in \partial^T P\backslash (H_i\cap H_j)$, we also have that $u$ satisfies property \eqref{eq:L_u_boundary}. To prove the remaining assertions of Lemma \ref{lem:Regularity_hitting_probability}, we use an approximation argument. Let $k\in\NN$, $\varphi_k:P\rightarrow[0,1]$, and $\psi_k:[0,\infty]\rightarrow [0,1]$ be smooth functions such that
\begin{equation}
\label{eq:Definition_eta_k_elliptic_Dirichlet}
\begin{aligned}
&\varphi_k = 1\quad\hbox{ on }\quad P\cap\{p\in P: \hbox{ dist}(p, H_i\cap H_j) \leq 1/k\},\\
&\varphi_k = 0\quad\hbox{ on }\quad P \backslash \{p\in P: \hbox{ dist}(p, H_i\cap H_j) \geq 2/k\},\\
&\psi_k(t) = 1\quad\hbox{ for all } t\geq 2/k,
\quad\hbox{ and }\quad
\psi_k(t) = 0\quad\hbox{ for all } t\leq 1/k,
\end{aligned}
\end{equation} 
and let $\zeta_k(t,p) := \psi_k(t)\varphi_k(p)$, for all $(t,p)\in [0,\infty)\times P$. Let $v_k$ be the unique solution in the space of functions \eqref{eq:Nice_space} to the non-homogeneous Dirichlet problem \eqref{eq:Parabolic_Dirichlet} with boundary condition $\zeta=\zeta_k$. The stochastic representation \eqref{eq:Parabolic_Dirichlet_rep_stoch} gives us that
\begin{equation}
\label{eq:Stoch_rep_v_k}
v_k(t,p) = \EE_{\QQ^p}\left[\psi_k(t-t\wedge\tau_{\partial^T P})\varphi_k(\omega(t\wedge\tau_{\partial^T P}))\right],
\quad\forall\, (t,p)\in [0,\infty)\times P,
\end{equation}
where we recall that the stopping time $\tau_{\partial^T P}$ is defined in \eqref{eq:tau_tangent} and the probability measure $\QQ^p$ is the unique solution to the martingale problem in Definition \ref{defn:Martingale_problem}.

We divide the proof into several steps. 

\setcounter{step}{0}
\begin{step}[Convergence as $t\rightarrow\infty$ and $k\rightarrow\infty$]
\label{step:Convergence_seq}
From the definition of the function $\psi_k$ we have that
\begin{equation}
\label{eq:psi_tau}
\begin{aligned}
\psi_k(t-t\wedge\tau_{\partial^TP})&=1\quad\hbox{ if }\tau_{\partial^TP} < t-1/k,\\
\psi_k(t-t\wedge\tau_{\partial^TP})&=0\quad\hbox{ if }t-2/k <\tau_{\partial^TP},
\end{aligned}
\end{equation}
from which it follows that
\begin{align*}
\left|v_k(t,p) - \EE_{\QQ^p}\left[\varphi_k(\omega(t\wedge\tau_{\partial^T P}))\mathbf{1}_{\{\tau_{\partial^TP} < t-1/k\}}\right]\right|
\leq \QQ^p(t-1/k \leq \tau_{\partial^TP} < t-2/k).
\end{align*}
Letting $t$ tend to $\infty$, we see that the right-hand side of the preceding inequality tends to $0$ because
$$
\sum_{a=0}^{\infty} \QQ^p(a/k \leq \tau_{\partial^TP} < (a+1)/k) = \QQ^p(\tau_{\partial^TP} < \infty) <\infty.
$$
Thus, we have that the sequence of functions $v_k(t,\cdot)$ converges pointwise on $P$, as $t\rightarrow\infty$, for each fixed $k\in \NN$, and we denote
\begin{equation}
\label{eq:Stoch_rep_u_k}
u_k(p):=\lim_{t\rightarrow\infty} v_k(t,p) = \EE_{\QQ^p}\left[\varphi_k(\omega(\tau_{\partial^T P}))\right],
\quad\forall\, k\in \NN,\quad\forall\, p \in P.
\end{equation}
From the definition of the cutoff functions $\varphi_k$ in \eqref{eq:Definition_eta_k_elliptic_Dirichlet} and of the hitting probability $u$ in \eqref{eq:Function_hitting_corner}, we have that
$$
\lim_{k\rightarrow\infty} u_k(p) = u(p),\quad\forall\, p \in P.
$$
We use this construction of the function $u$ to prove that it belongs to the space of functions \eqref{eq:Nice_space_elliptic}
and satisfies properties \eqref{eq:L_u} and \eqref{eq:L_u_boundary} in the following steps.
\end{step}

\begin{step}[Proof of $u\in C^2(P\backslash\partial^T P)$ and $u$ satisfies \eqref{eq:L_u}]
\label{step:Smoothness_seq}
Note that $|v_k(t,p)|\leq 1$, for all $(t,p)\in [0,\infty)\times P$ and for all $k\in\NN$. Let $U$ be a relatively open set in $P$ such that  
$\hbox{dist}(\bar U, P\backslash\partial^TP)>0$. We can apply \cite[Theorem 1.2]{Pop_2013a} to conclude that, for all $l\in\NN$, there is a positive constant, $C=C(L,l)$, such that
$$
\|v_k(t,\cdot)\|_{C^l(\bar U)} \leq C,\quad\forall\, t\in [0,\infty),\quad\forall\, k\in\NN.
$$
The construction of the function $u$ in Step \ref{step:Convergence_seq}, the
preceding estimate which is uniform in $t\in [0,\infty)$ and $k\in\NN$, and an
application of the Arzel\`a-Ascoli Theorem imply that $u$ belongs to
$C^{\infty}(P\backslash\partial^T P)$, since the relatively open set $U\subset P\backslash\partial^T P$ such that $\hbox{dist}(\bar U,\partial^TP)>0$ was arbitrarily chosen. Moreover, because the functions $v_k$ are solutions to the parabolic equation \eqref{eq:Parabolic_Dirichlet}, the preceding observation implies that the hitting probability $u$ satisfies equation \eqref{eq:L_u}.
\end{step}

\begin{step}[Proof of $u\in C(P\backslash(H_i\cap H_j))$ and of \eqref{eq:L_u_boundary}]
\label{step:Continuity_seq}
To establish that $u\in C(P\backslash(H_i\cap H_j))$ and satisfies \eqref{eq:L_u_boundary}, it is sufficient to prove that
$$
\lim_{q\rightarrow p} u(q) =0,\quad\forall\, p\in\partial^T P\backslash(H_i\cap H_j).
$$
Let $p\in \partial^T P \backslash(H_i\cap H_j)$. Then there is a positive
constant $r$ such that $B_r(p)\subset P\backslash(H_i\cap H_j)$. Let $\varsigma$
be the first hitting time of the Kimura diffusion on the set 
$(\partial B_r(p)\cap\partial^T P)\cup(\partial B_r(p)\cap\Int(P))$, where we recall that $B_r(p)$ is the relatively open ball centered at $p$ of radius $r$ with respect to the Riemannian metric induced by the generalized Kimura operator on $P$. The strong Markov property of Kimura diffusions established in Corollary \ref{cor:Strong_Markov} and the stochastic representation \eqref{eq:Function_hitting_corner} of the function $u$ give us that
\begin{equation}
\label{eq:stoch_rep_local}
u(q) = \EE_{\QQ^q}\left[\xi(\omega(\varsigma))\right],\quad\forall\, q\in \bar B_r(p). 
\end{equation}
where $\xi=0$ on $\partial^T P\cap \partial B_r(p)$ and $\xi=u$ on $\partial B_r(p)\cap\Int(P)$. Without loss of generality we can choose an adapted system of coordinates in a neighborhood of $p$ such that $p$ is equivalent to the origin in $\bar S_{n,m}$ and the generalized Kimura operator takes the form \eqref{eq:Operator} on $B^{\infty}_R$, for some $R>0$ small enough, such that $b_1=0$ on $\partial B^{\infty}_R \cap\{x_1=0\}$. Then, in the adapted system of coordinates, the local stochastic representation \eqref{eq:stoch_rep_local} becomes
\begin{equation}
\label{eq:stoch_rep_local_u}
u(z) = \EE_{\QQ^z} \left[\xi(\omega(\nu))\right],\quad\forall\, z\in\bar B^{\infty}_{\rho},
\end{equation}
where $0<\rho<R$, $\nu$ is the first hitting time of the boundary of the set 
$(\partial^TB^{\infty}_{\rho}\cap \partial B^{\infty}_{\rho}) \cup (\partial B^{\infty}_{\rho}\cap S_{n,m})$, and $\xi=0$ on 
$\partial^T B^{\infty}_{\rho}\cap \partial B^{\infty}_{\rho}$ and $\xi=u$ on $\partial
B^{\infty}_{\rho}\cap S_{n,m}$. The constant $\rho$ is suitably chosen
below. Our goal is to prove that 
\begin{equation}
\label{eq:lim_origin}
\lim_{z\rightarrow 0} u(z) = 0.
\end{equation}
Similarly to Step \ref{step:C_1_base} in the proof of Lemma \ref{lem:GL_base_case}, we choose the barrier function:
$$
w(z) = \sqrt{\frac{x_1}{\rho}} +\sum_{i=2}^n x_i + \sum_{l=1}^m y_l,\quad\forall\, z=(x,y)\in\barB^{\infty}_{\rho}.
$$
We clearly have that $w(0)=0$, $w(z)>0$, for all $z\in\barB^{\infty}_{\rho}\backslash\{0\}$, and direct calculations give us that
$$
Lw(z) = \frac{-a_{11}(z)+2b_1(z)}{4\sqrt{\rho x_1}} + \sum_{i=2}^n b_i(z) +\sum_{l=1}^m 2(d_{ll}(z)+e_l(z)y_l),\quad\forall\, z\in\barB^{\infty}_{\rho}.
$$
Using the fact that $b_1=0$ along $\partial B^{\infty}_{\rho}\cap \{x_1=0\}$ together with the continuity and boundedness of the coefficients, and the fact that $a_{11}\geq \delta>0$ on $\barB^{\infty}_{\rho}$ (by Assumption \ref{assump:Coeff}), it follows that we can choose $\rho>0$ small enough so that $Lw < 0$ on $\barB^{\infty}_{\rho}$. Thus, applying It\^o's rule, we have that
\begin{equation}
\label{eq:stoch_rep_w}
\EE_{\QQ^z}\left[w(\omega(\nu))\right] \leq w(z),\quad\forall\, z\in\barB^{\infty}_{\rho}.
\end{equation}
Using the definition of the function $\xi$ and of the stopping time $\nu$, together with fact that $w(0)=0$ and $w(z)>0$, for all 
$z\in\barB^{\infty}_{\rho}\backslash\{0\}$, we find that there is a positive constant, $K$, such that $\xi(\omega(\nu)) \leq Kw(\omega(\nu))$. Identity \eqref{eq:stoch_rep_local_u} together with inequality \eqref{eq:stoch_rep_w} yield
$$
u(z) \leq K w(z),\quad\forall\, z\in\barB^{\infty}_{\rho},
$$
which implies that
$$
\limsup_{z\rightarrow 0} u(z) \leq K\limsup_{z\rightarrow 0} w(z) = 0.
$$
Because the function $u$ is non-negative, we see that the preceding inequality implies \eqref{eq:lim_origin}. This completes the proof of the fact that $u$ is continuous at $p$, for all $p\in\partial^T P\backslash (H_i\cap H_j)$.
\end{step}

Combining Steps \ref{step:Convergence_seq}, \ref{step:Smoothness_seq}, and \ref{step:Continuity_seq} completes the proof.
\end{proof}

%
%

\bibliography{mfpde}

\def\cprime{$'$} \def\cprime{$'$}
  \def\polhk#1{\setbox0=\hbox{#1}{\ooalign{\hidewidth
  \lower1.5ex\hbox{`}\hidewidth\crcr\unhbox0}}} \def\cprime{$'$}
  \def\cprime{$'$} \def\cprime{$'$} \def\cprime{$'$} \def\cprime{$'$}
  \def\lfhook#1{\setbox0=\hbox{#1}{\ooalign{\hidewidth
  \lower1.5ex\hbox{'}\hidewidth\crcr\unhbox0}}} \def\cprime{$'$}
  \def\cprime{$'$} \def\cprime{$'$} \def\cprime{$'$} \def\cprime{$'$}
  \def\cprime{$'$} \def\cprime{$'$} \def\cprime{$'$} \def\cprime{$'$}
  \def\cprime{$'$}
\providecommand{\bysame}{\leavevmode\hbox to3em{\hrulefill}\thinspace}
\providecommand{\MR}{\relax\ifhmode\unskip\space\fi MR }
\providecommand{\MRhref}[2]{%
  \href{http://www.ams.org/mathscinet-getitem?mr=#1}{#2}
}
\providecommand{\href}[2]{#2}
\begin{thebibliography}{10}

\bibitem{Athreya_Barlow_Bass_Perkins_2002}
S.~R. Athreya, M.~T. Barlow, R.~F. Bass, and E.~A. Perkins, \emph{Degenerate
  stochastic differential equations and super-{M}arkov chains}, Probab. Theory
  Related Fields \textbf{123} (2002), 484--520.

\bibitem{Breiman_book}
L.~Breiman, \emph{Probability}, Classics in Applied Mathematics, vol.~7,
  Society for Industrial and Applied Mathematics (SIAM), Philadelphia, PA,
  1992, Corrected reprint of the 1968 original.

\bibitem{Cerrai_Clement_2001}
S.~Cerrai and P.~Cl\'ement, \emph{On a class of degenerate elliptic operators
  arising from the fleming–viot processes}, J. Evol. Equ. \textbf{1} (2001),
  243--276.

\bibitem{Cerrai_Clement_2003}
\bysame, \emph{Schauder estimates for a class of second order elliptic
  operators on a cube}, Bull. Sci. math. \textbf{127} (2003), 669--688.

\bibitem{Cerrai_Clement_2004}
\bysame, \emph{Well-posedness of the martingale problem for some degenerate
  diffusion processes occurring in dynamics of populations}, Bull. Sci. math.
  \textbf{128} (2004), 355--389.

\bibitem{Cerrai_Clement_2007}
\bysame, \emph{Schauder estimates for a degenerate second order elliptic
  operator on a cube}, J. Differential Equations \textbf{242} (2007), no.~2,
  287--321.

\bibitem{Chen_Stroock_2010}
L.~Chen and D.~W. Stroock, \emph{The fundamental solution to the
  {W}right-{F}isher equation}, SIAM J. Math. Anal. \textbf{42} (2010), no.~2,
  539--567.

\bibitem{Epstein_Mazzeo_annmathstudies}
C.~L. Epstein and R.~Mazzeo, \emph{Degenerate diffusion operators arising in
  population biology}, Annals of Mathematics Studies, Princeton University
  Press, Princeton, NJ, 2013, \url{arXiv:1110.0032}.

\bibitem{Epstein_Mazzeo_2016}
\bysame, \emph{Harnack inequalities and heat-kernel estimates for degenerate
  diffusion operators arising in population biology}, Appl. Math. Res. Express.
  AMRX (2016), no.~2, 217--280, \url{arXiv:1406.1426}.

\bibitem{Epstein_Pop_2016}
C.~L. Epstein and C.~A. Pop, \emph{{B}oundary estimates for a degenerate
  parabolic equation with partial {D}irichlet boundary conditions}, pp. 35,
  accepted in Journal of Geometric Analysis, \url{arXiv:1608.02044}.

\bibitem{Epstein_Pop_2013b}
\bysame, \emph{The {F}eynman--{K}ac formula and {H}arnack inequality for
  degenerate diffusions}, accepted in Annals of Probability,
  \url{arXiv:1406.4759}.

\bibitem{Fabes_Garofalo_Salsa_1986}
E.~B. Fabes, N.~Garofalo, and S.~Salsa, \emph{A backward {H}arnack inequality
  and {F}atou theorem for nonnegative solutions of parabolic equations},
  Illinois J. Math. \textbf{30} (1986), no.~4, 536--565.

\bibitem{Fisher_1922}
R.~A. Fisher, \emph{On the dominance ratio}, Proc. Roy. Soc. Edin. \textbf{42}
  (1922), 321--431.

\bibitem{Fisher_1930}
\bysame, \emph{The distributions of gene ratios of rare mutations}, Proc. Roy.
  Soc. Edin. \textbf{50} (1930), 205--220.

\bibitem{Hofrichter_Tran_Jost_2014a}
J.~Hofrichter, T.~D. Tran, and J.~Jost, \emph{A hierarchical extension scheme
  for backward solutions of the {W}right-{F}isher model},
  \url{arXiv:1406.5146v1}.

\bibitem{Hofrichter_Tran_Jost_2014b}
\bysame, \emph{A hierarchical extension scheme for solutions of the
  {W}right-{F}isher model}, Commun. Math. Sci. \textbf{14} (2016), no.~4,
  1093--1110, \url{arXiv:1406.5152v3}.

\bibitem{Hofrichter_Tran_Jost_2014c}
\bysame, \emph{The uniqueness of hierarchically extended backward solutions of
  the {W}right-{F}isher model}, Comm. Partial Differential Equations
  \textbf{41} (2016), no.~3, 447--483, \url{arXiv:1407.3067v2}.

\bibitem{Hsu_2002}
E.~P. Hsu, \emph{Stochastic analysis on manifolds}, Graduate studies in
  mathematics, Providence, R.I. American Mathematical Society, 2002.

\bibitem{Ikeda_Watanabe2}
N.~Ikeda and S.~Watanabe, \emph{Stochastic differential equations and diffusion
  processes}, second ed., North-Holland, Amsterdam, 1989.

\bibitem{KaratzasShreve1998}
I.~Karatzas and S.~E. Shreve, \emph{Methods of mathematical finance}, Springer,
  New York, 1998.

\bibitem{Lions_Magenes1}
J.-L. Lions and E.~Magenes, \emph{Non-homogeneous boundary value problems and
  applications. {V}ol. {I}}, Springer, New York, 1972.

\bibitem{Pop_2013b}
C.~A. Pop, \emph{{$C^0$}-estimates and smoothness of solutions to the parabolic
  equation defined by {K}imura operators}, J. Funct. Anal. \textbf{272} (2017),
  no.~1, 47--82, \url{ arXiv:1406.0742}.

\bibitem{Pop_2013a}
\bysame, \emph{Existence, uniqueness and the strong {M}arkov property of
  solutions to {K}imura diffusions with singular drift}, Trans. Amer. Math.
  Soc. \textbf{369} (2017), no.~8, 5543--5579, \url{arXiv:1406.0745}.

\bibitem{Safonov_2016}
M.~V. Safonov,  (2016), Personal Communication.

\bibitem{Safonov_Yuan_1999}
M.~V. Safonov and Y.~Yuan, \emph{Doubling properties for second order parabolic
  equations}, Ann. of Math. (2) \textbf{150} (1999), no.~1, 313--327.

\bibitem{Sato_1978}
K.~Sato, \emph{Diffusion operators in population genetics and convergence of
  {M}arkov chains}, in Measure Theory Applications to Stochastic Analysis, G.
  Kallianpur and D. Klzow, eds., of Lecture Notes in Mathematics, Springer
  Berlin Heidelberg, \textbf{695} (1978), 127--137.

\bibitem{Shimakura_1981}
N.~Shimakura, \emph{Formulas for diffusion approximations of some gene
  frequency models}, J. Math. Kyoto Univ. \textbf{21} (1981), no.~1, 19--45.

\bibitem{Stroock_manifolds}
D.~W. Stroock, \emph{An introduction to the analysis of paths on a {R}iemannian
  manifold}, Mathematical Surveys and Monographs, vol.~74, American
  Mathematical Society, Providence, RI, 2000.

\bibitem{Stroock_Varadhan}
D.~W. Stroock and S.~R.~S. Varadhan, \emph{Multidimensional diffusion
  processes}, Springer, Berlin, 1979.

\bibitem{Sturm_1995}
K.-T. Sturm, \emph{Analysis on local {D}irichlet spaces-{II}. {U}pper
  {G}aussian estimates for the fundamental solutions of parabolic equations},
  Osaka J. Math. \textbf{32} (1995), no.~2, 275--312.

\bibitem{Whitney_1936}
H.~Whitney, \emph{Differentiable manifolds}, Ann. of Math. (2) \textbf{37}
  (1936), no.~3, 645--680.

\bibitem{Wright_1931}
S.~Wright, \emph{Evolution in {M}endelian populations}, Genetics \textbf{16}
  (1931), 97--159.

\end{thebibliography}
\bibliographystyle{amsplain}

\end{document}